\documentclass{amsart}[14pt,a4paper]

\usepackage{geometry}
\usepackage{amsfonts}
\usepackage{hyperref}
\usepackage{amsmath}
\usepackage{galois}
\usepackage{bm}
\DeclareMathSizes{10}{10}{7}{5}
\usepackage{mathtools}
\numberwithin{equation}{section}

\usepackage{enumitem}
\usepackage{amssymb}

\numberwithin{equation}{section}

\usepackage{enumitem}
\usepackage{amssymb}
\usepackage{mathrsfs}

\newtheorem{theorem}{Theorem}[section]
\newtheorem{question}{Question}[section]

\newtheorem{lemma}{Lemma}[section]
\newtheorem{corollary}{Corollary}[section]
\usepackage{chngcntr}
\counterwithin{figure}{section}
\begin{document}
	\title{Arithmetic conditions for Julia sets to have positive area}
	\author{Jianyong Qiao and Hongyu Qu\textsuperscript{*}}
	\address{School of Mathematical Sciences, Beijing University of Posts and Telecommunications, Beijing
		100786, P. R. China. \textit{Email:\ qjy@bupt.edu.cn}}
	\address{Key Laboratory of Mathematics and Information Networks (Beijing University of Posts and Telecommunications), Ministry of Education, People's Republic of China}
	\address{School of Mathematical Sciences, Beijing University of Posts and Telecommunications, Beijing
		100786, P. R. China. \textit{Email:} \textit{hongyuqu2022@126.com}}
	\address{Key Laboratory of Mathematics and Information Networks (Beijing University of Posts and Telecommunications), Ministry of Education, People's Republic of China}
	\renewcommand{\thefootnote}{\fnsymbol{footnote}}
	\footnotetext[1]{Corresponding author, Email: hongyuqu2022@126.com}
	\maketitle
	\begin{abstract}
		In this paper, for quadratic polynomials with Cremer points or Siegel disks whose boundaries don't contain critical points,
		we give an arithmetic condition for their Julia sets to have positive area. Our proof relies on the Siegel Return Machinery as well as the pseudo-Siegel disk theory developed by Dudko and Lyubich.
	\end{abstract}
	\tableofcontents
	
	\section{Introduction}
	The quadratic polynomial $$P_{\alpha}(z)=e^{2\pi i\alpha}z+z^2,$$
	where $\alpha$ is an irrational number with the continued fraction expansion 
	\begin{align*}
		\alpha&=[a_1,a_2,a_3,\cdots]\\
		&=\frac{1}{a_1+\frac{1}{a_2+\frac{1}{a_3+\frac{1}{\ddots}}}},
	\end{align*}
	here $\{a_n\}$ is a sequence of positive integers.
	The origin is a fixed point of $P_{\alpha}$ and $P_{\alpha}'(0)=e^{2\pi i\alpha}$. 
	$0$ is called a Siegel point of $P_{\alpha}$ if $P_{\alpha}$ can be holomorphically conjugate
	to a linear map in a neighborhood of $0$.
	Otherwise, 
	$0$ is called a Cremer point of $P_{\alpha}$.
	In \cite{Yoc95}, Yoccoz proved that $0$ is a Siegel point if and only if $\alpha$ is a Brjuno number (refer also to \cite{Br}, \cite{Che19}, \cite{Sie42}). Here
	an irrational number $\alpha$ is called a Brjuno number if the Brjuno series
	\[\sum_{n=1}^{+\infty}\frac{\log{q_{n+1}}}{q_n}<+\infty,\]
	where $q_n$ is the denominator of the $n$th convergent to $\alpha$.
	This result reveals a profound connection between the arithmetic of $\alpha$ and the dynamics of $P_{\alpha}$.
	Over the past few decades, more wonderful connections between the arithmetic of $\alpha$ and the dynamics of $P_{\alpha}$ were founded, such as
	the Marmi-Moussa-Yoccoz conjecture
	(see \cite{BC06}, \cite{Ca03}, \cite{CC15} and \cite{MMY97}),
	the topology of the irrationally indifferent attractor of $P_{\alpha}$ (see \cite{Che22}, \cite{Dou87}, \cite{Her87}, \cite{SY24}) and so on.
	This paper is focus on exploring an arithmetic condition of $\alpha$ for the Julia set of $P_{\alpha}$ to have positive area. There have been some breakthroughs on the research of the connection between the arithmetic of $\alpha$ and the Lebesgue measure of $J(P_{\alpha})$ as follows:
	When $\alpha$ is of bounded type, McMullen proved that
	the Hausdorff dimension of $J(P_{\alpha})$ is less than $2$ and
	hence the Lebesgue measure of $J(P_{\alpha})$ is equal to $0$ (see \cite{McM98}); when $\alpha$ is of PZ-type\footnote[2]{Here PZ-type means $\ln a_n=O(\sqrt{n})$ as $n\to+\infty$. The set of all irrational numbers of PZ-type has full measure in $\mathbb{R}/\mathbb{Z}$.},
	Petersen and Zakeri proved that the Lebesgue measure of $J(P_{\alpha})$ is equal to $0$ (see \cite{PZ04}); 
	on the other hand, 
	Buff and Ch\'eritat proved that
	positive area of 	$J(P_{\alpha})$ can appear at both Brjuno and non-Brjuno cases (see \cite{BC}; see also \cite{QQ24}, \cite{Q24} and \cite{QQZ23} for improving some key techniques used in their proof).
	
	Although some significant progresses have been made, there still are many secrets about the Lebesgue measure of	$J(P_{\alpha})$ worth studying. Here list three open questions about Lebesgue measure of quadratic Julia sets (see \cite{Che09}):
	
	\begin{question}
		\label{Q1}Does there exist a quadratic polynomial $P_{\alpha}$ with
		a Cremer point, but whose Julia set has Lebesgue measure $=0$?\footnote{This question was proposed for rational functions in \cite{Che09}.}
	\end{question}	
	\begin{question}
		\label{Q2}Is it equivalent for a quadratic polynomial $P_{\alpha}$ with a fixed Siegel disk $\Delta_{\alpha}$ centering at $0$, to have a positive measure Julia set and to have $\partial\Delta_{\alpha}$ not containing the critical point?
	\end{question}
	In \cite{PZ04}, Peterson and Zakeri proved that for almost everywhere $\alpha$, ${\rm area}(J(P_{\alpha}))=0$ and the Siegel disk boundary $\partial\Delta_{\alpha}$ contains the unique finite critical point $c_0=-\frac{e^{2\pi i\alpha}}{2}$.
	
	According to \cite{Che22} and \cite{SY24}, if $\alpha$ is of sufficiently high type (there exists a sufficiently large $N>0$ such that for all $n$, $a_n\geq N$), then the Siegel disk boundary contains the critical point $c_0$ if and only if $\alpha$ is a Herman number.
	In \cite{CDY20}, Cheraghi, DeZotti and Yang proved that if $\alpha$ is an irrational number of sufficiently high type and does not belong to Herman numbers, then the Hausdorff dimension of $J(P_{\alpha})$ is $2$.
	
	\begin{question}
		\label{Q3}Is it possible to characterize the set of irrational numbers $\alpha$ such that
		${\rm area}(J(P_\alpha))>0$?
	\end{question}
	The famous work of Buff and Ch\'eritat shows the existence of $\alpha$ such that ${\rm area}(J(P_\alpha))>0$. 
	However, to our knowledge, so far there is no simple arithmetic condition for $\alpha$ to make ${\rm area}(J(P_\alpha))>0$ been provided explicitly.
	
	\subsection{Statement of main results.}
	
	\vspace{0.2cm}
	We denote by $\mathcal{C}$ the set of all non-Brjuno numbers between $0$ and $1$.
	We denote by $\mathcal{E}$ the set of all irrational number $\alpha\in(0,1)$ such that $P_{\alpha}$ has a Siegel disk centering at $0$ whose boundary doesn't contain the critical point $c_0$.
	
	Given positive integers $\mathfrak{M}$, $\mathfrak{l}$ and $\mathfrak{p}$,
	an irrational number $\alpha=[a_1,a_2,\cdots]$ is said to satisfy the arithmetic condition $Q(\mathfrak{M},\mathfrak{l},\mathfrak{p})$ if there exists a positive integer $j$ such that for all $j_1>j_2\geq j$ with $a_{j_1}>\mathfrak{M}$ and $a_{j_2}>\mathfrak{M}$, we have $j_1-j_2\geq\mathfrak{p}$ or $j_1-j_2\leq\mathfrak{l}-1$. See Picture \ref{f20260721a}.
	\begin{figure}
		\centering
		\includegraphics[scale=0.7]{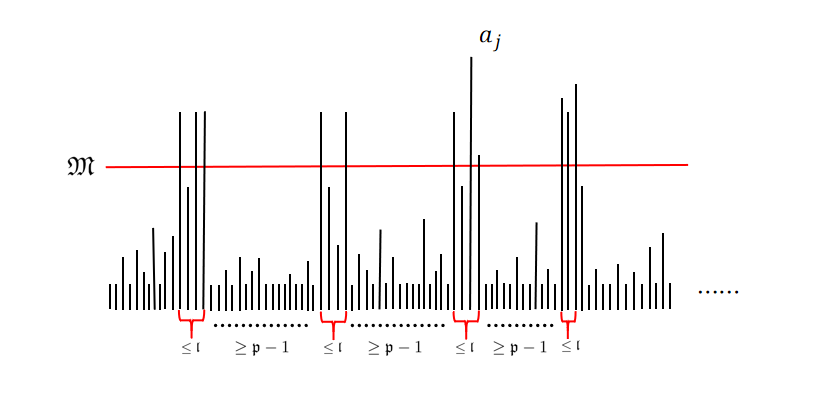}
		\caption{An illustration of the arithmetic condition $Q(\mathfrak{M},\mathfrak{l},\mathfrak{p})$.}
		\label{f20260721a}
	\end{figure}
	We denote by 
	$\mathcal{C}_{\mathfrak{l},\mathfrak{p}}^{(\mathfrak{M})}$ the set consisting of $\alpha=[a_1,a_2,\cdots]\in\mathcal{C}$ satisfying the above arithmetic condition $Q(\mathfrak{M},\mathfrak{l},\mathfrak{p})$; we denote by 
	$\mathcal{E}_{\mathfrak{l},\mathfrak{p}}^{(\mathfrak{M})}$ the set consisting of $\alpha=[a_1,a_2,\cdots]\in\mathcal{E}$ satisfying the above arithmetic condition $Q(\mathfrak{M},\mathfrak{l},\mathfrak{p})$.
	Our main results are the following two theorems:
	
	\begin{theorem}[Cremer case]
		\label{T2}Let $\mathfrak{M}\geq1$ and $\mathfrak{l}\geq1$ be two natural numbers. For every natural number $\mathfrak{p}\geq1$, if $\alpha\in\mathcal{C}_{\mathfrak{l},\mathfrak{p}}^{(\mathfrak{M})}$, then
		$P_{\alpha}$ has a positive area Julia set, with at most finitely many exceptional $\mathfrak{p}$.
	\end{theorem}
	
	\begin{theorem}[Siegel case]
		\label{T3}Let $\mathfrak{M}\geq1$ and $\mathfrak{l}\geq1$ be two natural numbers. For every natural number $\mathfrak{p}\geq1$, if $\alpha\in\mathcal{E}_{\mathfrak{l},\mathfrak{p}}^{(\mathfrak{M})}$, then
		$P_{\alpha}$ has a positive area Julia set, with at most finitely many exceptional $\mathfrak{p}$.
	\end{theorem}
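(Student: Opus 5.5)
The plan is to adapt the Buff--Ch\'eritat strategy for positive area, with its perturbative parabolic-implosion step replaced by the Dudko--Lyubich Siegel Return Machinery and pseudo-Siegel disks. The guiding idea is that condition $Q(\mathfrak{M},\mathfrak{l},\mathfrak{p})$ splits the continued fraction of $\alpha$, beyond the index $j$, into \emph{clusters}: maximal blocks of consecutive large entries $a_n>\mathfrak{M}$ of length at most $\mathfrak{l}$, with any two clusters separated by at least $\mathfrak{p}-\mathfrak{l}$ indices where $a_n\le\mathfrak{M}$. Inside such bounded-type stretches the renormalization tower has a priori bounds (McMullen / Dudko--Lyubich); these constants depend only on $\mathfrak{M}$ and $\mathfrak{l}$. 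As $\mathfrak{p}\to\infty$, the geometry over a long bounded-type stretch contracts exponentially, by a factor like $\lambda^{\mathfrak{p}}$ with $\lambda=\lambda(\mathfrak{M})<1$. That is why only finitely many $\mathfrak{p}$ can be exceptional: we choose $\mathfrak{p}_0(\mathfrak{M},\mathfrak{l})$ so that the estimates close for $\mathfrak{p}\ge\mathfrak{p}_0$.

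Step 1 (renormalization set-up). Along the Siegel renormalization tower of $P_\alpha$, we use pseudo-Siegel disks $\widehat Z_m$ at levels $m$ corresponding to the first index of each cluster. The Cremer case and the Siegel case (with $c_0\notin\partial\Delta_\alpha$) are treated uniformly. In both cases the postcritical set is approximated at level $m$ by the pseudo-Siegel disk, and the critical orbit stays at a definite relative distance from $\partial\widehat Z_m$ at suitable scales. In the Siegel case this uses $c_0\notin\partial\Delta_\alpha$ together with the fact that $\partial \Delta_\alpha$ is a compact invariant set away from $c_0$.

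Step 2 (measure estimate per cluster). We fix a disk $B$ around the critical value and let $E_m$ be the set of points of $B$ whose orbits enter the $\epsilon_m$-neighbourhood of $\widehat Z_m$ before escaping. For each cluster we aim to prove a relative estimate of the form
\[
\mathrm{area}(E_{m}\setminus E_{m'})\le C(\mathfrak{M},\mathfrak{l})\,\lambda^{\mathfrak{p}}\,\mathrm{area}(E_m),
\]
where $m'$ is the next cluster level. The mechanism is as follows. A large $a_n$ inside a cluster produces near-parabolic behaviour, so points must pass through the ``gate'' of a renormalized map. The Siegel Return Machinery shows that the first return to the pseudo-Siegel disk is almost univalent with bounded distortion. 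Hence the fraction of points lost (those that escape to the basin of infinity) is controlled by the conformal modulus of the annulus between consecutive pseudo-Siegel disks. Because of the long bounded-type gap, that modulus is at least comparable to $\mathfrak{p}$. Then $\prod_k(1-C\lambda^{\mathfrak{p}_k})>0$ with $\mathfrak{p}_k\ge\mathfrak{p}$. Since only finitely many clusters precede index $j$, this gives a set of positive area whose orbits remain bounded and accumulate on the postcritical set. That set lies in $K(P_\alpha)\setminus\bigcup$Fatou components, hence in $J(P_\alpha)$. In the Cremer case there is no interior at all near $0$; in the Siegel case we remove the preimages of $\Delta_\alpha$ using the separation from Step 1.

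Main obstacle. The hardest step is the per-cluster estimate inside a cluster, where up to $\mathfrak{l}$ consecutive large $a_n$ occur. There the near-parabolic loss of area must be bounded uniformly in the sizes of the $a_n$, since they are unbounded and only $\mathfrak{l}$ is fixed. I would first try to treat the whole cluster as one composite renormalization whose pseudo-Siegel disk degeneration is bounded by $C(\mathfrak{l})$ via the Dudko--Lyubich a priori bounds. Here the loss should be paid for not by smallness of the cluster itself but by the exponential gain $\lambda^{\mathfrak{p}}$ from the surrounding bounded-type stretch, which enlarges the modulus of the protective annulus. If that uniformity fails, the fallback is a Buff--Ch\'eritat-style control of the loss for each near-parabolic level by $O(1/\log a_n)$-type terms together with a dichotomy argument.
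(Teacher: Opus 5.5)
There is a genuine gap in how you assemble the per-cluster estimates, separate from the per-cluster estimate you flag as the main obstacle. Since $\alpha\in\mathcal{E}$ forces $a_n>\mathfrak{M}$ infinitely often (for bounded type $c_0\in\partial\Delta_\alpha$), there are infinitely many clusters. Condition $Q(\mathfrak{M},\mathfrak{l},\mathfrak{p})$ only gives gaps $\mathfrak{p}_k\geq\mathfrak{p}$, and nothing forces $\mathfrak{p}_k\to\infty$. Your nested sets give at best ${\rm area}(\bigcap_m E_m)\geq\prod_k(1-C\lambda^{\mathfrak{p}_k})\,{\rm area}(E_{m_0})$. This product is $0$ unless $\sum_k\lambda^{\mathfrak{p}_k}<\infty$, so it vanishes e.g.\ when the clusters sit at bounded spacing. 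Hence even granting Step 2, your claim $\prod_k(1-C\lambda^{\mathfrak{p}_k})>0$ is false in general: a fixed relative loss per cluster, however small, compounded over infinitely many clusters kills the area.

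The paper avoids this compounding with different bookkeeping. It works with the eventually golden-mean approximants $\alpha_k=[a_1,\dots,a_k,1,1,\dots]$, which have only finitely many clusters. For every $z$ in a fjord at a cluster level, it shows that the density in the filled-in square $S_z$ of non-captured points is at most $b_{\mathbf{k}-m+1}$, where $b_1=q$ and $b_m=q+c\bigl(cb_{m-1}(q+cb_{m-1})+qb_{m-1}\bigr)$ (Lemmas \ref{l7262}, \ref{l7272}, \ref{l7271}, Theorem \ref{key5}). A point missed at one stage is carried, with bounded distortion (Lemma \ref{key3}), into another fjord where the inductive bound applies again. Lemma \ref{key4} then bounds the failure set by $cb(q+cb)$, a product of two small densities. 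The recursion is therefore contracting near $0$, and $b_m\leq 1/(2c^3)$ uniformly in the number of clusters. That uniformity is exactly what a multiplicative product cannot give. Here $q$ is small because $\lambda_2^{\mathfrak{p}}<q^2$ and $\bm{\lambda}^N<q$, and this, not summability, is where the finitely many exceptional $\mathfrak{p}$ come from.

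Step 1 has a second, structural gap. The tilings $\mathfrak{D}_n$, dams and pseudo-Siegel disks are built from the critical points of $P^{q_{n+1}}$ on a quasicircle $\partial\Delta$ that contains $c_0$. For $\alpha\in\mathcal{E}$ none of this exists for $P_\alpha$ itself, so statements about $\partial\widehat Z_m$ for $P_\alpha$ have no content. The paper proves everything for $P_{\alpha_k}$, with constants independent of $k$ by Fact I (dams only at levels with $a_{n-1+{\bf h}}>\mathfrak{M}$, constants depending on $\mathfrak{M}$ only). It then passes to $P_\alpha$ by upper semicontinuity of area. In the Siegel case this must be done for $K_r(P_\theta)$ rather than $K(P_\theta)$, since ${\rm area}(K(P_\alpha))>0$ trivially. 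Likewise your $\bigcap_m E_m$ would contain Fatou points, so removing preimages of $\Delta_\alpha$ needs a quantitative argument. The paper's argument uses $\delta={\rm dist}(\Delta_\alpha,P_\alpha(c_0))>0$ (recurrence of $c_0$), the uniform shrinking of $\gamma^n_{c_0}$ (Lemma \ref{l8141}), and $\phi_{\alpha_k}\to\phi_\alpha$. Together these show that orbits enter $\overline{\Delta_{\alpha_k}}$ only through $\Omega^n_{c_0}$, hence outside $\Delta_{\alpha_k}(r)\supseteq\Delta_\alpha(r')$.

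Finally, the loss within a bounded-type stretch is not controlled by the modulus of a protective annulus: a large modulus alone does not force most points to be captured. The paper gets $\lambda_2^{p}$ from about $p/\mathfrak{q}_3$ essential returns to hovered main mountains (Lemma \ref{l819}). Each return yields a captured ball of definite hyperbolic size (Lemma \ref{l822}), which shrinks geometrically under pullback by McMullen's expansion near $U_0$, and this is fed into Lemma \ref{L1}. The consecutive large entries of a cluster are handled by the $N$ thickenings in Lemma \ref{key1}.
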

	Theorems \ref{T2} and \ref{T3} give arithmetic conditions for Cremer quadratic Julia sets and Siegel quadratic Julia sets to have positive areas, respectively. 
	
	It is easy to see that $\mathcal{C}_{\mathfrak{l},\mathfrak{p}}^{(\mathfrak{M})}$ in Theorem \ref{T2} is an uncountable set. Moreover, because of the arbitrariness of $j$ in the definition of $Q(\mathfrak{M},\mathfrak{l},\mathfrak{p})$ and the fact that changing finitely many entries in the continued fraction expansion of an irrational number does not affect whether it belongs to $\mathcal{C}$,
	we have that $\mathcal{C}_{\mathfrak{l},\mathfrak{p}}^{(\mathfrak{M})}$ in Theorem \ref{T2} is also dense in $(0,1)$.
	
	Note that $$\cdots\subsetneqq\mathcal{C}_{1,\mathfrak{p}}^{(1)}\subsetneqq\cdots\subsetneqq\mathcal{C}_{1,2}^{(1)}\subsetneqq\mathcal{C}_{1,1}^{(1)}=\mathcal{C}$$
	and $$\cdots\subsetneqq\mathcal{E}_{1,\mathfrak{p}}^{(1)}\subsetneqq\cdots\subsetneqq\mathcal{E}_{1,2}^{(1)}\subsetneqq\mathcal{E}_{1,1}^{(1)}=\mathcal{E}.$$
	Theorems \ref{T2} and \ref{T3} seem to support the following conjecture:
	All quadratic polynomials with a Cremer point or a Siegel disk whose boundary doesn't contain a critical point have a Julia set with positive area.

	\subsection{Strategy of the proof.}
	We shall give an outline of the proof of the main theorem for the special case: the non-Brjuno number $\alpha=[a_1,a_2,\cdots]$ satisfies $a_n=1$ for all $\mathfrak{p}\nmid n$, where $\mathfrak{p}\gg1$. The main idea is also applicable for the other cases with an appropriate revision.
	
	We consider the perturbations of $\alpha$:
	$$\alpha_k=[a_1,a_2,\cdots,a_k,1,1,\cdots],\ k\geq1.$$
	By the upper semi-continuity of areas of filled-in Julia sets (see [Proposition $2$, \cite{BC}]), we have $$\limsup\limits_{k\to+\infty}{\rm area}(K(P_{\alpha_k}))\leq{\rm area}(K(P_{\alpha}))={\rm area}(J(P_{\alpha})).$$
	So in order to prove ${\rm area}(J(P_{\alpha}))>0$, we need only to
	prove
	\begin{equation}
		\label{e20260724a}\limsup\limits_{k\to+\infty}{\rm area}(K(P_{\alpha_k}))>0.
	\end{equation}
	
	By the Douady-Ghys surgery, for all $k\geq1$, the Siegel disk $\Delta_{\alpha_k}$ is a quasidisk and ${\rm area}(\Delta_{\alpha_k})>0$. However, as $k\to\infty$, the shape of $\Delta_{\alpha_k}$ will develop deep fjords towards $0$ on many different renormalization levels, and at last the interior completely disappears. 
	For all integer $n\geq-1$, the pseudo-Siegel disk $\hat{\Delta}_{\alpha_k}^n$ of level $n$ constructed by Dudko and Lyubich, obtained from $\Delta_{\alpha_k}$ by adding all deep fjords of level $\geq n$ by an appropriate way, has controllable dynamics and bounded geometry on scale $\geq n$ (see (P1) and (P2) in Section \ref{s2.3}).
	The remaining proof is divided into two steps.
	
	In the first step, we apply the Siegel Return Machinery to the pseudo-Siegel disk: similar to that in [Avila and Lyubich, \cite{AL22}], at this setting we also construct ``safe trapping disks'' (see $\tilde{\Omega}_{c_0}^n\setminus\Lambda_\alpha^{n+\mathfrak{q}_3}$) at various levels, and then apply the Siegel Return Machinery to ensure so many times (depending on $\mathfrak{p}$) of returns back to these ``safe trapping disks'' that the probability of landing at a deeper level pseudo-Siegel disk is as high as possible (see Lemma \ref{key1}). 
	
	In the second step, based on the result in the first step, we estimate the density of $K(P_{\alpha_k})$ from deep to shallow fjords (see Theorem \ref{key5}), and thus obtain that for sufficiently large $n$, the density of $K(P_{\alpha_k})$ in $\hat{\Delta}_{\alpha_k}^n$ (in fact, we only do it for some quasidisk with a fixed dilatation contained in $\hat{\Delta}_{\alpha_k}^n$ here) has a positive low bound, independent of $k$, which implies (\ref{e20260724a}).

	\subsection{The other case: infinitely renormalizable case}
	For quadratic polynomials, except in the irrational indifferent case, Julia sets can also occur positive area in the infinitely  renormalizable case. 
	In \cite{BC} 
	Buff and Ch\'eritat proved that there exist infinitely renormalizable quadratic polynomials with unbounded satellite combinatorics having Julia sets of positive area. In \cite{AL22}
	Avila and Lyubich proved that there exists a Feigenbaum quadratic polynomial with
	primitive combinatorics whose Julia set has positive area. In \cite{DL23}
	Dudko and Lyubich proved that there exist quadratic polynomials with bounded
	satellite combinatorics whose Julia sets have positive area.
	
	\vspace{0.2cm}
	\noindent{\bf Acknowledgements} The research work was supported by
	the National Natural Science Foundation of China (12301102, 12471084 and 12171264) and the Fundamental Research Funds for the Central Universities.
	
	\section{The conformal geometry and pseudo-Siegel disks\label{S2}}
	\subsection{Notations}
	We list some notations used later:
	\begin{itemize}
		\item For all nonzero $a,b,c$, $a\preceq_cb$ (or $b\succeq_ca$) means that $\frac{b}{a}$ has a positive low bound depending only on $c$; $a\asymp_cb$ means that $a\preceq_cb$ and $a\succeq_cb$.
		\item For any $a,b,c,c',d>0$, $a\ll_cb$ (or $b\gg_ca$) means that $\frac{b}{a}$ has a positive low bound much larger than $1$ depending only on $c$; $b=O_c(a)$ means that $b\preceq_ca$. Furthermore, $\gg_c$ and $b=O_c(a)$ satisfy that if $a\gg_cb$ and $b\succeq_cd$, then
		$a\gg_cd$; if $a\gg_cb$ and $c>c'>0$, then $a\gg_{c'}b$; if $b=O_c(a)$ and $c'>c>0$, then $b=O_{c'}(a)$.
		\item $\preceq$, $\succeq$, $\ll$ and $\gg$ are based on a  universal constant $c$ ($>0$), in particular, independent of the rotation number $\alpha$.
	\end{itemize}
	Let $D$ be a closed quasidisk in $\mathbb{C}$.
	\begin{itemize}
		\item For any $n$ ($\geq3$) different points $a_1,a_2,\cdots,a_n$ on $\partial D$, $a_1<a_2<\cdots<a_n$ means that $a_1,a_2,\cdots,a_n$ are clockwise on $\partial D$. 
		\item For any $n$ ($\geq3$) intervals $I_1,I_2,\cdots,I_n$ with pairwise disjoint interiors on $\partial D$, $I_1<I_2<\cdots<I_n$ means that $I_1,I_2,\cdots,I_n$ are clockwise on $\partial D$.
		\item ${\rm int}(D)$ means the interior of $D$.
		\item For any Jordan arc $I\subseteq\mathbb{C}$, the notation ${\rm int}(I)$, the interior of $I$, means the difference between $I$ and its two endpoints.
	\end{itemize}
	\begin{itemize}
		\item We denote by $c_{0,\alpha}$ the unique finite critical point $-\frac{e^{2\pi i\alpha}}{2}$ of $P_{\alpha}(z)=e^{2\pi i\alpha}z+z^2$, 
		and $c_{0,\alpha}$ is usually simply written as $c_0$ if no confusion.
	\end{itemize}
	\begin{itemize}
		\item We denote by $\rho_{U}(z)$, $z\in U\cap\mathbb{C}$ the metric density of the hyperbolic domain $U$ ($\subseteq\hat{\mathbb{C}}$) under the standard coordinate of $U\cap\mathbb{C}$.
		\item For all hyperbolic Riemann surface $U$, any nonempty subset $E_1\subseteq U$ and any curve $\gamma_1\subseteq U$, we denote by ${\rm diam}_{U}(E_1)$ the hyperbolic diameter of $E_1$ on $U$ and by $l_{U}(\gamma_1)$ the hyperbolic length of $\gamma_1$ on $U$. Moreover, for any subset $E_2\subseteq\mathbb{C}$ and any curve $\gamma_2\subseteq\mathbb{C}$, we denote by ${\rm diam}(E_2)$ the Euclidean diameter of $E_2$ and by $l(\gamma_1)$ the Euclidean length of $\gamma_1$ on $U$.
	\end{itemize}

	\subsection{The extremal width of a family of curves.}
	\quad
	
	\vspace{0.2cm}
	\noindent2.2.1. {\bf A family of curves.}
	Given a family of curves $\mathcal{F}$ in $\hat{\mathbb{C}}$, we denote by $\mathcal{W}(\mathcal{F})$ the extremal width of $\mathcal{F}$, that is the reciprocal of the extremal length of $\mathcal{F}$. The extremal width satisfies the following laws:
	\begin{itemize}
		\item If $\mathcal{F}_1$ and $\mathcal{F}_2$ are two families of curves such that $\mathcal{F}_2\subseteq\mathcal{F}_1$, then $\mathcal{W}(\mathcal{F}_2)\leq\mathcal{W}(\mathcal{F}_1)$. 
		\item If $\mathcal{F}_j$, $1\leq j\leq n$ are $n$ families of curves, then $\mathcal{W}(\bigcup_{j=1}^n\mathcal{F}_j)\leq\sum_{j=1}^n\mathcal{W}(\mathcal{F}_j)$; Moreover, if $\mathcal{F}_j$, $1\leq j\leq n$ are contained
		in $n$ pairwise disjoint measurable sets, respectively, then $\mathcal{W}(\bigcup_{j=1}^n\mathcal{F}_j)=\sum_{j=1}^n\mathcal{W}(\mathcal{F}_j)$.
	\end{itemize}
	A curve is said to overflow $\mathcal{F}$ if this curve contains a subcurve which belongs to $\mathcal{F}$;
	a family of curves $\mathcal{G}$ is said to overflow $\mathcal{F}$ if every curve in $\mathcal{G}$ overflows $\mathcal{F}$.
	\begin{itemize}
		\item If $\mathcal{G}$ overflows $\mathcal{F}$, then the extremal widths have the following relation:
		$$\mathcal{W}(\mathcal{G})\leq\mathcal{W}(\mathcal{F}).$$
	\end{itemize}
	
	\vspace{0.2cm}
	\noindent2.2.2. {\bf A rectangle.}
	By a (topological) rectangle in $\hat{\mathbb{C}}$ we mean a closed Jordan disk $\mathcal{R}$ together with a conformal map $h:\mathcal{R}\to E_x$, where
	$E_x:=[0,x]\times[0,1]\subseteq\mathbb{C}$. In this case, we define
	\begin{itemize}
		\item the base $\partial^{h,0}\mathcal{R}$ is $h^{-1}([0,x]\times\{0\})$;
		\item the roof $\partial^{h,1}\mathcal{R}$ is $h^{-1}([0,x]\times\{1\})$; 
		\item every $h^{-1}(\{t\}\times[0,1])$, $0\leq t\leq x$ is a vertical curve of $\mathcal{R}$, in particular, $h^{-1}(\{\frac{x}{2}\}\times[0,1])$ is called the center arc of $\mathcal{R}$, the left side $\partial^{v,0}\mathcal{R}$ is $h^{-1}(\{0\}\times[0,1])$, the right side $\partial^{v,1}\mathcal{R}$ is $h^{-1}(\{1\}\times[0,1])$;
		\item the vertical family $\mathcal{F}^{v}(\mathcal{R})$ is the family of all vertical curves of $\mathcal{R}$; the full family $\mathcal{F}^{full}(\mathcal{R})$ is the family of curves connecting the base and the roof on $\mathcal{R}$;
		\item the extremal width of $\mathcal{R}$ is $\mathcal{W}(\mathcal{R}):=x$.
		It is well known that $$\mathcal{W}(\mathcal{R})=\mathcal{W}(\mathcal{F}^{full}(\mathcal{R}))=\mathcal{W}(\mathcal{F}^{v}(\mathcal{R})).$$
	\end{itemize}
	A family of curves $\mathcal{F}$ or a rectangle $\mathcal{R}$ is said to 
	\begin{itemize}
		\item overflow a rectangle $\mathcal{R}'$ if every curve in $\mathcal{F}$ or every vertical curve of $\mathcal{R}$ overflows the full family $\mathcal{F}^{full}(\mathcal{R}')$;
		\item cross a rectangle $\mathcal{R}'$ if every curve in $\mathcal{F}$ or every vertical curve of $\mathcal{R}$ overflows the dual rectangle $(\mathcal{R}')^*$ of $\mathcal{R}'$, where the dual rectangle $(\mathcal{R}')^*$ means that $(\mathcal{R}')^*=\mathcal{R}'$ and the base, roof of $(\mathcal{R}')^*$ are exactly the left, right sides of $\mathcal{R}'$.
	\end{itemize}
	If a family of curves $\mathcal{F}$ (resp. a rectangle $\mathcal{R}$) crosses a rectangle $\mathcal{R}'$, then
	$$\mathcal{W}(\mathcal{F})\leq1/\mathcal{W}(\mathcal{R}')\ {\rm(resp.}\ \mathcal{W}(\mathcal{R})\leq1/\mathcal{W}(\mathcal{R}')).$$
	
	\vspace{0.2cm}
	\noindent2.2.3. {\bf An annulus.} Let $\mathcal{A}$ be an annulus on $\hat{\mathbb{C}}$. Then there exists a conformal map
	$$\phi_{\mathcal{A}}:\mathcal{A}\to \mathbb{D}_r:=\{z:r<|z|<1\}$$
	for some $0<r<1$. The conformal module of $\mathcal{A}$ is defined by ${\rm mod}(\mathcal{A}):=\frac{1}{2\pi}\log\frac{1}{r}$. We denote by $\mathcal{F}^v(\mathcal{A})$ the family of curves connecting two components of $\partial\mathcal{A}$ on $\overline{\mathcal{A}}$ and by $\mathcal{F}^h(\mathcal{A})$ the family of Jordan curves not homotopic to a single point in $\mathcal{A}$. The following properties hold:
	\begin{itemize}
	\item ${\rm mod}(\mathcal{A})=\mathcal{W}(\mathcal{F}^h(\mathcal{A}))=1/\mathcal{W}(\mathcal{F}^v(\mathcal{A}))$.
	\end{itemize}
	Let $\mathcal{A}_j$, $1\leq j\leq n$ be $n$ pairwise disjoint annuluses contained in $\mathcal{A}$ such that each $\mathcal{A}_j$ separates two components of $\partial\mathcal{A}$.
	\begin{itemize}
		\item ${\rm mod}(\mathcal{A})\geq\sum_{j=1}^n{\rm mod}(\mathcal{A}_j)$.
	\end{itemize}

	\subsection{The geometry of nests of tilings.} 
	\quad
	
	\vspace{0.2cm}
	\noindent 2.3.1. {\bf A nest of tilings.}
	
	\vspace{0.2cm}
	\noindent
	The notations and definitions used here refer to [Section $11.1$, \cite{DL}]. Consider a closed quasidisk $D\subseteq\mathbb{C}$.
	Let $\mathcal{T}=\{\mathcal{T}_n\}_{n\geq -1}$ be a system of finite partitions of $\partial D$ into finitely many closed intervals such that $\mathcal{T}_{n+1}$ is a refinement of $\mathcal{T}_n$. $\mathcal{T}$ is called a nest of tilings of $D$ if
	\begin{itemize}
		\item the maximal diameter of intervals in $\mathcal{T}_n$ tends to $0$ as $n\to\infty$, and
		\item every interval in $\mathcal{T}_n$ for $n\geq -1$ decomposes into at least two intervals of $\mathcal{T}_{n+2}$.
	\end{itemize}
	Intervals in $\mathcal{T}_n$ are also called intervals of level $n$ of $\mathcal{T}$. 
	For any positive integer $M\geq2$, a nest of tilings $\mathcal{T}$ is said to have $M$-bounded combinatorics if $\mathcal{T}$ satisfies
	\begin{itemize}
		\item[(${\rm a}1$)] $\mathcal{T}_{-1}$ has at most $M$ intervals;
		\item[(${\rm b}1$)] for all $n\geq-1$, every interval in $\mathcal{T}_n$ decomposes into at most $M$ intervals in $\mathcal{T}_{n+1}$.
	\end{itemize}
	If $\mathcal{T}$ is only required to satisfy (b1), then we call that $\mathcal{T}$ has post-$M$-bounded combinatorics.
	
	For any two disjoint intervals $I,J\subseteq\partial D$, we denote by
	\begin{itemize}
		\item $\mathcal{F}(I,J)$ the family of curves in $\hat{\mathbb{C}}$ connecting $I$ and $J$:
		$$\mathcal{F}(I,J):=\{\gamma:[0,1]\to\hat{\mathbb{C}}:\gamma(0)\in I,\ \gamma(1)\in J\};$$
		\item $\mathcal{F}_D^-(I,J)$ the family of curves in $D$ connecting $I$ and $J$:
		$$\mathcal{F}_D^-(I,J):=\{\gamma:[0,1]\to D:\gamma(0)\in I,\ \gamma(1)\in J\};$$
		\item $\mathcal{F}_D^+(I,J)$ the family of curves in $\hat{\mathbb{C}}\setminus{\rm int}(D)$ connecting $I$ and $J$:
		$$\mathcal{F}_D^+(I,J):=\{\gamma:[0,1]\to\hat{\mathbb{C}}\setminus{\rm int}(D):\gamma(0)\in I,\ \gamma(1)\in J\};$$
		\item $\mathcal{W}(I,J)$ the extremal width of $\mathcal{F}(I,J)$;
		\item $\mathcal{W}_D^-(I,J)$ the extremal width of $\mathcal{F}_D^-(I,J)$;
		\item $\mathcal{W}_D^+(I,J)$ the extremal width of $\mathcal{F}_D^+(I,J)$.
	\end{itemize}
	
	For any interval $I\in\mathcal{T}_n$, let $I_l, I_r\in\mathcal{T}_n$ be two its neighboring intervals. We denote by $[3I]^c$ the closure of $\partial D\setminus(I_l\cup I\cup I_r)$, that is
	$$[3I]^c=\overline{\partial D\setminus(I_l\cup I\cup I_r)},$$
	and define
	\begin{itemize}
		\item $\mathcal{F}^-_{3,\mathcal{T}}(I)$ to be the family of curves in $D$ connecting $I$ and $[3I]^c$;
		\item $\mathcal{F}^+_{3,\mathcal{T}}(I)$ to be the family of curves in $\hat{\mathbb{C}}\setminus{\rm int}(D)$ connecting $I$ and $[3I]^c$;
		\item $\mathcal{F}_{3,\mathcal{T}}(I)$ to be the family of curves in $\hat{\mathbb{C}}$ connecting $I$ and $[3I]^c$;
		\item $\mathcal{W}^{\pm}_{3,\mathcal{T}}(I)=\mathcal{W}(\mathcal{F}^{\pm}_{3,\mathcal{T}}(I))$; $\mathcal{W}_{3,\mathcal{T}}(I)=\mathcal{W}(\mathcal{F}_{3,\mathcal{T}}(I))$.
	\end{itemize}
	
	A nest of tilings $\mathcal{T}$ is said to have essentially bounded outer geometry if there exists a constant $C$ such that for every interval $I$ of $\mathcal{T}$ we have $\mathcal{W}^+_{3,\mathcal{T}}(I)\leq C$. In this case, one also says that $\mathcal{T}$ has essentially $C$-bounded outer geometry. If moreover, $\mathcal{T}$ has (post-)$M$-bounded combinatorics, then one says that $\mathcal{T}$ has (post-) bounded outer geometry or (post-) $(C,M)$-bounded outer geometry. Similarly, essentially bounded inner geometry, (post-) bounded inner geometry, essentially bounded geometry, (post-) bounded geometry are defined.
	
	\vspace{0.2cm}
	\noindent 2.3.2. {\bf The length associated to a nest of tilings.}
	
	\vspace{0.2cm}
	\noindent
	Let $\mathcal{T}$ be a nest of tilings of the closed quasidisk $D$. Assume that $\mathcal{T}$ has $M$-bounded combinatorics. For any interval $I$ on $\partial D$, we denote by
	$|I|_{\mathcal{T}}$ the supremum of $\sum\limits_{j=1}^k\frac{1}{M^{n_j+1}}$, where $n_1,n_2,\cdots,n_k$ are levels of any finitely many pairwise interior-disjoint intervals $I_1, I_2, \cdots, I_k$ of $\mathcal{T}$ such that $I_1<I_2<\cdots<I_k$ and $\bigcup\limits_{j=1}^kI_j\subseteq I$. It is easy to check the following properties:
	\begin{itemize}
		\item For any two intervals $I,J\subseteq\partial D$, if $I\subseteq J$, then $|I|_{\mathcal{T}}\leq|J|_{\mathcal{T}}$, and moreover, the equality holds if and only if $I=J$.
		\item For any interval $I$ of $\mathcal{T}$ with level $n\geq-1$, $|I|_{\mathcal{T}}=1$ if $n=-1$; $|I|_{\mathcal{T}}=\frac{1}{M^{n+1}}\ {\rm or}\ \frac{1}{M^{n}}$ if $n\geq0$, for $I$ may be also an interval with level $n-1$.
		\item For any interval $I\subseteq\partial D$ and nonnegative integer $m\geq-1$, if $|I|_{\mathcal{T}}\geq\frac{1}{M^{m}}$, then $I$ contains at least one interval of level $m$.
	\end{itemize}
	
	For any intervals $I_1, I_2, \cdots, I_k$ on $\partial D$ with pairwise disjoint interiors,  
	we write $I_1\#I_2\# \cdots\#I_k$
	if $I_1< I_2<\cdots<I_k$ and for all $j\in\{1,2,\cdots,k-1\}$, $I_j$ and $I_{j+1}$ are adjacent.
	
	\begin{lemma}
		\label{l811}Assume that a nest of tilings $\mathcal{T}$ of the closed quasidisk $D$ has post-$(C,M)$-bounded inner and outer geometries.
		Let $I_1$, $I_2$ and $I_3$ be three intervals on $\partial D$ such that
		$$I_1\#I_2\#I_3,\ \min\{|I_1|_{\mathcal{T}},|I_3|_{\mathcal{T}}\}\succeq|I_2|_{\mathcal{T}}\ {\rm and}\ |I_2|_{\mathcal{T}}<1.$$
		Then $$\mathcal{W}_D^+(I_2,\overline{\partial D\setminus(I_1\cup I_2\cup I_3)})\preceq_{C,M}1\ {\rm and}\ \mathcal{W}_D^-(I_2,\overline{\partial D\setminus(I_1\cup I_2\cup I_3)})\preceq_{C,M}1.$$
	\end{lemma}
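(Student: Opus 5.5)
Let $n$ be the smallest level such that $I_2$ contains an interval of $\mathcal{T}_n$. Since $|I_2|_{\mathcal{T}}<1$ we have $n\ge 0$. By the third listed property of $|\cdot|_{\mathcal{T}}$, $|I_2|_{\mathcal{T}}\ge M^{-m}$ forces $I_2$ to contain an interval of level $m$. Combined with post-$M$-bounded combinatorics (each level-$k$ interval splits into at most $M$ pieces of level $k+1$), this shows that $|I_2|_{\mathcal{T}}\asymp_M M^{-n}$ up to a bounded shift of levels. In particular $I_2$ is covered by a chain of at most $O_M(1)$ consecutive intervals $J_1\#\cdots\#J_s$ of level $n-1$, and hence by boundedly many intervals of a fixed level $n_0:=n-c(M)$. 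Likewise, since $\min\{|I_1|_{\mathcal{T}},|I_3|_{\mathcal{T}}\}\succeq |I_2|_{\mathcal{T}}\asymp M^{-n}$, each of $I_1$ and $I_3$ contains an entire interval of level $n_0$, after decreasing $n_0$ by a constant depending on the implicit constant and $M$. I will fix $n_0$ so that both statements hold simultaneously.

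Next I cover $I_2$ by level-$n_0$ intervals $K_1,\dots,K_s$ with $s\preceq_M 1$. For each $K_i$, the family $\mathcal{F}_D^{\pm}(I_2,\overline{\partial D\setminus(I_1\cup I_2\cup I_3)})$ restricted to curves starting on $K_i$ has to reach the complement of $I_1\cup I_2\cup I_3$. That complement lies outside the union of $K_i$ and the level-$n_0$ intervals between $K_i$ and the full level-$n_0$ intervals sitting inside $I_1$ and $I_3$. Choosing the covering tiles appropriately, every such curve then connects $K_i$ to $[3K_i]^c$, or at least overflows a family of that type. There is one subtlety: the neighbours of $K_i$ might not lie inside $I_1\cup I_2\cup I_3$. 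To handle this I pass, if necessary, to a coarser level (still $n_0-O_M(1)$), where the neighbours of the relevant tiles are contained in $I_1\cup I_2\cup I_3$. This uses the facts that $I_1$ and $I_3$ contain whole tiles of level $n_0$ and that $\mathcal{T}_{k+2}$ refines each tile into at least two pieces. With that arrangement, overflowing gives
$$\mathcal{W}^{\pm}_D(I_2,\cdot)\le\sum_{i=1}^s\mathcal{W}^{\pm}_{3,\mathcal{T}}(K_i)\le sC\preceq_{C,M}1,$$
using subadditivity of extremal width and the bounded outer/inner geometry hypotheses.

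The main obstacle is the combinatorial step. I need to guarantee that for each covering tile $K_i$ of $I_2$, both neighbours of $K_i$ at the chosen level lie inside $I_1\cup I_2\cup I_3$, while keeping the number of tiles bounded. A tile near the edge of $I_2$ might have a neighbour sticking past $I_1$ if $I_1$ is only barely larger than $I_2$. My first attempt is to pick the level $n_0$ so that $I_1$ and $I_3$ each contain a full tile of that level. Tiles of level $n_0$ meeting $I_2$ then have neighbours either in $I_2$, or overlapping $I_1$ or $I_3$. In the latter case, the curves from that tile into the complement must cross the full tile inside $I_1$, and I compare them with $[3K]^c$ for a slightly larger union of tiles, i.e. a bounded chain. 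If this fails, I would instead bound the width by a union of $\mathcal{F}_{3,\mathcal{T}}$-families of tiles at several consecutive levels between $n_0$ and $n$; there are only $O_M(1)$ such levels.
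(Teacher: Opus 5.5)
Your overall mechanism is the right one, and it matches the paper's. You cover $I_2$ by $O_M(1)$ tiles $K_s$ of a single level whose neighbourhoods $[3K_s]$ lie in $I_1\cup I_2\cup I_3$, so every curve from $I_2$ to $\overline{\partial D\setminus(I_1\cup I_2\cup I_3)}$ belongs to some $\mathcal{F}^{\pm}_{3,\mathcal{T}}(K_s)$, and subadditivity finishes.

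The gap is in the combinatorial step, which you correctly flag as the crux but set up in the wrong direction. A larger level means \emph{finer} tiles, and a tile $K$ of level $k$ has $|K|_{\mathcal{T}}\ge M^{-k-1}$. So $I_1$ can contain a whole tile of level $k$ only if $M^{-k-1}\le|I_1|_{\mathcal{T}}$. You only know $|I_1|_{\mathcal{T}}\succeq|I_2|_{\mathcal{T}}\asymp_M M^{-n}$, and by minimality of $n$ even $I_2$ contains no tile of level $<n$. Hence nothing forces $I_1$ or $I_3$ to contain a whole tile of level $n_0=n-c(M)$, and ``decreasing $n_0$'' makes this worse, not better. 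Likewise, passing to a \emph{coarser} level only enlarges the neighbours of the covering tiles, so it can never push them inside $I_1\cup I_2\cup I_3$. Your fallbacks in the last paragraph (comparing with $[3K]^c$ for a union of tiles, or mixing several levels) are not made precise, and they are not needed.

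The fix goes the other way, and it is exactly the paper's argument. Choose $m$ with $M^{-m}\le\min\{|I_1|_{\mathcal{T}},|I_2|_{\mathcal{T}},|I_3|_{\mathcal{T}}\}$ and $M^{-m}\asymp_M|I_2|_{\mathcal{T}}$. This is possible because the constant in $\succeq$ is universal, and it forces $m\ge n$ up to a bounded shift. Then each $I_j$ contains a tile of level $m$.

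Take a \emph{minimal} chain $K_1\#\cdots\#K_t$ of level-$m$ tiles covering $I_2$. Then $t=O_M(1)$, because the $t-2$ inner tiles lie in $I_2$ and each contributes at least $M^{-m-1}$ to $|I_2|_{\mathcal{T}}\preceq_M M^{-m}$.

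Minimality gives that $I_2\cap K_1$ and $I_2\cap K_t$ are non-degenerate. So the level-$m$ tile inside $I_1$ is different from $K_1$ and lies on the far side of it. The neighbour of $K_1$ outside the chain therefore lies in the arc of $I_1$ between that tile and $K_1$, hence in $I_1$; symmetrically on the side of $I_3$.

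Thus $[3K_s]\subseteq I_1\cup I_2\cup I_3$ for every $s$, and there is no ``overlapping neighbour'' case at all. The bound $\sum_s\mathcal{W}^{\pm}_{3,\mathcal{T}}(K_s)\le tC\preceq_{C,M}1$ follows.
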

	\begin{proof}
		Since $|I_2|_{\mathcal{T}}<1$, $\min\{|I_1|_{\mathcal{T}},|I_3|_{\mathcal{T}}\}\succeq|I_2|_{\mathcal{T}}$ and $\mathcal{T}$ has post-$M$-bounded combinatorics, we can choose $t$ ($=O_M(1)$) pairwise interior-disjoint intervals
		$I_2^{(1)}, I_2^{(2)},\cdots, I_2^{(t)}$ of level $m$ ($\geq2$)
		with
		$$I_2^{(1)}\#I_2^{(2)}\#\cdots\#I_2^{(t)}$$ such that
		\begin{equation}
			\label{e20260720a}I_2\subseteq I_2^{(1)}\cup I_2^{(2)}\cup\cdots\cup I_2^{(t)},
		\end{equation}
		\begin{equation}
			\label{e20260720b}I_2\not\subseteq I_2^{(1)}\cup I_2^{(2)}\cup\cdots\cup I_2^{(t-1)},
		\end{equation}
		\begin{equation}
			\label{e20260720c}I_2\not\subseteq I_2^{(2)}\cup I_2^{(3)}\cup\cdots\cup I_2^{(t)},
		\end{equation}
		$$\frac{1}{M^{m}}\leq\min\{|I_1|_{\mathcal{T}},|I_2|_{\mathcal{T}},|I_3|_{\mathcal{T}}\}$$
		and $$\frac{1}{M^{m}}\asymp_M|I_2|_{\mathcal{T}}.$$ 
		Since $\frac{1}{M^{m}}\leq\min\{|I_1|_{\mathcal{T}},|I_2|_{\mathcal{T}},|I_3|_{\mathcal{T}}\}$, we have that $I_j$ contains at least one interval of level $m$ for $j=1,2,3$.
		By (\ref{e20260720a}), (\ref{e20260720b}) and (\ref{e20260720c}),
		we have that both $I_2\cap I_2^{(1)}$ and $I_2\cap I_2^{(t)}$ are non-degenerate intervals. It follows that both $I_1$ and $I_3$ contain an interval of level $m$ different from $I_2^{(s)}$, $1\leq s\leq t$.
		Since $I_1\#I_2\#I_3$,
		we have that $\mathcal{F}_D^+(I_2,\overline{\partial D\setminus(I_1\cup I_2\cup I_3)})\subseteq\bigcup\limits_{s=1}^t\mathcal{F}_{3,\mathcal{T}}^+(I_2^{(s)})$ and $\mathcal{F}_D^-(I_2,\overline{\partial D\setminus(I_1\cup I_2\cup I_3)})\subseteq\bigcup\limits_{s=1}^t\mathcal{F}_{3,\mathcal{T}}^-(I_2^{(s)})$.
		Observe also that $\mathcal{T}$ has essentially $C$-bounded inner and outer geometries. Thus $$\mathcal{W}(\mathcal{F}_D^+(I_2,\overline{\partial D\setminus(I_1\cup I_2\cup I_3)}))\leq\sum_{s=1}^t\mathcal{W}_{3,\mathcal{T}}^+(I_2^{(s)})\preceq_{C,M}1$$
		and
		$$\mathcal{W}(\mathcal{F}_D^-(I_2,\overline{\partial D\setminus(I_1\cup I_2\cup I_3)}))\leq\sum_{s=1}^t\mathcal{W}_{3,\mathcal{T}}^-(I_2^{(s)})\preceq_{C,M}1.$$
	\end{proof}
	The following lemma is stated for a nest of tilings with post-bounded inner and outer geometries, see [Lemma $4$, \cite{Q24}]. One can see the original corresponding result for pseudo-Siegel disks in \cite{DL}, see [Lemma\ 11.4\ (II),\ \cite{DL}] for details.
	\begin{lemma}
		\label{l12}Assume that a nest of tilings $\mathcal{T}$ of the closed quasidisk $D$ has post-bounded inner and outer geometries. Then $\mathcal{T}$ has essentially bounded geometry.
	\end{lemma}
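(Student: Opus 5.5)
The plan is to show that every interval $I$ of $\mathcal{T}$, of any level $n$, has both $\mathcal{W}^+_{3,\mathcal{T}}(I)$ and $\mathcal{W}^-_{3,\mathcal{T}}(I)$ bounded by a constant depending only on $(C,M)$. Under post-$(C,M)$-bounded inner and outer geometry, each of these widths is already assumed to be at most $C$. The content of the lemma is therefore the full width $\mathcal{W}_{3,\mathcal{T}}(I)$, which counts curves through all of $\hat{\mathbb{C}}$. So the real goal is
\[
\mathcal{W}_{3,\mathcal{T}}(I)\preceq_{C,M}1 .
\]
I would first handle the degenerate configurations: levels $n=-1,0$, where $[3I]^c$ may be empty or short. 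Post-$M$-bounded combinatorics and the fact that each interval splits into at least two intervals two levels down keep the number of such configurations controlled. In those cases I would pass to a deeper level, or regard the lemma as a statement about levels $n\geq1$.

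The main step is to decompose a curve $\gamma\in\mathcal{F}_{3,\mathcal{T}}(I)$ according to how it alternates between $D$ and $\hat{\mathbb{C}}\setminus{\rm int}(D)$. Write $I_l\#I\#I_r$, and subdivide $I_l$ and $I_r$ further into intervals $J_1\#J_2\#\cdots$ of geometrically comparable $|\cdot|_{\mathcal{T}}$-length, using deeper levels of $\mathcal{T}$.
\begin{enumerate}
\item A curve staying on one side (inside or outside) is already controlled: it lies in $\mathcal{F}_D^\pm(I,[3I]^c)$, and Lemma \ref{l811} bounds these families.
\item A curve that crosses $\partial D$ must, on its way from $I$ to $[3I]^c$, contain a subarc that stays in $D$ or in the complement and joins some intermediate interval $J$ to a point of $\partial D$ outside a neighbourhood of $J$ of comparable $\mathcal{T}$-size. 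Lemma \ref{l811} applies to such a triple $J_1\#J\#J_3$ as well.
\end{enumerate}
The first escape of $\gamma$ from the union of $I$ and a comparable neighbourhood is therefore an overflow of one of finitely many families $\mathcal{F}_D^\pm(J,\overline{\partial D\setminus(J_1\cup J\cup J_3)})$. Each of these has width $\preceq_{C,M}1$, and subadditivity of widths under union gives the bound.

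The main obstacle is the possibility that a curve oscillates across $\partial D$ many times near $I$ while never making a long one-sided excursion. A naive count of the overflowed families then has no bound. To address this I would use a standard annulus argument. Build nested "half-annuli" around $I$, alternating inner and outer rectangles, each of which has definite modulus in the dual direction by Lemma \ref{l811}. Concretely, the outer family joining $I$ to $[3I]^c$ has bounded width, so the dual family of outer arcs separating $I$ from $[3I]^c$ in the complement of $D$ has definite width. Likewise the inner family has bounded width, so the inner separating arcs have definite width.

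Gluing inner and outer separating arcs along their common endpoints on $\partial D$ would give closed curves separating $I$ from $[3I]^c$. These curves should fill an annulus of modulus $\succeq_{C,M}1$, which bounds $\mathcal{W}_{3,\mathcal{T}}(I)$ by the inverse of that modulus. The delicate point is matching the endpoints of the inner and outer arcs. I would first try to resolve this by choosing the separating arcs to land on the same level-$(n+k)$ tiles, using bounded combinatorics. If that fails, I would fall back on quoting the argument of [Lemma 11.4 (II), \cite{DL}] as adapted in [Lemma 4, \cite{Q24}].
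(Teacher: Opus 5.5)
Your reduction is right: only the full-plane width $\mathcal{W}_{3,\mathcal{T}}(I)$ needs bounding. Your fallback is also exactly what the paper does, since it gives no argument for Lemma \ref{l12} beyond citing [Lemma 4, \cite{Q24}] and [Lemma 11.4 (II), \cite{DL}]. The argument you sketch yourself, however, has a genuine gap, and it sits exactly where the content of the lemma lies.

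\textbf{Where the sketch fails.} The first-escape decomposition fails, as you note. A curve can zig-zag across $\partial D$ from $I$ to $[3I]^c$ with every one-sided excursion joining combinatorially adjacent small tiles, so it overflows none of your finitely many families.

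The annulus step does not repair this, because endpoint matching is not the issue. Regard ${\rm int}(D)$ and $\hat{\mathbb{C}}\setminus D$ as quadrilaterals with horizontal sides $I_l$ and $I_r$. Each has width $\geq 1/C$ by duality with $\mathcal{W}^{\pm}_{3,\mathcal{T}}(I)\le C$. Glued along ${\rm int}(I_l)$ and ${\rm int}(I_r)$, they form precisely the annulus $\hat{\mathbb{C}}\setminus(I\cup[3I]^c)$, whose modulus is $1/\mathcal{W}_{3,\mathcal{T}}(I)$. So ``the glued curves fill an annulus of modulus $\succeq_{C,M}1$'' is a restatement of the lemma, not a step towards it.

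Nor can that modulus bound follow from the widths of the two quadrilaterals. Consider curves that run in the outer piece from $I$ to some $x\in I_l$, and then in the inner piece from $x$ to $[3I]^c$. Their width is governed by how the inner and outer conformal coordinates of $I_l$ match. A welding that is badly non-quasisymmetric along $I_l$ across many scales makes the modulus arbitrarily small while both quadrilateral widths stay fixed. Hence no argument that uses the hypothesis only for the single interval $I$ can work, and landing arcs on common level-$(n+k)$ tiles does not address this.

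The paper's own gluing result, Lemma \ref{l7311}, handles exactly these mixed curves via Corollary \ref{c8201}, i.e.\ via Lemma \ref{l12} itself. So within the paper's toolkit your route is circular.

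\textbf{The missing idea.} Use the nest as a common ruler at all deeper levels.
\begin{enumerate}
\item Normalize the inner and outer Riemann maps from half-planes so that $I$ has unit size and $\infty$ corresponds to a point of $\partial D$ well outside $[3I]$.
\item Bounded $\mathcal{W}^-_{3,\mathcal{T}}$ for all tiles of level $\geq n$ near $[3I]$, together with post-$M$-bounded combinatorics and the splitting of every tile two levels down, gives the following. Inner harmonic lengths of these tiles are comparable for neighbours and for parent and child, and they decrease geometrically. The same holds from outside.
\item Consequently the welding homeomorphism between the inner and outer parametrizations of $[3I]$ is $k(C,M)$-quasisymmetric.
\item A Beurling--Ahlfors extension then glues the two Riemann maps into a $K(C,M)$-quasiconformal embedding $F$ of a round disk $B$. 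It satisfies $F(B)\cap[3I]^c=\emptyset$ and $F^{-1}(I)\subseteq B'$ for a concentric disk $B'$ with $\mathrm{mod}(B\setminus\overline{B'})\succeq_{C,M}1$.
\item Every curve of $\mathcal{F}_{3,\mathcal{T}}(I)$ crosses $F(B\setminus\overline{B'})$, which gives $\mathcal{W}_{3,\mathcal{T}}(I)\preceq_{C,M}1$.
\end{enumerate}
Without an argument of this kind, your proposal reduces to the citation.
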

	
	\begin{corollary}
		\label{c8201}Assume that a nest of tilings $\mathcal{T}$ of the closed quasidisk $D$ has post-{\rm(}$C,M${\rm)}-bounded inner and outer geometries.
		Let $I_1$, $I_2$ and $I_3$ be three intervals on $D$ such that $I_1\#I_2\#I_3$, $|I_2|_{\mathcal{T}}\preceq\min\{|I_1|_{\mathcal{T}},|I_3|_{\mathcal{T}}\}$ and $|I_2|_{\mathcal{T}}<1$. Then $\mathcal{W}(\mathcal{F}(I_2,\overline{\partial D\setminus(I_1\cup I_2\cup I_3)}))\preceq_{C,M}1$.
	\end{corollary}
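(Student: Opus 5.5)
The plan is to combine Lemma \ref{l811} with Lemma \ref{l12}, replacing the split into inner and outer families by the full family $\mathcal{F}(I_2,\overline{\partial D\setminus(I_1\cup I_2\cup I_3)})$. Set $J:=\overline{\partial D\setminus(I_1\cup I_2\cup I_3)}$. As in the proof of Lemma \ref{l811}, I first use post-$M$-bounded combinatorics, $|I_2|_{\mathcal{T}}<1$ and $|I_2|_{\mathcal{T}}\preceq\min\{|I_1|_{\mathcal{T}},|I_3|_{\mathcal{T}}\}$ to choose a level $m$ and $t=O_M(1)$ consecutive intervals $I_2^{(1)}\#\cdots\#I_2^{(t)}$ of level $m$. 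These are chosen to cover $I_2$ minimally, with $M^{-m}\asymp_M|I_2|_{\mathcal{T}}$ and $M^{-m}\le\min_j|I_j|_{\mathcal{T}}$. Exactly as there, $I_1$ and $I_3$ then each contain a level-$m$ interval different from all the $I_2^{(s)}$.

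From this I get the containment
\[
\mathcal{F}(I_2,J)\subseteq\bigcup_{s=1}^t\mathcal{F}_{3,\mathcal{T}}(I_2^{(s)}).
\]
The reason is that $J$ is disjoint from $I_2^{(s)}$ and from both of its level-$m$ neighbors, since the neighbors lie inside $I_1\cup I_2\cup I_3$. Hence $J\subseteq[3I_2^{(s)}]^c$ for the index $s$ with $\gamma(0)\in I_2^{(s)}$. A curve starting in $I_2$ and ending in $J$ therefore belongs to $\mathcal{F}_{3,\mathcal{T}}(I_2^{(s)})$ for that $s$. By subadditivity and monotonicity of extremal width (\S2.2.1),
\[
\mathcal{W}(\mathcal{F}(I_2,J))\le\sum_{s=1}^t\mathcal{W}_{3,\mathcal{T}}(I_2^{(s)}).
\]

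It remains to bound each $\mathcal{W}_{3,\mathcal{T}}(I)$ for intervals $I$ of $\mathcal{T}$. This is precisely essentially bounded (full) geometry, which Lemma \ref{l12} guarantees from post-bounded inner and outer geometries. The constant should depend only on $(C,M)$, which I take to be the quantitative content of Lemma \ref{l12} (as in [Lemma 4, \cite{Q24}]). With $t=O_M(1)$ this gives $\mathcal{W}(\mathcal{F}(I_2,J))\preceq_{C,M}1$.

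The main obstacle is this last point: Lemma \ref{l12} is stated qualitatively. One must check that its constant depends only on $C$ and $M$, and not on $D$ or $\mathcal{T}$, so that the final bound is uniform.
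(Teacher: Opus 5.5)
Your proposal is correct and follows the paper's proof. The paper also invokes Lemma \ref{l12} to get essentially bounded full geometry and then reruns the covering argument of Lemma \ref{l811} with $\mathcal{F}_{3,\mathcal{T}}$ in place of $\mathcal{F}^{\pm}_{3,\mathcal{T}}$. The $(C,M)$-uniformity of the constant, which you rightly flag, is likewise left implicit there, relying on the quantitative form of [Lemma 4, \cite{Q24}].
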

	
	\begin{proof}
		By Lemma \ref{l12}, $\mathcal{T}$ has essentially bounded geometry. Since $\mathcal{T}$ has post-$M$-bounded combinatorics, 
		$\mathcal{T}$ has post-bounded geometry.
		Replacing post-bounded inner and outer geometries by post-bounded geometry, in the same way as proving Lemma \ref{l811}, we can obtain the corollary.
		
	\end{proof}


	\subsection{Pseudo-Siegel disks.\label{s2.3}}
	Assume that $\alpha=[a_1,a_2,\cdots]$ is an eventually golden-mean rotation number, that is, there exists a positive integer $j_0$ such that
	$a_j=1$ for all $j\geq j_0$ (Note that by the notation $\alpha$ we denote an eventually golden-mean rotation number in the whole Sections \ref{S2}-\ref{S4}). We consider
	the sequence of the best approximations of $\alpha$:
	$$\frac{p_n}{q_n}:=\left\{ \begin{matrix}
		[a_1, a_2,a_3,...,a_n]& {\rm if}\ a_1 >1\\
		[1, a_2,a_3,...,a_{n+1}]& {\rm if}\ a_1=1\end{matrix}\right.,\ n\geq1,$$
	where $p_n,q_n$ are two coprime positive integers and set $q_0:= 1$.
	
	Due to the Douady-Ghys surgery, the quadratic polynomial $P_{\alpha}(z)=e^{2\pi i\alpha}z+z^2$ has a Siegel disk $\Delta_{\alpha}$ centering at $0$ whose boundary is a quasicircle containing the critical point $c_{0}=-\frac{e^{2\pi i\alpha}}{2}$.
	
	Let $\Phi$ be a conformal map fixing $0$ from $\mathbb{D}$ to $\Delta_{\alpha}$ with the homeomorphism extension $\tilde{\Phi}$ on $\overline{\mathbb{D}}$.
	For all interval $I$ on $\partial\Delta_{\alpha}$, the combinatorial length $|I|_{\partial\Delta_\alpha}$ of $I$ is defined by the Euclidean length of $\tilde{\Phi}^{-1}(I)$ divided by $2\pi$. For any two points $a,b$ on $\partial\Delta_{\alpha}$, the combinatorial distance ${\rm dist}_{\partial\Delta_{\alpha}}(a,b)$ between $a$ and $b$ is defined by the minimum of lengths of two intervals on $\partial\Delta_{\alpha}$ with endpoints $a$ and $b$.
	
	For $n\geq-1$, we denote by ${\rm CP}_n$ the set of critical points of $P_{\alpha}^{\comp q_{n+1}}$.
	The diffeo-tiling $\mathfrak{D}_n(\alpha)$ of level $n$ is the partition of $\partial\Delta_{\alpha}$ induced by ${\rm CP}_n$: every interval in $\mathfrak{D}_n(\alpha)$, also called an interval of level $n$, is the closure of a component of $\partial\Delta_{\alpha}\setminus{\rm CP}_n$. It is easy to check that
	\begin{itemize}
		\item For $n=-1$, $\mathfrak{D}_n(\alpha)$ consists of a single ``interval'' with a single endpoint $c_0$;
		\item For all $n\geq-1$, every interval in $\mathfrak{D}_n(\alpha)$ is divided into at least two intervals in $\mathfrak{D}_{n+2}(\alpha)$.
	\end{itemize}
	We denote by $x_{n}$ the unique point on $\partial\Delta_{\alpha}$ such that $P_{\alpha}^{\comp n}(x_{n})=c_{0}$ for all $n\geq0$. For all $I\in\mathfrak{D}_n(\alpha)$, we also write $I=I_{[x_{m},x_{m'}]}$, where $x_m$ and $x_{m'}$ are the two endpoints of $I$. We write $\iota_n:={\rm dist}_{\partial\Delta_\alpha}(c_{0},x_{q_{n}})$ for all $n\geq0$.
	
	Let $D\subseteq\mathbb{C}$ be a closed quasidisk and $I$ be an interval on $\partial D$. A curve $\gamma$ or rectangle $\mathcal{R}$ is said to be based on $I$ with respect to $D$ if
	\begin{itemize}
		\item except two endpoints, $\gamma$ is contained in $\hat{\mathbb{C}}\setminus D$ or $\mathcal{R}\setminus(\partial^{h,0}\mathcal{R}\cup\partial^{h,1}\mathcal{R})\subseteq\hat{\mathbb{C}}\setminus D$,
		\item two endpoints of $\gamma$ are contained in $I$ or $\partial^{h,0}\mathcal{R}\cup\partial^{h,1}\mathcal{R}\subseteq I$.
	\end{itemize}
	In this case, we denote by $O_I(\gamma)$ (resp. $O_I(\mathcal{R})$)
	the difference between  the closure of the union of all components of $\hat{\mathbb{C}}\setminus (I\cup\gamma)$ (resp. $\hat{\mathbb{C}}\setminus (I\cup\partial\mathcal{R})$) not containing ${\rm int}(D)$ and $D$. Moreover, $\gamma$ (resp. $\mathcal{R}$), based on $I$, is said to be non-winding with respect to $D$ if $\infty\not\in O_I(\gamma)$ (resp. $\infty\not\in O_I(\mathcal{R})$).
	
	We assign each $I_n\in\mathfrak{D}_n(\alpha)$ a non-winding geodesic $\tau_n(I_n)$ based on $I_n$ with respect to $\overline{\Delta_{\alpha}}$ on the hyperbolic space $\hat{\mathbb{C}}\setminus\overline{\Delta_{\alpha}}$ such that two endpoints of $\tau_n(I_n)$ are in $I_n\cap{\rm CP}_{n+1}\setminus{\rm CP}_n$. Let $F_n$ be the filling-in of $$\partial\Delta_{\alpha}\cup\left(\bigcup_{I_n\in\mathfrak{D}_n(\alpha)}\tau_n(I_n)\right),$$
	that is the union of $\partial\Delta_{\alpha}\cup\left(\bigcup_{I_n\in\mathfrak{D}_n(\alpha)}\tau_n(I_n)\right)$
	and all bounded components of $$\mathbb{C}\setminus\left(\partial\Delta_{\alpha}\cup\left(\bigcup_{I_n\in\mathfrak{D}_n(\alpha)}\tau_n(I_n)\right)\right).$$
	The set $F_n$ is called a geodesic pre-filling-in of $\Delta_{\alpha}$ of level $n$ associated with $\tau_n$.
	
	A nest $\{\frak{L}_n\}_{n\geq-1}$ is called a geodesic filling-in of $\Delta_{\alpha}$ if
	\begin{itemize}
		\item $\frak{L}_n=\overline{\Delta_{\alpha}}$ for all sufficiently large $n$,
		\item for all $n\geq0$,
		$$\frak{L}_{n-1}=\frak{L}_n\ {\rm or}\ \frak{L}_{n-1}=\frak{L}_n\cup F_{n-1},$$
		where $F_{n-1}$ is a geodesic pre-filling-in of $\Delta_{\alpha}$ of level $n-1$. (see Figure \ref{f20260720a})
	\end{itemize}
	\begin{figure}
		\centering
		\includegraphics[scale=0.5]{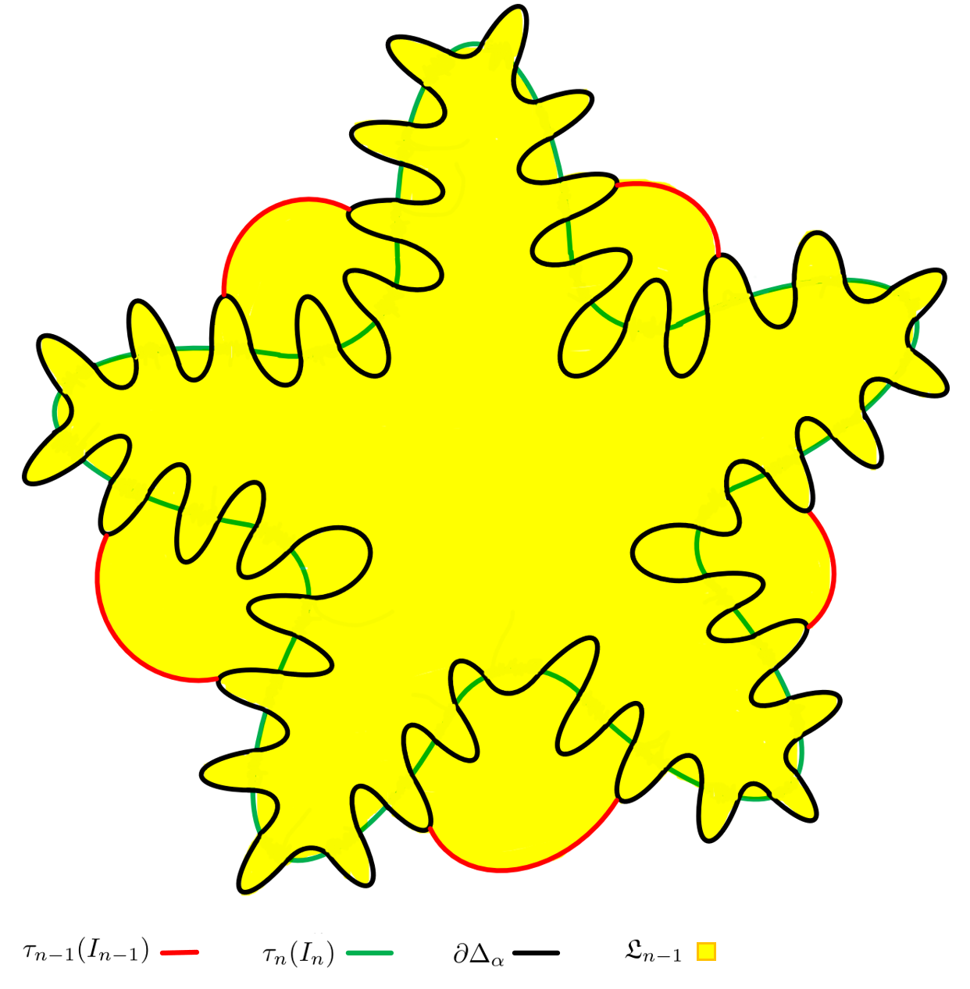}
		\caption{An illustration of the case: $\frak{L}_{n-1}=\frak{L}_n\cup F_{n-1}$, $\frak{L}_{n}=\frak{L}_{n+1}\cup F_{n}$, $\frak{L}_{n+1}=\overline{\Delta_{\alpha}}$.}
		\label{f20260720a}
	\end{figure}
	Each $\frak{L}_n$ is called a geodesic filling-in of $\Delta_{\alpha}$ of level $n$ and each $\tau_n(I_n)$, if exists, is called a dam of level $n$ based on $I_n$ with respect to $\{\frak{L}_n\}_{n\geq-1}$.
	It is easy to see that $\frak{L}_n$ is a closed quasidisk. 
	For any interval $I$ on $\partial\Delta_{\alpha}$, if two endpoints of $I$ are contained in $\partial\frak{L}_n$, then $I$ is called regular rel $\partial\frak{L}_n$. For all regular interval $I$ rel $\partial\frak{L}_n$,
	the interval $\hat{I}$ on $\partial\frak{L}_n$, that has the same endpoints as $I$ and is homotopic to $I$ on $\mathbb{C}^*$, is called the projection of $I$ onto $\partial\frak{L}_n$.
	
	In \cite{DL}, Dudko and Lyubich proved that there exists a geodesic filling-in of $\Delta_{\alpha}$: $\{\frak{L}_n\}_{n\geq-1}$ such that the following properties hold:\footnote{Dudko and Lyubich obtained more nice properties about their geodesic filling-ins, such as ``quasi-invariance'' up to $q_{n+1}$, and the part listed here will be used in our argument later in this paper.}
	\begin{itemize}
		\item[(P1)] {\bf Uniform a priori bounds:} there exist universal constants $C>0, M\geq2$ (not depending on $\alpha$) such that for all $n\geq-1$, there exists a nest of tilings $\mathcal{T}=\{\mathcal{T}_k\}_{k\geq-1}$ of $\frak{L}_n$ with post-$(C,M)$-bounded inner and outer geometries. Moreover,
		for all $m\geq n\geq-1$ and $I_m\in\mathfrak{D}_m$, if $I_m$ is regular rel $\partial\frak{L}_n$, then the projection $\hat{I}_m$ of $I_m$ onto $\partial\frak{L}_n$ belongs to $\mathcal{T}_{m-n-1}$.
		(refer to Section $11.4$ of \cite{DL} for details)
		\item[(P2)] {\bf Protection of dams:}
		for each dam $\beta_{I_n}$ of level $n\geq-1$ based on $I_{n}\in\mathfrak{D}_n$, 
		there exists a non-winding (with respect to $\overline{\Delta_{\alpha}}$) geodesic $\hat{\beta}_{I_{n}}$ (with respect to the hyperbolic metric on $\hat{\mathbb{C}}\setminus\overline{\Delta_{\alpha}}$) based on $I_n$ with endpoints in $I_{n}\cap{\rm CP}_{n+1}$ and ${\rm int}(\beta_{I_n})\subseteq O_{I_n}(\hat{\beta}_{I_{n}})$ such that
		\begin{itemize}
			\item the interval between two endpoints of $\beta_{I_{n}}$ on $I_n$ has a combinatorial distance at least $\frac{4}{5}|I_n|_{\partial\Delta_\alpha}$;
			\item the rectangle $\mathcal{R}_{I_n}$ between $\beta_{I_n}$ and $\hat{\beta}_{I_{n}}$ on $\hat{\mathbb{C}}\setminus\Delta_\alpha$, based on $I_n':=I_{n}\cap P_{\alpha}^{\comp q_{n+1}}(I_n)$ (with respect to $\overline{\Delta_{\alpha}}$) with $\partial^{v,0}\mathcal{R}_{I_n}=\beta_{I_n}$ and $\partial^{v,1}\mathcal{R}_{I_n}=\hat{\beta}_{I_{n}}$, satisfies that the width $\mathcal{W}(\mathcal{R}_{I_n})$ is greater than a universal $\Delta+4\gg1$;
			\item the combinatorial distance between two endpoints of $\hat{\beta}_{I_{n}}$ and two endpoints of $I_n$ is at least $11\iota_{n+1}$. (refer to Assumptions $4$ and $6$, Section $5.1.9$ and Lemma $5.6$.)
		\end{itemize}
	\end{itemize}
	Each $\frak{L}_n$, satisfying (P1) and (P2), is called a pseudo-Siegel disk of level $n$, written specially as $\hat{\Delta}_{\alpha}^n$. 
	
	If $\beta_{n}$ is a dam based on $I_{n}\in\mathfrak{D}_n$ with respect to $\{\hat{\Delta}_{\alpha}^n\}_{n\geq-1}$, then both $\beta_{n}$ and $\hat{\beta}_{n}$, based on $I_{n}$, are non-winding with respect to $\overline{\Delta_{\alpha}}$.
	The set $O_{I_n}(\beta_n)$ is
	called a fjord of level $n$ of $\Delta_{\alpha}$, written specially as $\mathcal{J}(\beta_n)$, and
	$O_{I_n}(\hat{\beta}_n)$ is called a big fjord of level $n$ of $\Delta_{\alpha}$, written specially as $\mathcal{J}(\hat{\beta}_n)$. We view $\overline{\mathcal{J}(\hat{\beta}_n)\setminus\mathcal{J}(\beta_n)}$ as a rectangle with $\partial^{v,1}\overline{\mathcal{J}(\hat{\beta}_n)\setminus\mathcal{J}(\beta_n)}=\hat{\beta}_n$ and $\partial^{v,0}\overline{\mathcal{J}(\hat{\beta}_n)\setminus\mathcal{J}(\beta_n)}=\beta_n$; that is, $\overline{\mathcal{J}(\hat{\beta}_n)\setminus\mathcal{J}(\beta_n)}$ is exactly $\mathcal{R}_{I_n}$ appearing in (P2). Let $\varphi_{I_n}$ be a conformal map from $\overline{\mathcal{J}(\hat{\beta}_n)\setminus\mathcal{J}(\beta_n)}$ to $E_x$ for some positive real number $x$ ($\geq\Delta+4$) such that $$\varphi_{I_n}^{-1}(\{(x,z):0\leq z\leq 1\})=\partial^{v,1}\overline{\mathcal{J}(\hat{\beta}_n)\setminus\mathcal{J}(\beta_n)}$$
	and $$\varphi_{I_n}^{-1}(\{(0,z):0\leq z\leq 1\})=\partial^{v,0}\overline{\mathcal{J}(\hat{\beta}_n)\setminus\mathcal{J}(\beta_n)}.$$
	We set $$\tilde{\beta}_n:=\varphi_{I_n}^{-1}(\{(y,z):0\leq z\leq 1, y=x-4\}).$$
	See Figure \ref{f20260720b}.
	\begin{figure}
		\centering
		\includegraphics[scale=0.5]{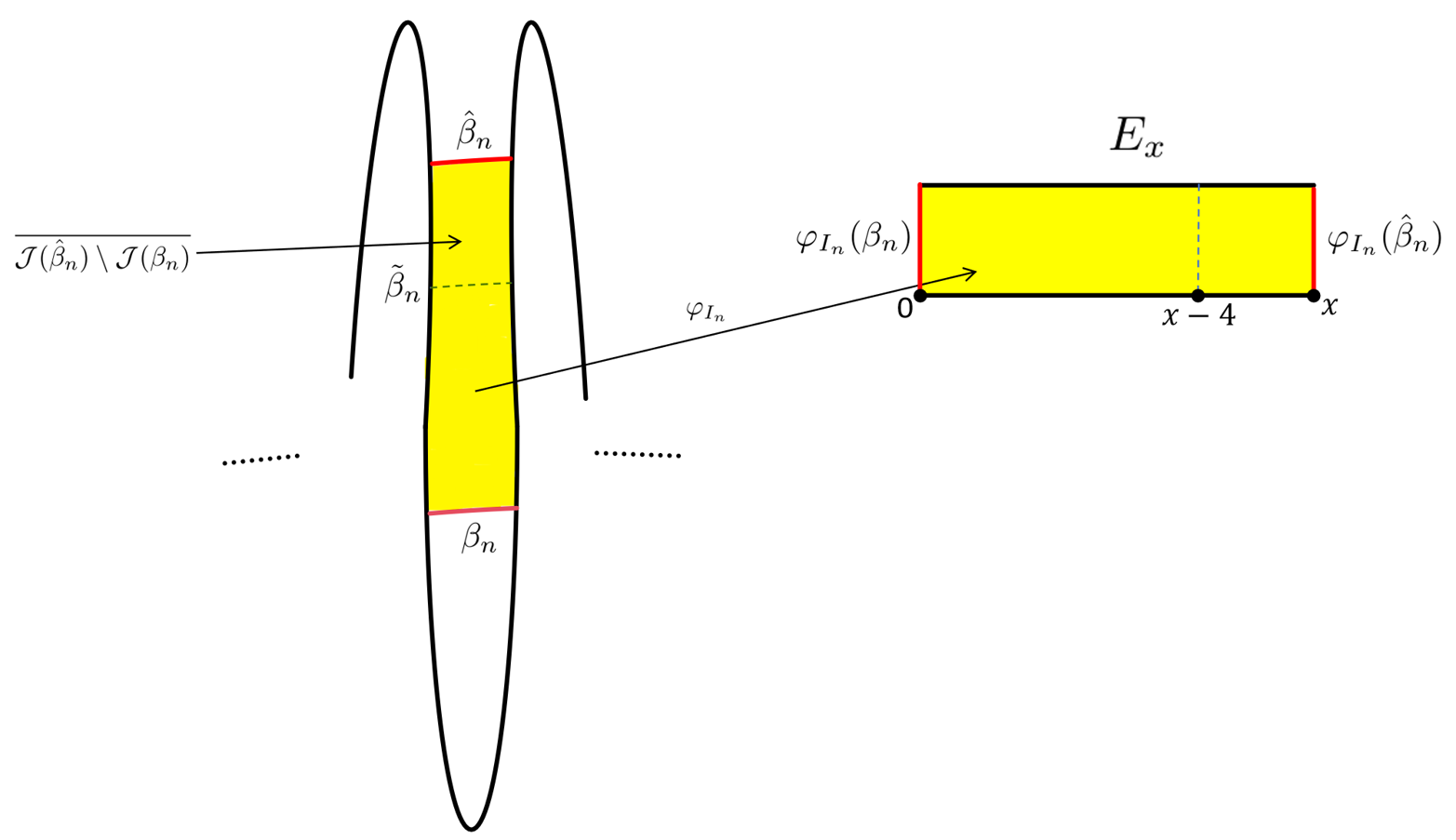}
		\caption{}
		\label{f20260720b}
	\end{figure}
	We set $\mathcal{J}(\tilde{\beta}_n):=O_{I_n}(\tilde{\beta}_{n})$.
	We view $\overline{\mathcal{J}(\tilde{\beta}_n)\setminus\mathcal{J}(\beta_n)}$ as a rectangle with $$\partial^{v,1}\overline{\mathcal{J}(\tilde{\beta}_n)\setminus\mathcal{J}(\beta_n)}=\tilde{\beta}_n$$
	and $$\partial^{v,0}\overline{\mathcal{J}(\tilde{\beta}_n)\setminus\mathcal{J}(\beta_n)}=\beta_n.$$ Then 
	\begin{equation}
		\label{e20251214a1}\mathcal{W}\left(\overline{\mathcal{J}(\tilde{\beta}_n)\setminus\mathcal{J}(\beta_n)}\right)\geq\Delta\gg1.
	\end{equation}
	
	Since for all $m\geq-1$, $x_{q_{m+1}-1}\in{\rm CP}_{m}$ and ${\rm dist}_{\partial\Delta_{\alpha}}(x_{q_{m+1}-1},P_{\alpha}(c_{0}))=\iota_{m+1}$, by (P2) we have that $P_{\alpha}(c_{0})$ is not contained in the closure of any big fjord of level $m$ and hence $P_{\alpha}(c_{0})$ is contained in $\partial\hat{\Delta}_{\alpha}^n$ ($n\geq-1$). 
	Then we have
	\begin{lemma}
		\label{l26821}
		For all $n\geq-1$, $\partial\hat{\Delta}_{\alpha}^n$ is a quasicircle containing $P_{\alpha}(c_{0})$ in $\mathbb{C}$.
	\end{lemma}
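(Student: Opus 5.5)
The lemma has two parts: that $\partial\hat{\Delta}_{\alpha}^n$ is a quasicircle, and that it contains $P_{\alpha}(c_0)$. The first part is essentially structural. The second follows from the observation in the paragraph just before the statement, which I will turn into a careful induction on the nest $\{\frak{L}_n\}$.

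\emph{Quasicircle.} First, $\partial\Delta_{\alpha}$ is a quasicircle by the Douady--Ghys surgery, since $\alpha$ is eventually golden-mean. Since $\frak{L}_m=\overline{\Delta_{\alpha}}$ for all large $m$, I will do a downward induction on $n$. At each step either $\frak{L}_{n-1}=\frak{L}_n$, or $\frak{L}_{n-1}$ is obtained from $\frak{L}_n$ by adding finitely many (one per $I_{n-1}\in\mathfrak{D}_{n-1}(\alpha)$) regions cut off by geodesic dams $\tau_{n-1}(I_{n-1})$ of $\hat{\mathbb{C}}\setminus\overline{\Delta_{\alpha}}$. Each dam is a hyperbolic geodesic with endpoints on the quasicircle $\partial\Delta_{\alpha}$, so it is a quasiarc. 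It meets $\partial\Delta_{\alpha}$ only at its endpoints and is transverse there, in the sense that near each endpoint it lies in a cone. This holds because, in a quasiconformal reflection chart of $\hat{\mathbb{C}}\setminus\overline{\Delta_\alpha}$, geodesics with distinct endpoints on the circle are circular arcs hitting the boundary orthogonally. Gluing finitely many such arcs to a quasicircle and taking the outer boundary of the filled-in set produces a Jordan curve satisfying Ahlfors' three-point condition. The only new configurations are near dam endpoints and where the dam meets $\partial\frak{L}_n$, and there the angles are bounded away from $0$. By the non-winding property and (P2), distinct dams of different levels are nested or disjoint rather than crossing. This structural claim is also stated in the paper just before the lemma (``It is easy to see that $\frak{L}_n$ is a closed quasidisk''), so I would cite it rather than grind through the argument.

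\emph{$P_\alpha(c_0)\in\partial\hat{\Delta}_{\alpha}^n$.} A point of $\partial\Delta_{\alpha}$ fails to lie on $\partial\frak{L}_n$ exactly when it lies in the interior side of some fjord $\mathcal{J}(\beta_m)$, $m\geq n$, i.e.\ strictly between the endpoints of a dam on its base interval $I_m$. Since $\mathcal{J}(\beta_m)\subseteq\mathcal{J}(\hat{\beta}_m)$, it suffices to show that $P_\alpha(c_0)$ is not in the closed base interval of any big fjord $\hat\beta_m$. Fix $m\geq-1$. The point $x_{q_{m+1}-1}$ lies in ${\rm CP}_m$, so it is an endpoint of the interval of $\mathfrak{D}_m(\alpha)$ containing $P_\alpha(c_0)$. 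In fact $P_\alpha(c_0)$ lies in one of the two intervals $I_m$ adjacent to $x_{q_{m+1}-1}$, at combinatorial distance $\iota_{m+1}$ from that endpoint. By the third item of (P2), the endpoints of $\hat\beta_{I_m}$ lie at distance at least $11\iota_{m+1}$ from the endpoints of $I_m$. Hence $P_\alpha(c_0)$ lies outside the arc of $I_m$ between the endpoints of $\hat\beta_{I_m}$. It therefore lies outside $\overline{\mathcal{J}(\hat\beta_{I_m})}$, and so outside every fjord of level $m$. Taking the union over all $m\geq n$, $P_\alpha(c_0)$ is not removed from the boundary, so $P_\alpha(c_0)\in\partial\frak{L}_n$.

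\emph{Main obstacle.} The delicate point is the case where $P_\alpha(c_0)$ coincides with, or lies in the interval adjacent to, an endpoint of $I_m$. The interval adjacent to $x_{q_{m+1}-1}$ on the other side must be handled symmetrically. I must also verify that $\iota_{m+1}$ is smaller than $|I_m|_{\partial\Delta_\alpha}$, so that $P_\alpha(c_0)$ genuinely lies inside one of these two adjacent intervals. This follows from the closest-return property of $q_{m+1}$.
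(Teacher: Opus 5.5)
Your proposal is correct and follows the paper's own argument. Since $x_{q_{m+1}-1}\in{\rm CP}_m$ lies at combinatorial distance $\iota_{m+1}$ from $P_\alpha(c_0)$, the third item of (P2) keeps $P_\alpha(c_0)$ out of the closure of every big fjord, so $P_\alpha(c_0)\in\partial\hat{\Delta}_{\alpha}^n$; the quasidisk property of $\mathfrak{L}_n$ is the same structural observation the paper simply asserts. The worries in your last paragraph are vacuous: by the closest-return property $x_{q_{m+1}-1}$ is an endpoint of the level-$m$ interval containing $P_\alpha(c_0)$, and $P_\alpha(c_0)$ is never a point of ${\rm CP}_m$.
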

	For all $n\geq-1$, we denote by $\hat{U}_{\alpha}^{n}$ the closure of the component of $P_{\alpha}^{-1}(\hat{\Delta}_{\alpha}^{n})\setminus\{c_0\}$ not containing $0$ and denote by $\hat{\Delta}_{\alpha}^{n,-1}$ the closure of the other component of $P_{\alpha}^{-1}(\hat{\Delta}_{\alpha}^{n})\setminus\{c_0\}$.
	Every interval on $\partial\hat{\Delta}_{\alpha}^n$, whose endpoints are not contained in the closure of any big fjord of level $\geq n$, is called a
	well-grounded interval with respect to $\hat{\Delta}_{\alpha}^n$. 
	\begin{lemma}
		\label{l9301}For all $n\geq-1$, all $x_{q_m-1},$ $x_{q_{m+1}-q_m-1},$ $ x_{2q_m-1},$  $x_{q_{m+1}+q_{m+10}-1}$,  $x_{q_m+q_{m+11}-q_{m+10}-1}$ {\rm(}$m\geq0${\rm)} are contained in $\partial\hat{\Delta}_{\alpha}^n$, but not contained in the closure of any big fjord of level $n$. As a consequence, for all $n\geq-1$, any interval whose endpoints are contained in
		$$\{x: x=x_{q_m-1}\ {\rm or}\ x_{2q_m-1}\ {\rm or}\ x_{q_{m+1}+q_{m+10}-1}\ {\rm or}\ x_{q_{m+1}-q_m-1}\ {\rm or}\ x_{q_m+q_{m+11}-q_{m+10}-1}, m\geq0\}$$
		is well-grounded with respect to 
		$\hat{\Delta}_{\alpha}^n$.
	\end{lemma}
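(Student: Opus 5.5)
The plan is to reduce the lemma to combinatorial distance estimates on $\partial\Delta_\alpha$ and then use (P2). The key point is that every point of $\partial\Delta_\alpha$ outside the union of the closures of all big fjords of all levels lies on $\partial\hat{\Delta}_{\alpha}^n$. Indeed, $\hat{\Delta}_{\alpha}^n$ is obtained from $\overline{\Delta_\alpha}$ by adding fjords $\mathcal{J}(\beta_k)$, $k\geq n$, and each such fjord is contained in the corresponding big fjord. The same argument already gave $P_\alpha(c_0)\in\partial\hat{\Delta}_{\alpha}^n$ before Lemma \ref{l26821}. It also shows that such points are not in the closure of any big fjord of level $n$. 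The consequence about well-grounded intervals is then immediate from the definition, because it suffices to show that no listed point lies in the closure of any big fjord of any level $k\geq -1$.

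\textbf{Locating the points.} Each listed point is $x_{s-1}$ for some $s$, and $P_\alpha^{\circ(s-1)}(x_{s-1})=c_0$. Conjugating by the rotation, $x_{s-1}$ sits at combinatorial position $-(s-1)\alpha$ from $c_0$, that is, at distance $\|s\alpha\|$ from $P_\alpha(c_0)$. With the standard estimates $\|q_m\alpha\|\asymp 1/q_{m+1}$ and $\iota_m=\|q_m\alpha\|$:
\begin{itemize}
\item $x_{q_m-1}$ is at distance $\iota_m$ from $P_\alpha(c_0)$;
\item $x_{2q_m-1}$ is at distance $2\iota_m$;
\item $x_{q_{m+1}-q_m-1}$ is at distance $\iota_m+\iota_{m+1}$ (on the appropriate side);
\item $x_{q_{m+1}+q_{m+10}-1}$ is at distance $\iota_{m+1}\pm\iota_{m+10}$;
\item $x_{q_m+q_{m+11}-q_{m+10}-1}$ is at distance $\iota_m\pm(\iota_{m+10}+\iota_{m+11})$.
\end{itemize}
All of these points are close to $P_\alpha(c_0)$, at distances comparable to $\iota_j$ for some $j\leq m+1$.

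\textbf{Step 1: levels $k$ with $\iota_{k+1}$ small relative to the point's distance.} A big fjord of level $k$ is based on $I_k\in\mathfrak{D}_k$ with endpoints in ${\rm CP}_k$. By the third item of (P2), its closure meets $\partial\Delta_\alpha$ only in an interval staying at least $11\iota_{k+1}$ away from the endpoints of $I_k$. The critical points ${\rm CP}_k$ contain $x_{q_{k+1}-1}$ and the points $x_{j-1}$ for $j\leq q_{k+1}$. These include the points $x_{q_j-1}$ ($j\leq k+1$) and all nearby combinations with index below $q_{k+1}$. Hence whenever the listed index $s$ is at most $q_{k+1}$, the point is itself an endpoint of some $I_k$, and so it is excluded from the closure of the big fjord.

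\textbf{Step 2: levels $k$ with $q_{k+1}<s$.} This occurs only for $k\leq m+9$ or so. For these levels I would show directly that the point lies within about $10\iota_{k+1}$ of $P_\alpha(c_0)$ or of an endpoint of the tile containing it. The crucial check is that $P_\alpha(c_0)$ is at distance exactly $\iota_{k+1}$ from the endpoint $x_{q_{k+1}-1}$. Triangle inequalities with $\iota_{m+10}\ll\iota_{m+1}$ then place the point within $11\iota_{k+1}$ of an endpoint of $I_k$, so it is outside the big fjord. The indices $m+10$ and $m+11$ are presumably chosen precisely so that these perturbations stay under the $11\iota$ margin.

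\textbf{Main obstacle.} The delicate case is $x_{q_{m+1}+q_{m+10}-1}$ at levels between $m+1$ and $m+9$. For each such level I must show that the perturbation $\iota_{m+10}$ keeps the point within $11\iota_{k+1}$ of an endpoint, uniformly in $a_j$. The margin $11\iota_{k+1}$ versus $\iota_{m+10}\le\iota_{k+1}$ should suffice. I would do an explicit case check on the sign (side) of each perturbation.
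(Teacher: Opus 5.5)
Your scheme is the paper's. For a listed point $x_{s-1}$ and a level $k$, either $s\le q_{k+1}$, so $x_{s-1}\in{\rm CP}_k$ is an endpoint of a level-$k$ interval, or $x_{s-1}$ lies within a small multiple of $\iota_{k+1}$ of some point of ${\rm CP}_k$. In both cases the $11\iota_{k+1}$ margin in (P2) keeps the point out of the closure of every level-$k$ big fjord.

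The gap is in Step 2. The only reference point you actually use there is $x_{q_{k+1}-1}$, reached through $P_\alpha(c_0)$. That gives the bound $\|s\alpha\|+\iota_{k+1}$, which is $O(\iota_{k+1})$ only when $\|s\alpha\|=O(\iota_{k+1})$. This is not automatic: $\iota_m=b\,\iota_{m+1}+\iota_{m+2}$, where $q_{m+2}=b\,q_{m+1}+q_m$, so $\iota_m/\iota_{m+1}>b$ is not universally bounded.

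The case where this really breaks is $x_{2q_m-1}$ at level $k=m$ when $q_{m+1}=q_m+q_{m-1}$. Then $2q_m>q_{m+1}$, so Step 1 does not apply. The distance from $x_{2q_m-1}$ to $x_{q_{m+1}-1}$ (through $P_\alpha(c_0)$) is $2\iota_m+\iota_{m+1}$, and its distance to $x_{q_m-1}$ is $\iota_m$. Both exceed $11\iota_{m+1}$ as soon as $b\ge 11$.

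The missing idea is the identity $2q_m=(q_m-q_{m-1})+q_{m+1}$. Since $q_m-q_{m-1}\le q_{m+1}$, the point $x_{q_m-q_{m-1}-1}$ lies in ${\rm CP}_m$, and it is at combinatorial distance exactly $\iota_{m+1}$ from $x_{2q_m-1}$. This is the case the paper isolates. Your proposal does not notice it and gives no reference point that works there.

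Relatedly, the perturbed points at intermediate levels must be compared with the unperturbed critical points, not with $P_\alpha(c_0)$. Their distance to $P_\alpha(c_0)$ is about $\iota_{m+1}$, resp. $\iota_m$, which again is not $O(\iota_{k+1})$ at those levels. The right comparisons are:
\begin{itemize}
\item for $m+1\le k\le m+9$, $x_{q_{m+1}-1}\in{\rm CP}_k$ is at distance $\iota_{m+10}\le\iota_{k+1}$ from $x_{q_{m+1}+q_{m+10}-1}$;
\item for $m\le k\le m+9$, $x_{q_m-1}\in{\rm CP}_k$ is at distance $\iota_{m+10}+\iota_{m+11}\le 2\iota_{k+1}$ from $x_{q_m+q_{m+11}-q_{m+10}-1}$.
\end{itemize}
So the case you flag as the main obstacle is immediate and needs no sign analysis. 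The family $x_{q_m+q_{m+11}-q_{m+10}-1}$ needs the same treatment, and you do not carry it out explicitly.

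The route through $P_\alpha(c_0)$ is valid only at the low levels, where it gives a bound of at most $3\iota_{k+1}$, as in the paper:
\begin{itemize}
\item $k\le m-2$ for $x_{q_m-1}$;
\item $k\le m-1$ for $x_{q_{m+1}-q_m-1}$, $x_{2q_m-1}$ and $x_{q_m+q_{m+11}-q_{m+10}-1}$;
\item $k\le m$ for $x_{q_{m+1}+q_{m+10}-1}$.
\end{itemize}
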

	\begin{proof}
		For all $m\geq0$, $x_{q_m-1}\in{\rm CP}_{m-1}$ (resp. $x_{q_{m+1}-q_m-1}\in{\rm CP}_{m}$). Then for all $n\geq m-1$ (resp. $n\geq m$), $x_{q_m-1}$ (resp. $x_{q_{m+1}-q_m-1}$) is contained in $\partial\hat{\Delta}_{\alpha}^n$, but not contained in the closure of any big fjord of level $n$.
		Next, we consider the case $n\leq m-2$ (resp. $n\leq m-1$). In this case, $x_{q_{n+1}-1}\in{\rm CP}_{n}$ and
		the combinatorial distance between $x_{q_m-1}$ (resp. $x_{q_{m+1}-q_m-1}$) and $x_{q_{n+1}-1}$
		is less than or equal to
		$3\iota_{n+1}$. Then by (P2) $x_{q_m-1}$ (resp. $x_{q_{m+1}-q_m-1}$) is not contained in the closure of any big fjord of level $n$.
		Thus for all $n\geq-1$, all $x_{q_m-1}$ (resp. $x_{q_{m+1}-q_m-1}$) {\rm(}$m\geq0${\rm)} are contained in $\partial\hat{\Delta}_{\alpha}^n$, but not contained in the closure of any big fjord of level $n$.
		
		For all $m\geq0$,
		if $q_{m+1}>q_{m}+q_{m-1}$, then 
		$x_{2q_m-1}\in{\rm CP}_{m}$. Then for all $n\geq m$, $x_{2q_m-1}$ is contained in $\partial\hat{\Delta}_{\alpha}^n$, but not contained in the closure of any big fjord of level $n$.
		Next, we consider the case $n\leq m-1$. In this case, $x_{q_{n+1}-1}\in{\rm CP}_{n}$ and
		the combinatorial distance between $x_{2q_m-1}$ and $x_{q_{n+1}-1}$
		is less than or equal to
		$3\iota_{n+1}$. Then by (P2) $x_{2q_m-1}$ is not contained in the closure of any big fjord of level $n$.
		Thus for all $n\geq-1$,  $x_{2q_m-1}$ is contained in $\partial\hat{\Delta}_{\alpha}^n$, but not contained in the closure of any big fjord of level $n$.
		If $q_{m+1}=q_{m}+q_{m-1}$, then 
		$x_{2q_m-1}=x_{q_m-q_{m-1}+q_{m+1}-1}\in{\rm CP}_{m+1}$. Then for all $n\geq m+1$, $x_{2q_m-1}$ is contained in $\partial\hat{\Delta}_{\alpha}^n$, but not contained in the closure of any big fjord of level $n$.
		Next, we consider the case $n\leq m-1$. In this case, $x_{q_{n+1}-1}\in{\rm CP}_{n}$ and
		the combinatorial distance between $x_{2q_m-1}$ and $x_{q_{n+1}-1}$
		is less than or equal to
		$3\iota_{n+1}$. Then by (P2) $x_{2q_m-1}$ is not contained in the closure of any big fjord of level $n$.
		At last, we consider the case $n=m$. In this case, $x_{q_n-q_{n-1}-1}\in{\rm CP}_{n}$ and
		the combinatorial distance between $x_{2q_m-1}=x_{q_n-q_{n-1}+q_{n+1}-1}$ and $x_{q_n-q_{n-1}-1}$
		is equal to
		$\iota_{n+1}$. Then by (P2) $x_{2q_m-1}$ is not contained in the closure of any big fjord of level $n$.
		Thus for all $n\geq-1$, all $x_{2q_m-1}$ {\rm(}$m\geq0${\rm)} are contained in $\partial\hat{\Delta}_{\alpha}^n$, but not contained in the closure of any big fjord of level $n$.
		
		For all $m\geq0$, $x_{q_{m+1}+q_{m+10}-1}\in{\rm CP}_{m+10}$ (resp. $x_{q_m+q_{m+11}-q_{m+10}-1}\in{\rm CP}_{m+10}$). Then for all $n\geq m+10$, $x_{q_{m+1}+q_{m+10}-1}$ (resp. $x_{q_m+q_{m+11}-q_{m+10}-1}$) is contained in $\partial\hat{\Delta}_{\alpha}^n$, but not contained in the closure of any big fjord of level $n$.
		Next, we consider the case $n\leq m$ (resp. $n\leq m-1$). In this case, $x_{q_{n+1}-1}\in{\rm CP}_{n}$ and
		the combinatorial distance between $x_{q_{m+1}+q_{m+10}-1}$ (resp. $x_{q_m+q_{m+11}-q_{m+10}-1}$) and $x_{q_{n+1}-1}$
		is less than or equal to
		$2\iota_{n+1}$. Then by (P2) $x_{q_{m+1}+q_{m+10}-1}$ (resp. $x_{q_m+q_{m+11}-q_{m+10}-1}$) is not contained in the closure of any big fjord of level $n$.
		At last, we consider the case $m+1\leq n\leq m+9$ (resp. $m\leq n\leq m+9$). In this case, $x_{q_{m+1}-1}\in{\rm CP}_{n}$ (resp. $x_{q_{m}-1}\in{\rm CP}_{n}$) and
		the combinatorial distance between $x_{q_{m+1}+q_{m+10}-1}$ (resp. $x_{q_m+q_{m+11}-q_{m+10}-1}$) and $x_{q_{m+1}-1}$ (resp. $x_{q_{m}-1}$)
		is less than or equal to
		$2\iota_{n+1}$. Then by (P2) $x_{q_{m+1}+q_{m+10}-1}$ (resp. $x_{q_m+q_{m+11}-q_{m+10}-1}$) is not contained in the closure of any big fjord of level $n$.
		Thus for all $n\geq-1$, all $x_{q_{m+1}+q_{m+10}-1}$ (resp. $x_{q_m+q_{m+11}-q_{m+10}-1}$) {\rm(}$m\geq0${\rm)} are contained in $\partial\hat{\Delta}_{\alpha}^n$, but not contained in the closure of any big fjord of level $n$.
	\end{proof}
	
	The following five lemmas \ref{l8271a}—\ref{l825a} follow from \cite{DL}, see [Lemma $4.6$, \cite{DL}], [Lemma $5.6$, \cite{DL}], [Lemma $5.8$, \cite{DL}], [Uniform Bounds Theorem $1.1$, \cite{DL}] and [Lemma A.5, \cite{DL}].
	
	\begin{lemma}[Dudko and Lyubich]
		\label{l8271a}For all dam $\beta_{n}$ of level $n$ based on $I_{n}\in\mathfrak{D}_n$ satisfying {\rm(P}$2${\rm)}, $P_{\alpha}^{\comp q_{n+1}}$ is injective on $\mathcal{J}(\hat{\beta}_n)\setminus\varphi_{I_n}^{-1}\left(\{(y,z):0\leq z\leq 1,\ x-3< y\leq x\}\right)$. As a consequence, $P_{\alpha}^{\comp q_{n+1}}|_{\mathcal{J}(\tilde{\beta}_n)}$ is injective on
		$$\varphi_{I_n}^{-1}\left(\{(y,z):0\leq z\leq 1,\ x-4\leq y\leq x-3\}\right).$$
	\end{lemma}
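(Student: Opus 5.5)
The plan is to take the first assertion, injectivity of $P_{\alpha}^{\comp q_{n+1}}$ on $\mathcal{J}(\hat{\beta}_n)$ minus the outer collar $\varphi_{I_n}^{-1}(\{x-3<y\leq x\})$, essentially from [Lemma 4.6, \cite{DL}] via the protection property (P2), and then to derive the stated consequence by restriction. I will first explain why the first assertion is reasonable, since this is also what one would check when importing it.

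\emph{Step 1: the iterate is univalent on the whole big fjord.} On $\partial\Delta_\alpha$, $P_\alpha^{\comp q_{n+1}}$ acts as rotation by $q_{n+1}\alpha$. This rotation moves points by a combinatorial distance $\iota_{n+1}\ll|I_n|_{\partial\Delta_\alpha}$.
\begin{itemize}
\item The critical values of $P_\alpha^{\comp q_{n+1}}$ are the points $P_\alpha^{\comp j}(c_0)$, $1\le j\le q_{n+1}$, and they lie on $\partial\Delta_\alpha$.
\item By the third item of (P2), the endpoints of $\hat\beta_{I_n}$ stay at combinatorial distance at least $11\iota_{n+1}$ from the endpoints of $I_n$. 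The endpoints of $I_n$ are points of ${\rm CP}_n$, i.e.\ critical points of $P_\alpha^{\comp q_{n+1}}$.
\item The open region $O_{I_n}(\hat\beta_{I_n})$ lies outside $\overline{\Delta_\alpha}$ and is bounded by a subarc of $I_n$ together with $\hat\beta_{I_n}$. So it contains no critical point of $P_\alpha^{\comp q_{n+1}}$ in its interior. Any critical point on its boundary arc lies in $I_n\cap {\rm CP}_{n+1}$, away from the ${\rm CP}_n$-endpoints.
\item Pulling back along the orbit, each branch of $P_\alpha^{-1}$ is univalent on a simply connected domain avoiding the critical value. Hence $P_\alpha^{\comp q_{n+1}}$ is locally univalent near $\mathcal{J}(\hat\beta_n)$ away from boundary points.
\end{itemize}

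\emph{Step 2: global injectivity away from the collar.} Suppose two points $z\neq w$ in the truncated fjord have the same image. Then the images of the curve $\hat\beta_{I_n}$ and of the dam must overlap. I would control this with the extremal width of the protecting rectangle $\mathcal{R}_{I_n}$, which exceeds $\Delta+4$.
\begin{itemize}
\item Consider the image of $\mathcal{R}_{I_n}$ under the locally univalent map. Its vertical curves join $I_n'$ to $P_\alpha^{\comp q_{n+1}}(I_n')$.
\item The overlap forces a loss of modulus that is bounded by a universal constant, namely at most $3$ in the $y$-coordinate of $\varphi_{I_n}$. This is the content of the quasi-invariance statement in [Lemma 4.6, \cite{DL}].
\item Therefore, after removing the collar $\{x-3<y\le x\}$, no identification can occur. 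This gives the first claim.
\end{itemize}

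\emph{Step 3: the consequence.} The set $\varphi_{I_n}^{-1}(\{x-4\le y\le x-3\})$ is contained in $\mathcal{J}(\tilde\beta_n)$. This holds because $\tilde\beta_n=\varphi_{I_n}^{-1}(\{y=x-4\})$, the set $\mathcal{J}(\tilde\beta_n)=O_{I_n}(\tilde\beta_n)$ contains all points with $y\ge x-4$ inside the big fjord, and it also contains $\mathcal{J}(\beta_n)$. The set is also disjoint from the collar $\{y>x-3\}$. The stated injectivity of the restriction is then immediate from Step 2.

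The main obstacle is Step 2, specifically the quantitative statement that exactly the collar of width $3$ suffices. I would not reprove this. I would cite [Lemma 4.6, \cite{DL}] together with the choice of $\Delta+4$ in (P2), so the work reduces to verifying that our normalization of $\varphi_{I_n}$ matches theirs.
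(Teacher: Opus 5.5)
Your route is the paper's: the paper gives no argument of its own for this lemma. It imports the first assertion from Lemma~4.6 of \cite{DL} and treats the second as a restriction of it.

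\textbf{Step~3 reverses the orientation.} This is the one step you actually carry out, and the containment it asserts is false.
\begin{itemize}
\item By definition $\varphi_{I_n}$ sends $\beta_n$ to $\{y=0\}$ and $\hat\beta_n$ to $\{y=x\}$, and $\beta_n$ lies inside the big fjord.
\item So $\mathcal{J}(\tilde\beta_n)=O_{I_n}(\tilde\beta_n)$ is the part of $\mathcal{J}(\hat\beta_n)$ on the $\beta_n$-side of $\tilde\beta_n$. It equals $\mathcal{J}(\beta_n)\cup\varphi_{I_n}^{-1}(\{0\le y\le x-4\})$, without points of $I_n$.
\item This is why the paper obtains $\mathcal{W}(\overline{\mathcal{J}(\tilde\beta_n)\setminus\mathcal{J}(\beta_n)})=x-4\ge\Delta$.
\item Hence $\mathcal{J}(\tilde\beta_n)$ contains no points with $y>x-4$. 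The strip $\varphi_{I_n}^{-1}(\{x-4\le y\le x-3\})$ meets it only along $\tilde\beta_n$.
\item This matches how the strip is used in Lemma~\ref{l20257191}: a geodesic inside the strip's image encloses $P_\alpha^{\circ j}(\mathcal{J}(\tilde\beta_n))$ from outside.
\end{itemize}
The containment you actually need is simpler. The strip lies in $\varphi_{I_n}^{-1}(E_x)=\overline{\mathcal{J}(\hat\beta_n)\setminus\mathcal{J}(\beta_n)}$ and avoids the collar $\{x-3<y\le x\}$. So, up to its base and roof on $I_n$, it lies in the set where the first assertion gives injectivity. The same holds for $\mathcal{J}(\tilde\beta_n)$. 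That is the sensible reading of the phrase ``$P_\alpha^{\circ q_{n+1}}|_{\mathcal{J}(\tilde\beta_n)}$ is injective on the strip''.

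\textbf{Steps~1--2 should be presented only as motivation.} Step~1 is wrong as stated.
\begin{itemize}
\item Only the critical \emph{values} $P_\alpha^{\circ j}(c_0)$ lie on $\partial\Delta_\alpha$.
\item The critical points of $P_\alpha^{\circ q_{n+1}}$ form all of $\bigcup_{0\le j<q_{n+1}}P_\alpha^{-j}(c_0)$, which includes points off $\overline{\Delta_\alpha}$. For instance $2c_0-x_1\in\partial U_0$ is one, since $P_\alpha(z)=P_\alpha(2c_0-z)$.
\end{itemize}
So the absence of critical points of $P_\alpha^{\circ q_{n+1}}$ in the big fjord does not follow from where ${\rm CP}_n$ sits on $I_n$. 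What is really needed is control of the forward images of $\mathcal{J}(\hat\beta_n)$ up to time $q_{n+1}$, together with the need to discard a collar of width $3$. That is exactly what the cited lemma supplies, and your overlap heuristic in Step~2 does not substitute for it.
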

	
	Let $\mathcal{R}$ and $\mathcal{R}_1$ be two rectangles such that $\mathcal{R}_1\subseteq\mathcal{R}$, $\partial^{v,0}\mathcal{R}_1=\partial^{v,0}\mathcal{R}$ and $\partial^{v,1}\mathcal{R}_1=\partial^{v,1}\mathcal{R}$. If for every component $F$ of $\partial^{h,s}\mathcal{R}_1\setminus\partial^{h,s}\mathcal{R}$ ($s\in\{0,1\}$),
	there exists a rectangle $\mathcal{S}\subseteq\mathcal{R}$ with $\mathcal{W}(\mathcal{S})\gg1$ and $\partial^{h,0}\mathcal{S}\cup\partial^{h,1}\mathcal{S}\subseteq\partial^{h,s}\mathcal{R}$ such that $F$ and $\partial^{h,1-s}\mathcal{R}$ are contained in different components of $\mathcal{R}\setminus\mathcal{S}$, then we say that $\mathcal{R}_1$ is protected in $\mathcal{R}$. In this case, each such $\mathcal{S}$ is called a protection of $\mathcal{R}_1$ in $\mathcal{R}$.
	\begin{lemma}[Dudko and Lyubich]
		\label{l861}Let $\mathcal{R}$ and $\mathcal{R}_1$ be two rectangles such that $\mathcal{R}_1\subseteq\mathcal{R}$, $\partial^{v,0}\mathcal{R}_1=\partial^{v,0}\mathcal{R}$ and $\partial^{v,1}\mathcal{R}_1=\partial^{v,1}\mathcal{R}$. If $\mathcal{R}_1$ is protected in $\mathcal{R}$, then $\mathcal{W}(\mathcal{R}_1)\asymp\mathcal{W}(\mathcal{R})$.
	\end{lemma}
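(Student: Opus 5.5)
The plan is to reduce the statement to a comparison of the vertical families of $\mathcal{R}$ and $\mathcal{R}_1$. Since $\mathcal{R}_1\subseteq\mathcal{R}$ and the two rectangles share both vertical sides, every curve of $\mathcal{F}^{full}$ for the dual of $\mathcal{R}_1$ (connecting $\partial^{v,0}$ to $\partial^{v,1}$ inside $\mathcal{R}_1$) is also such a curve inside $\mathcal{R}$. By monotonicity of extremal width we get $1/\mathcal{W}(\mathcal{R}_1)\leq 1/\mathcal{W}(\mathcal{R})$, i.e. $\mathcal{W}(\mathcal{R}_1)\geq\mathcal{W}(\mathcal{R})$. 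So the task is the reverse bound $\mathcal{W}(\mathcal{R}_1)\preceq\mathcal{W}(\mathcal{R})$. By the rectangle laws this is equivalent to showing that the extremal length of horizontal curves (joining the left and right sides) is not much smaller in $\mathcal{R}_1$ than in $\mathcal{R}$.

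For the reverse bound I would take the vertical family of $\mathcal{R}_1$, namely curves in $\mathcal{R}_1$ joining $\partial^{h,0}\mathcal{R}_1$ to $\partial^{h,1}\mathcal{R}_1$. I split it into two parts.
\begin{enumerate}
\item[(a)] $\mathcal{F}_0$ consists of curves whose endpoints both lie on $\partial^{h,0}\mathcal{R}\cap\partial^{h,0}\mathcal{R}_1$ and $\partial^{h,1}\mathcal{R}\cap\partial^{h,1}\mathcal{R}_1$ respectively. These are vertical curves of $\mathcal{R}$, so $\mathcal{W}(\mathcal{F}_0)\leq\mathcal{W}(\mathcal{R})$.
\item[(b)] The remaining curves start or end on a component $F$ of $\partial^{h,s}\mathcal{R}_1\setminus\partial^{h,s}\mathcal{R}$.
\end{enumerate}

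For a curve in (b), let $\mathcal{S}$ be the protection of $F$. Since $F$ and $\partial^{h,1-s}\mathcal{R}$ lie in different components of $\mathcal{R}\setminus\mathcal{S}$, the curve must cross $\mathcal{S}$, i.e. overflow the dual $\mathcal{S}^*$, or more precisely contain a vertical-type crossing between its sides. Hence the family of curves starting at $F$ has width at most $1/\mathcal{W}(\mathcal{S})\ll1$. The issue is that there may be many components $F$. Here I would use that the protections attached to distinct components $F$ can be chosen nested or disjoint: being rectangles based on $\partial^{h,s}\mathcal{R}$, they form a laminar family. Keeping only the maximal ones, which are pairwise disjoint, every curve in (b) crosses one maximal $\mathcal{S}_i$. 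Then I would compare $\sum_i 1/\mathcal{W}(\mathcal{S}_i)$ with $\mathcal{W}(\mathcal{R})$. The curves of (b) first travel, inside $\mathcal{R}$, from a point on the interval of $\partial^{h,s}\mathcal{R}$ spanned by $\mathcal{S}_i$ across $\mathcal{S}_i$, and then continue on to the opposite side.

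The main obstacle is controlling this sum uniformly, independent of the number of protections. The approach I would try first is a modification of the extremal metric. Take the extremal metric $\rho$ for the horizontal family of $\mathcal{R}$. Add to it, on each maximal $\mathcal{S}_i$, a copy of the extremal metric of the dual of $\mathcal{S}_i$, scaled so that crossing $\mathcal{S}_i$ costs length at least $1$. Because $\mathcal{W}(\mathcal{S}_i)\gg1$, the added area is $\sum_i (\text{footprint})/\mathcal{W}(\mathcal{S}_i)$, where the footprint is the length of the base interval of $\mathcal{S}_i$ measured by the vertical foliation of $\mathcal{R}$. This is $\ll \mathcal{W}(\mathcal{R})$. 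The resulting metric shows that the vertical family of $\mathcal{R}_1$ has width $\preceq\mathcal{W}(\mathcal{R})+o(\mathcal{W}(\mathcal{R}))$, which gives $\mathcal{W}(\mathcal{R}_1)\asymp\mathcal{W}(\mathcal{R})$. If the footprint bookkeeping fails, the fallback is to pass to the universal-cover (strip) model of $\mathcal{R}$ and argue directly with the Grötzsch inequality on the horizontal annuli cut out by each protection. This is the route I expect Dudko–Lyubich take in Lemma 5.6.
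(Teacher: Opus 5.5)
Your first step is correct. Since $\mathcal{R}_1\subseteq\mathcal{R}$ with the same vertical sides, $\mathcal{F}^{full}(\mathcal{R}_1^*)\subseteq\mathcal{F}^{full}(\mathcal{R}^*)$, so $\mathcal{W}(\mathcal{R}_1)\geq\mathcal{W}(\mathcal{R})$. (Your reformulation right afterwards has the inequality backwards: what remains is that the horizontal extremal length is not much \emph{larger} in $\mathcal{R}_1$.) The split into (a) and (b) is also fine, as is the observation that a curve starting on $F$ overflows $\mathcal{S}^*$.

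The gap is the step that is supposed to make this uniform in the number of components. Scale the extremal metric of $\mathcal{S}_i^*$ so that every crossing has length $\geq 1$. Its area is then exactly $\mathcal{W}(\mathcal{S}_i^*)=1/\mathcal{W}(\mathcal{S}_i)$; there is no ``footprint'' factor. So your metric only gives $\mathcal{W}(\mathcal{R}_1)\leq 2\mathcal{W}(\mathcal{R})+2\sum_i 1/\mathcal{W}(\mathcal{S}_i)$, which is no better than subadditivity. That sum is not controlled by $\mathcal{W}(\mathcal{R})$. In $E_1$, take $K$ disjoint half-annuli $\{re^{-\pi W}\leq|z-a_k|\leq r,\ \mathrm{Im}\,z\geq0\}$ with $r=1/(4K)$, centred on the base, and let the $F_k$ be the inner semicircles. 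Each protection has width $W$ and $\mathcal{W}(\mathcal{R})=1$, but $\sum_k 1/\mathcal{W}(\mathcal{S}_k)=K/W\to\infty$. Laminarity is also not part of the hypothesis: the definition only provides some protection for each $F$, and these may cross. You do note that curves from $F$ must continue to the opposite side, but you never use this, and that is exactly where the bound has to come from. The paper itself imports this lemma from Dudko--Lyubich without proof, so your argument has to stand on its own, and at this step it does not.

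The missing idea is that protections confine the pockets to a thin collar in the conformal coordinate $h:\mathcal{R}\to E_x$. Take $s=0$ and let $p$ lie in the component of $\mathcal{R}\setminus\mathcal{S}$ containing $F$. Every vertical curve of $\mathcal{S}$, closed up by the base segment between its endpoints, encloses $h(p)$. Hence it meets $\sigma_p=\{\mathrm{Re}\,h(p)\}\times[\mathrm{Im}\,h(p),1]$. So the vertical family of $\mathcal{S}$ overflows the family of curves in $E_x$ from the base to $\sigma_p$. Comparing with the strip and mapping by $e^{\pi z}$, that family has width at most $C_0\log(1/\mathrm{Im}\,h(p))+C_1$. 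Since $\mathcal{W}(\mathcal{S})\gg1$, this forces $\mathrm{Im}\,h(p)\leq\epsilon:=e^{-(\mathcal{W}(\mathcal{S})-C_1)/C_0}\ll1$. Symmetrically, components on the roof side lie in $\{\mathrm{Im}\,h\geq1-\epsilon\}$. Then every vertical curve of $\mathcal{R}_1$ has Euclidean length at least $1-2\epsilon$ in $E_x$. The single metric $|dz|/(1-2\epsilon)$ therefore gives $\mathcal{W}(\mathcal{R}_1)\leq(1-2\epsilon)^{-2}\mathcal{W}(\mathcal{R})$, however many components there are. This needs no laminarity and no separate accounting for the protections.
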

	For any two disjoint intervals $I$ and $J$ on $\partial\Delta_\alpha$ with their projections $\hat{I},\hat{J}$ onto $\partial\hat{\Delta}_{\alpha}^n$ respectively, if $\hat{I},\hat{J}$ are well-grounded with respect to the pseudo-Siegel disk $\hat{\Delta}_{\alpha}^n$, then
	by (P2), $\mathcal{F}_{\hat{\Delta}_{\alpha}^n}^+(\hat{I},\hat{J})$ is protected in $\mathcal{F}_{\overline{\Delta_{\alpha}}}^+(I,J)$. Thus it follows from Lemma \ref{l861} that the following lemma \ref{l7291} holds:
	\begin{lemma}[Dudko and Lyubich]
		\label{l7291}
		$\mathcal{W}(\mathcal{F}_{\hat{\Delta}_{\alpha}^n}^+(\hat{I},\hat{J}))\asymp\mathcal{W}(\mathcal{F}_{\overline{\Delta_{\alpha}}}^+(I,J)).$
	\end{lemma}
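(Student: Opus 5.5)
The statement to prove is Lemma \ref{l7291}. The plan is to realize both families as the vertical families of rectangles, with $\mathcal{F}_{\hat{\Delta}_{\alpha}^n}^+(\hat{I},\hat{J})$ as a protected subrectangle of $\mathcal{F}_{\overline{\Delta_{\alpha}}}^+(I,J)$, and then apply Lemma \ref{l861}.

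\textbf{Step 1 (inequality $\preceq$ and the rectangles).} The complement $\hat{\mathbb{C}}\setminus{\rm int}(\overline{\Delta_{\alpha}})$ is a closed disk in $\hat{\mathbb{C}}$. With $I,J$ marked on its boundary, it is a topological rectangle $\mathcal{R}$ with base $I$, roof $J$, and sides the two complementary arcs of $\partial\Delta_\alpha$. Hence $\mathcal{W}(\mathcal{F}_{\overline{\Delta_{\alpha}}}^+(I,J))=\mathcal{W}(\mathcal{R})$.

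Similarly, $\hat{\mathbb{C}}\setminus{\rm int}(\hat{\Delta}_{\alpha}^n)$ is a closed Jordan disk, since $\partial\hat{\Delta}_{\alpha}^n$ is a quasicircle by Lemma \ref{l26821}. It is therefore a rectangle $\mathcal{R}_1$ with base $\hat{I}$ and roof $\hat{J}$. Because $I,\hat I$ (resp. $J,\hat J$) share endpoints, $\partial^{v,s}\mathcal{R}_1$ and $\partial^{v,s}\mathcal{R}$ have the same endpoints. After the harmless identification of the sides, we get $\mathcal{R}_1\subseteq\mathcal{R}$ as in the hypothesis of Lemma \ref{l861}.

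Every curve of $\mathcal{F}^+_{\hat{\Delta}_{\alpha}^n}(\hat I,\hat J)$ avoids ${\rm int}(\hat{\Delta}_{\alpha}^n)\supseteq{\rm int}(\Delta_\alpha)$. Such a curve overflows a curve joining $I$ to $J$ outside $\Delta_\alpha$: points of $\hat I$ lying on dams sit inside fjords whose "mouths" are based on subintervals of $I$, so one can extend slightly. This gives $\mathcal{W}(\mathcal{R}_1)\preceq\mathcal{W}(\mathcal{R})$ directly. The converse inequality is the content of Lemma \ref{l861}.

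\textbf{Step 2 (protection).} The components $F$ of $\partial^{h,s}\mathcal{R}_1\setminus\partial^{h,s}\mathcal{R}$ are exactly the dams $\beta_m$ of level $m\ge n$ appearing on $\hat I$ or $\hat J$. These are the dams that are not contained in smaller fjords.

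For each such dam we take $\mathcal{S}$ to be the rectangle $\overline{\mathcal{J}(\tilde{\beta}_m)\setminus\mathcal{J}(\beta_m)}$, or the rectangle $\mathcal{R}_{I_m}$ from (P2). Its horizontal sides lie on $I_m'\subseteq I_m$. Its width is at least $\Delta\gg1$ by (\ref{e20251214a1}).

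It remains to check two things:
\begin{enumerate}
\item $\mathcal{S}\subseteq\mathcal{R}$, with both horizontal sides of $\mathcal{S}$ in $I$ (resp. $J$);
\item $\mathcal{S}$ separates $\beta_m$ from the opposite side $J$ (resp. $I$) inside $\mathcal{R}$.
\end{enumerate}
Both follow from well-groundedness. The endpoints of $\hat I,\hat J$ do not lie in the closure of any big fjord $\mathcal{J}(\hat\beta_m)$ of level $\ge n$. Hence each big fjord touching $\hat I$ is based on a subinterval of $I$, so $I_m'\subseteq I_m\subseteq I$. Moreover, $J$ lies outside $O_{I_m}(\hat\beta_m)$. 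Since $\beta_m\subseteq\overline{\mathcal{J}(\hat\beta_m)}$ is separated from the rest of $\mathcal{R}$ by the collar $\mathcal{R}_{I_m}$, the separation property holds. Non-winding of $\hat\beta_m$ ensures $\infty$ is on the correct side.

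\textbf{Step 3 (conclusion).} With protection established, Lemma \ref{l861} gives $\mathcal{W}(\mathcal{R}_1)\asymp\mathcal{W}(\mathcal{R})$, which is the claim.

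\textbf{Main obstacle.} The main difficulty is Step 2 in the presence of nested dams. A dam of level $m$ may lie inside a big fjord of a lower level $m'\ge n$, and then it is not on $\partial\hat{\Delta}_\alpha^n$ at all. Only the outermost dams matter, but one must make sure that their protecting collars are disjoint from, or nested compatibly with, those of other outermost dams. I would first try to handle this using the nesting structure of the geodesic filling-in $\frak{L}_{n-1}=\frak{L}_n\cup F_{n-1}$: outermost dams of different levels bound pairwise disjoint big fjords, because geodesics in $\hat{\mathbb{C}}\setminus\overline{\Delta_\alpha}$ based on nested or disjoint intervals do not cross. One may need to use the $\tilde\beta_m$-collars, which have width $\ge\Delta$, rather than the full ones to guarantee disjointness.
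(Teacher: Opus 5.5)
Your route is the paper's: the (P2) collars $\mathcal{R}_{I_m}$ serve as protections, and Lemma \ref{l861} turns protection into comparability of widths. Two things in the write-up are wrong, however.

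In Step 1 the overflow direction is reversed. Extending a curve of $\mathcal{F}^+_{\hat{\Delta}_{\alpha}^n}(\hat I,\hat J)$ into a fjord exhibits it as a \emph{subcurve} of a curve of $\mathcal{F}^+_{\overline{\Delta_{\alpha}}}(I,J)$. That is not overflowing, and it yields no inequality. When only fjords based on $I\cup J$ are filled, the automatic inequality is the opposite one, $\mathcal{W}(\mathcal{R})\le\mathcal{W}(\mathcal{R}_1)$. The reason is that a curve from $I$ to $J$ must leave each such fjord through its dam, so it contains a curve from $\hat I$ to $\hat J$ in the smaller domain. The bound $\mathcal{W}(\mathcal{R}_1)\preceq\mathcal{W}(\mathcal{R})$ is exactly what protection buys; without protection it can fail, e.g.\ for a long thin fjord based on $I$ whose dam comes close to $J$. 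Since Step 3 takes $\asymp$ from Lemma \ref{l861} anyway, this paragraph only needs to be deleted or reversed.

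The genuine gap is the ``harmless identification of the sides''. Let $T,T_1$ be the two arcs of $\overline{\partial\Delta_\alpha\setminus(I\cup J)}$, and let $\hat T,\hat T_1$ be the corresponding arcs of $\partial\hat{\Delta}_{\alpha}^n$. Fjords of level $\ge n$ based on $T\cup T_1$ lie in $\mathcal{R}\setminus\mathcal{R}_1$ and make $\partial^{v,s}\mathcal{R}_1\ne\partial^{v,s}\mathcal{R}$. So the hypothesis of Lemma \ref{l861} fails, and your Step 2 never treats these dams. They also act in the opposite direction: removing them can only shrink the family joining $\hat I$ to $\hat J$. So they cannot be absorbed into the base/roof argument.

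A clean fix uses Lemma \ref{l861} twice.
\begin{enumerate}
\item Fill only the fjords based on $I\cup J$. This gives a rectangle $\mathcal{R}'$ with horizontal sides $\hat I,\hat J$ and vertical sides $T,T_1$. Your Step 2 together with Lemma \ref{l861} gives $\mathcal{W}(\mathcal{R}')\asymp\mathcal{W}(\mathcal{R})$.
\item Apply Lemma \ref{l861} to the dual rectangles $\mathcal{R}_1^*\subseteq(\mathcal{R}')^*$. Their vertical sides $\hat I,\hat J$ coincide, and their horizontal sides differ only by the side fjords, which are protected by their own (P2) collars. These collars lie in $\mathcal{R}'$: well-groundedness puts the base of every big fjord of level $\ge n$ in the interior of one of $I,J,T,T_1$, so big fjords based on $T\cup T_1$ are disjoint from those based on $I\cup J$. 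Using $\mathcal{W}(\mathcal{R}^*)=1/\mathcal{W}(\mathcal{R})$, you get $\mathcal{W}(\mathcal{R}_1)\asymp\mathcal{W}(\mathcal{R}')$.
\end{enumerate}
Combining the two steps gives the lemma. The paper's one-line proof simply asserts protection, so this duality step is implicit there as well.

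By contrast, the obstacle you flag, disjointness of the collars of different outermost dams, is not needed. Protection is checked one component $F$ at a time and only requires $\mathcal{S}\subseteq\mathcal{R}$.
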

	
	\begin{lemma}[Dudko and Lyubich]
		\label{l8271}For all $n\geq-1$ and all three intervals $I_1$, $I_2$ and $I_3$ of level $n$ with $I_1\#I_2\#I_3$, there exists an absolute constant $C>0$ such that $\mathcal{W}(\mathcal{F}(I_2,[3I_2]^c))\leq C$,
		where $[3I_2]^c=\overline{\partial\Delta_{\alpha}\setminus(I_1\cup I_2\cup I_3)}$. As a consequence, if $\mathfrak{D}_n$ contains at least four intervals, then $\mathcal{W}(\mathcal{F}(I_2,[3I_2]^c))\asymp 1$.
	\end{lemma}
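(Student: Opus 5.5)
The statement to prove is Lemma \ref{l8271}. Its first part is quoted from Dudko--Lyubich (Uniform Bounds Theorem 1.1 of \cite{DL}). My plan is to derive it from the structure already recorded, namely (P1) and the projection machinery, and then deduce the consequence.

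\emph{Upper bound.} Take the pseudo-Siegel disk of level $n$, $\hat{\Delta}_{\alpha}^n$, with its nest of tilings $\mathcal{T}$ from (P1). This nest has post-$(C,M)$-bounded inner and outer geometries with universal $C,M$.

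The endpoints of $I_1,I_2,I_3$ lie in ${\rm CP}_n$. These critical points are of the form $x_{q_m-1}$ or $x_{q_{m+1}-q_m-1}$, up to the standard combinatorics. So I first check that $I_1,I_2,I_3$ and $[3I_2]^c$ are regular rel $\partial\hat{\Delta}_{\alpha}^n$, and that their projections are well-grounded, using Lemma \ref{l9301} and the argument in its proof. By (P1), the projections $\hat I_1,\hat I_2,\hat I_3$ belong to $\mathcal{T}_{-1}$, and they satisfy $\hat I_1\#\hat I_2\#\hat I_3$.

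Any curve connecting $I_2$ to $[3I_2]^c$ in $\hat{\mathbb{C}}$ either crosses the dams or can be compared with a curve connecting the projections. The rectangles $\mathcal{R}_{I_n}$ have width at least $\Delta+4\gg1$, and they protect the big fjords. So, arguing as in Lemma \ref{l7291} but for the full family $\mathcal{F}$ rather than $\mathcal{F}^+$, I get
\[\mathcal{W}(\mathcal{F}(I_2,[3I_2]^c))\preceq \mathcal{W}(\mathcal{F}(\hat I_2,\overline{\partial\hat{\Delta}_{\alpha}^n\setminus(\hat I_1\cup\hat I_2\cup\hat I_3)}))+O(1).\]
The first term on the right is bounded by Corollary \ref{c8201}. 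Its hypotheses hold: all three projections are tiles of the same level $-1$, so their $\mathcal{T}$-lengths are comparable, and $|\hat I_2|_{\mathcal{T}}<1$ whenever $\mathfrak{D}_n$ has more than one interval. If there are fewer than four intervals, $[3I_2]^c$ is a point or empty and the width is trivially bounded or zero.

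\emph{Main obstacle.} The delicate step is exactly the protection/comparison for the unrestricted family $\mathcal{F}$, since Lemma \ref{l861} is formulated for rectangles in the outer side. I would split the full family into its inner part, which lives in $\overline{\Delta_\alpha}$, and its outer part. The inner part is handled by Lemma \ref{l12}, which gives essentially bounded inner geometry. The outer part goes through Lemma \ref{l7291}. Mixed curves overflow one of finitely many families of the two types, and the number of such families is controlled by $M$. Subadditivity of extremal width then yields a universal constant $C$.

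\emph{Lower bound.} Suppose $\mathfrak{D}_n$ has at least four intervals. Then $[3I_2]^c$ is a nondegenerate interval. The family $\mathcal{F}(I_2,[3I_2]^c)$ contains $\mathcal{F}^-_{\overline{\Delta_\alpha}}(I_2,[3I_2]^c)$, which is conformally a curve family in $\mathbb{D}$ between two disjoint arcs. Its width is not small, because the inner geometry is dual-bounded: a crossing family separating $I_2$ from $[3I_2]^c$ would cross either $I_1$ or $I_3$. Its width is then controlled by $1/\mathcal{W}(\mathcal{F}(I_1,\cdot))$ and by the first part of the lemma applied to $I_1$ and $I_3$. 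Combining the two bounds gives $\mathcal{W}\asymp1$.
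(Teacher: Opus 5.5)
The paper gives no argument for this lemma. It imports the first assertion from \cite{DL} as the Uniform Bounds Theorem 1.1, the main result of that paper. Reducing it to the other imported facts (P1)--(P2) would be legitimate in principle, but your reduction has a genuine gap exactly at the step you call the main obstacle.

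The inequality $\mathcal{W}(\mathcal{F}(I_2,[3I_2]^c))\preceq\mathcal{W}(\mathcal{F}(\hat{I}_2,\cdot))+O(1)$ is asserted, not proved, and nothing available yields it. Lemma \ref{l7291} (via Lemma \ref{l861}) compares only \emph{outer} families. Points of $I_2$ at the bottom of fjords of level $\geq n$ are interior points of $\hat{\Delta}_{\alpha}^n$, so curves starting there (for instance through $\Delta_\alpha$) need not overflow anything based on $\hat{I}_2$.

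Your inner/outer/mixed split does not repair this.
\begin{itemize}
\item The inner part $\mathcal{W}^-_{\overline{\Delta_\alpha}}(I_2,[3I_2]^c)$ is harmless, but for an elementary reason: in the linearizing coordinate every interval of $\mathfrak{D}_n$ ($n\geq0$) has combinatorial length in $[\iota_n,2\iota_n)$ by the three-gap theorem. It is not a consequence of Lemma \ref{l12}, which concerns $\hat{\Delta}_{\alpha}^n$ with its tiling $\mathcal{T}$.
\item The outer part is indeed bounded, by Lemma \ref{l7291} together with the essentially bounded outer geometry of the level-$(-1)$ tile $\hat{I}_2$.
\item The whole difficulty is the mixed curves, which leave and re-enter $\Delta_\alpha$ through $I_1$ or $I_3$ near their common endpoints with $I_2$. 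Their sub-arcs join adjacent intervals, where widths are infinite. So they do not overflow finitely many bounded-width families at a single scale.
\end{itemize}
Handling the mixed curves requires excising neighbourhoods of those endpoints at all finer scales, using \emph{full}-family bounds there. This is what the welding Lemma \ref{l7311} does (its proof invokes Corollary \ref{c8201} at finer scales), and what Lemma 11.4 of \cite{DL} does for $\mathcal{T}$. For $\partial\Delta_\alpha$, those finer-scale full bounds are precisely the statement being proved. Moreover, $M$ does not control the number of pieces: the diffeo-tilings $\mathfrak{D}$ have unbounded combinatorics, since a level-$n$ interval can split into a number of level-$(n+1)$ intervals of the order of a partial quotient of $\alpha$.

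There are also smaller errors.
\begin{itemize}
\item Corollary \ref{c8201} cannot be applied to $\hat{I}_2$. A tile of $\mathcal{T}_{-1}$ has $|\hat{I}_2|_{\mathcal{T}}=1$ by definition, not $<1$. Lemma \ref{l12} applied directly to that tile does bound the full width on $\partial\hat{\Delta}_{\alpha}^n$, but that is not the quantity in the statement.
\item The endpoints of level-$n$ intervals are arbitrary points $x_j$, $0\leq j<q_{n+1}$, not of the special forms treated in Lemma \ref{l9301}. Well-groundedness still holds, because ${\rm CP}_n\subseteq{\rm CP}_m$ for $m\geq n$ and (P2) keeps every big fjord of level $m$ at combinatorial distance at least $11\iota_{m+1}$ from ${\rm CP}_m$.
\end{itemize}

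Your lower bound is fine once the upper bound is known. By duality in $\overline{\Delta_\alpha}$, $\mathcal{W}^-_{\overline{\Delta_\alpha}}(I_2,[3I_2]^c)=1/\mathcal{W}^-_{\overline{\Delta_\alpha}}(I_1,I_3)$. When $\mathfrak{D}_n$ has at least four intervals, $\mathcal{F}^-_{\overline{\Delta_\alpha}}(I_1,I_3)\subseteq\mathcal{F}(I_1,[3I_1]^c)$. Alternatively, the lower bound follows directly from the combinatorial lengths.
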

	
	\begin{lemma}[Dudko and Lyubich]
		\label{l825a}
		Let $D$ be a closed Jordan disk on $\mathbb{C}$ and $\mathcal{R}$ be a rectangle contained in $D$ with $\partial^{h,0}\mathcal{R}\cup\partial^{h,1}\mathcal{R}\subseteq\partial D$ and $\mathcal{W}(\mathcal{R})>\frac{1}{2}$. Let $T$ be a component of $\overline{\partial D\setminus(\partial^{h,0}\mathcal{R}\cup\partial^{h,1}\mathcal{R})}$ and $a,b$ be two endpoints of $T$.
		Let $\gamma$ be the geodesic connecting $a$ and $b$ with respect to the hyperbolic metric on ${\rm int}(D)$.
		Then we have that $\gamma$ is contained in the filling-in of $T\cup\mathcal{R}$. 
	\end{lemma}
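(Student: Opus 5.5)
The plan is to reduce to a normalized conformal model using the conformal invariance of both extremal width and hyperbolic geodesics. Let $B=\partial^{h,0}\mathcal{R}$ and $B'=\partial^{h,1}\mathcal{R}$. Then $\partial D\setminus(B\cup B')$ has exactly two components, $T$ and $T'$, with $a\in B\cap T$ and $b\in B'\cap T$. View $D$ as a conformal quadrilateral $Q$ with sides $B,T,B',T'$ in cyclic order. Every vertical curve of $\mathcal{R}$ joins $B$ to $B'$ inside $D$, so the vertical family of $\mathcal{R}$ is a subfamily of the base-to-roof family of $Q$. Hence
$$\mathcal{W}_Q(B,B')\geq\mathcal{W}(\mathcal{R})>\tfrac12 .$$
Let $\sigma$ be the vertical side of $\mathcal{R}$ facing $T'$. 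It is a crosscut of $D$ from $B$ to $B'$, and the filling-in of $T\cup\mathcal{R}$ is the closure of the component of $D\setminus\sigma$ whose boundary contains $T$. So the statement reduces to showing that $\gamma$ does not cross $\sigma$.

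For the normalized model, uniformize ${\rm int}(D)$ to the upper half-plane $\mathbb{H}$ with $a\mapsto-1$ and $b\mapsto1$, choosing orientation so that $T$ corresponds to $[-1,1]$, and $B,B'$ correspond to $[1,s]$ and $[-t,-1]$ for some $s,t>1$, with $T'$ containing $\infty$. Then $\gamma$ is the upper unit semicircle, and the filling-in lemma becomes a statement about where the image of $\sigma$ can lie. The key tool is the reflection $\iota(z)=1/\bar z$. It fixes $\gamma$ pointwise, preserves $\mathbb{H}$, fixes $\pm1$, and exchanges the inside and outside of the unit half-disk.

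Now suppose, towards a contradiction, that $\gamma$ leaves the filling-in of $T\cup\mathcal{R}$. Then $\sigma$ must enter the half-disk bounded by $\gamma$ and $T$: it cannot pass around $T$, since $T$ is part of the boundary. Consequently $\mathcal{R}$ lies in the domain bounded by $T'$, $B$, $B'$ and $\sigma$, and every vertical curve of $\mathcal{R}$ joins $B$ to $B'$ while staying on the $T'$-side of $\sigma$.

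The hardest step is the quantitative comparison that produces the contradiction. I expect it to go as follows. A vertical curve of $\mathcal{R}$ that dips into the unit half-disk can be folded by $\iota$ onto a curve in the outer region, which does not increase the relevant extremal length. In the other direction, curves joining $[1,s]$ to $[-t,-1]$ in the outer region pass around the whole semicircle. A symmetrization against the extremal configuration, where $\gamma$ itself is the obstacle, bounds their width by the width of the family joining $[1,\infty)$ to $(-\infty,-1]$ in the region $\{|z|>1\}\cap\mathbb{H}$. That family is conformally a half of the half-plane quadrilateral with vertices $-1,1,\infty,\infty$, and by the $\iota$-symmetry its width should be exactly $\tfrac12$.

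This would give $\mathcal{W}(\mathcal{R})\leq\tfrac12$, contradicting the hypothesis. The delicate point is to check that the folding really sends the vertical family of $\mathcal{R}$ into a family overflowing this extremal family, so that the overflow inequality $\mathcal{W}(\mathcal{G})\leq\mathcal{W}(\mathcal{F})$ from 2.2.1 applies. I would attempt this first via the crossing inequality $\mathcal{W}(\mathcal{R})\leq1/\mathcal{W}(\mathcal{R}')$, with $\mathcal{R}'$ the dual rectangle cut out by $\gamma$ and $T'$.
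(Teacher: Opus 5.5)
Your reduction to showing that $\gamma$ does not cross the vertical side $\sigma$ facing $T'$ is fine, and so is the normalization $T=[-1,1]$, $\gamma=\{|z|=1\}\cap\mathbb{H}$. The quantitative step, however, fails.

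First, the comparison family does not have width $\frac12$. In $\{|z|>1\}\cap\mathbb{H}$ the arcs $[1,\infty)$ and $(-\infty,-1]$ share the boundary point $\infty$, so the family joining them has infinite width. The disjoint semicircles $|z|=R$, $R>1$, already contribute $\int_1^\infty dR/(\pi R)=\infty$. Equivalently, under $\log$ the region becomes the half-strip $\{\mathrm{Re}\,w>0,\ 0<\mathrm{Im}\,w<\pi\}$, and the two arcs are its infinite sides.

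Second, and more fundamentally, this is not a matter of constants: reflecting in $\gamma$ discards the only information the contradiction hypothesis supplies. That hypothesis gives a point $d\in\gamma$ outside the filling-in of $T\cup\mathcal{R}$. What it implies is that every vertical curve of $\mathcal{R}$ separates $d$ from $T$, i.e.\ passes between $T$ and $d$ through the unit half-disk. Your sentence that the vertical curves stay on the $T'$-side of $\sigma$ is backwards; $\mathcal{R}$ always lies on the $T$-side of $\sigma$. Since $\iota$ fixes $\gamma$ pointwise, it cannot see where $d$ is. The folded curves are merely curves from $[1,s]$ to $[-t,-1]$ in the outer region, a family of width at least $\frac1\pi\log\min\{s,t\}$. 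Moreover, folding only bounds the original width by twice the folded width.

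In fact nothing can be concluded without using $d$. For $s=t$ large, the half-annulus $\{1\le|z|\le s\}\cap\overline{\mathbb{H}}$ is a rectangle from $B$ to $B'$ of width $\frac1\pi\log s$, so the width hypothesis alone puts no upper bound on anything.

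The missing idea is a double cover branched at $d$, which is how the paper argues.
\begin{itemize}
\item Map ${\rm int}(D)$ to $\mathbb{D}$ with $d\mapsto 0$. Because $d$ lies on the geodesic joining $a$ and $b$, you can at the same time make $T$ the lower semicircle; this is exactly where the geodesic hypothesis enters. Base and roof then lie on the upper semicircle.
\item Since $d$ is outside the filling-in, $0$ can be joined to a point $c$ of the upper semicircle separating base from roof by an arc $\gamma_1$ avoiding $\tilde\phi(\mathcal{R})$.
\item The two branches of $\sqrt{z}$ on $\overline{\mathbb{D}}\setminus\gamma_1$ send $\tilde\phi(\mathcal{R})$ to two disjoint rectangles, each of width $\mathcal{W}(\mathcal{R})$. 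Each joins $\mathcal{C}_-=\{e^{2\pi i\theta}:0\le\theta\le\frac14\}$ to $\mathcal{C}_+=\{e^{2\pi i\theta}:\frac12\le\theta\le\frac34\}$.
\item The family joining these opposite quarter arcs in $\overline{\mathbb{D}}$ has width exactly $1$. Hence $2\mathcal{W}(\mathcal{R})\le 1$, contradicting $\mathcal{W}(\mathcal{R})>\frac12$.
\end{itemize}
In your half-plane picture, the corresponding move is to translate $d$ to $i$ along $\gamma$ by a Möbius map fixing $\pm1$, and then unwrap around $i$, rather than reflect in $\gamma$.
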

	Here we give a new proof of Lemma \ref{l825a} as follows:
	\begin{proof}[The proof of Lemma \ref{l825a}]
		We prove the lemma by contradiction. We suppose that $\gamma$ is not contained in the filling-in of $T\cup\mathcal{R}$. Then there exists a point $d$ on $\gamma$ not contained in the filling-in of $T\cup\mathcal{R}$.
		Let $\phi$ be a conformal map from ${\rm int}(D)$ to $\mathbb{D}$ with the homeomorphism extension $\tilde{\phi}$ on $\overline{D}$ such that  $\phi(d)=0$ and
		$\tilde{\phi}(T)$ is the lower half unit circle $\{e^{2\pi i\theta}: \frac{1}{2}\leq\theta\leq1\}$. Then $\tilde{\phi}(\gamma)=[-1,1]$ and
		$\tilde{\phi}(\partial^{h,0}\mathcal{R})\cup\tilde{\phi}(\partial^{h,1}\mathcal{R})\subseteq\{e^{2\pi i\theta}: 0\leq\theta\leq\frac{1}{2}\}$ (Without loss of generality, we assume $\tilde{\phi}(\partial^{h,0}\mathcal{R})<\tilde{\phi}(\partial^{h,1}\mathcal{R})<\{e^{2\pi i\theta}: \frac{1}{2}\leq\theta\leq1\}$).
		We take a point $c=e^{2\pi i\theta_0}$ ($\theta_0\in(0,\frac{1}{2})$) separating $\tilde{\phi}(\partial^{h,0}\mathcal{R})$ and $\tilde{\phi}(\partial^{h,1}\mathcal{R})$ on $\{e^{2\pi i\theta}: 0\leq\theta\leq\frac{1}{2}\}$ and a Jordan arc $\gamma_1$ connecting $0$ and $c$
		on $\overline{\mathbb{D}}$ not intersecting $\tilde{\phi}(R)$. 
		Observe that $\sqrt{z}$ has two single-value continuous branches on $\overline{\mathbb{D}}\setminus\gamma_1$, written as $\psi_1$ (with $\psi_1(-1)=i$) and $\psi_2$ (with $\psi_1(-1)=-i$).
		We view $\psi_1\comp\tilde{\phi}(\mathcal{R})$ (resp. $\psi_2\comp\tilde{\phi}(\mathcal{R})$)
		as a rectangle with $\partial^{h,0}\psi_1\comp\tilde{\phi}(\mathcal{R})=\psi_1\comp\tilde{\phi}(\partial^{h,0}\mathcal{R})$ (resp. $\partial^{h,1}\psi_2\comp\tilde{\phi}(\mathcal{R})=\psi_2\comp\tilde{\phi}(\partial^{h,0}\mathcal{R})$) and $\partial^{h,1}\psi_1\comp\tilde{\phi}(\mathcal{R})=\psi_1\comp\tilde{\phi}(\partial^{h,1}\mathcal{R})$ (resp. $\partial^{h,0}\psi_2\comp\tilde{\phi}(\mathcal{R})=\psi_2\comp\tilde{\phi}(\partial^{h,1}\mathcal{R})$). Then $\mathcal{W}(\psi_1\comp\tilde{\phi}(\mathcal{R}))=\mathcal{W}(\mathcal{R})$
		and $\mathcal{W}(\psi_2\comp\tilde{\phi}(\mathcal{R}))=\mathcal{W}(\mathcal{R})$.
		We set $\mathcal{C}_-=\{e^{2\pi i\theta}:0\leq\theta\leq\frac{1}{4}\}$ and $\mathcal{C}_+=\{e^{2\pi i\theta}:\frac{1}{2}\leq\theta\leq\frac{3}{4}\}$. 
		It is easy to see $\mathcal{F}(\psi_1\comp\tilde{\phi}(\mathcal{R}))\cup\mathcal{F}(\psi_2\comp\tilde{\phi}(\mathcal{R}))\subseteq\mathcal{F}_{\overline{\mathbb{D}}}^-(\mathcal{C}_-,\mathcal{C}_+)$.
		Thus
		\begin{align*}
			1<2\mathcal{W}(\mathcal{R})&=\mathcal{W}(\psi_1\comp\tilde{\phi}(\mathcal{R}))+\mathcal{W}(\psi_2\comp\tilde{\phi}(\mathcal{R}))\\
			&=\mathcal{W}(\mathcal{F}(\psi_1\comp\tilde{\phi}(\mathcal{R}))\cup\mathcal{F}(\psi_2\comp\tilde{\phi}(\mathcal{R})))\\
			&\leq\mathcal{W}(\mathcal{F}_{\overline{\mathbb{D}}}^-(\mathcal{C}_-,\mathcal{C}_+))\\
			&=1.
		\end{align*}
		This is impossible. 
		
	\end{proof}
	
	For all $n\geq-1$ and any interval $I$ on $\partial\hat{\Delta}_{\alpha}^n$, we define
	$$|I|_{\hat{\Delta}_{\alpha}^n}:=|I|_{\mathcal{T}},$$
	where $\mathcal{T}$ is the nest of tilings as described in (P1). 
	Moreover, if
	$J$ is an interval on $\partial\hat{\Delta}_{\alpha}^n$ having a disjoint interior with $I$, then we define
	$${\rm dist}_{\hat{\Delta}_{\alpha}^n}(I,J):=\min\left\{|T_1|_{\mathcal{T}},|T_2|_{\mathcal{T}}\right\},$$
	where $T_1$ and $T_2$ are the two components of $\overline{\partial\hat{\Delta}_{\alpha}^n\setminus(I\cup J)}$.
	
	\begin{lemma}
		\label{l831}Assume that $I$ and $J$ are two disjoint intervals on $\partial\Delta_{\alpha}$ and are well-grounded with respect to the pseudo-Siegel disk $\hat{\Delta}_{\alpha}^n$ of level $n$.
		Let $\hat{I}$ and $\hat{J}$ be projections of $I$ and $J$ onto $\partial\hat{\Delta}_{\alpha}^n$, respectively.
		For all $c>0$, if ${\rm dist}_{\hat{\Delta}_{\alpha}^n}(\hat{I},\hat{J})\asymp_c\min\{|\hat{I}|_{\hat{\Delta}_{\alpha}^n},|\hat{J}|_{\hat{\Delta}_{\alpha}^n}\}\preceq_c1$, then $\mathcal{W}_{\over{\Delta}_{\alpha}}^+(I,J)\asymp_c1$.
	\end{lemma}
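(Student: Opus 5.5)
By Lemma \ref{l7291}, since $I$ and $J$ are well-grounded rel $\hat{\Delta}_{\alpha}^n$, we have $\mathcal{W}_{\overline{\Delta_{\alpha}}}^+(I,J)\asymp\mathcal{W}(\mathcal{F}^+_{\hat{\Delta}_{\alpha}^n}(\hat{I},\hat{J}))$. So the plan is to prove $\mathcal{W}^+_{\hat{\Delta}_{\alpha}^n}(\hat{I},\hat{J})\asymp_c 1$ for the pseudo-Siegel disk $D:=\hat{\Delta}_{\alpha}^n$, which carries the nest of tilings $\mathcal{T}$ of (P1) with post-$(C,M)$-bounded inner and outer geometries. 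By Lemma \ref{l12} this nest also has essentially bounded geometry. From here on we work purely combinatorially with $|\cdot|_{\mathcal{T}}$ on $\partial D$.

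\emph{Upper bound.} Assume without loss of generality that $|\hat{I}|_{\mathcal{T}}\leq|\hat{J}|_{\mathcal{T}}$, and let $T_1,T_2$ be the two components of $\overline{\partial D\setminus(\hat{I}\cup\hat{J})}$. By hypothesis $\min\{|T_1|_{\mathcal{T}},|T_2|_{\mathcal{T}}\}\asymp_c|\hat{I}|_{\mathcal{T}}$. Set $I_1:=T_1$, $I_2:=\hat{I}$, $I_3:=T_2$, so that $I_1\#I_2\#I_3$ and $\min\{|I_1|_{\mathcal{T}},|I_3|_{\mathcal{T}}\}\succeq_c|I_2|_{\mathcal{T}}$. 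We have $|I_2|_{\mathcal{T}}<1$, because $\hat{I}$ is a proper subinterval of $\partial D$ and the length is strictly monotone. Every curve in $\mathcal{F}^+_D(\hat{I},\hat{J})$ connects $I_2$ to $\overline{\partial D\setminus(I_1\cup I_2\cup I_3)}=\hat{J}$. The argument of Lemma \ref{l811} (whose constants now also depend on $c$) then gives $\mathcal{W}^+_D(\hat{I},\hat{J})\preceq_{c}1$.

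\emph{Lower bound.} Here we need a competing estimate, and the plan is to use the duality between $\mathcal{F}^+_D(\hat{I},\hat{J})$ and the family $\mathcal{F}^+_D(T_1,T_2)$ in the outer domain $\hat{\mathbb{C}}\setminus{\rm int}(D)$, which is a quadrilateral. Their widths are reciprocal, so it suffices to show $\mathcal{W}^+_D(T_1,T_2)\preceq_c1$. Suppose $|T_1|_{\mathcal{T}}\leq|T_2|_{\mathcal{T}}$. Since $|T_1|_{\mathcal{T}}\asymp_c|\hat I|_{\mathcal{T}}\le|\hat J|_{\mathcal T}$, the intervals $\hat{I}$ and $\hat{J}$ flank $T_1$ and each has $\mathcal{T}$-length $\succeq_c|T_1|_{\mathcal{T}}$. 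Applying Lemma \ref{l811} to the triple $\hat{I}\#T_1\#\hat{J}$ gives $\mathcal{W}^+_D(T_1,T_2)\preceq_c1$, provided $|T_1|_{\mathcal{T}}<1$, which again holds.

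\emph{Main obstacle.} The delicate point is ensuring the lemma's hypotheses hold uniformly, in particular the bound $\min\{|T_1|_{\mathcal T},|T_2|_{\mathcal T}\}\preceq_c 1$. There is also a degenerate regime in which all four arcs have $\mathcal{T}$-length comparable to $1$. In that case we cover everything by boundedly many top-level intervals, and the same $[3I]$-type covering argument used in Lemma \ref{l811} still applies, since $\mathcal{T}_{-1}$ and the first few levels have boundedly many pieces relevant at scale $\asymp_c 1$. Finally, the relation $\asymp$ from Lemma \ref{l7291} transfers both bounds back to $\overline{\Delta_{\alpha}}$, giving $\mathcal{W}^+_{\overline{\Delta}_{\alpha}}(I,J)\asymp_c1$.
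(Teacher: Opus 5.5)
Your proposal follows the paper's proof, only in the opposite order. It applies Lemma \ref{l811} to the short interval flanked by the two gaps, and again to the short gap flanked by $\hat I,\hat J$. It uses reciprocity of the two conjugate families in the quadrilateral $\hat{\mathbb{C}}\setminus{\rm int}(\hat{\Delta}_{\alpha}^n)$, and it returns to $\overline{\Delta_{\alpha}}$ via Lemma \ref{l7291}.

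The one step that fails as written is your check of the hypothesis $|I_2|_{\mathcal{T}}<1$ of Lemma \ref{l811}. Strict monotonicity only gives $|\hat I|_{\mathcal{T}}<|\partial\hat{\Delta}_{\alpha}^n|_{\mathcal{T}}$, and the latter is at least the number of tiles of $\mathcal{T}_{-1}$. By (P1) these tiles include the projections of all regular intervals of $\mathfrak{D}_n$, each of which already has $\mathcal{T}$-length $1$. Since only post-bounded combinatorics is available, there is no bound on $\#\mathcal{T}_{-1}$. So $\hat I$ or $T_1$ may have $\mathcal{T}$-length $\geq 1$, and the remark in your last paragraph that $\mathcal{T}_{-1}$ has boundedly many pieces is not correct either.

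The repair is the subdivision the paper performs, and this is where the hypothesis $\min\{|\hat I|_{\mathcal{T}},|\hat J|_{\mathcal{T}}\}\preceq_c 1$ enters. The short arc is $\hat I$ for your upper bound and $T_1$ for your lower bound. It has length $\preceq_c 1$ and is $\asymp_c$ the shorter of its two neighbouring arcs. Split it into $k=O_c(1)$ consecutive subarcs, each of $\mathcal{T}$-length $<1$ and at most the shorter neighbour's length; cutting at endpoints of tiles of a suitable level $m\geq 1$ does this. For each subarc, take as flanks the remainder of the arc on either side together with the adjacent arc. The complement of the triple is still $\hat J$ (resp.\ $T_2$), and each flank is at least as long as the subarc, so Lemma \ref{l811} applies with its universal constant. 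Summing the $k$ widths gives the bound $\preceq_c 1$, with the dependence on $c$entering only through $k$. With this modification your argument is complete.
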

	\begin{proof}
		Let $\hat{T}$ be the closure of the component of $\partial\hat{\Delta}_{\alpha}^n\setminus(\hat{I}\cup\hat{J})$ such that $|\hat{T}|_{\hat{\Delta}_{\alpha}^n}={\rm dist}_{\hat{\Delta}_{\alpha}^n}(\hat{I},\hat{J})$.
		We split $\hat{T}$ into $I_1, I_2, \cdots, I_k$ such that $k\asymp_c1$, $I_1\#I_2\# \cdots\#I_k$ and for all $1\leq j\leq k$, $|I_j|_{\hat{\Delta}_{\alpha}^n}\leq\min\{|\hat{I}|_{\hat{\Delta}_{\alpha}^n},|\hat{J}|_{\hat{\Delta}_{\alpha}^n}\}$ and $|I_j|_{\hat{\Delta}_{\alpha}^n}<1$.
		Let $\hat{T}_1$ be the closure of the other component of $\partial\hat{\Delta}_{\alpha}^n\setminus(\hat{I}\cup\hat{J})$.
		By Lemma \ref{l811} we have 
		$\mathcal{W}(\mathcal{F}_{\hat{\Delta}_{\alpha}^n}^+(I_j,\hat{T}_1))\preceq1$ for all $1\leq j\leq k$.
		Observe that $\mathcal{F}_{\hat{\Delta}_{\alpha}^n}^+(\hat{T},\hat{T}_1)\subseteq\bigcup\limits_{j=1}^k\mathcal{F}_{\hat{\Delta}_{\alpha}^n}^+(I_j,\hat{T}_1)$. Then 
		$$\mathcal{W}(\mathcal{F}_{\hat{\Delta}_{\alpha}^n}^+(\hat{T},\hat{T}_1))\leq\sum\limits_{j=1}^k\mathcal{W}(\mathcal{F}_{\hat{\Delta}_{\alpha}^n}^+(I_j,\hat{T}_1))\preceq_c1.$$
		Thus
		$$\mathcal{W}(\mathcal{F}_{\hat{\Delta}_{\alpha}^n}^+(\hat{I},\hat{J}))\succeq_c1.$$
		Observe that  
		${\rm dist}_{\hat{\Delta}_{\alpha}^n}(\hat{T},\hat{T}_1)\asymp_c\min\{|\hat{T}|_{\hat{\Delta}_{\alpha}^n},|\hat{T}_1|_{\hat{\Delta}_{\alpha}^n}\}\preceq_c1$. By the above similar argument, we can obtain that
		$$\mathcal{W}(\mathcal{F}_{\hat{\Delta}_{\alpha}^n}^+(\hat{T},\hat{T}_1))\succeq_c1$$
		and hence
		$$\mathcal{W}(\mathcal{F}_{\hat{\Delta}_{\alpha}^n}^+(\hat{I},\hat{J}))\preceq_c1.$$
		This implies 
		$$\mathcal{W}(\mathcal{F}_{\hat{\Delta}_{\alpha}^n}^+(\hat{I},\hat{J}))\asymp_c1.$$
		Since $I$ and $J$ are well-grounded with respect to $\hat{\Delta}_{\alpha}^n$, by Lemma \ref{l7291} 
		$$\mathcal{W}(\mathcal{F}_{\over{\Delta}_{\alpha}}^+(I,J))\asymp\mathcal{W}(\mathcal{F}_{\hat{\Delta}_{\alpha}^n}^+(\hat{I},\hat{J}))\asymp_c1$$
		and hence
		$$\mathcal{W}(\mathcal{F}_{\over{\Delta}_{\alpha}}^+(I,J))\asymp_c1.$$
		
	\end{proof}

	For any $I_n(\alpha)\in\mathfrak{D}_n(\alpha)$ ($n\geq1$),
	we let $\hat{I}_n(\alpha)$ be the projection of $I_n(\alpha)$ onto $\partial\hat{\Delta}_{\alpha}^n$ and
	$\gamma_n(\alpha)$ be the geodesic connecting two endpoints of $\hat{I}_n(\alpha)$
	with respect to the hyperbolic metric on ${\rm int}(\hat{\Delta}_{\alpha}^n)$. We denote by $\hat{I}_n(\alpha)+\gamma_n(\alpha)$ the closed curve obtained by connecting $\hat{I}_n(\alpha)$ and $\gamma_n(\alpha)$ end to end.
	Then we have the following lemma.
	\begin{lemma}[Hongyu Qu, \cite{Q24}]
		\label{l84a}There is an absolute constant $\mathfrak{K}$ $>1$ such that
		for any $I_n(\alpha)\in\mathfrak{D}_n(\alpha)$, the closed curve $\hat{I}_n(\alpha)+\gamma_n(\alpha)$ is a $\mathfrak{K}$-quasicircle.
	\end{lemma}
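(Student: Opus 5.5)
To prove that $\hat{I}_n(\alpha)+\gamma_n(\alpha)$ is a $\mathfrak{K}$-quasicircle, I will use Ahlfors' three-point criterion in its extremal-width form. A Jordan curve $\Gamma$ is a $K$-quasicircle, with $K$ depending only on $c$, if for every decomposition of $\Gamma$ into four consecutive arcs $A_1\#A_2\#A_3\#A_4$ the width $\mathcal{W}(\mathcal{F}(A_1,A_3))$ is bounded above by $c$ whenever the widths of the two "opposite" families are comparable. Equivalently, it suffices to show that the cross-ratio distortion between the two complementary domains of $\Gamma$ is bounded. I will verify this bounded-geometry condition separately on the inner side and on the outer side of $\Gamma$. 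The inner side is the region $\Omega$ of ${\rm int}(\hat{\Delta}_{\alpha}^n)$ cut off by $\gamma_n$.

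\textbf{Step 1: combinatorics on $\hat{I}_n$.} By (P1), $\hat{I}_n$ is an interval of level $-1$ or $0$ of the nest of tilings $\mathcal{T}$ of $\hat{\Delta}_{\alpha}^n$. Hence $|\hat{I}_n|_{\mathcal{T}}\asymp 1$, and $\mathcal{T}$ restricted to $\hat{I}_n$ has post-$(C,M)$-bounded inner and outer geometry. By Lemma \ref{l12} and Corollary \ref{c8201}, for any three consecutive subintervals $J_1\#J_2\#J_3$ of $\hat{I}_n$ with $|J_2|_{\mathcal{T}}\preceq\min\{|J_1|_{\mathcal{T}},|J_3|_{\mathcal{T}}\}$, we have $\mathcal{W}(\mathcal{F}(J_2,\overline{\partial\hat{\Delta}_{\alpha}^n\setminus(J_1\cup J_2\cup J_3)}))\preceq 1$. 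Conversely, Lemma \ref{l831} gives the matching lower bounds when the combinatorial distance is comparable to the lengths. This yields a quasisymmetric parametrization of $\hat{I}_n$ by the $\mathcal{T}$-length, with uniform constants.

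\textbf{Step 2: the geodesic side.} I will map ${\rm int}(\hat{\Delta}_{\alpha}^n)$ conformally onto $\mathbb{D}$, sending $\gamma_n$ to a diameter. Then $\Omega$ becomes a half-disk, and its geometry is controlled by the harmonic measure of $\hat{I}_n$ relative to the complementary interval $[\hat{I}_n]^c$. Because $\mathfrak{D}_n$ has neighbors of $I_n$ of comparable $\mathcal{T}$-length, Lemma \ref{l8271} together with Lemma \ref{l7291} shows that the inner width $\mathcal{W}^-(\hat{I}_n,[3\hat{I}_n]^c)$ is bounded above and below. As a result, the conformal image of $\hat{I}_n$ occupies a definite proportion of the circle, and the quasisymmetry from Step 1 transfers to the boundary of the half-disk. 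Lemma \ref{l825a} is used to localize $\gamma_n$. The protected rectangles built from neighboring level-$n$ intervals (each of width $>1/2$ by Lemma \ref{l8271} after subdividing) force $\gamma_n$ to stay within the filling-in of $\hat{I}_n$ together with these rectangles. This prevents $\gamma_n$ from making long excursions and gives the bounded turning of $\Gamma$ near the two endpoints of $\hat{I}_n$.

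\textbf{Step 3: the outer side and conclusion.} Outside $\Gamma$, a curve either stays outside $\hat{\Delta}_{\alpha}^n$, in which case it is controlled by the outer bounds of (P1), or it crosses into ${\rm int}(\hat{\Delta}_{\alpha}^n)\setminus\Omega$, where it is controlled by the inner bounds plus Step 2. Summing the widths via the subadditivity rule for unions of families gives the two-sided width comparison for every quadrilateral on $\Gamma$. By the extremal-length criterion, $\Gamma$ is then a $\mathfrak{K}$-quasicircle with $\mathfrak{K}$ depending only on $(C,M)$, hence absolute. I expect the main obstacle to be the endpoints of $\hat{I}_n$, where the arc $\hat{I}_n$ meets $\gamma_n$. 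There one must exclude cusps, which means showing that the angle-like ratio of widths on the two sides stays bounded. This is exactly where Lemma \ref{l825a} and the bounded geometry of the adjacent intervals $I_l,I_r$ must be combined carefully.
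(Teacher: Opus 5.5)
The paper does not prove this lemma; it is imported from Qu's earlier work. So I judge your outline on its own terms, and it has a genuine gap exactly where the content of the lemma lies.

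\textbf{Where the gap is.} The inner side of $\Gamma=\hat I_n+\gamma_n$ is essentially free. Let $\psi$ be the Riemann map of ${\rm int}(\hat\Delta^n_\alpha)$ with $\psi(\gamma_n)=[-1,1]$. Then $H:=\psi(\Omega)$ is the upper half-disk and $\psi(\hat I_n)$ is a semicircle, so your remark that $\hat I_n$ ``occupies a definite proportion of the circle'' carries no information.

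All the content is on the exterior of $\Gamma$. That region is glued from $\hat{\mathbb{C}}\setminus\hat\Delta^n_\alpha$ and ${\rm int}(\hat\Delta^n_\alpha)\setminus\overline{\Omega}$ along $\partial\hat\Delta^n_\alpha\setminus\hat I_n$. This is precisely what Steps 2--3 assert without proof, and the tools you cite do not supply it:
\begin{enumerate}
\item Lemmas \ref{l8271}, \ref{l7291} and \ref{l831} concern full-plane or outer widths relative to $\partial\Delta_\alpha$. They never give inner widths of $\hat\Delta^n_\alpha$.
\item Lemma \ref{l8271} yields at best a dual lower bound of order $1/C$. It does not give the width $>1/2$ inside $\hat\Delta^n_\alpha$ that Lemma \ref{l825a} requires.
\item Even with such a rectangle, Lemma \ref{l825a} only confines $\gamma_n$ qualitatively to a filling-in. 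That gives no bounded-turning estimate.
\item In Step 3, a curve in $\hat{\mathbb{C}}\setminus\overline{\Omega}$ joining two arcs of $\Gamma$ may cross $\partial\hat\Delta^n_\alpha\setminus\hat I_n$ any number of times. Such a curve lies in neither of your two families, so subadditivity says nothing about it.
\end{enumerate}
Controlling these alternating curves is a welding problem of the type of Lemma \ref{l12} or Lemma \ref{l7311}. It cannot be carried out along all of $\partial\hat\Delta^n_\alpha\setminus\hat I_n$. The level $\mathcal{T}_{-1}$ has one tile per interval of $\mathfrak{D}_n(\alpha)$, hence unboundedly many tiles. Moreover, (P1) does not control the shape of $\hat\Delta^n_\alpha$ at scales larger than a single level-$(-1)$ tile; long fingers built from many small tiles are compatible with it. Separately, the quasicircle criterion you state is garbled. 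What is needed is comparability of the moduli of the interior and exterior quadrilaterals of $\Gamma$ with the same four vertices.

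\textbf{What is missing: localisation to three tiles.} Restrict attention to $\hat I_l,\hat I_n,\hat I_r\in\mathcal{T}_{-1}$. Your Step 1 is a reasonable ingredient, but it must be run on $\hat I_l\cup\hat I_n\cup\hat I_r$, not on $\hat I_n$ alone, in order to see the corners.
\begin{enumerate}
\item \emph{Definite collar.} The top-level inner bounds $\mathcal{W}^-_{3,\mathcal{T}}\le C$ from (P1), applied to $\hat I_n$ and its two neighbours, force $\psi(\hat I_l)$ and $\psi(\hat I_r)$ to be arcs of angle at least $\epsilon(C)>0$ at $\mp1$. (If $\mathcal{T}_{-1}$ has only boundedly many tiles, the combinatorics is globally bounded and this is easier.) Hence $\overline H$ has a neighbourhood $V$ with ${\rm mod}(V\setminus\overline H)\succeq1$, meeting $\partial\mathbb{D}$ only inside $\psi(\hat I_l\cup\hat I_n\cup\hat I_r)$. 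The outer bounds give the analogous collar on the other side of $\partial\hat\Delta^n_\alpha$.
\item \emph{Local welding.} On $\hat I_l\cup\hat I_n\cup\hat I_r$ the restricted nest has genuinely $M$-bounded combinatorics and $(C,M)$-bounded inner and outer geometry at every deeper level. From this one shows that the inner and outer boundary correspondences along this arc are quasisymmetrically related. This is the only place a welding argument is needed, and it runs over only three tiles.
\item \emph{Conclusion.} Consequently $\psi^{-1}$ extends to a $K(C,M)$-quasiconformal embedding of $V$. Then $\Gamma=\psi^{-1}(\partial H)$ is a $\mathfrak{K}$-quasicircle by quasiconformal distortion.
\end{enumerate}

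In this picture the endpoints of $\hat I_n$, which you single out as the main obstacle, are harmless. They are right angles in the model, and the welding is quasisymmetric there because of the deep tiles on both sides. What your proposal lacks is the definite collar and the local welding.
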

	For all closed quasidisk $D$, we let $I$ and $J$ be two  intervals on $\partial D$ with disjoint interiors. We denote by $\mathcal{G}_D^+(I,J)$ the rectangle on $\hat{\mathbb{C}}\setminus\overline{D}$ with $\partial^{h,0}\mathcal{G}_D^+(I,J)=I$, $\partial^{h,1}\mathcal{G}_D^+(I,J)=J$ and $\partial^{v,0}\mathcal{G}_D^+(I,J)$ (resp. $\partial^{v,1}\mathcal{G}_D^+(I,J)$) is a geodesic with respect to the hyperbolic metric on $\hat{\mathbb{C}}\setminus\overline{D}$. Similarly, we denote by $\mathcal{G}_D^-(I,J)$ the rectangle on  $D$ with $\partial^{h,0}\mathcal{G}_D^-(I,J)=I$, $\partial^{h,1}\mathcal{G}_D^-(I,J)=J$ and $\partial^{v,0}\mathcal{G}_D^-(I,J)$ (resp. $\partial^{v,1}\mathcal{G}_D^-(I,J)$) is a geodesic with respect to the hyperbolic metric on ${\rm int}(D)$.
	By [Log-Rule, \cite{DL}], the following two results hold:
	\begin{equation}
		\label{e8221}\mathcal{W}_D^+(I,J)\preceq ({\rm resp.} \succeq)1\Rightarrow\mathcal{W}(\mathcal{G}_D^+(I,J))\preceq ({\rm resp.} \succeq)1,
	\end{equation}
	and
	\begin{equation}
		\label{e8222}
		\mathcal{W}_D^-(I,J)\preceq ({\rm resp.} \succeq)1\Rightarrow\mathcal{W}(\mathcal{G}_D^-(I,J))\preceq ({\rm resp.} \succeq)1.
	\end{equation}
	Indeed, we can choose an appropriate conformal map $\phi$ from ${\rm int}(D)$ to $\mathbb{D}$ with the homeomorphism extension  $\tilde{\phi}$ on $\overline{D}$ such that 
	$\tilde{\phi}(I)$ and $\tilde{\phi}(J)$ have the same length, and at the same time the two components of $\partial\mathbb{D}\setminus(\tilde{\phi}(I)\cup\tilde{\phi}(J))$ also have the same length. If $\mathcal{W}_D^-(I,J)\preceq ({\rm resp.} \succeq)1$, then $\mathcal{W}_{\overline{\mathbb{D}}}^-(\tilde{\phi}(I),\tilde{\phi}(J))\preceq ({\rm resp.} \succeq)1$. By [Log-Rule, \cite{DL}] the length of $\tilde{\phi}(I)$ and  $\tilde{\phi}(J)$ is at most (resp. at least) comparable with the length of two components of $\partial\mathbb{D}\setminus(\tilde{\phi}(I)\cup\tilde{\phi}(J))$. This implies $\mathcal{W}(\mathcal{G}_{\overline{\mathbb{D}}}^-(\tilde{\phi}(I),\tilde{\phi}(J)))\preceq ({\rm resp.} \succeq)1$. Pulling back to $D$ by $\tilde{\phi}$, we obtain (\ref{e8222}).
	Similarly, we can also obtain (\ref{e8221}).
	Moreover,
	by Lemma \ref{l825a}, we have
	\begin{equation}
		\label{e8223} \mathcal{W}_D^+(I,J)\gg1\Rightarrow\mathcal{W}(\mathcal{G}_D^+(I,J))\gg1,
	\end{equation}
	and
	\begin{equation}
		\label{e8224}
		\mathcal{W}_D^-(I,J)\gg1\Rightarrow\mathcal{W}(\mathcal{G}_D^-(I,J))\gg1.
	\end{equation}
	
	By (\ref{e8221}) and Lemma \ref{l8271}, the following lemma holds:
	
	
	\begin{lemma}
		\label{l7301}For all $n\geq4$ and any positive integer $k\gg1$, $I_n\in\mathfrak{D}_n(\alpha)$ and $I_{n+k}\in\mathfrak{D}_{n+k}(\alpha)$ with $I_{n+k}\subseteq I_n$, we have $$\mathcal{W}(\mathcal{F}_{\over{\Delta}_{\alpha}}^+(I_{n+k},[3I_n]^c))\preceq\frac{1}{k}.$$
		As a consequence, if $I$ is an interval on $\partial\Delta_{\alpha}$, $I_{n+k}\subseteq I$ and both two components of $\overline{I\setminus I_{n+k}}$ contain at least one interval of $\mathfrak{D}_n$, then 
		$$\mathcal{W}(\mathcal{F}_{\over{\Delta}_{\alpha}}^+(I_{n+k},\overline{\partial\Delta_{\alpha}\setminus I}))\preceq\frac{1}{k}.$$
	\end{lemma}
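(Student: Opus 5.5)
The plan is a series/Grötzsch-type argument. Curves from $I_{n+k}$ to $[3I_n]^c$ outside $\overline{\Delta_\alpha}$ must cross about $k$ nested, essentially disjoint "annular" layers, one for each intermediate level $n+j$. Each layer has width bounded by an absolute constant, which comes from Lemma \ref{l8271} together with (\ref{e8221}). The serial law for extremal length then gives total width $\preceq 1/k$.

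\textbf{Step 1 (nested chain).} For each $0\le j\le k$, let $J_j\in\mathfrak{D}_{n+j}(\alpha)$ be the interval of level $n+j$ containing $I_{n+k}$. Then $J_0=I_n$, $J_k=I_{n+k}$, and the $J_j$ are nested. Since every interval of level $m$ splits into at least two intervals of level $m+2$, we can pass to every other level (or every few levels) and still keep $\succeq k$ indices. Along this subsequence, $J_{j+2}$ together with its two neighbours $[3J_{j+2}]$ has a complement meeting $J_j$'s neighbourhood in a definite way.

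Choose $j_1<j_2<\dots<j_s$, with $s\asymp k$, spaced by a fixed number of levels, so that
\[
[3J_{j_{i+1}}]\subseteq J_{j_i}, \quad \text{or at least} \quad [3J_{j_{i+1}}]\subseteq [3J_{j_i}].
\]
For $n\ge 4$, each $\mathfrak{D}_{n+j}$ has at least four intervals, so Lemma \ref{l8271} gives $\mathcal{W}(\mathcal{F}(J_{j_i},[3J_{j_i}]^c))\asymp 1$.

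\textbf{Step 2 (separating layers).} Consider the outer family $\mathcal{F}^+_{\overline{\Delta_\alpha}}$. For each $i$, let $\mathcal{G}_i=\mathcal{G}^+_{\overline{\Delta_\alpha}}(J_{j_i},[3J_{j_i}]^c)$ be the geodesic rectangle. By (\ref{e8221}) and Lemma \ref{l8271}, $\mathcal{W}(\mathcal{G}_i)\preceq 1$. Its complement in $\hat{\mathbb{C}}\setminus\overline{\Delta_\alpha}$ contains the region $O_i$ cut off by the hyperbolic geodesic joining the endpoints of $[3J_{j_i}]$. Any curve of $\mathcal{F}^+(I_{n+k},[3I_n]^c)$ starts inside $O_s$ and ends outside $O_1$, so it must cross each "collar" $O_i\setminus O_{i+1}$ between consecutive geodesics.

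The key quantitative input is that the family $\mathcal{H}_i$ of curves crossing $O_i\setminus O_{i+1}$ (from $\partial O_{i+1}$ to $\partial O_i$, outside $\overline{\Delta_\alpha}$) has width $\preceq 1$. This should follow because $\mathcal{H}_i$ is overflowed into $\mathcal{F}^+(J_{j_{i+1}},[3J_{j_{i+1}}]^c)$ up to the geodesic pieces, and hyperbolic geodesics in the outer domain are controlled by the Log-Rule bounds (\ref{e8221}).

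\textbf{Step 3 (serial law).} The collars are disjoint, and the family $\mathcal{F}^+(I_{n+k},[3I_n]^c)$ overflows each $\mathcal{H}_i$ in disjoint sets. The extremal length of a family overflowing disjoint families is at least the sum of their extremal lengths, so
\[
\mathcal{W}\bigl(\mathcal{F}^+(I_{n+k},[3I_n]^c)\bigr)\le \Bigl(\sum_i \mathcal{W}(\mathcal{H}_i)^{-1}\Bigr)^{-1}\preceq \frac{1}{s}\asymp\frac1k.
\]

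\textbf{Consequence.} If each component of $\overline{I\setminus I_{n+k}}$ contains an interval of $\mathfrak{D}_n$, then $\overline{\partial\Delta_\alpha\setminus I}$ is contained in $[3I'_n]^c$ for the level-$n$ interval $I'_n\supseteq I_{n+k}$. Hence the family in question overflows $\mathcal{F}^+(I_{n+k},[3I'_n]^c)$, and the bound follows by monotonicity.

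\textbf{Main obstacle.} I expect the hardest part to be Step 2: showing that consecutive collars have width $\preceq1$ rather than only that the wide families from Lemma \ref{l8271} are bounded. In particular, I must make sure the geodesic boundaries of consecutive levels are disjoint and nested; the non-winding property and Lemma \ref{l825a} should give this.
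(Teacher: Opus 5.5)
Your architecture (about $k$ disjoint geodesic-bounded layers in series, each contributing extremal length $\succeq1$) is the same as the paper's. However, the one estimate that makes it work is not established, and the justification you sketch for it fails.

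The rectangles $\mathcal{G}^+_{\overline{\Delta_\alpha}}(J_{j_i},[3J_{j_i}]^c)$ of width $\preceq1$ play no role. An upper bound on the width of a family running from $J_{j_i}$ to the far complement gives no lower bound on extremal length in series. Also, $O_i$ is not contained in their complement, since both contain the points just outside $J_{j_i}$.

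More importantly, $\mathcal{H}_i$ does not overflow $\mathcal{F}^+_{\overline{\Delta_\alpha}}(J_{j_{i+1}},[3J_{j_{i+1}}]^c)$. Its curves run from the geodesic $\partial O_{i+1}$ to the geodesic $\partial O_i$ and need not touch $J_{j_{i+1}}$ at all. The reverse overflow would bound $\mathcal{W}(\mathcal{H}_i)$ from below, not above. So the bound $\mathcal{W}(\mathcal{H}_i)\preceq1$, which is exactly what Step 3 consumes, is left unproved.

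The missing idea is to pass to the complementary family. Let $L_i,R_i$ be the two components of $\overline{[3J_{j_i}]\setminus[3J_{j_{i+1}}]}$. The collar $\overline{O_i\setminus O_{i+1}}$ is precisely the geodesic rectangle $\mathcal{G}^+_{\overline{\Delta_\alpha}}(L_i,R_i)$, and $\mathcal{H}_i$ is its dual family, so $\mathcal{W}(\mathcal{H}_i)=1/\mathcal{W}(\mathcal{G}^+_{\overline{\Delta_\alpha}}(L_i,R_i))$.

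By (\ref{e8221}) it suffices to show $\mathcal{W}^+_{\overline{\Delta_\alpha}}(L_i,R_i)\succeq1$. By four-point duality in $\hat{\mathbb{C}}\setminus{\rm int}(\Delta_\alpha)$, this quantity equals $1/\mathcal{W}^+_{\overline{\Delta_\alpha}}([3J_{j_{i+1}}],[3J_{j_i}]^c)$.

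Write $[3J_{j_{i+1}}]=K_1\cup K_2\cup K_3$ with $K_m\in\mathfrak{D}_{n+j_{i+1}}(\alpha)$. A level gap $j_{i+1}-j_i\geq2$ guarantees $[3K_m]\subseteq[3J_{j_i}]$ for $m=1,2,3$, because each neighbour of $J_{j_i}$ splits into at least two intervals of level $n+j_{i+1}$. Subadditivity and Lemma \ref{l8271} then give $\mathcal{W}^+_{\overline{\Delta_\alpha}}([3J_{j_{i+1}}],[3J_{j_i}]^c)\leq\sum_{m=1}^3\mathcal{W}^+_{\overline{\Delta_\alpha}}(K_m,[3K_m]^c)\preceq1$.

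This is exactly the paper's computation. The paper uses four consecutive intervals $I^{(1)}\#I^{(2)}\#I^{(3)}\#I^{(4)}$ of level $n+k-4t$ with the target inside $I^{(2)}\cup I^{(3)}$, and the rectangles $\mathcal{G}^+_{\overline{\Delta_\alpha}}(I^{(1)},I^{(4)})$ in place of your collars.

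That choice also sidesteps a second point in your Step 1: the nesting $[3J_{j_{i+1}}]\subseteq J_{j_i}$ cannot be arranged in general. For instance, if $I_{n+k}$ has $c_0$ as an endpoint, all the $J_j$ share that endpoint. So you must work with the weaker nesting $[3J_{j_{i+1}}]\subsetneq[3J_{j_i}]$ together with the inclusions $[3K_m]\subseteq[3J_{j_i}]$ checked above.

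Once this is in place, Step 3 and your deduction of the consequence are fine.
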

	\begin{proof}
		Let $k\gg1$.
		Let $I_{n+k-4}^{(1)}$, $I_{n+k-4}^{(2)}$, $I_{n+k-4}^{(3)}$ and $I_{n+k-4}^{(4)}$ be four intervals of level $n+k-4$ such that
		\begin{itemize}
			\item $I_{n+k-4}^{(1)}\#I_{n+k-4}^{(2)}\#I_{n+k-4}^{(3)}\#I_{n+k-4}^{(4)}$,
			\item $I_{n+k}\subseteq I_{n+k-4}^{(2)}\cup I_{n+k-4}^{(3)}$.
		\end{itemize}
		For all $s<\frac{k}{4}$, we let 
		$I_{n+k-4s}^{(1)}$, $I_{n+k-4s}^{(2)}$, $I_{n+k-4s}^{(3)}$ and $I_{n+k-4s}^{(4)}$ be four intervals of level $n+k-4s$ such that
		\begin{itemize}
			\item $I_{n+k-4s}^{(1)}\#I_{n+k-4s}^{(2)}\#I_{n+k-4s}^{(3)}\#I_{n+k-4s}^{(4)}$,
			\item $I_{n+k-(s-1)4}^{(1)}\cup I_{n+k-(s-1)4}^{(2)}\cup I_{n+k-(s-1)4}^{(3)}\cup I_{n+k-(s-1)4}^{(4)}\subseteq I_{n+k-4s}^{(2)}\cup I_{n+k-4s}^{(3)}$.
		\end{itemize}
		We take $s= \lfloor\frac{k}{8}\rfloor$.
		Then
		$\mathcal{G}_{\overline{\Delta_\alpha}}^+(I_{n+k-4t}^{(1)},I_{n+k-4t}^{(4)})$, $1\leq t\leq s$ are pairwise disjoint, and separate $I_{n+k}$ and $[3I_n]^c$ on $\hat{\mathbb{C}}\setminus\Delta_\alpha$.
		Thus
		\begin{equation}
			\label{e20260607a}\mathcal{W}(\mathcal{F}_{\overline{\Delta_\alpha}}^+(I_{n+k},[3I_n]^c))\leq\frac{1}{\sum_{t=1}^s\mathcal{W}(\mathcal{G}_{\overline{\Delta_\alpha}}^+(I_{n+k-4t}^{(1)},I_{n+k-4t}^{(4)}))}.
		\end{equation}
		For all $t$ with $1\leq t\leq s$,
		Lemma \ref{l8271} gives
		$\mathcal{W}_{\overline{\Delta_\alpha}}^+(I_{n+k-4t}^{(j)},[3I_{n+k-4t}^{(j)}]^c)\preceq1$ for $j=1,2,3,4$.
		Then 
		\begin{align*}
			\mathcal{W}_{\overline{\Delta_\alpha}}^+(I_{n+k-4t}^{(1)},I_{n+k-4t}^{(4)})&=\frac{1}{\mathcal{W}_{\overline{\Delta_\alpha}}^+\left(I_{n+k-4t}^{(2)}\cup I_{n+k-4t}^{(3)},\overline{\partial\Delta_{\alpha}\setminus\bigcup_{j=1}^4I_{n+k-4t}^{(j)}}\right)}\\
			&\geq\frac{1}{\mathcal{W}_{\overline{\Delta_\alpha}}^+(I_{n+k-4t}^{(2)},[3I_{n+k-4t}^{(2)}]^c)+\mathcal{W}_{\overline{\Delta_\alpha}}^+(I_{n+k-4t}^{(3)},[3I_{n+k-4t}^{(3)}]^c)}\\
			&\succeq1.
		\end{align*}
		By (\ref{e8221}) we have
		$$\mathcal{W}(\mathcal{G}_{\overline{\Delta_\alpha}}^+(I_{n+k-4t}^{(1)},I_{n+k-4t}^{(4)}))\succeq1,\ 1\leq t\leq s.$$
		By combining (\ref{e20260607a}), it follows that $$\mathcal{W}(\mathcal{F}_{\overline{\Delta_\alpha}}^+(I_{n+k},[3I_n]^c))\preceq\frac{1}{s}\asymp\frac{1}{k}.$$
	\end{proof}

	\subsection{Welding of inner and outer geometries.\label{s2.4}}
	\begin{lemma}
		\label{l7311}Let  $\mathcal{T}=\{\mathcal{T}_n\}_{n\geq-1}$ be a nest of tilings of a closed quasidisk $D$ with post-{\rm(}C,M{\rm)}-bounded inner and outer geometries. Let $I$ and $J$ be two disjoint intervals on $\partial D$ such that
		$|I|_{\mathcal{T}}\asymp|J|_{\mathcal{T}}$, $|I|_{\mathcal{T}}<1$ and $|J|_{\mathcal{T}}<1$.
		Let $\mathcal{R}_1$ be a rectangle on $\overline{\hat{\mathbb{C}}\setminus D}$ such that $\partial^{h,0}\mathcal{R}_1=I$, $\partial^{h,1}\mathcal{R}_1=J$ and $\mathcal{W}(\mathcal{R}_1)\succeq1$. Let $\mathcal{R}_2$ be a rectangle on $D$ such that $\partial^{h,0}\mathcal{R}_2=I$, $\partial^{h,1}\mathcal{R}_2=J$ and $\mathcal{W}(\mathcal{R}_2)\succeq1$. We spit $I$ into $I_1\# I_2\# I_3$ such that $|I_1|_{\mathcal{T}}\asymp|I_3|_{\mathcal{T}}\preceq|I_2|_{\mathcal{T}}$ and spit $J$ into $J_1\# J_2\# J_3$ such that $|J_1|_{\mathcal{T}}\asymp|J_3|_{\mathcal{T}}\preceq|J_2|_{\mathcal{T}}$. If there exist rectangles $\mathcal{R}_1^{(1)}$, $\mathcal{R}_1^{(2)}$, $\mathcal{R}_2^{(1)}$ and $\mathcal{R}_2^{(2)}$
		such that
		\begin{itemize}
			\item $\mathcal{R}_1^{(1)}\cup\mathcal{R}_1^{(2)}\subseteq\mathcal{R}_1$, $\mathcal{R}_1^{(1)}\cap\mathcal{R}_1^{(2)}=\emptyset$, $\partial^{h,0}\mathcal{R}_1^{(1)}=I_1$, $\partial^{h,1}\mathcal{R}_1^{(1)}=J_3$, $\partial^{h,0}\mathcal{R}_1^{(2)}=I_3$, $\partial^{h,1}\mathcal{R}_1^{(2)}=J_1$, $\mathcal{W}(\mathcal{R}_1^{(1)})\succeq1$ and $\mathcal{W}(\mathcal{R}_1^{(2)})\succeq1$,
			\item $\mathcal{R}_2^{(1)}\cup\mathcal{R}_2^{(2)}\subseteq\mathcal{R}_2$, $\mathcal{R}_2^{(1)}\cap\mathcal{R}_2^{(2)}=\emptyset$, $\partial^{h,0}\mathcal{R}_2^{(1)}=I_1$, $\partial^{h,1}\mathcal{R}_2^{(1)}=J_3$, $\partial^{h,0}\mathcal{R}_2^{(2)}=I_3$ and $\partial^{h,1}\mathcal{R}_2^{(2)}=J_1$, $\mathcal{W}(\mathcal{R}_2^{(1)})\succeq1$ and $\mathcal{W}(\mathcal{R}_2^{(2)})\succeq1$,
		\end{itemize}
		then the annulus $\mathcal{R}=(\mathcal{R}_1\cup\mathcal{R}_2)^{\comp}$, the interior of the union $\mathcal{R}_1\cup\mathcal{R}_2$, satisfies ${\rm mod}(\mathcal{R})\succeq_{C,M}1$.
	\end{lemma}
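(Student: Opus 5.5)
We bound ${\rm mod}(\mathcal{R})$ from below by exhibiting a curve family that must cross $\mathcal{R}$ and showing its width is $\preceq_{C,M}1$. Here $\mathcal{R}$ is the interior of $\mathcal{R}_1\cup\mathcal{R}_2$. Since $\mathcal{R}_1$ and $\mathcal{R}_2$ are glued along $I$ and $J$, $\mathcal{R}$ is an annulus. One boundary component of $\mathcal{R}$ consists of the left sides $\partial^{v,0}\mathcal{R}_1\cup\partial^{v,0}\mathcal{R}_2$, the other of the right sides. Using ${\rm mod}(\mathcal{R})=1/\mathcal{W}(\mathcal{F}^v(\mathcal{R}))$, it suffices to show $\mathcal{W}(\mathcal{F}^v(\mathcal{R}))\preceq_{C,M}1$.

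\textbf{Step 1: classify a crossing curve.} Let $\gamma\in\mathcal{F}^v(\mathcal{R})$, so $\gamma$ joins the two complementary components of $\mathcal{R}$ inside $\overline{\mathcal{R}}$. Whenever $\gamma$ passes from $\mathcal{R}_1$ to $\mathcal{R}_2$ it crosses $\partial D$, and it does so only through $I$ or $J$. We split $\gamma$ according to how it meets the disjoint subrectangles $\mathcal{R}_1^{(1)},\mathcal{R}_1^{(2)},\mathcal{R}_2^{(1)},\mathcal{R}_2^{(2)}$. These four rectangles form two "bands", one from $I_1$ to $J_3$ and one from $I_3$ to $J_1$, each with an outer half and an inner half. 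Together they form a closed ring of bands: $\mathcal{R}_1^{(1)}\cup\mathcal{R}_2^{(1)}$ links $I_1$ with $J_3$ on both sides, and similarly for the other band. The combinatorial point is that any curve going from one boundary component of $\mathcal{R}$ to the other must cross at least one of the four rectangles $\mathcal{R}_s^{(j)}$ transversally, from its left side to its right side. The reason is that a band $\mathcal{R}_1^{(j)}\cup\mathcal{R}_2^{(j)}$ separates the annulus $\mathcal{R}$ into pieces. There is one exception: $\gamma$ may avoid all four bands by travelling through $I_2$ or $J_2$ from one side of $\partial D$ to the other, i.e. through the gap between the two bands. In that case $\gamma$ contains a subarc joining $I_2$ to the region beyond $I_1$ or $I_3$, running through $\mathcal{R}_1$ or $\mathcal{R}_2$. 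The same holds for $J_2$.

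\textbf{Step 2: estimate each subfamily.} Curves crossing $\mathcal{R}_s^{(j)}$ from left to right cross that rectangle, so their family has width $\le 1/\mathcal{W}(\mathcal{R}_s^{(j)})\preceq1$. This gives four families of width $\preceq1$. For curves of the exceptional type, we overflow families of the form $\mathcal{F}_D^{\pm}(I_2',\overline{\partial D\setminus(I_1'\cup I_2'\cup I_3')})$. Here $I_2'=I_2$ (or $J_2$) and $I_1',I_3'$ are neighbouring intervals chosen with $|I_1'|_{\mathcal{T}},|I_3'|_{\mathcal{T}}\succeq|I_2|_{\mathcal{T}}$. Taking $I_1'\supseteq I_1$ and $I_3'\supseteq I_3$ extended along $\partial D$ is possible because $|I_1|\asymp|I_3|$, $|I|\asymp|J|$, and all lengths are $<1$. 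Such a curve lies in $\mathcal{R}_1$ or $\mathcal{R}_2$ and must leave the part of $\partial D$ near $I_2$ without passing through the bands. Hence it overflows a curve from $I_2$ to $\overline{\partial D\setminus(I_1'\cup I_2'\cup I_3')}$ lying in $D$ or in $\hat{\mathbb{C}}\setminus{\rm int}(D)$. Lemma \ref{l811} then bounds these four families by $\preceq_{C,M}1$. If the length hypothesis of Lemma \ref{l811} fails directly, first subdivide $I_2$ into $O_{C,M}(1)$ pieces of length $\asymp|I_1|_{\mathcal{T}}$ and apply the lemma to each piece, as in the proof of Lemma \ref{l831}. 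Summing the $O(1)$ families by subadditivity gives $\mathcal{W}(\mathcal{F}^v(\mathcal{R}))\preceq_{C,M}1$, hence ${\rm mod}(\mathcal{R})\succeq_{C,M}1$.

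\textbf{Main obstacle.} The delicate step is the topological claim in Step 1. We must justify that a curve which avoids crossing every band $\mathcal{R}_s^{(j)}$ transversally really overflows one of the families controlled by Lemma \ref{l811}. It must not, for instance, sneak along $I_1$ inside a band without crossing it. I would handle this first by fixing the cyclic order $I_1<I_2<I_3<\cdots<J_1<J_2<J_3$ on $\partial D$ and tracking the first and last points where $\gamma$ meets $\partial D$. I would then use the Jordan curve theorem on the closed curves formed by the vertical sides of the bands together with $I$ and $J$. If the bookkeeping resists, a fallback is a length–area argument: put the metric $\rho=\rho_1^{(1)}+\rho_1^{(2)}+\rho_2^{(1)}+\rho_2^{(2)}+\rho_{\rm gap}$, built from extremal metrics of each band and of the Lemma \ref{l811} families, and check that every $\gamma$ has $\rho$-length $\ge1$.
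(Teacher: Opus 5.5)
Your argument has a genuine gap: the dichotomy in your Step 1 is false, and the problem is substantive, not a matter of bookkeeping.

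\textbf{Where Step 1 fails.} The two halves of a band, e.g. $\mathcal{R}_1^{(2)}$ and $\mathcal{R}_2^{(2)}$, share their base $I_3$ and their roof $J_1$. So a curve can cross the ring $\mathcal{R}_1^{(2)}\cup\mathcal{R}_2^{(2)}$ without crossing either rectangle: it enters $\mathcal{R}_1^{(2)}$ through one vertical side, touches $I_3$, and leaves $\mathcal{R}_2^{(2)}$ through the other vertical side.

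Concretely, up to a conformal change of coordinates, take $D$ to be the lower half-plane, $I=[0,3]$ and $J=[6,9]$. Let $\mathcal{R}$ be the round annulus $1.5<|z-4.5|<4.5$, and let the four subrectangles be the half-annuli with radii in $[1.5,2.5]$ and $[3.5,4.5]$. Now take a curve from the inner circle to the outer one that changes half-planes only at $2.5\in I_3$ and at $0.5\in I_1$, and crosses the gap $2.5<|z-4.5|<3.5$ inside $D$. This curve belongs to $\mathcal{F}^v(\mathcal{R})$, crosses none of the four rectangles from side to side, and never meets $I_2\cup J_2$.

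So the curves that escape the band estimate are not the ones passing through $I_2$ or $J_2$, and the families $\mathcal{F}_D^{\pm}(I_2,\cdot)$ and $\mathcal{F}_D^{\pm}(J_2,\cdot)$ of your Step 2 do not control them. The overflow claim of Step 2 is unfounded in any case. Curves of $\mathcal{F}^v(\mathcal{R})$ end on the vertical sides of $\mathcal{R}_1$ and $\mathcal{R}_2$. They need not stay inside one of these rectangles, and they need not reach $\partial D$ far from $I$.

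\textbf{The correct split and the missing estimate.} The paper divides $\mathcal{F}^v(\mathcal{R})$ as follows.
\begin{itemize}
\item A curve avoiding $I_3\cup J_1$ must cross $\mathcal{R}_1^{(2)}$ or $\mathcal{R}_2^{(2)}$ from side to side.
\item A curve avoiding $I_1\cup J_3$ must cross $\mathcal{R}_1^{(1)}$ or $\mathcal{R}_2^{(1)}$ from side to side.
\item Every remaining curve meets both $I_3\cup J_1$ and $I_1\cup J_3$, so it overflows $\mathcal{F}(I_3\cup J_1,I_1\cup J_3)$.
\end{itemize}
What you are missing is the bound $\mathcal{W}(I_3\cup J_1,I_1\cup J_3)\preceq_{C,M}1$ for curves in all of $\hat{\mathbb{C}}$. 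A connecting subarc may cross $\partial D$ repeatedly, for instance through $I_2$ or $J_2$, so the one-sided bounds of Lemma \ref{l811} are not enough. You need Corollary \ref{c8201}, i.e. essentially bounded geometry coming from Lemma \ref{l12}.

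It is applied with $I_3$ as the middle interval and $I_2$ as one neighbour. For the other neighbour, take either:
\begin{itemize}
\item the arc from $I_3$ to the far end of $J$, which gives $\mathcal{W}(I_3,I_1)\preceq_{C,M}1$; or
\item the arc from $I_3$ through $J_1\cup J_2$, which gives $\mathcal{W}(I_3,J_3)\preceq_{C,M}1$.
\end{itemize}
The same is done symmetrically for $J_1$. This is where the hypotheses $|I_3|_{\mathcal{T}}\preceq|I_2|_{\mathcal{T}}$, $|I|_{\mathcal{T}}\asymp|J|_{\mathcal{T}}\preceq|J_2|_{\mathcal{T}}$ and $|I_3|_{\mathcal{T}}<1$ are actually used.

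Your ``main obstacle'' paragraph senses the issue of curves sneaking along $I_1$. But such curves genuinely exist, so neither Jordan-curve bookkeeping nor a metric $\rho$ assembled from the Lemma \ref{l811} families will close the argument.
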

	\begin{proof}
		Since $I_1<I_2<I_3<J$, $|I_3|_{\mathcal{T}}\preceq|I_2|_{\mathcal{T}}<|I|_{\mathcal{T}}\asymp|J|_{\mathcal{T}}$ and $|I_3|_{\mathcal{T}}\leq|I|_{\mathcal{T}}<1$, by Corollary \ref{c8201} we have 
		$\mathcal{W}(I_3,I_1)\preceq_{C,M}1$;
		since 
		$I_2<I_3<J_2<J_3$, $|I_3|_{\mathcal{T}}\preceq|I_2|_{\mathcal{T}}$, $|I_3|_{\mathcal{T}}\leq|I|_{\mathcal{T}}\asymp|J|_{\mathcal{T}}\preceq|J_2|_{\mathcal{T}}$ and $|I_3|_{\mathcal{T}}\leq|I|_{\mathcal{T}}<1$, by Corollary \ref{c8201} we have 
		$\mathcal{W}(I_3,J_3)\preceq_{C,M}1$. Then $\mathcal{W}(I_3,I_1\cup J_3)\preceq_{C,M}1$. Similarly, we can obtain $\mathcal{W}(J_1,I_1\cup J_3)\preceq_{C,M}1$.
		Thus 
		\begin{equation}
			\label{e20260221d}\mathcal{W}(I_3\cup J_1,I_1\cup J_3)\preceq_{C,M}1.
		\end{equation}
		
		Consider the family of curves in $\mathcal{R}$ connecting two components of $\mathbb{C}\setminus\mathcal{R}$, written as $\mathcal{F}^v(\mathcal{R})$. Since $\mathcal{W}(\mathcal{R}_1^{(1)})\succeq1$, $\mathcal{W}(\mathcal{R}_1^{(2)})\succeq1$, $\mathcal{W}(\mathcal{R}_2^{(1)})\succeq1$ and $\mathcal{W}(\mathcal{R}_2^{(2)})\succeq1$,
		we have that the width of curves in $\mathcal{F}^v(\mathcal{R})$ not intersecting $I_3\cup J_1$ or $I_1\cup J_3$ is $\preceq1$. 
		By combining (\ref{e20260221d}), it follows that $\mathcal{W}(\mathcal{F}^v(\mathcal{R}))\preceq_{C,M}1$. Thus 
		${\rm mod}(\mathcal{R})\succeq_{C,M}1$.
		
	\end{proof}
	
	For all $n\geq2$, we let $T_1$ be the interval of $\mathfrak{D}_{n}(\alpha)$ containing $P_{\alpha}(c_{0})$, that is $T_1=I_{[x_{q_{n+1}-1},x_{q_n-1}]}$, and let $T_2$ be the interval of $\mathfrak{D}_{n+10}(\alpha)$ containing $P_{\alpha}(c_{0})$, that is $T_2=I_{[x_{q_{n+11}-1},x_{q_{n+10}-1}]}$.
	Let $I_{2,l}$ and $I_{2,r}$ be the two components of $\overline{T_1\setminus T_2}$ such that $I_{2,l}<T_2<I_{2,r}$. Let $a$ and $b$ be two endpoints of $I_{2,l}$, where $a$ is also an endpoint of $T_1$; let $c$ and $d$ be two endpoints of $I_{2,r}$, where $d$ is also an endpoint of $T_1$. Let $\gamma_{a,d}$ be the geodesic connecting $a$ and $d$ with respect to the hyperbolic metric on $\hat{\mathbb{C}}\setminus\overline{\Delta_{\alpha}}$; let $\gamma_{b,c}$ be the geodesic connecting $b$ and $c$ with respect to the hyperbolic metric on $\hat{\mathbb{C}}\setminus\overline{\Delta_{\alpha}}$.
	By Lemma \ref{l9301} we have that $I_{2,l}$ and $I_{2,r}$ are regular intervals rel $\partial\hat{\Delta}_{\alpha}^{m}$ ($m\leq n$). Let $\hat{I}_{2,l}$ and $\hat{I}_{2,r}$ be projections of $I_{2,l}$ and $I_{2,r}$ onto $\partial\hat{\Delta}_{\alpha}^{m}$, respectively.
	We let $\mathcal{G}$ be the interior of the union $\mathcal{G}^-_{\hat{\Delta}_{\alpha}^m}(\hat{I}_{2,l},\hat{I}_{2,r})\cup\hat{\mathcal{G}}^+_{\overline{\Delta_{\alpha}}}(\hat{I}_{2,l},\hat{I}_{2,r})$, where $\hat{\mathcal{G}}^+_{\overline{\Delta_{\alpha}}}(\hat{I}_{2,l},\hat{I}_{2,r})$ is the rectangle bounded by $\hat{I}_{2,l}$, $\hat{I}_{2,r}$, $\gamma_{a,d}$ and $\gamma_{b,c}$ with $\partial^{h,0}\hat{\mathcal{G}}^+_{\overline{\Delta_{\alpha}}}(\hat{I}_{2,l},\hat{I}_{2,r})=\hat{I}_{2,l}$ and $\partial^{h,1}\hat{\mathcal{G}}^+_{\overline{\Delta_{\alpha}}}(\hat{I}_{2,l},\hat{I}_{2,r})=\hat{I}_{2,r}$. See Figure \ref{f20260720c}.
	\begin{figure}
		\centering
		\includegraphics[scale=0.5]{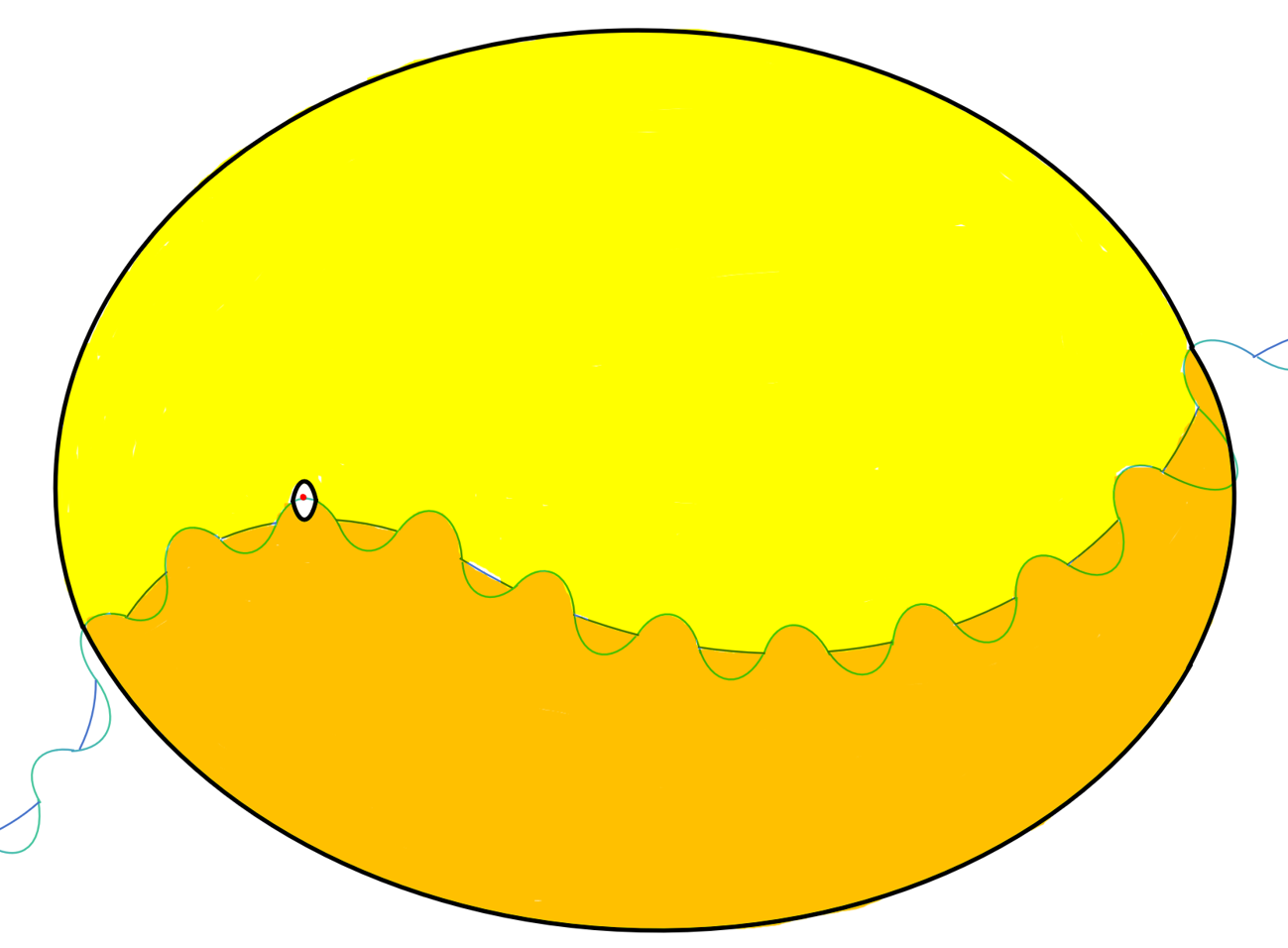}
		\caption{The orange area represents $\mathcal{G}^-_{\hat{\Delta}_{\alpha}^m}(\hat{I}_{2,l},\hat{I}_{2,r})$; the yellow area represents $\hat{\mathcal{G}}^+_{\overline{\Delta_{\alpha}}}(\hat{I}_{2,l},\hat{I}_{2,r})$; the red point represents $P_{\alpha}(c_0)$.}
		\label{f20260720c}
	\end{figure}
	We have the following lemma.
	
	\begin{lemma}
		\label{l832}${\rm mod}(\mathcal{G})\succeq1$.
	\end{lemma}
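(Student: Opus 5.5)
The plan is to derive Lemma \ref{l832} from the welding Lemma \ref{l7311}. We apply it to the closed quasidisk $D=\hat{\Delta}_{\alpha}^m$, with the nest of tilings $\mathcal{T}$ from (P1), and take $I=\hat{I}_{2,l}$ and $J=\hat{I}_{2,r}$. The inner rectangle is $\mathcal{R}_2=\mathcal{G}^-_{\hat{\Delta}_{\alpha}^m}(\hat{I}_{2,l},\hat{I}_{2,r})$. The outer rectangle is $\mathcal{R}_1=\hat{\mathcal{G}}^+_{\overline{\Delta_{\alpha}}}(\hat{I}_{2,l},\hat{I}_{2,r})$, which lies in $\overline{\hat{\mathbb{C}}\setminus \hat{\Delta}_{\alpha}^m}$ up to the fjords it crosses. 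By Lemma \ref{l9301}, the points $a=x_{q_{n+1}-1}$, $b=x_{q_{n+11}-1}$, $c=x_{q_{n+10}-1}$ and $d=x_{q_n-1}$ are well-grounded, so $I_{2,l}$ and $I_{2,r}$ project to well-grounded intervals.

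\textbf{Combinatorial hypotheses.} By (P1), projections of level-$k$ intervals are tiles of level $k-m-1$. The intervals $I_{2,l}$ and $I_{2,r}$ are each made of a bounded number of tiles between levels $n$ and $n+10$, and each contains an interval of level $n+1$ (the part of $T_1$ on either side of $T_2$). Hence $|\hat{I}_{2,l}|_{\mathcal{T}}\asymp|\hat{I}_{2,r}|_{\mathcal{T}}$, with constants depending only on $M$ and the fixed depth $10$. Both lengths are $<1$ since $n\ge 2$.

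\textbf{Widths of $\mathcal{R}_1$ and $\mathcal{R}_2$.} The two components of $\overline{\partial\hat\Delta^m_\alpha\setminus(\hat I_{2,l}\cup\hat I_{2,r})}$ are $\hat T_2$ and the complement of $\hat T_1$. Here $\hat T_2$ has length comparable to $M^{-10}$ times that of $\hat I_{2,l}$, and the complement of $\hat T_1$ is long. So $\mathcal{W}^\pm(\hat I_{2,l},\hat I_{2,r})\succeq1$: the curves crossing the other way have to separate a very short gap $\hat T_2$ from the long complement, and Lemma \ref{l811} bounds their width. Then (\ref{e8222}) gives $\mathcal{W}(\mathcal{R}_2)\succeq1$. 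For $\mathcal{R}_1$, Lemma \ref{l7291} transfers the outer width to $\overline{\Delta_\alpha}$, and (\ref{e8221}) then gives $\mathcal{W}(\mathcal{R}_1)\succeq 1$. In that step we use that $\gamma_{a,d}$ and $\gamma_{b,c}$ are geodesics for $\hat{\mathbb C}\setminus\overline{\Delta_\alpha}$ and that the dams are protected by (P2).

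\textbf{Subrectangles.} Split $\hat I_{2,l}=I_1\#I_2\#I_3$ with $I_1$ the tile adjacent to $a$ and $I_3$ the tile adjacent to $b$, each of size $\asymp$ a fixed fraction of the whole. Split $\hat I_{2,r}=J_1\#J_2\#J_3$ similarly, with $J_1$ adjacent to $c$ and $J_3$ adjacent to $d$.

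For the rectangles in Lemma \ref{l7311}, take $\mathcal{R}_i^{(1)}$ to be the sub-rectangle of $\mathcal{R}_i$ between $I_1$ and $J_3$ (the ``outer'' pair near $a,d$), and $\mathcal{R}_i^{(2)}$ the sub-rectangle between $I_3$ and $J_1$ (near $b,c$). Their widths are $\succeq1$ by the same argument via Lemma \ref{l811}, (\ref{e8221}) and (\ref{e8222}). Disjointness follows from planarity once we choose them as geodesic rectangles $\mathcal{G}^\pm(I_1,J_3)$ and $\mathcal{G}^\pm(I_3,J_1)$. Two facts make this work: the geodesic sides are nested (Lemma \ref{l825a}), and geodesic rectangles of width $\succeq 1$ with these bases sit inside $\mathcal{R}_i$, because $I_1$ and $J_3$ are separated from $I_3$ and $J_1$ on the circle. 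Lemma \ref{l7311} then yields $\mathrm{mod}(\mathcal{G})\succeq_{C,M}1$, which is $\succeq1$ since $C$ and $M$ are universal.

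\textbf{Main obstacle.} The hardest step is the outer side. $\hat{\mathcal{G}}^+$ is bounded by geodesics of $\hat{\mathbb C}\setminus\overline{\Delta_\alpha}$ rather than of the complement of $\hat\Delta^m_\alpha$. We must check that it is a genuine rectangle outside $\hat\Delta^m_\alpha$, i.e.\ that $\gamma_{a,d}$ and $\gamma_{b,c}$ do not enter the fjords. We also need its width, and the widths of its subrectangles, to be comparable with those computed for $\hat\Delta_\alpha^m$. I would handle this with the protection statement (P2) and Lemma \ref{l861}, arguing that the geodesics stay outside the big fjords because their endpoints are well-grounded, together with Lemma \ref{l825a}.
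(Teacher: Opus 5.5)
Your proposal is correct and is essentially the paper's proof. The paper also applies Lemma \ref{l7311} on $\hat{\Delta}_{\alpha}^m$, taking $I_1,I_3,J_1,J_3$ to be the level-$(n+10)$ tiles $\hat T_5,\hat T_3,\hat T_4,\hat T_6$ at $a,b,c,d$; it gets the inner widths from Lemma \ref{l811} and (\ref{e8222}), and the outer widths from Lemma \ref{l831} (that is, Lemma \ref{l811} plus Lemma \ref{l7291}), (\ref{e8221}), and the (P2)-protection combined with Lemma \ref{l861}, exactly as you describe.

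Two small imprecisions do not affect the argument. First, the side of $T_1\setminus T_2$ next to $x_{q_{n+1}-1}$ contains no full level-$(n+1)$ interval; the comparability $|\hat I_{2,l}|\asymp|\hat I_{2,r}|$ instead follows because each is strictly inside $\hat T_1$ and contains a level-$(n+10)$ tile. Second, $\hat T_2$ is comparable to $\hat I_{2,l}$ rather than much shorter, but $|\hat T_2|\preceq\min\{|\hat I_{2,l}|,|\hat I_{2,r}|\}$ is all that Lemma \ref{l811} needs.
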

	\begin{proof}
		We let $T_3$ and $T_4$ be two intervals of $\mathfrak{D}_{n+10}(\alpha)$ such that $T_3\cup T_4\subseteq T_1$ and $T_3\#T_2\#T_4$;
		we let $T_5$ and $T_6$ be two intervals of $\mathfrak{D}_{n+10}(\alpha)$ such that $T_5\cup T_6\subseteq T_1$, $T_5<T_6<\partial\Delta_{\alpha}\setminus T_1$ and both $T_5$ and $T_6$ have a common endpoint with $T_1$.
		Observe that all endpoints of $I_{2,l}, I_{2,r}, T_1, T_2, T_3, T_4, T_5, T_6$ are among 
		$x_{q_{n+1}-1}, x_{q_{n+11}-1},\ x_{q_{n+10}-1}, x_{q_{n}-1},\ x_{q_{n+11}-q_{n+10}-1},\ x_{2q_{n+10}-1},\ x_{q_{n+10}-q_{n+9}-1},\ x_{q_{n+1}+q_{n+10}-1},\\ x_{q_{n}+q_{n+11}-q_{n+10}-1}$.
		By Lemma \ref{l9301} we have that $I_{2,l}, I_{2,r}, T_1, T_2, T_3, T_4, T_5, T_6$ are well-grounded with respect to $\hat{\Delta}_{\alpha}^m$. We consider projections of $T_1$, $T_2$, $T_3$, $T_4$, $T_5$, $T_6$, $T_7:=\overline{I_{2,l}\setminus(T_5\cup T_3)}$ and $T_8:=\overline{I_{2,r}\setminus(T_6\cup T_4)}$ onto $\partial\hat{\Delta}_{\alpha}^{m}$, written as $\hat{T}_1$, $\hat{T}_2$, $\hat{T}_3$, $\hat{T}_4$, $\hat{T}_5$, $\hat{T}_6$,
		$\hat{T}_7$ and $\hat{T}_8$,
		respectively.
		Then by (P1) in Section \ref{s2.3} we have
		$$|\hat{T}_1|_{\hat{\Delta}_{\alpha}^m}=\frac{1}{M^{n-m}}$$
		and
		$$\frac{1}{M^{n+10-m}}\leq|\hat{T}_j|_{\hat{\Delta}_{\alpha}^m}\leq\frac{1}{M^{n+9-m}},\ 2\leq j\leq6.$$
		Observe that $\hat{I}_{2,r}\subsetneqq\hat{T}_1$ and
		$\hat{I}_{2,r}$ contains at least three intervals 
		of level $n+10$. Then
		$\frac{1}{M^{n+10-m}}\leq|\hat{I}_{2,r}|_{\hat{\Delta}_{\alpha}^m}<\frac{1}{M^{n-m}}$ and $\frac{1}{M^{n+10-m}}\leq|T_8|_{\hat{\Delta}_{\alpha}^m}<\frac{1}{M^{n-m}}$. Similarly, $\frac{1}{M^{n+10-m}}\leq|\hat{I}_{2,l}|_{\hat{\Delta}_{\alpha}^m}<\frac{1}{M^{n-m}}$ and $\frac{1}{M^{n+10-m}}\leq|T_7|_{\hat{\Delta}_{\alpha}^m}<\frac{1}{M^{n-m}}$. 
		Thus
		\begin{equation}
			\label{e831}|\hat{I}_{2,l}|_{\hat{\Delta}_{\alpha}^m}\asymp|\hat{I}_{2,r}|_{\hat{\Delta}_{\alpha}^m}\asymp|\hat{T}_2|_{\hat{\Delta}_{\alpha}^m}<1,
		\end{equation}
		\begin{equation}
			\label{e833}
			|\hat{T}_3|_{\hat{\Delta}_{\alpha}^m}\asymp|\hat{T}_4|_{\hat{\Delta}_{\alpha}^m}\asymp|\hat{T}_2|_{\hat{\Delta}_{\alpha}^m}<1,
		\end{equation}
		\begin{equation}
			\label{e20260222a}
			|\hat{T}_3|_{\hat{\Delta}_{\alpha}^m}\asymp|\hat{T}_5|_{\hat{\Delta}_{\alpha}^m}\preceq|\hat{T}_7|_{\hat{\Delta}_{\alpha}^m}<|\hat{I}_{2,l}|_{\hat{\Delta}_{\alpha}^m}<1
		\end{equation}
		and
		\begin{equation}
			\label{e20260222b}
			|\hat{T}_4|_{\hat{\Delta}_{\alpha}^m}\asymp|\hat{T}_6|_{\hat{\Delta}_{\alpha}^m}\preceq|\hat{T}_8|_{\hat{\Delta}_{\alpha}^m}<|\hat{I}_{2,r}|_{\hat{\Delta}_{\alpha}^m}<1.
		\end{equation}
		Since $\hat{T}_2\subseteq\overline{\hat{T}_1\setminus(\hat{T}_5\cup\hat{T}_6)}\subseteq\hat{T}_1$, we have
		$$\frac{1}{M^{n+10-m}}\leq\left|\overline{\hat{T}_1\setminus(\hat{T}_5\cup\hat{T}_6)}\right|_{\hat{\Delta}_{\alpha}^m}\leq\frac{1}{M^{n-m}}$$
		and hence
		\begin{equation}
			\label{e832}
			|\hat{T}_5|_{\hat{\Delta}_{\alpha}^m}\asymp|\hat{T}_6|_{\hat{\Delta}_{\alpha}^m}\asymp\left|\overline{\hat{T}_1\setminus(\hat{T}_5\cup\hat{T}_6)}\right|_{\hat{\Delta}_{\alpha}^m}<1.
		\end{equation}
		Together with (\ref{e831}), (\ref{e833}) and (\ref{e832}), Lemma \ref{l811} gives that 
		$$\mathcal{W}(\mathcal{F}_{\hat{\Delta}_{\alpha}^m}^-(\hat{I}_{2,l},\hat{I}_{2,r}))\succeq1,\ \mathcal{W}(\mathcal{F}_{\hat{\Delta}_{\alpha}^m}^-(\hat{T}_3,\hat{T}_4))\succeq1\ {\rm and}\ \mathcal{W}(\mathcal{F}_{\hat{\Delta}_{\alpha}^m}^-(\hat{T}_5,\hat{T}_6))\succeq1.$$
		By (\ref{e8222}) we have that
		\begin{equation}
			\label{e20260221b}\mathcal{W}(\mathcal{G}_{\hat{\Delta}_{\alpha}^m}^-(\hat{I}_{2,l},\hat{I}_{2,r}))\succeq1,\ \mathcal{W}(\mathcal{G}_{\hat{\Delta}_{\alpha}^m}^-(\hat{T}_3,\hat{T}_4))\succeq1\ {\rm and}\ \mathcal{W}(\mathcal{G}_{\hat{\Delta}_{\alpha}^m}^-(\hat{T}_5,\hat{T}_6))\succeq1.
		\end{equation}
		Together with (\ref{e831}), (\ref{e833}) and (\ref{e832}), Lemma \ref{l831} gives that
		$$\mathcal{W}(\mathcal{F}_{\overline{\Delta_{\alpha}}}^+(I_{2,l},I_{2,r}))\succeq1,\ \mathcal{W}(\mathcal{F}_{\overline{\Delta_{\alpha}}}^+(T_5,T_6))\succeq1\ {\rm and}\ \mathcal{W}(\mathcal{F}_{\overline{\Delta_{\alpha}}}^+(T_3,T_4))\succeq1.$$
		By (\ref{e8221}) we have that
		$$\mathcal{W}(\mathcal{G}_{\overline{\Delta_{\alpha}}}^+(I_{2,l},I_{2,r}))\succeq1,\ \mathcal{W}(\mathcal{G}_{\overline{\Delta_{\alpha}}}^+(T_5,T_6))\succeq1\ {\rm and}\ \mathcal{W}(\mathcal{G}_{\overline{\Delta_{\alpha}}}^+(T_3,T_4))\succeq1.$$
		Since $I_{2,l}, I_{2,r}, T_3, T_4, T_5, T_6$ are well-grounded with respect to $\hat{\Delta}_{\alpha}^m$,
		by (P2) in Section \ref{s2.3}, we have that $\hat{\mathcal{G}}_{\overline{\Delta_{\alpha}}}^+(\hat{I}_{2,l},\hat{I}_{2,r})$, $\hat{\mathcal{G}}_{\overline{\Delta_{\alpha}}}^+(\hat{T}_5,\hat{T}_6)$ and $\hat{\mathcal{G}}_{\overline{\Delta_{\alpha}}}^+(\hat{T}_3,\hat{T}_4)$ are protected in $\mathcal{G}_{\overline{\Delta_{\alpha}}}^+(I_{2,l},I_{2,r})$, $\mathcal{G}_{\overline{\Delta_{\alpha}}}^+(T_5,T_6)$ and $\mathcal{G}_{\overline{\Delta_{\alpha}}}^+(T_3,T_4)$, respectively.
		It follows from Lemma \ref{l861} that
		\begin{equation}
			\label{e20260221c}\mathcal{W}(\hat{\mathcal{G}}_{\overline{\Delta_{\alpha}}}^+(\hat{I}_{2,l},\hat{I}_{2,r}))\succeq1,\ \mathcal{W}(\hat{\mathcal{G}}_{\overline{\Delta_{\alpha}}}^+(\hat{T}_5,\hat{T}_6))\succeq1\ {\rm and}\ \mathcal{W}(\hat{\mathcal{G}}_{\overline{\Delta_{\alpha}}}^+(\hat{T}_3,\hat{T}_4))\succeq1.
		\end{equation}
		At last, together with (\ref{e831}), (\ref{e20260222a}), (\ref{e20260222b}), (\ref{e20260221b}) and (\ref{e20260221c}), Lemma \ref{l7311} gives
		$${\rm mod}(\mathcal{G})\succeq1.$$
		
	\end{proof}

	\begin{corollary}
		\label{c831}For all $n\geq2$ and $k\gg1$, we let $T_1$ be the interval of $\mathfrak{D}_{n}(\alpha)$ containing $P_{\alpha}(c_0)$, that is $T_1=I_{[x_{q_{n+1}-1},x_{q_n-1}]}$, and let $T_2$ be the interval of $\mathfrak{D}_{n+k}(\alpha)$ containing $P_{\alpha}(c_0)$, that is $T_2=I_{[x_{q_{n+k+1}-1},x_{q_{n+k}-1}]}$.
		Let $T_{2,l}$ and $T_{2,r}$ be the two components of $\overline{T_1\setminus T_2}$ such that $T_{2,l}<T_2<T_{2,r}$, and let
		$\hat{T}_{2,l}$ and $\hat{T}_{2,r}$ be projections of $T_{2,l}$ and $T_{2,r}$ onto $\partial\hat{\Delta}_{\alpha}^{m}$ {\rm(}$m\leq n${\rm)}, respectively.
		We let $\mathcal{G}$ be the interior of the union $\mathcal{G}^-_{\hat{\Delta}_{\alpha}^m}(\hat{T}_{2,l},\hat{T}_{2,r})\cup\hat{\mathcal{G}}^+_{\overline{\Delta_{\alpha}}}(\hat{T}_{2,l},\hat{T}_{2,r})$.
		Then
		${\rm mod}(\mathcal{G})\succeq k\gg1$.
	\end{corollary}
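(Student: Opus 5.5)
The plan is to reduce Corollary \ref{c831} to Lemma \ref{l832} by decomposing the annulus $\mathcal{G}$ into roughly $k/10$ nested, pairwise disjoint annuli. Each of these is a copy of the annulus of Lemma \ref{l832} at a different level, and we then apply superadditivity of the modulus (Section 2.2.3).

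Set $s=\lfloor k/10\rfloor$. For $0\le t\le s-1$, let $T_1^{(t)}$ be the interval of $\mathfrak{D}_{n+10t}(\alpha)$ containing $P_\alpha(c_0)$, namely $I_{[x_{q_{n+10t+1}-1},x_{q_{n+10t}-1}]}$. Let $T_2^{(t)}=T_1^{(t+1)}$ be the interval of level $n+10t+10$ containing $P_\alpha(c_0)$, and let $I^{(t)}_{2,l},I^{(t)}_{2,r}$ be the two components of $\overline{T_1^{(t)}\setminus T_2^{(t)}}$. Since $m\le n\le n+10t$, Lemma \ref{l832} applies verbatim with $n$ replaced by $n+10t$. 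It gives annuli $\mathcal{G}^{(t)}$, the interiors of $\mathcal{G}^-_{\hat\Delta^m_\alpha}(\hat I^{(t)}_{2,l},\hat I^{(t)}_{2,r})\cup\hat{\mathcal{G}}^+_{\overline{\Delta_\alpha}}(\hat I^{(t)}_{2,l},\hat I^{(t)}_{2,r})$, each with ${\rm mod}(\mathcal{G}^{(t)})\succeq1$. The constant does not depend on $t$, because the constants of Lemma \ref{l832} depend only on the universal $C,M$ from (P1).

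Next we need the geometric facts. First, the intervals $\hat I^{(t)}_{2,l}$, $t=0,\dots,s-1$, are consecutive and interior-disjoint subintervals of $\hat T_{2,l}$, whose union is $\hat T_{2,l}$ minus the part inside $\hat T_1^{(s)}$; the same holds on the right. Second, every $\mathcal{G}^{(t)}$ is contained in $\mathcal{G}$ and separates the two boundary components of $\mathcal{G}$. Third, the $\mathcal{G}^{(t)}$ are pairwise disjoint and nested. For the inner part, the hyperbolic geodesics in ${\rm int}(\hat\Delta^m_\alpha)$ that bound $\mathcal{G}^-$ connect the endpoints of the nested intervals $\hat T_1^{(t)}\supset\hat T_1^{(t+1)}$. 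Geodesics whose endpoint pairs are unlinked do not cross, so the inner rectangles are disjoint and nested, and they fill the corresponding piece of $\mathcal{G}^-_{\hat\Delta^m_\alpha}(\hat T_{2,l},\hat T_{2,r})$. For the outer part, the rectangles are bounded by the exterior geodesics $\gamma$ joining the endpoints of $T_1^{(t)}$ and of $T_1^{(t+1)}$ in $\hat{\mathbb C}\setminus\overline{\Delta_\alpha}$. These are again non-crossing, since the endpoint pairs are nested, and they lie inside the outer rectangle of $\mathcal{G}$. The endpoints used are all of the form $x_{q_j-1}$, so by Lemma \ref{l9301} they are well-grounded and the projections are compatible.

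Given these facts, each $\mathcal{G}^{(t)}$ is a subannulus of $\mathcal{G}$ separating its boundary, and they are disjoint. Superadditivity then gives ${\rm mod}(\mathcal{G})\ge\sum_{t=0}^{s-1}{\rm mod}(\mathcal{G}^{(t)})\succeq s\asymp k$. Since $k\gg1$, this yields ${\rm mod}(\mathcal{G})\succeq k\gg1$.

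The main obstacle is the second step, and in particular the outer side. There the pieces $\hat{\mathcal G}^+$ are bounded partly by projections onto $\partial\hat\Delta^m_\alpha$ and partly by geodesics relative to $\overline{\Delta_\alpha}$. One must check that $\partial\hat\Delta^m_\alpha$, together with its dams, does not make the successive outer rectangles overlap, and that the union really fills $\mathcal{G}$ in the nested way described. The approach here is to use the non-crossing of geodesics with nested endpoint pairs, together with the non-winding property of dams in (P2), which keeps each fjord inside the region cut off by its base interval.
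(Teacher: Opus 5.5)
Your proposal is correct and is essentially the paper's proof. The paper likewise takes the $\lfloor k/10\rfloor$ annuli built from the intervals of levels $n+10(s-1)$ and $n+10s$ that contain $P_\alpha(c_0)$, gets modulus $\succeq1$ for each from Lemma~\ref{l832}, and concludes by the Gr\"otzsch inequality. Your additional checks are details the paper leaves implicit: that these annuli are disjoint and nested, and that each separates the boundary components of $\mathcal{G}$. These follow from non-crossing of geodesics with unlinked endpoints and from the well-groundedness of the points $x_{q_j-1}$ given by Lemma~\ref{l9301}.
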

	\begin{proof}
		For all $1\leq s\leq[\frac{k}{10}]$, we let $I_{s,l}$ and  $I_{s,r}$ be the two components of $$\overline{I_{[x_{q_{n+1+10(s-1)}-1},x_{q_{n+10(s-1)}-1}]}\setminus  I_{[x_{q_{n+1+10s}-1},x_{q_{n+10s}-1}]}}.$$
		For all $1\leq s\leq[\frac{k}{10}]$,
		we define $\mathcal{G}_s$ as the interior of the union $\mathcal{G}^-_{\hat{\Delta}_{\alpha}^{m}}(\hat{I}_{s,l},\hat{I}_{s,r})\cup\hat{\mathcal{G}}^+_{\overline{\Delta_{\alpha}}}(\hat{I}_{s,l},\hat{I}_{s,r})$, where $\hat{I}_{s,l}$ and $\hat{I}_{s,r}$ be projections of $I_{s,l}$ and $I_{s,r}$ onto $\partial\hat{\Delta}_{\alpha}^{m}$, respectively.
		By Lemma \ref{l832}, we have ${\rm mod}(\mathcal{G}_s)\succeq1$. Thus by Gr\"otzsch inequality we have
		$${\rm mod}(\mathcal{G})\geq\sum_{s=1}^{[\frac{k}{10}]}{\rm mod}(\mathcal{G}_s)\succeq k\gg1.$$

	\end{proof}
	
	For all $n\geq2$ and $k\geq10$, we let $T_1$ be the interval of $\mathfrak{D}_{n}(\alpha)$ containing $P_{\alpha}(c_0)$ and let $T_2$ be the interval of $\mathfrak{D}_{n+k}(\alpha)$ containing $P_{\alpha}(c_0)$.
	Let $I_{2,l}$ and $I_{2,r}$ be the two components of $\overline{T_1\setminus T_2}$ such that $I_{2,l}<T_2<I_{2,r}$, and let
	$\hat{I}_{2,l}$ and $\hat{I}_{2,r}$ be projections of $I_{2,l}$ and $I_{2,r}$ onto $\partial\hat{\Delta}_{\alpha}^{m}$ ($m\leq n$), respectively.
	We let $\mathcal{G}$ be the interior of the union $\mathcal{G}^-_{\hat{\Delta}_{\alpha}^m}(\hat{I}_{2,l},\hat{I}_{2,r})\cup\hat{\mathcal{G}}^+_{\overline{\Delta_{\alpha}}}(\hat{I}_{2,l},\hat{I}_{2,r})$.
	Then by Lemma \ref{l832} we have
	${\rm mod}(\mathcal{G})\succeq1$. Moreover, if $k\gg1$, then by Corollary \ref{c831} ${\rm mod}(\mathcal{G})\gg1$.
	
	We assume $\mathcal{G}\subseteq\mathbb{C}$ and $P_{\alpha}(c_0)$ is contained in the bounded component of $\mathbb{C}\setminus\mathcal{G}$. Let $\mathcal{G}^{-1}$ be the preimage of $\mathcal{G}$ under $P_{\alpha}$. Then $P_{\alpha}|_{\mathcal{G}^{-1}}$ is a covering of degree $2$ from $\mathcal{G}^{-1}$ to $\mathcal{G}$. Thus
	\begin{equation}
		\label{e20260221a}{\rm mod}(\mathcal{G}^{-1})=\frac{{\rm mod}(\mathcal{G})}{2}. 
	\end{equation}
	Let
	$\hat{I}_{2,l}^{-1}\subseteq\partial\hat{\Delta}_{\alpha}^{m,-1}$
	be the interval such that $P_{\alpha}(\hat{I}_{2,l}^{-1})=\hat{I}_{2,l}$; let $\hat{I}_{2,r}^{-1}\subseteq\partial\hat{\Delta}_{\alpha}^{m,-1}$
	be the interval such that $P_{\alpha}(\hat{I}_{2,r}^{-1})=\hat{I}_{2,r}$.
	Then $\overline{\mathcal{G}^{-1}\cap\partial\hat{\Delta}_{\alpha}^{m,-1}}=\hat{I}_{2,l}^{-1}\cup\hat{I}_{2,r}^{-1}$.
	Thus $\overline{\mathcal{G}^{-1}\setminus\hat{\Delta}_{\alpha}^{m,-1}}$ can be viewed as 
	a rectangle with $\partial^{h,0}\overline{\mathcal{G}^{-1}\setminus\hat{\Delta}_{\alpha}^{m,-1}}=\hat{I}_{2,l}^{-1}$ and
	$\partial^{h,1}\overline{\mathcal{G}^{-1}\setminus\hat{\Delta}_{\alpha}^{m,-1}}=\hat{I}_{2,r}^{-1}$.
	Since every Jordan curve, not homotopic to a point in $\mathcal{G}^{-1}$, overflows some element of $\mathcal{F}^{full}(\overline{\mathcal{G}^{-1}\setminus\hat{\Delta}_{\alpha}^{m,-1}})$, we have $\mathcal{W}(\overline{\mathcal{G}^{-1}\setminus\hat{\Delta}_{\alpha}^{m,-1}})\geq{\rm mod}(\mathcal{G}^{-1})$. Let $I_{2,l}^{-1}$ and $I_{2,r}^{-1}$ be pre-images of $I_{2,l}$ and $I_{2,r}$ contained in $\partial\Delta_{\alpha}$ under $P_{\alpha}$ respectively (Note that $\hat{I}_{2,l}^{-1}$ and $\hat{I}_{2,r}^{-1}$ are pre-images of $\hat{I}_{2,l}$ and $\hat{I}_{2,r}$ contained in $\partial\hat{\Delta}_{\alpha}^{m,-1}$ under $P_{\alpha}$ respectively).
	We denote by $\mathcal{R}_{+}^{(-1)}$ the rectangle bounded by $I_{2,l}^{-1}\cup I_{2,r}^{-1}\cup\partial^{v,0}\overline{\mathcal{G}^{-1}\setminus\hat{\Delta}_{\alpha}^{m,-1}}\cup\partial^{v,1}\overline{\mathcal{G}^{-1}\setminus\hat{\Delta}_{\alpha}^{m,-1}}$ with
	$\partial^{h,0}\mathcal{R}_{+}^{(-1)}=I_{2,l}^{-1}$ and $\partial^{h,1}\mathcal{R}_{+}^{(-1)}=I_{2,r}^{-1}$. See Figure \ref{f20260720d}.
	\begin{figure}
		\centering
		\includegraphics[scale=0.6]{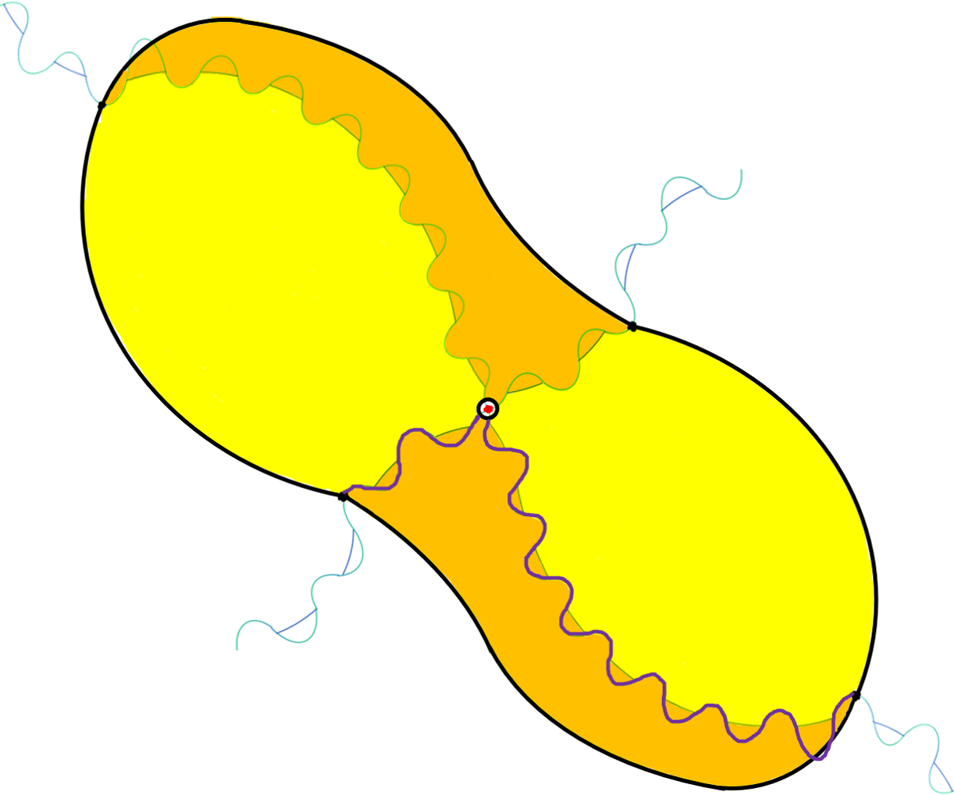}
		\caption{The union of orange areas and yellow areas represents $\mathcal{G}^{-1}$; the yellow (resp. orange) area represents pre-image of $\hat{\mathcal{G}}^+_{\overline{\Delta_{\alpha}}}(\hat{I}_{2,l},\hat{I}_{2,r})$ (resp. $\mathcal{G}^-_{\hat{\Delta}_{\alpha}^m}(\hat{I}_{2,l},\hat{I}_{2,r})$) under $P_{\alpha}$; the red point represents $c_0$; the short purple arc represents $I_{2,l}^{-1}$; the long purple arc represents $I_{2,r}^{-1}$.}
		\label{f20260720d}
	\end{figure}
	By Lemma \ref{l9301} $I_{2,l}$ and $I_{2,r}$ are well-grounded intervals with respect to $\partial\Delta_{\alpha}^m$. Then 
	by (P2) in Section \ref{s2.3}, $\hat{\mathcal{G}}^+_{\overline{\Delta_{\alpha}}}(\hat{I}_{2,l},\hat{I}_{2,r})$ is protected in $\mathcal{G}^+_{\overline{\Delta_{\alpha}}}(I_{2,l},I_{2,r})$.
	Pulling back these protections by $P_{\alpha}$, we obtain that 
	$\overline{\mathcal{G}^{-1}\setminus\hat{\Delta}_{\alpha}^{m,-1}}$ is protected in
	$\mathcal{R}_{+}^{(-1)}$.
	By Lemma \ref{l861}
	$$\mathcal{W}(\mathcal{R}_{+}^{(-1)})\asymp\mathcal{W}(\overline{\mathcal{G}^{-1}\setminus\hat{\Delta}_{\alpha}^{m,-1}})\geq{\rm mod}(\mathcal{G}^{-1}).$$
	By combining (\ref{e20260221a}), it follows that
	\begin{equation}
		\label{e85a}\mathcal{W}(\mathcal{R}_{+}^{(-1)})\succeq{\rm mod}(\mathcal{G}).
	\end{equation}

	\section{Dynamics near the Siegel disk $\Delta_{\alpha}$\label{S3}}
	
	\subsection{Geodesic mountain ranges of Siegel disk $\Delta_{\alpha}$}
	For all $n\geq4$, we denote by ${\rm EP}(\mathfrak{D}_n(\alpha))$ the set of endpoints of all intervals in $\mathfrak{D}_n(\alpha)$. For any $x\in{\rm EP}(\mathfrak{D}_n(\alpha))$, we denote by $I_{l,x}^n,I_{r,x}^n\in\mathfrak{D}_n(\alpha)$ be a pair of adjacent intervals with the common endpoint $x$ such that $y<x<z$, where $y$ and $z$ are the two other endpoints of $I_{l,x}^n$ and $I_{r,x}^n$, respectively. We denote by $\gamma_x^n$ the geodesic connecting $y$ and $z$ with respect to the hyperbolic metric on $\hat{\mathbb{C}}\setminus\overline{\Delta_{\alpha}}$. 
	Recall that the notation $\overline{O_{I_{l,x}^n\cup I_{r,x}^n}(\gamma_x^n)}$ means the closed region bounded by $\gamma_x^n\cup I_{l,x}^n\cup I_{r,x}^n$ not intersecting $\Delta_{\alpha}$.
	We say that $\Omega_x^n:=\overline{O_{I_{l,x}^n\cup I_{r,x}^n}(\gamma_x^n)}\setminus(I_{l,x}^n\cup I_{r,x}^n)$ is a (geodesic) mountain of level $n$ centering at $x$.
	For all $n\geq4$, we set
	$$\Lambda_{\alpha}^n:=\bigcup_{x\in{\rm EP}(\mathfrak{D}_{n}(\alpha))}\Omega_x^{n},$$
	which is called the (geodesic) mountain range of level $n$ of the Siegel disk $\Delta_{\alpha}$.
	The difference $$\tilde{\Omega}_{c_0}^n:=\Omega_{c_0}^n\setminus\hat{\Delta}_{\alpha}^n$$ is called the main (geodesic) mountain of level $n$ of $\Delta_{\alpha}$.

	\subsection{Geodesics of deep levels are non-winding}
	
	\begin{lemma}
		\label{l04231}There exists a positive integer $\mathfrak{q}_1\geq6$, independent of $\alpha$, such that for all $n\geq\mathfrak{q}_1$ and all adjacent intervals $I_3,J_3\in\mathfrak{D}_{n}(\alpha)$ with a common endpoint $c_0$, $\gamma_{c_0}^{n}$ is non-winding based on $I_3\cup J_3$ with respect to the closed quasidisk $\overline{\Delta_{\alpha}}$.
	\end{lemma}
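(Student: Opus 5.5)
The plan is to argue by contradiction, turning winding of $\gamma_{c_0}^n$ into a wide family of curves that the a priori bounds rule out. Suppose $\gamma:=\gamma_{c_0}^n$, the hyperbolic geodesic in $\hat{\mathbb{C}}\setminus\overline{\Delta_\alpha}$ joining the outer endpoints $y,z$ of $I_3\cup J_3$, is winding, i.e. $\infty\in O_{I_3\cup J_3}(\gamma)$. Then the closed curve $\gamma\cup I_3\cup J_3$ bounds, on the side away from ${\rm int}(\Delta_\alpha)$, a region containing $\infty$. Equivalently, the complementary arc $K:=\overline{\partial\Delta_\alpha\setminus(I_3\cup J_3)}$ together with $\gamma$ encloses $\Delta_\alpha$ from outside, and every curve in $\hat{\mathbb{C}}\setminus\overline{\Delta_\alpha}$ from a point of $K$ to $\infty$ must cross $\gamma$.

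The first key step is to translate this into extremal widths via a conformal uniformization $\psi:\hat{\mathbb{C}}\setminus\overline{\Delta_\alpha}\to\mathbb{D}$ with $\psi(\infty)=0$. Under $\psi$, $\gamma$ becomes a hyperolic geodesic, i.e. a circular arc orthogonal to $\partial\mathbb{D}$, joining $\psi(y)$ and $\psi(z)$. Winding means that $0$ lies on the side of this arc adjacent to $\psi(I_3\cup J_3)$. That forces $\psi(I_3\cup J_3)$ to have harmonic measure at least $1/2$ seen from $\infty$. By the Log-Rule (as used for (\ref{e8221})), this gives $\mathcal{W}^+_{\overline{\Delta_\alpha}}(I_3\cup J_3,K)\succeq1$, or more precisely that $I_3\cup J_3$ is "combinatorially large" in the outer metric.

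The second step is to contradict this with the a priori bounds at deep levels. Take intervals of a shallower level, for instance $n-4$ or a fixed shallow level such as level $2$, in the manner of Lemma \ref{l7301}. For $n\geq\mathfrak{q}_1$ large, $I_3\cup J_3$ sits inside the union of two adjacent intervals of level $n-k$ with $k=n-\mathrm{const}$, and $K$ contains $[3I_{n-k}]^c$. Lemma \ref{l7301} then gives $\mathcal{W}(\mathcal{F}^+_{\overline{\Delta_\alpha}}(I_3\cup J_3,K'))\preceq 1/k$ for a sizable subarc $K'\subseteq K$. On the other hand, the harmonic measure of $I_3\cup J_3$ from $\infty$ is controlled by the width to a fixed shallow-level arc $K'$ of definite harmonic measure. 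Level $-1,0,1$ intervals have harmonic measure $\asymp1$ from $\infty$ by Lemma \ref{l8271} and bounded combinatorics at low levels. So the harmonic measure of $I_3\cup J_3$ from $\infty$ is $\preceq$ some function tending to $0$ as $k\to\infty$, which is incompatible with the $\geq1/2$ bound once $n$ exceeds a universal $\mathfrak{q}_1$. Universality of the constants in Lemma \ref{l8271} and Lemma \ref{l7301} makes $\mathfrak{q}_1$ independent of $\alpha$.

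The main obstacle is rigorously converting "winding" into "harmonic measure of $I_3\cup J_3$ at $\infty$ is large," and then into a lower bound on an extremal width comparable with the Lemma \ref{l7301} families. For this I would first try a direct argument. Curves from $\infty$ to $K$ must cross $\gamma$, and $\gamma$ is the geodesic. Hence by Lemma \ref{l825a}-type reasoning (geodesic contained in the filling-in of a wide rectangle), a wide rectangle between $I_3\cup J_3$ and $K'$ (width $\gg1$ by Lemma \ref{l7301}) would force $\gamma$ to lie in the filling-in of $(I_3\cup J_3)\cup\mathcal{R}$, which is a bounded set not containing $\infty$. That directly yields non-winding. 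So concretely: choose $K'$ and the annular separating rectangles $\mathcal{G}^+_{\overline{\Delta_\alpha}}(I^{(1)}_{n-4t},I^{(4)}_{n-4t})$ from the proof of Lemma \ref{l7301}, glue them into a rectangle of width $\gg1$ with horizontal sides on $\overline{\partial\Delta_\alpha\setminus(I_3\cup J_3)}$ bounded away from $\infty$, and apply Lemma \ref{l825a} in $\hat{\mathbb{C}}\setminus\Delta_\alpha$.
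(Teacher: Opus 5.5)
Your reduction of winding to ``$\psi(I_3\cup J_3)$ has harmonic measure $\ge 1/2$ at $\infty$'' is correct, but the step meant to contradict it does not work. Every tool you invoke (Lemmas \ref{l7301}, \ref{l8271}, \ref{l825a} and the Log-Rule) concerns extremal widths of curve families in $\hat{\mathbb{C}}$, $\overline{\Delta_\alpha}$ or $\hat{\mathbb{C}}\setminus\Delta_\alpha$, together with hyperbolic geodesics of $\hat{\mathbb{C}}\setminus\overline{\Delta_\alpha}$. All of this is invariant under M\"obius maps of the sphere, and winding is not. A M\"obius map sending a point of $O_{I_3\cup J_3}(\gamma_{c_0}^n)$ to $\infty$ preserves every width bound and maps geodesics to geodesics, yet makes $\gamma_{c_0}^n$ winding. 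So width estimates alone cannot control the harmonic measure at $\infty$ of a proper arc.

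In particular, your claim that a fixed shallow-level arc $K'$ has harmonic measure $\asymp 1$ at $\infty$ ``by Lemma \ref{l8271} and bounded combinatorics at low levels'' is unsupported. Moreover, the diffeo-tilings do not have bounded combinatorics: $\mathfrak{D}_0$ has $q_1$ intervals.

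Your fallback has the same defect. Lemma \ref{l825a} only places $\gamma_{c_0}^n$ on the $I_3\cup J_3$ side of a wide rectangle $\mathcal{R}$ whose horizontal sides $I_l,I_r$ flank $I_3\cup J_3$. Note that Lemma \ref{l7301} makes the width between $I_3\cup J_3$ and the far arc \emph{small}; the wide rectangle is the dual one, between $I_l$ and $I_r$. This yields non-winding only if $\mathcal{R}$ is itself non-winding. The rectangles $\mathcal{G}^+_{\overline{\Delta_\alpha}}(I^{(1)}_{n-4t},I^{(4)}_{n-4t})$ are bounded by outer geodesics whose non-winding is exactly the kind of statement being proved. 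So ``bounded away from $\infty$'' is assumed, not shown.

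The missing input is dynamical, and it is what the paper uses. Let $T\in\mathfrak{D}_4$ be the interval containing $P_\alpha(c_0)$, and $T_1,T_2\in\mathfrak{D}_4$ the two intervals meeting at $c_0$; these three are distinct.

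If $T$ has harmonic measure $\ge 1/2$ at $\infty$, then $T_1\cup T_2$ has harmonic measure $\le 1/2$. Since $I_3\cup J_3\subsetneq T_1\cup T_2$, non-winding follows at once.

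If $T$ has harmonic measure $<1/2$, the outer geodesics based on $T$ are non-winding. Hence the annulus $\mathcal{G}$ of Lemma \ref{l832}, built from $T$ and the level-$14$ interval $T_4\ni P_\alpha(c_0)$ with ${\rm mod}(\mathcal{G})\succeq1$, lies in $\mathbb{C}$ with $P_\alpha(c_0)$ in its bounded complementary component. Because $P_\alpha$ is a polynomial whose only finite critical point is $c_0$, the preimage $P_\alpha^{-1}(\mathcal{G})$ is a bounded degree-two covering annulus around $c_0$. This produces the rectangle $\mathcal{R}_+^{(-1)}$ of width $\succeq 1$ (by (\ref{e85a})), with horizontal sides flanking $T_4^{-1}=I_{[c_0,x_{q_{15}}]}\cup I_{[c_0,x_{q_{14}}]}$. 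A homotopy argument shows it is non-winding based on $T^{-1}$.

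Only at this point does your width argument close the proof. For $n\ge\mathfrak{q}_1$, Lemmas \ref{l7301} and \ref{l825a} make the region between $\gamma_{c_0}^n$ and $\overline{\partial\Delta_\alpha\setminus(I_3\cup J_3)}$ a rectangle of width $\gg1$. Therefore $\gamma_{c_0}^n$ cannot cross $\mathcal{R}_+^{(-1)}$, and it is trapped on the non-winding side.
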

	\begin{proof}
		We consider $\mathfrak{D}_4(\alpha)$. Observe that $\mathfrak{D}_4(\alpha)$ has at least four intervals. Let $T$ be the interval of $\mathfrak{D}_4(\alpha)$ containing the critical value $P_{\alpha}(c_0)$, that is $T=[x_{q_{5}-1},x_{q_{4}-1}]$. Let $T_1$ and $T_2$ be the two intervals of $\mathfrak{D}_4(\alpha)$ having a common endpoint $c_0$.
		Then $T$, $T_1$ and $T_2$ are pairwise different.
		We firstly consider the case: the harmonic measure of $T$ in $(\hat{\mathbb{C}}\setminus\overline{\Delta_{\alpha}},\infty)$ is less than $\frac{1}{2}$. We let $T_4$ be the interval of $\mathfrak{D}_{14}(\alpha)$ containing $P_{\alpha}(c_0)$.
		Let $I_{4,l}$ and $I_{4,r}$ be the two components of $\overline{T\setminus T_4}$ such that $I_{4,l}<T_4<I_{4,r}$. 
		It follows from Lemma \ref{l9301} that $T$, $T_4$, $I_{4,l}$ and $I_{4,r}$ are regular intervals of $\partial\hat{\Delta}_{\alpha}^{4}$. We consider projections of $T$, $T_4$, $I_{4,l}$ and $I_{4,r}$ onto $\partial\hat{\Delta}_{\alpha}^{4}$, written as $\hat{T}$, $\hat{T}_4$, $\hat{I}_{4,l}$ and $\hat{I}_{4,r}$, respectively. 
		We define $\mathcal{G}$ as the interior of the union $\mathcal{G}^-_{\hat{\Delta}_{\alpha}^4}(\hat{I}_{4,l},\hat{I}_{4,r})\cup\hat{\mathcal{G}}^+_{\overline{\Delta_{\alpha}}}(\hat{I}_{4,l},\hat{I}_{4,r})$. 
		By Lemma \ref{l832} ${\rm mod}(\mathcal{G})\succeq1$. Since the harmonic measure of $T$ in $(\hat{\mathbb{C}}\setminus\overline{\Delta_{\alpha}},\infty)$ is less than $\frac{1}{2}$, both $\partial^{v,0}\hat{\mathcal{G}}^+_{\overline{\Delta_{\alpha}}}(\hat{I}_{4,l},\hat{I}_{4,r})$ and $\partial^{v,1}\hat{\mathcal{G}}^+_{\overline{\Delta_{\alpha}}}(\hat{I}_{4,l},\hat{I}_{4,r})$ are non-winding based on $T$ with respect to $\overline{\Delta_{\alpha}}$. Observe also that $P_{\alpha}(c_0)\in\hat{T}_4$ (Refer to Lemma \ref{l26821}). Thus $\mathcal{G}\subseteq\mathbb{C}$ and $P_{\alpha}(c_0)$ is contained in the bounded component of $\mathbb{C}\setminus\mathcal{G}$.
		We define $\mathcal{G}^{-1}:=P_{\alpha}^{-1}(\mathcal{G})$. Then $P_{\alpha}|_{\mathcal{G}^{-1}}$ is a covering of degree $2$ from $\mathcal{G}^{-1}$ to $\mathcal{G}$. 
		Let $I_{4,l}^{-1}$ and $I_{4,r}^{-1}$ be pre-images of $I_{4,l}$ and $I_{4,r}$ contained in $\partial\Delta_{\alpha}$ under $P_{\alpha}$, respectively.
		As the last paragraph in Section \ref{s2.4},
		we denote by $\mathcal{R}_{+}^{(-1)}$ the bounded rectangle bounded by $I_{4,l}^{-1}\cup I_{4,r}^{-1}\cup\partial^{v,0}\overline{\mathcal{G}^{-1}\setminus\hat{\Delta}_{\alpha}^{4,-1}}\cup\partial^{v,1}\overline{\mathcal{G}^{-1}\setminus\hat{\Delta}_{\alpha}^{4,-1}}$  with
		$\partial^{h,0}\mathcal{R}_{+}^{(-1)}=I_{4,l}^{-1}$ and $\partial^{h,1}\mathcal{R}_{+}^{(-1)}=I_{4,r}^{-1}$. 
		By (\ref{e85a}) we have
		$$\mathcal{W}(\mathcal{R}_{+}^{(-1)})\succeq{\rm mod}(\mathcal{G})\succeq1.$$
		
		Let $T_4^{-1}$
		be the component of $P_{\alpha}^{-1}(T_4)$ contained in $\partial\Delta_{\alpha}$. Then $I_{4,l}^{-1}\#T_4^{-1}\#I_{4,r}^{-1}$ and $T_4^{-1}=I_{[c_0,x_{q_{15}}]}\cup I_{[c_0,x_{q_{14}}]}$.
		Let $I_3,J_3\in\mathfrak{D}_{n}(\alpha)$ ($n\geq17$) be the two adjacent intervals having a common endpoint $c_0$.
		Let $I_l$ and $I_r$ be the two components of $\overline{(I_{[c_0,x_{q_{15}}]}\cup I_{[c_0,x_{q_{14}}]})\setminus (I_3\cup J_3)}$. It follows from Lemma \ref{l7301} that we can choose a sufficiently large positive integer $\mathfrak{q}_1\geq17$, independent of $\alpha$, so that for all $n\geq\mathfrak{q}_1$, $\mathcal{W}_{\overline{\Delta_{\alpha}}}^+(I_l,I_r)\gg1$. We view the closed region $D_1$ on $\hat{\mathbb{C}}$ bounded by $\overline{\partial\Delta_{\alpha}\setminus(I_3\cup J_3)}\cup\gamma_{c_0}^{n}$ not containing $\Delta_{\alpha}$ as a rectangle with $\partial^{h,0}D_1=I_l$ and $\partial^{h,1}D_1=I_r$. By Lemma \ref{l825a} $\mathcal{W}(D_1)\geq\mathcal{W}_{\overline{\Delta_{\alpha}}}^+(I_l,I_r)-\frac{1}{2}\gg1$. This implies that $\gamma_{c_0}^{n}$ doesn't cross $\mathcal{R}_{+}^{(-1)}$, for if not, then $D_1$ crosses $\mathcal{R}_{+}^{(-1)}$, which contradicts $\mathcal{W}(\mathcal{R}_{+}^{(-1)})\succeq1$. 
		
		Let $T^{-1}$ (resp. $\hat{T}^{-1}$) be the pre-image of $T$ (resp. $\hat{T}$) contained in $\partial\Delta_{\alpha}$ (resp. $\partial\hat{\Delta}_{\alpha}^{4,-1}$) under $P_{\alpha}$.  
		Since $T$ and $\hat{T}$ are homotopic on $\mathbb{C}^*$, we have that $T^{-1}$ and $\hat{T}^{-1}$ are homotopic on $\mathbb{C}^*$.
		It is also easy to see that $\hat{T}^{-1}$ and the outer vertical boundary $\partial^{v,0}\overline{\mathcal{G}^{-1}\setminus\hat{\Delta}_{\alpha}^{4,-1}}$ have the same endpoints, and they are homotopic on $\mathbb{C}^*$, for $\hat{T}$ is contained in the filling-in of $\mathbb{C}\setminus\mathcal{G}$.
		Thus $T^{-1}$ and $\partial^{v,0}\overline{\mathcal{G}^{-1}\setminus\hat{\Delta}_{\alpha}^{4,-1}}$ are homotopic on $\mathbb{C}^*$. This implies that
		$\mathcal{R}_{+}^{(-1)}$
		is non-winding based on $T^{-1}$ with respect to $\overline{\Delta_{\alpha}}$.
		Thus $\gamma_{c_0}^{n}$ is also non-winding based on $T^{-1}$ (also $I_3\cup J_3$) with respect to $\overline{\Delta_{\alpha}}$.
		
		Next, we consider the other case: the harmonic measure of $T$
		in $(\hat{\mathbb{C}}\setminus\overline{\Delta_{\alpha}},\infty)$ is greater than or equal to $\frac{1}{2}$. In this case, 
		the harmonic measure of $T_1\cup T_2$ is less than or equal to $\frac{1}{2}$. For any positive integer $n\geq6$, we take a pair of adjacent intervals $I_3, J_3\in\mathfrak{D}_n$ such that $I_3$ and $J_3$ have a common endpoint $c_0$. Then $I_3\cup J_3\subsetneqq T_1\cup T_2$ and hence the harmonic measure of $I_3\cup J_3$ is less than $\frac{1}{2}$. It follows that $\gamma_{c_0}^{n}$ is non-winding based on $I_3\cup J_3$ with respect to $\overline{\Delta_{\alpha}}$.
	\end{proof}
	
	We denote by $U_0$ the component of $P_{\alpha}^{-1}(\Delta_{\alpha})$ not containing $0$. 
	In the proof of Lemma \ref{l04231}, since $$P_{\alpha}(c_0)\not\in\partial^{v,1}\mathcal{G}^-_{\hat{\Delta}_{\alpha}^4}(\hat{I}_{4,l},\hat{I}_{4,r})\ {\rm and}\ 
	\partial^{v,1}\mathcal{G}^-_{\hat{\Delta}_{\alpha}^4}(\hat{I}_{4,l},\hat{I}_{4,r})\subseteq\hat{\Delta}_{\alpha}^4,$$
	we have that
	$P_{\alpha}^{-1}(\partial^{v,1}\mathcal{G}^-_{\hat{\Delta}_{\alpha}^4}(\hat{I}_{4,l},\hat{I}_{4,r}))$ has two connected components: the one contained in $\hat{\Delta}_{\alpha}^{4,-1}$ and the other one not intersecting $\hat{\Delta}_{\alpha}^{4,-1}$, written as $\hat{\gamma}_0$. Since two endpoints of
	$\partial^{v,1}\mathcal{G}^-_{\hat{\Delta}_{\alpha}^4}(\hat{I}_{4,l},\hat{I}_{4,r})$, say $x_{q_4-1}$ and $x_{q_5-1}$, aren't contained in the closure of any big fjord of level $n\geq4$, we have
	$\partial^{v,1}\mathcal{G}^-_{\hat{\Delta}_{\alpha}^4}(\hat{I}_{4,l},\hat{I}_{4,r})\cap\Delta_{\alpha}\not=\emptyset$.
	By combining the fact that $\hat{\gamma}_0$ doesn't intersect $\Delta_{\alpha}$, it follows that $\hat{\gamma}_0\cap U_0\not=\emptyset$. 
	Observe also that $\hat{\gamma}_0\subseteq\partial^{v,0}\mathcal{R}_{+}^{(-1)}$. Then
	$\partial^{v,0}\mathcal{R}_{+}^{(-1)}\cap U_0\not=\emptyset$ and hence $U_0\cup\{c_0\}$ connects $c_0$ and $\partial^{v,0}\mathcal{R}_{+}^{(-1)}$.
	This implies that for all $n\geq\mathfrak{q}_1$,
	$\gamma_{c_0}^{n}\cap U_0\not=\emptyset$, for $\gamma_{c_0}^{n}$ doesn't cross $\mathcal{R}_{+}^{(-1)}$.
	
	In the same way 
	we can obtain that if there exists a level $m$ interval $T$ containing $P_{\alpha}(c_0)$ such that the harmonic measure of $T$ in $(\hat{\mathbb{C}}\setminus\overline{\Delta_{\alpha}},\infty)$ is less than $\frac{1}{2}$, then for all $n\geq m+\mathfrak{q}_1$,
	$\gamma_{c_0}^{n}\cap U_0\not=\emptyset$. Together with the following lemma, it gives that for all $n\geq\mathfrak{q}_2+\mathfrak{q}_1$,
	\begin{equation}
		\label{e20251126a}\gamma_{c_0}^{n}\cap U_0\not=\emptyset.
	\end{equation}
	
	\begin{lemma}
		\label{l841}There exists a positive integer $\mathfrak{q}_2$ $(\gg\mathfrak{q}_1)$, independent of $\alpha$, such that for all adjacent intervals $I_6$ and $J_6$ of level $n$ {\rm(}$\geq\mathfrak{q}_2${\rm)} having a common endpoint $a$, $\gamma_{a}^{n}$ is non-winding based on $I_6\cup J_6$ with respect to the closed quasidisk $\overline{\Delta_{\alpha}}$.
	\end{lemma}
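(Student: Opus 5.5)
The plan is to mimic the second case of the proof of Lemma \ref{l04231}, using harmonic measure in $(\hat{\mathbb{C}}\setminus\overline{\Delta_{\alpha}},\infty)$.

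\textbf{Step 1 (harmonic measure threshold).} A geodesic $\gamma$ of $\hat{\mathbb{C}}\setminus\overline{\Delta_{\alpha}}$ joining two points of $\partial\Delta_{\alpha}$ cuts $\partial\Delta_{\alpha}$ into two arcs. After uniformizing the exterior to $\hat{\mathbb{C}}\setminus\overline{\mathbb{D}}$ with $\infty\mapsto\infty$, $\gamma$ becomes a circular arc orthogonal to the unit circle. Such an arc separates $\infty$ from the shorter of the two boundary arcs. So if $\omega(I_6\cup J_6)<\frac{1}{2}$, where $\omega$ is harmonic measure seen from $\infty$, then $\gamma_a^n$ is non-winding based on $I_6\cup J_6$. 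It therefore suffices to find $\mathfrak{q}_2$, independent of $\alpha$, such that every union of two adjacent level-$n$ intervals with $n\geq\mathfrak{q}_2$ has harmonic measure $<\frac{1}{2}$.

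\textbf{Step 2 (a fixed bounded level).} Work at a level $m_0$ large enough that $\mathfrak{D}_{m_0}(\alpha)$ has at least five intervals. Note the pair $I_6\cup J_6$ of level $n\ge m_0+2$ lies inside the union of two adjacent level-$m_0$ intervals. Since harmonic measures of the intervals of $\mathfrak{D}_{m_0}$ sum to $1$, at most one level-$m_0$ interval, call it $T$, can have $\omega\geq\frac{1}{2}$.

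\textbf{Step 3 (the case with a heavy interval $T$).} Pairs $I_6\cup J_6$ lying away from $T$ are fine, since their measure is at most $1-\omega(T)\le\frac{1}{2}$ and, being a proper subset, is strictly less. For pairs inside or meeting $T$, use bounded geometry from Lemma \ref{l8271} and Lemma \ref{l7301}. For $n\ge m_0+k$, the width $\mathcal{W}^+_{\overline{\Delta_{\alpha}}}(I_6\cup J_6,\overline{\partial\Delta_\alpha\setminus \hat T})\preceq 1/k$, where $\hat T$ is the union of $T$ with its two neighbors.

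By Lemma \ref{l825a} there is then a wide rectangle $D_1$ in the exterior between the two complementary arcs of $I_6\cup J_6$ inside $\hat T$, exactly as $D_1$ was produced in the proof of Lemma \ref{l04231}. I also need a rectangle of width $\succeq 1$ based on arcs of the complement of $\hat T$ (or on its two far ends), built via Lemma \ref{l831} / (\ref{e8221}), whose position relative to $\infty$ is controlled. A crossing argument then follows: if $\gamma_a^n$ winds around $\infty$, then $D_1$ (width $\gg1$) must cross this rectangle of width $\succeq1$, which contradicts $\mathcal{W}(\mathcal{R})\le 1/\mathcal{W}(\mathcal{R}')$.

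\textbf{Main obstacle.} The hard step is producing, uniformly in $\alpha$, a "non-winding anchor" rectangle near the heavy interval $T$. In Lemma \ref{l04231} this anchor came from pulling back by $P_\alpha$ through the critical value. For a general point $a$ I would instead iterate: apply the same argument with $T$ replaced by the heavy interval of level $m_0+j$, for $j=1,2,\dots$. The heavy intervals form a nested sequence and shrink, because every interval splits into at least two intervals two levels down (Lemma \ref{l7301} gives decay). So after a bounded number $j_0$ of levels, the heavy interval's harmonic measure drops below $\frac{1}{2}$ — or else it stays heavy while its diameter tends to $0$, which is impossible. Making this bound on $j_0$ uniform in $\alpha$ is the key point. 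I would derive it from the universal width decay $\preceq 1/k$: harmonic measure from $\infty$ of a subarc is dominated by the extremal width to a fixed complementary arc of definite size. Then $\mathfrak{q}_2=m_0+j_0+\mathfrak{q}_1$.
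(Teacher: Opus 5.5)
Your Step 1 reduction is correct: $\gamma_a^n$ is non-winding when $\omega(I_6\cup J_6)<\frac12$, with $\omega$ the harmonic measure from $\infty$. So is the observation that at most one interval of a fixed level can be heavy.

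\textbf{The gap.} Nothing in your Step 3 or in your resolution of the ``main obstacle'' can make $j_0$ uniform in $\alpha$. Every estimate you invoke is a statement about extremal widths of curve families in $\hat{\mathbb{C}}\setminus\overline{\Delta_{\alpha}}$ or in $\Delta_\alpha$: Lemma \ref{l8271}, Lemma \ref{l7301}, Lemma \ref{l831}, (\ref{e8221}) and Lemma \ref{l825a}. All of these are invariant under Möbius maps of the sphere.

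Take a Möbius map sending an arbitrary point $p\in\hat{\mathbb{C}}\setminus\overline{\Delta_{\alpha}}$ to $\infty$. It preserves all these widths and the tilings. But it replaces harmonic measure from $\infty$ by harmonic measure from $p$, and choosing $p$ very close to a deep-level pair makes that pair heavy. Consequently:
\begin{itemize}
\item No inequality of the form ``$\omega(A)$ is dominated by $\mathcal{W}^+_{\overline{\Delta_\alpha}}(A,B)$ for a combinatorially definite $B$'' can hold. It would need $\omega(B)$ to be definite, which is exactly what is in question.
\item The anchor rectangle ``whose position relative to $\infty$ is controlled'' cannot be manufactured from widths.
\item Your fallback (the heavy interval shrinks to a point) works for each fixed $\alpha$, but it only gives $j_0=j_0(\alpha)$. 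Making it uniform, e.g.\ via Beurling's estimate, would need Euclidean shrinking of level-$n$ intervals relative to ${\rm diam}(\Delta_\alpha)$, uniform in $\alpha$. That is again non-conformal information you do not have.
\end{itemize}

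\textbf{The missing ingredient is dynamical.} It is what the paper uses. The only place $\infty$ gets located is Lemma \ref{l04231}. There the dichotomy is: either the interval containing $P_\alpha(c_0)$ is light, or the intervals at $c_0$ are light. This is combined with the fact that $P_\alpha$ is a polynomial, so preimages of bounded annuli are bounded. The paper transports this to an arbitrary endpoint $a$ as follows.
\begin{enumerate}
\item Choose adjacent $I,J\in\mathfrak{D}_{2+\mathfrak{q}_1}(\alpha)$ with common endpoint $b$ such that $I_6\cup J_6$ avoids their outer level-$(8+\mathfrak{q}_1)$ end intervals $I_5,J_5$. Let $I_l,I_r$ be the two components of $\overline{(I\cup J)\setminus(I_6\cup J_6)}$.
\item Let $k$ be minimal such that $c_0$ is an endpoint of $P_{\alpha}^{\comp k}(I)$ or $P_{\alpha}^{\comp k}(J)$. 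By minimality, $P_{\alpha}^{\comp k}$ preserves the levels of all the relevant subintervals.
\item By Lemma \ref{l7301} and (\ref{e8223}), the rectangle $\mathcal{G}^+_{\overline{\Delta_{\alpha}}}(P_{\alpha}^{\comp k}(I_l),P_{\alpha}^{\comp k}(I_r))$ has width $\gg1$ once $n\geq\mathfrak{q}_2$.
\item This rectangle lies in $\overline{\Omega^{\mathfrak{q}_1}_{c_0}}$, which does not contain $\infty$ by Lemma \ref{l04231}. Hence it is non-winding.
\item Pull it back along $P_{\alpha}^{\comp k}(I\cup J),\dots,I\cup J$. This gives a non-winding rectangle of width $\gg1$ with base $I_l$ and roof $I_r$.
\item Lemma \ref{l825a} then traps $\gamma_a^n$ inside this rectangle, so $\gamma_a^n$ is non-winding.
\end{enumerate}
In short, your iteration over levels should be replaced by iteration along the orbit to the critical point.
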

	\begin{proof}
		For all $n\geq8+\mathfrak{q}_1$, we let $I_6$ and $J_6$ be the two adjacent intervals of $\mathfrak{D}_n(\alpha)$ having a common endpoint $a$. Since
		every interval of $\mathfrak{D}_{2+\mathfrak{q}_1}(\alpha)$ can be divided into at least $5$ intervals of $\mathfrak{D}_{8+\mathfrak{q}_1}(\alpha)$,
		we can choose a pair of adjacent intervals $I$ and $J$ with a common endpoint $b$ in $\mathfrak{D}_{2+\mathfrak{q}_1}(\alpha)$ such that $I_6\cup J_6\in\overline{(I\cup J)\setminus(I_5\cup J_5)}$, where $I_5$ ($\subseteq I$) and $J_5$ ($\subseteq J$) are the two intervals of $\mathfrak{D}_{8+\mathfrak{q}_1}(\alpha)$ such that $I_5$ (resp. $J_5$) has a common endpoint as $I$ (resp. $J$), but not $b$. Let $I_{l}$ and $I_{r}$ be the two components of
		$\overline{(I\cup J)\setminus(I_6\cup J_6)}$.
		We let $k$ be the smallest nonnegative integer such that $P_{\alpha}^{\comp k}(I)$ or $P_{\alpha}^{\comp k}(J)$ have $c_0$ as an endpoint.
		It follows from the smallestness that $P_{\alpha}^{\comp k}(I)$ (resp. $P_{\alpha}^{\comp k}(J)$) is an interval of $\mathfrak{D}_{2+\mathfrak{q}_1}(\alpha)$, $P_{\alpha}^{\comp k}(I_5)$ (resp. $P_{\alpha}^{\comp k}(J_5)$) is an  interval of $\mathfrak{D}_{8+\mathfrak{q}_1}(\alpha)$ and $P_{\alpha}^{\comp k}(I_6)$ (resp. $P_{\alpha}^{\comp k}(J_6)$) is an  interval of $\mathfrak{D}_{n}(\alpha)$. 
		Since $P_{\alpha}^{\comp k}(I_l)$ (resp. $P_{\alpha}^{\comp k}(I_r)$) contains $P_{\alpha}^{\comp k}(I_5)$ or $P_{\alpha}^{\comp k}(J_5)$, by Lemma \ref{l7301} there exists $\mathfrak{q}_2\gg10+\mathfrak{q}_1$ such that for all $n\geq\mathfrak{q}_2$, we have $\mathcal{W}_{\overline{\Delta_\alpha}}^+(P_{\alpha}^{\comp k}(I_l),P_{\alpha}^{\comp k}(I_r))\gg1$. By (\ref{e8223}) $\mathcal{W}(\mathcal{G}_{\overline{\Delta_{\alpha}}}^+(P_{\alpha}^{\comp k}(I_l),P_{\alpha}^{\comp k}(I_r))\gg1$.
		By Lemma \ref{l04231} we have $\infty\not\in\overline{\Omega_{c_0}^{\mathfrak{q}_1}}$. By combining 
		$\mathcal{G}_{\overline{\Delta_{\alpha}}}^+(P_{\alpha}^{\comp k}(I_l),P_{\alpha}^{\comp k}(I_r))\subseteq\overline{\Omega_{c_0}^{\mathfrak{q}_1}}$, it follows that $\mathcal{G}_{\overline{\Delta_{\alpha}}}^+(P_{\alpha}^{\comp k}(I_l),P_{\alpha}^{\comp k}(I_r))$ is non-winding based on $P_{\alpha}^{\comp k}(I_l)\cup P_{\alpha}^{\comp k}(I_r)$ with respect to $\overline{\Delta_{\alpha}}$.
		Pulling back $\mathcal{G}_{\Delta_\alpha}^+(P_{\alpha}^{\comp k}(I_l),P_{\alpha}^{\comp k}(I_r))$ by $P_{\alpha}^{\comp k}$ along $P_{\alpha}^{\comp k}(I\cup J),P_{\alpha}^{\comp(k-1)}(I\cup J),\cdots, I\cup J$,
		we obtain a non-winding rectangle $\mathcal{R}$ with $\partial^{h,0}\mathcal{R}=I_l$, $\partial^{h,1}\mathcal{R}=I_r$ and $\mathcal{W}(\mathcal{R})\gg1$.
		By Lemma \ref{l825a}, we can obtain that $\gamma_a^n$ doesn't cross $\mathcal{R}$ and hence $\gamma_{a}^{n}$ is non-winding based on $I\cup J$ (also $I_6\cup J_6$) with respect to $\overline{\Delta_\alpha}$.
	\end{proof}
	
	In the proof of Lemma \ref{l04231}, we proved that if the harmonic measure of the interval $T$ of level $4$ containing $P_{\alpha}(c_0)$ in $(\hat{\mathbb{C}}\setminus\overline{\Delta_{\alpha}},\infty)$ is less than $\frac{1}{2}$, then
	for all $n\geq\mathfrak{q}_1$, $\gamma_{c_0}^{n}$ doesn't cross $\mathcal{R}_{+}^{(-1)}$, and thus
	$$\Omega_{c_0}^{n}\subseteq O_{T^{-1}}(\mathcal{R}_{+}^{(-1)})\subseteq\hat{\Delta}_{\alpha}^{4,-1}\cup \hat{\mathcal{G}^{-1}},$$
	where $\hat{\mathcal{G}^{-1}}$ means the filling-in of $\mathcal{G}^{-1}$.
	This implies 
	$$P_{\alpha}(\Omega_{c_0}^{n})\subseteq O_T(\mathcal{G}^+_{\overline{\Delta_{\alpha}}}(I_{4,l},I_{4,r}))\cup\hat{\Delta}_{\alpha}^{4}\subseteq\Lambda_\alpha^4\cup\overline{\Delta_{\alpha}}.$$
	It follows from Lemma \ref{l841} that all intervals of level $n\geq\mathfrak{q}_2$ have harmonic measure $<\frac{1}{2}$. Then for all $n\geq\mathfrak{q}_2$, the interval of level $n$ containing $P_{\alpha}(c_0)$ has harmonic measure $<\frac{1}{2}$ in $(\hat{\mathbb{C}}\setminus\overline{\Delta_{\alpha}},\infty)$. Thus in the same way we can prove 
	
	\begin{corollary}
		\label{l8191}For all $n\geq\mathfrak{q}_2$, $P_{\alpha}(\Omega_{c_0}^{n+\mathfrak{ q}_1})\subseteq\Lambda_\alpha^{n}\cup\overline{\Delta_{\alpha}}$.
	\end{corollary}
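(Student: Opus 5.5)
We repeat the level-$4$ argument from the proof of Lemma \ref{l04231}, with level $4$ replaced by an arbitrary level $n\geq\mathfrak{q}_2$ and the gap $10$ between the two critical-value intervals kept fixed. Let $T$ be the interval of $\mathfrak{D}_n(\alpha)$ containing $P_{\alpha}(c_0)$, so $T=I_{[x_{q_{n+1}-1},x_{q_n-1}]}$. Let $T_4$ be the interval of $\mathfrak{D}_{n+10}(\alpha)$ containing $P_{\alpha}(c_0)$, and let $I_{4,l},I_{4,r}$ be the two components of $\overline{T\setminus T_4}$.

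\emph{Step 1: the annulus around the critical value.} By Lemma \ref{l841}, every interval of level $\geq\mathfrak{q}_2$ has harmonic measure $<\frac12$ in $(\hat{\mathbb{C}}\setminus\overline{\Delta_\alpha},\infty)$. In particular this holds for $T$, so the vertical sides of $\hat{\mathcal{G}}^+_{\overline{\Delta_\alpha}}(\hat I_{4,l},\hat I_{4,r})$ are non-winding based on $T$. By Lemma \ref{l9301}, the intervals $T,T_4,I_{4,l},I_{4,r}$ are well-grounded with respect to $\hat\Delta^n_\alpha$, so we may project them onto $\partial\hat\Delta_\alpha^n$. We then form
$$\mathcal{G}=\big(\mathcal{G}^-_{\hat\Delta^n_\alpha}(\hat I_{4,l},\hat I_{4,r})\cup\hat{\mathcal{G}}^+_{\overline{\Delta_\alpha}}(\hat I_{4,l},\hat I_{4,r})\big)^{\comp}.$$
Lemma \ref{l832}, applied with $m=n$, gives ${\rm mod}(\mathcal{G})\succeq1$. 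By Lemma \ref{l26821}, $P_\alpha(c_0)\in\hat T_4$. Together with the non-winding property, this shows that $\mathcal{G}\subseteq\mathbb{C}$ separates $P_\alpha(c_0)$ from $\infty$.

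\emph{Step 2: pulling back.} Set $\mathcal{G}^{-1}=P_\alpha^{-1}(\mathcal{G})$; then $P_\alpha:\mathcal{G}^{-1}\to\mathcal{G}$ is a degree-$2$ covering. Build $\mathcal{R}^{(-1)}_+$ as at the end of Section \ref{s2.4}. By (\ref{e85a}), $\mathcal{W}(\mathcal{R}^{(-1)}_+)\succeq1$. We also need that $\mathcal{R}_+^{(-1)}$ is non-winding based on $T^{-1}$. As in Lemma \ref{l04231}, this follows from three facts:
\begin{itemize}
\item $T$ and $\hat T$ are homotopic in $\mathbb{C}^*$, so their lifts $T^{-1}$ and $\hat T^{-1}$ are homotopic in $\mathbb{C}^*$;
\item $\hat T^{-1}$ is homotopic to the outer vertical side $\partial^{v,0}\overline{\mathcal{G}^{-1}\setminus\hat\Delta^{n,-1}_\alpha}$, since $\hat T$ lies in the filling-in of $\mathbb{C}\setminus\mathcal{G}$;
\item hence $T^{-1}$ is homotopic in $\mathbb{C}^*$ to the outer side of $\mathcal{R}_+^{(-1)}$.
\end{itemize}

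\emph{Step 3: the deep geodesic stays inside.} Take $N\geq n+\mathfrak{q}_1$ and let $I_3,J_3\in\mathfrak{D}_N$ be the two intervals adjacent at $c_0$. The set $T_4^{-1}$ consists of the two intervals of level $n+10$ at $c_0$. Let $I_l,I_r$ be the components of $\overline{T_4^{-1}\setminus(I_3\cup J_3)}$. By Lemma \ref{l7301}, with $k=N-(n+10)\geq\mathfrak{q}_1-10$, we get $\mathcal{W}^+_{\overline{\Delta_\alpha}}(I_l,I_r)\gg1$, provided $\mathfrak{q}_1$ is chosen large, as was already done. Lemma \ref{l825a} then turns the region $D_1$ between $\gamma^N_{c_0}$ and $\partial\Delta_\alpha\setminus(I_3\cup J_3)$ into a rectangle of width $\gg1$. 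If $\gamma_{c_0}^N$ crossed $\mathcal{R}^{(-1)}_+$, then $D_1$ would cross it too, forcing $\mathcal{W}(D_1)\le 1/\mathcal{W}(\mathcal{R}^{(-1)}_+)\preceq1$, a contradiction. Therefore
$$\Omega^N_{c_0}\subseteq O_{T^{-1}}(\mathcal{R}_+^{(-1)})\subseteq \hat\Delta^{n,-1}_\alpha\cup\widehat{\mathcal{G}^{-1}}.$$

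\emph{Step 4: applying $P_\alpha$.} Applying $P_\alpha$ gives
$$P_\alpha(\Omega^N_{c_0})\subseteq O_T\big(\mathcal{G}^+_{\overline{\Delta_\alpha}}(I_{4,l},I_{4,r})\big)\cup\hat\Delta^n_\alpha.$$
It remains to show that this set lies in $\Lambda^n_\alpha\cup\overline{\Delta_\alpha}$, and we expect this to be the main obstacle. Two inclusions are needed:
\begin{itemize}
\item The outer piece $O_T(\mathcal{G}^+)$ lies in the mountain $\Omega^n_x$ centred at an endpoint $x$ of $T$. This uses geodesic comparison: the geodesic over $T$ lies below $\gamma^n_x$ over $I^n_{l,x}\cup I^n_{r,x}\supseteq T$, and both are non-winding by Lemma \ref{l841}.
\item The part $\hat\Delta^n_\alpha\setminus\overline{\Delta_\alpha}$ lies in $\Lambda^n_\alpha$. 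It consists of fjord fillings bounded by dams of level $\geq n$, each based on a single interval $I_m\subseteq I_n$. Each such dam is a hyperbolic geodesic based inside $I_n$, so it lies under the geodesic $\gamma^n_x$ over two adjacent level-$n$ intervals containing $I_n$.
\end{itemize}
Both inclusions rest on the monotonicity principle that a non-winding geodesic based on a smaller interval lies in the region cut off by the geodesic over a larger interval. I would make this precise by conformally mapping $\hat{\mathbb{C}}\setminus\overline{\Delta_\alpha}$ to the outside of the disk, where geodesics become circular arcs and the nesting is elementary.
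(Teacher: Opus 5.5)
Your proposal is correct and is essentially the paper's own argument. The paper likewise proves this corollary by rerunning the level-$4$ part of the proof of Lemma~\ref{l04231} at level $n\geq\mathfrak{q}_2$, with Lemma~\ref{l841} supplying the harmonic-measure bound $<\frac12$ for the interval containing $P_\alpha(c_0)$. The one difference is that the paper simply asserts the final inclusion $O_T(\mathcal{G}^+_{\overline{\Delta_\alpha}}(I_{4,l},I_{4,r}))\cup\hat\Delta^n_\alpha\subseteq\Lambda^n_\alpha\cup\overline{\Delta_\alpha}$, whereas you justify it by geodesic nesting in the disk model (a geodesic with endpoints in a boundary arc lies in the closed half-disk cut off by the geodesic over any larger arc), and that justification is sound.
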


	\subsection{Essentially visiting main mountains\label{s3.4}}
	
	\begin{lemma}
		\label{l8251}For all  $n\geq\mathfrak{q}_3:=3\mathfrak{q}_2$ and $a\in{\rm EP}(\mathfrak{D}_n(\alpha))$, there exists a nonnegative integer $k$ such that  $P_{\alpha}^{\comp k}(\Omega_{a}^n)\subseteq\Omega_{c_0}^{n-\mathfrak{q}_2}$ and $P_{\alpha}^{\comp j}(\Omega_{a}^n)\subseteq\Lambda_{\alpha}^{n-\mathfrak{q}_2}$ for all $0\leq j\leq k$.
	\end{lemma}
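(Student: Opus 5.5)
The strategy is to push the mountain $\Omega_a^n$ forward along $\partial\Delta_\alpha$ until its center lands on $c_0$. Throughout, the pair of level-$n$ intervals $I_{l,a}^n\cup I_{r,a}^n$ is tracked. Write $a=x_k$, which is possible because every point of ${\rm EP}(\mathfrak{D}_n(\alpha))$ lies in ${\rm CP}_n$, hence is a preimage of $c_0$ under $P_\alpha^{\comp j}$ for some $0\leq j<q_{n+1}$. Take $k$ to be this index, so that $P_\alpha^{\comp k}(a)=c_0$ and the points $P_\alpha^{\comp j}(a)$, $0\leq j<k$, are never equal to $c_0$. Since $P_\alpha$ restricted to $\partial\Delta_\alpha$ is a rotation in the linearizing coordinate, for $0\le j\le k$ the image $P_\alpha^{\comp j}(I_{l,a}^n\cup I_{r,a}^n)$ is contained in a union of boundedly many consecutive intervals of some level $n'$ with $n-\mathfrak{q}_2\le n'\le n$. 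At $j=k$ the image is a neighborhood of $c_0$ contained in $I_{l,c_0}^{n'}\cup I_{r,c_0}^{n'}$. The critical points of $P_\alpha^{\comp q_{n+1}}$ are shifted by $P_\alpha$ to critical points of level at most one shallower, except when passing through $c_0$; this is exactly why coarsening by a fixed number of levels is needed.

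The key step is to show that $P_\alpha^{\comp j}$ is univalent on a neighborhood of $\Omega_a^n$ for $j\le k$, and that its image stays inside a mountain of level $n-\mathfrak{q}_2$. First apply Lemma \ref{l841}: since $n\ge 3\mathfrak{q}_2$, every geodesic $\gamma_b^{m}$ with $m\ge n-\mathfrak{q}_2\ge\mathfrak{q}_2$ is non-winding, so each mountain $\Omega_b^m$ is a bounded region attached to $I_{l,b}^m\cup I_{r,b}^m$. Next, mimic the construction in the proof of Lemma \ref{l841}. Choose a pair of intervals $I,J$ of level $n-\mathfrak{q}_2$ with common endpoint $b$ whose union contains $I_{l,a}^n\cup I_{r,a}^n$ deep inside, at least $\mathfrak{q}_2/2$ levels away from $\partial(I\cup J)$. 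Lemma \ref{l7301} then provides a rectangle of width $\gg1$ separating $I_{l,a}^n\cup I_{r,a}^n$ from $\overline{\partial\Delta_\alpha\setminus(I\cup J)}$ outside $\Delta_\alpha$, and by (\ref{e8223}) it may be taken to be a geodesic rectangle $\mathcal{G}^+_{\overline{\Delta_\alpha}}$. Lemma \ref{l825a} shows that $\gamma_a^n$ does not cross this rectangle, so $\Omega_a^n\subseteq\Omega_b^{n-\mathfrak{q}_2}$. For each $j\le k$ I would repeat this argument for the image intervals. Because $P_\alpha^{\comp j}$ avoids $c_0$ on the relevant boundary pieces, it maps the complement of $\Delta_\alpha$ near these intervals univalently; the hyperbolic geodesic $\gamma_a^n$ is then carried into a curve based on the image intervals that cannot cross the image of the wide protecting rectangle, whose width is preserved by univalence. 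Hence $P_\alpha^{\comp j}(\Omega_a^n)$ lies in the mountain of level $n-\mathfrak{q}_2$ centered at the corresponding endpoint, and therefore in $\Lambda_\alpha^{n-\mathfrak{q}_2}$. At $j=k$ that center is $c_0$, which gives $P_\alpha^{\comp k}(\Omega_a^n)\subseteq\Omega_{c_0}^{n-\mathfrak{q}_2}$.

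I expect the main obstacle to be justifying univalence of $P_\alpha^{\comp j}$ on the region bounded by the protecting rectangle. The danger is that this region could contain a preimage of $c_0$ off $\partial\Delta_\alpha$, for example inside $U_0$; compare (\ref{e20251126a}), which shows that $\gamma_{c_0}^n$ does meet $U_0$. To handle this I would pull back the rectangle rather than push forward. Start from the geodesic rectangle around $P_\alpha^{\comp k}(I\cup J)$, which is non-winding by Lemma \ref{l04231} and lies in $\Omega_{c_0}^{\mathfrak{q}_1}$. Pull it back step by step along the orbit $P_\alpha^{\comp k}(I\cup J),\dots,I\cup J$, as in the proof of Lemma \ref{l841}. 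This ensures that each step is a conformal pullback along a non-critical branch, since $c_0$ is not interior to any intermediate interval by the minimality of $k$. Widths are then preserved at every stage, and the containments for $0\le j\le k$ follow by the same non-crossing argument applied to each pulled-back rectangle.
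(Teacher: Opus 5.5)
Your overall architecture matches the paper's: a wide geodesic rectangle at the terminal time, made non-winding by Lemma \ref{l04231}, pulled back conformally along the orbit, then the center arc plus Lemma \ref{l825a}, and finally pushing $\Omega_a^n\subseteq O_{I\cup J}(\gamma_{\frac12})$ forward. The gap is your choice of $k$, and it breaks the step you yourself flagged.

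\textbf{Why your $k$ fails.} You take $k$ with $P_\alpha^{\comp k}(a)=c_0$ and then claim that $c_0$ is not interior to any intermediate image ``by the minimality of $k$''. Minimality of that $k$ controls only the orbit of the point $a$ (and of the fine tiles $I^n_{l,a}\cup I^n_{r,a}$). It does not control the coarse pieces $I_l,I_r$ on which the protecting rectangle lives.

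\begin{itemize}
\item For the rectangle to have width $\gg1$, $I_l$ and $I_r$ must each contain a whole tile of level roughly $n-\mathfrak{q}_2/2$. The endpoints of that tile are points $x_m$ with $m<q_{n-\mathfrak{q}_2/2+1}$.
\item Since $a=x_k$ may have $k$ as large as $q_{n+1}-1$, typically some such $m$ satisfies $m<k$. Then $c_0=P_\alpha^{\comp m}(x_m)$ lies in $P_\alpha^{\comp m}(I_l\cup I_r)$.
\item At the next step the critical value $P_\alpha(c_0)$ lies on the base or roof of the image rectangle. No univalent branch of $P_\alpha^{-1}$ carries that rectangle back to one based on $P_\alpha^{\comp m}(I_l)$, $P_\alpha^{\comp m}(I_r)$ outside $\overline{\Delta_\alpha}$: each lift splits at $c_0$, and part of its boundary lies on $\partial U_0$.
\end{itemize}

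So neither the width nor the non-crossing/containment argument survives such a step, and the chain from time $k$ back to time $0$ breaks.

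\textbf{How the paper avoids this.} The lemma only asks for some $k$, and the paper never requires the center $a$ to reach $c_0$. It makes two changes.

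\begin{itemize}
\item It takes $I,J$ of level $n-\mathfrak{q}_2+2$, not $n-\mathfrak{q}_2$.
\item It lets $k$ be the least integer for which $P_\alpha^{\comp k}(I)$ or $P_\alpha^{\comp k}(J)$ has $c_0$ as an endpoint.
\end{itemize}

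Then for every $j<k$, the images $P_\alpha^{\comp j}(I)$ and $P_\alpha^{\comp j}(J)$ are still tiles of level $n-\mathfrak{q}_2+2$ that do not contain $c_0$. Hence the pullback of $\mathcal{G}^+_{\overline{\Delta_\alpha}}(P_\alpha^{\comp k}(I_l),P_\alpha^{\comp k}(I_r))$ along $P_\alpha^{\comp k}(I\cup J),\dots,I\cup J$ is conformal.

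At time $k$, $c_0$ may be the outer endpoint of $P_\alpha^{\comp k}(I\cup J)$ rather than $P_\alpha^{\comp k}(b)$. The two-level offset still gives $P_\alpha^{\comp k}(I\cup J)\subseteq I^{n-\mathfrak{q}_2}_{l,c_0}\cup I^{n-\mathfrak{q}_2}_{r,c_0}$, because every tile of level $n-\mathfrak{q}_2$ contains at least two tiles of level $n-\mathfrak{q}_2+2$. So the terminal rectangle lies in $\overline{\Omega^{n-\mathfrak{q}_2}_{c_0}}$ and is non-winding by Lemma \ref{l04231}.

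With $I,J$ of level $n-\mathfrak{q}_2$, as you proposed, the landing in $\Omega^{n-\mathfrak{q}_2}_{c_0}$ can fail: you would only land in $\Omega^{n-\mathfrak{q}_2}_{P_\alpha^{\comp k}(b)}$.

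With these two changes the rest of your sketch goes through. For intermediate times, compare $P_\alpha^{\comp j}(\gamma_{\frac12})$ with $\gamma^{n-\mathfrak{q}_2+2}_{P_\alpha^{\comp j}(b)}$ via Lemma \ref{l825a} to conclude $P_\alpha^{\comp j}(\Omega_a^n)\subseteq\Lambda_\alpha^{n-\mathfrak{q}_2}$.
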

	\begin{proof}
		We denote by $I_6$ and $J_6$ the two adjacent intervals of $\mathfrak{D}_n(\alpha)$ having the common endpoint $a$. Since
		every interval of $\mathfrak{D}_{n-\mathfrak{q}_2+2}(\alpha)$ can be divided into at least $5$ intervals of $\mathfrak{D}_{n-\mathfrak{q}_2+8}(\alpha)$,
		we can choose a pair of adjacent intervals $I$ and $J$ with a common endpoint $b$ in $\mathfrak{D}_{n-\mathfrak{q}_2+2}(\alpha)$ such that $I_6\cup J_6\in\overline{(I\cup J)\setminus(I_5\cup J_5)}$, where $I_5$ ($\subseteq I$) and $J_5$ ($\subseteq J$) are the two intervals of $\mathfrak{D}_{n-\mathfrak{q}_2+8}(\alpha)$ such that $I_5$ (resp. $J_5$) has a common endpoint as $I$ (resp. $J$), but not $b$. Let $I_{l}$ and $I_{r}$ be the two components of
		$\overline{(I\cup J)\setminus(I_6\cup J_6)}$ with $I_{l}<I_6\cup J_6<I_{r}$.
		We let $k$ be the smallest nonnegative integer such that $P_{\alpha}^{\comp k}(I)$ or $P_{\alpha}^{\comp k}(J)$ have $c_0$ as an endpoint.
		It follows from the smallestness that $P_{\alpha}^{\comp k}(I)$ (resp. $P_{\alpha}^{\comp k}(J)$) is an interval of $\mathfrak{D}_{n-\mathfrak{q}_2+2}(\alpha)$, $P_{\alpha}^{\comp k}(I_5)$ (resp. $P_{\alpha}^{\comp k}(J_5)$) is an  interval of $\mathfrak{D}_{n-\mathfrak{q}_2+8}(\alpha)$ and $P_{\alpha}^{\comp k}(I_6)$ (resp. $P_{\alpha}^{\comp k}(J_6)$) is an  interval of $\mathfrak{D}_{n}(\alpha)$. 
		Since $P_{\alpha}^{\comp k}(I_l)$ (resp. $P_{\alpha}^{\comp k}(I_r)$) contains $P_{\alpha}^{\comp k}(I_5)$ or $P_{\alpha}^{\comp k}(J_5)$, by Lemma \ref{l7301} we have $\mathcal{W}_{\overline{\Delta_\alpha}}^+(P_{\alpha}^{\comp k}(I_l),P_{\alpha}^{\comp k}(I_r))\gg1$. By (\ref{e8223}) $\mathcal{W}(\mathcal{G}_{\overline{\Delta_{\alpha}}}^+(P_{\alpha}^{\comp k}(I_l),P_{\alpha}^{\comp k}(I_r))\gg1$.
		Since $n-\mathfrak{q}_2\geq\mathfrak{q}_1$, by Lemma \ref{l04231} we have $\infty\not\in\overline{\Omega_{c_0}^{n-\mathfrak{q}_2}}$. By combining 
		$\mathcal{G}_{\overline{\Delta_{\alpha}}}^+(P_{\alpha}^{\comp k}(I_l),P_{\alpha}^{\comp k}(I_r))\subseteq\overline{\Omega_{c_0}^{n-\mathfrak{q}_2}}$, it follows that $\mathcal{G}_{\overline{\Delta_{\alpha}}}^+(P_{\alpha}^{\comp k}(I_l),P_{\alpha}^{\comp k}(I_r))$ is non-winding based on $P_{\alpha}^{\comp k}(I)\cup P_{\alpha}^{\comp k}(J)$ with respect to $\overline{\Delta_{\alpha}}$.
		Pulling back $\mathcal{G}_{\overline{\Delta_{\alpha}}}^+(P_{\alpha}^{\comp k}(I_l),P_{\alpha}^{\comp k}(I_r))$ by $P_{\alpha}^{\comp k}$ along $P_{\alpha}^{\comp k}(I\cup J),P_{\alpha}^{\comp(k-1)}(I\cup J),\cdots, I\cup J$,
		we obtain a non-winding rectangle $\mathcal{R}$ with $\partial^{h,0}\mathcal{R}=I_l$ and $\partial^{h,1}\mathcal{R}=I_r$ with $\mathcal{W}(\mathcal{R})\gg1$.
		Let $\gamma_{\frac{1}{2}}$ be the center arc of $\mathcal{R}$.
		By Lemma \ref{l825a}, we can obtain that $\gamma_a^n$ doesn't cross $\gamma_{\frac{1}{2}}$.
		Then $\Omega_{a}^n\subseteq O_{I\cup J}(\gamma_{\frac{1}{2}})$. Applying $P_{\alpha}^{\comp k}$ to both sides of the above set relation, we have
		$$P_{\alpha}^{\comp k}(\Omega_{a}^n)\subseteq P_{\alpha}^{\comp k}(O_{I\cup J}(\gamma_{\frac{1}{2}}))=O_{P_{\alpha}^{\comp k}(I)\cup P_{\alpha}^{\comp k}(J)}(P_{\alpha}^{\comp k}(\gamma_{\frac{1}{2}}))\subseteq \Omega_{c_0}^{n-\mathfrak{q}_2}.$$
		
		For all $0\leq j\leq k$, we view $P_{\alpha}^{\comp j}(\mathcal{R})$ as a rectangle with $\partial^{h,0}P_{\alpha}^{\comp j}(\mathcal{R})=P_{\alpha}^{\comp j}(I_l)$ and $\partial^{h,1}P_{\alpha}^{\comp j}(\mathcal{R})=P_{\alpha}^{\comp j}(I_r)$.
		Then $\mathcal{W}(P_{\alpha}^{\comp j}(\mathcal{R}))=\mathcal{W}(\mathcal{R})\gg1$ and $P_{\alpha}^{\comp j}(\gamma_{\frac{1}{2}})$ is the center arc of $P_{\alpha}^{\comp j}(\mathcal{R})$. Observe that $\gamma_{P_{\alpha}^{\comp j}(b)}^{n-\mathfrak{q}_2+2}$ has the same endpoints as $\partial^{v,0}P_{\alpha}^{\comp j}(\mathcal{R})$. Then
		it follows from Lemma \ref{l825a} that $\gamma_{P_{\alpha}^{\comp j}(b)}^{n-\mathfrak{q}_2+2}$ doesn't intersect $P_{\alpha}^{\comp j}(\gamma_{\frac{1}{2}})$, and hence $O_{P_{\alpha}^{\comp j}(I)\cup P_{\alpha}^{\comp j}(J)}(P_{\alpha}^{\comp j}(\gamma_{\frac{1}{2}}))\subseteq\Omega_{P_{\alpha}^{\comp j}(b)}^{n-\mathfrak{q}_2+2}\subseteq\Lambda_{\alpha}^{n-\mathfrak{q}_2}$.
		Applying $P_{\alpha}^{\comp j}$ to both sides of $\Omega_{a}^n\subseteq O_{I\cup J}(\gamma_{\frac{1}{2}})$,
		we obtain $$P_{\alpha}^{\comp j}(\Omega_{a}^n)\subseteq P_{\alpha}^{\comp j}(O_{I\cup J}(\gamma_{\frac{1}{2}}))=O_{P_{\alpha}^{\comp j}(I)\cup P_{\alpha}^{\comp j}(J)}(P_{\alpha}^{\comp j}(\gamma_{\frac{1}{2}}))\subseteq\Lambda_{\alpha}^{n-\mathfrak{q}_2}.$$
	\end{proof}

	\begin{lemma}
		\label{l20257191}For any dam $\beta_{I_n}$ of level $n\geq-1$ based on $I_{n}\in\mathfrak{D}_n$ with respect to $\{\hat{\Delta}_{\alpha}^n\}_{n\geq-1}$ and any $z\in\mathcal{J}(\tilde{\beta}_{I_n})$, we have that $P_{\alpha}^{\comp j}(z)\in\Lambda_\alpha^n$ for all $j\in\{0,1, \cdots,q_{n+1}\}$ and $P_{\alpha}^{\comp j_0}(z)\in\Omega_{c_0}^n$ for some $j_0\in\{1, \cdots,q_{n+1}\}$.
	\end{lemma}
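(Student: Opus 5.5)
The plan is to exploit the injectivity of $P_{\alpha}^{\comp q_{n+1}}$ on the fjord provided by Lemma \ref{l8271a}, together with the combinatorics of the first return of $I_n$ to the critical point, in the same way as in Lemma \ref{l8251}. Write $I_n=I_{[x_m,x_{m'}]}$. Since the endpoints of intervals of $\mathfrak{D}_n$ are the critical points of $P_{\alpha}^{\comp q_{n+1}}$, namely $x_0=c_0,x_1,\dots,x_{q_{n+1}-1}$, one endpoint of $I_n$ is some $x_{m}$ with $m\le q_{n+1}-1$. Hence $P_\alpha^{\comp m}(I_n)$ has $c_0$ as an endpoint, and we take $j_0:=m$ (or, if $m=0$, we take instead the other endpoint index, which is at least $1$; this is the case to check separately, using that $I_n$ then already has $c_0$ as endpoint and the orbit of length $q_{n+1}$ returns close to $c_0$).

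First, for $0\le j\le q_{n+1}$ the map $P_\alpha^{\comp j}$ restricted to $I_n$ is a homeomorphism onto an interval which is a union of at most two adjacent intervals of $\mathfrak{D}_n$. This holds because $P_\alpha$ permutes the partition points except at $c_0$, and $I_n$ contains no interior critical point of $P_\alpha^{\comp q_{n+1}}$. Consequently $P_\alpha^{\comp j}(I_n)\subseteq I^n_{l,x}\cup I^n_{r,x}$ for a suitable endpoint $x\in{\rm EP}(\mathfrak{D}_n)$.

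Second, since $\mathcal{J}(\tilde\beta_{I_n})\subseteq\mathcal{J}(\hat\beta_{I_n})$ minus the collar $\{x-3<y\le x\}$, Lemma \ref{l8271a} gives that $P_\alpha^{\comp j}$ is injective on $\mathcal{J}(\tilde\beta_{I_n})$ for all $j\le q_{n+1}$. Therefore $P_\alpha^{\comp j}(\mathcal{J}(\tilde\beta_{I_n}))$ is a Jordan region bounded by $P_\alpha^{\comp j}(\tilde\beta_{I_n})$ and a subinterval of $P_\alpha^{\comp j}(I_n)$. To place this region inside $\Omega_x^n$, we argue as in Lemma \ref{l8251}. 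The rectangle $\overline{\mathcal{J}(\hat\beta_{I_n})\setminus\mathcal{J}(\tilde\beta_{I_n})}$ has width $4$, and the part of width $1$ on which the iterates remain injective maps conformally. We use the outer rectangle (of width $\geq 1$, with base in $I_n$) as a protection, together with Lemma \ref{l825a}, to show that the geodesic $\gamma_x^n$ cannot cross the image of $\tilde\beta_{I_n}$. The argument is that if it did, the image rectangle, which is non-winding and based on $I^n_{l,x}\cup I^n_{r,x}$, would cross a rectangle of width $>\tfrac12$ sitting below $\gamma_x^n$, and this would contradict Lemma \ref{l825a}. Non-winding of $\gamma_x^n$ is needed for $n\ge\mathfrak q_2$ (Lemma \ref{l841}); for small $n$ we use instead that the fjord images are non-winding, being pulled back from bounded pieces. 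This yields $P_\alpha^{\comp j}(z)\in\Omega_x^n\subseteq\Lambda_\alpha^n$ for every $j$, and in particular $P_\alpha^{\comp j_0}(z)\in\Omega_{c_0}^n$ when $x=c_0$.

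The main obstacle is this second step: ensuring that the images of the fjord do not escape past the geodesic $\gamma_x^n$, i.e.\ that the whole image lies under the mountain rather than only its base. I would first try the width comparison just described. The dam $\tilde\beta_{I_n}$ sits at depth at least $\Delta$ inside the big fjord, whose endpoints are separated from the endpoints of $I_n$ by at least $11\iota_{n+1}$ by (P2). The iterated rectangle $P_\alpha^{\comp j}(\overline{\mathcal{J}(\hat\beta)\setminus\mathcal{J}(\tilde\beta)})$ then separates the image of $\mathcal{J}(\tilde\beta_{I_n})$ from infinity within a region bounded by $\gamma_x^n$. The remaining care is the index bookkeeping: one must check that $j_0\ge1$ can always be chosen, which uses that the endpoint $x_{q_{n+1}-1}$ or $x_{q_n-1}$ structure ensures some iterate in $\{1,\dots,q_{n+1}\}$ maps an endpoint of $I_n$ to $c_0$.
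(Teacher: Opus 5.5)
Your overall route is the paper's: injectivity of $P_\alpha^{\comp j}$, $0\le j\le q_{n+1}$, on $\mathcal{J}(\tilde\beta_{I_n})\cup\mathcal{R}$ with $\mathcal{R}=\varphi_{I_n}^{-1}(\{(y,z):0\le z\le 1,\ x-4\le y\le x-3\})$ (Lemma \ref{l8271a}), pushing this width-one strip forward, Lemma \ref{l825a}, and a choice of $j_0$ at which the image sits next to $c_0$. However, the step you single out as the main obstacle is not argued correctly as written. A rectangle crossing a rectangle of width $>\frac12$ contradicts nothing by itself, and Lemma \ref{l825a} is not a statement about crossings. Your fallback, the image of the whole width-$4$ rectangle $\overline{\mathcal{J}(\hat\beta_{I_n})\setminus\mathcal{J}(\tilde\beta_{I_n})}$, is also unavailable, because injectivity is only guaranteed on $\{y\le x-3\}$.

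The step closes directly, as in the paper. For $0\le j\le q_{n+1}$, points of ${\rm CP}_n$ inside $P_\alpha^{\comp j}(I_n)$ lie within combinatorial distance $\iota_{n+1}$ of its endpoints. Hence the $11\iota_{n+1}$ margin from (P2) puts the base and roof of the width-one rectangle $P_\alpha^{\comp j}(\mathcal{R})$ inside a single interval $I_{n,j}\in\mathfrak{D}_n$. Lemma \ref{l825a} then yields a geodesic $\gamma_j$ of $\hat{\mathbb{C}}\setminus\overline{\Delta_\alpha}$ inside $P_\alpha^{\comp j}(\mathcal{R})$ joining its base and roof. Since $P_\alpha^{\comp j}$ is a homeomorphism on $\mathcal{J}(\tilde\beta_{I_n})\cup\mathcal{R}$, this gives $P_\alpha^{\comp j}(\mathcal{J}(\tilde\beta_{I_n}))\subseteq O_{I_{n,j}}(\gamma_j)$. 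Geodesics of the disk $\hat{\mathbb{C}}\setminus\overline{\Delta_\alpha}$ with unlinked endpoints do not cross, so $O_{I_{n,j}}(\gamma_j)\subseteq\Omega_x^n$ for either endpoint $x$ of $I_{n,j}$. This is purely topological, so no non-winding property of $\gamma_x^n$ (Lemma \ref{l841}) is needed. Finally, take $j_0\in\{1,\dots,q_{n+1}\}$ with $I_{n,j_0}$ having $c_0$ as an endpoint.

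The same margin settles your case $m=0$, where your ``at most two adjacent intervals'' claim alone does not suffice. For example, for $I_n=I_{[c_0,x_{q_{n+1}-q_n}]}$ and $j=q_{n+1}-q_n$, the image $P_\alpha^{\comp j}(I_n)$ overshoots $x_{q_n}$ by $\iota_{n+1}$. It therefore sits in the two level-$n$ intervals around $x_{q_n}$, not those around $c_0$. The image of the base of $\mathcal{R}$, however, still lies in $I_{[x_{q_n},c_0]}$.
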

	
	\begin{proof}
		Recall that $\overline{\mathcal{J}(\hat{\beta}_{I_n})\setminus\mathcal{J}(\beta_{I_n})}$ is a non-winding rectangle based on $I_n$ with respect to $\overline{\Delta_{\alpha}}$ together with the conformal map $$\varphi_{I_n}:\overline{\mathcal{J}(\hat{\beta}_n)\setminus\mathcal{J}(\beta_n)}\to E_x$$
		such that $$\varphi_{I_n}^{-1}(\{(x,z):0\leq z\leq 1\})=\hat{\beta}_n$$
		and $$\varphi_{I_n}^{-1}(\{(0,z):0\leq z\leq 1\})=\beta_n.$$
		By Lemma \ref{l8271a}, for all $j\in\{0,1,\cdots, q_{n+1}\}$, $P_{\alpha}^{\comp j}$ is a univalent map on the interior of $$\mathcal{R}=\varphi_{I_n}^{-1}(\{(y,z):0\leq z\leq 1, x-4\leq y\leq x-3\})$$
		and homeomorphism on $\mathcal{R}$.
		Since the combinatorial distance between $\mathcal{R}\cap\partial\Delta_{\alpha}$ ($\subseteq I_n$) and two endpoints of $I_n$ is at least $11\iota_{n+1}$ (Due to {\rm(P}$2${\rm)} in Section \ref{s2.3}), we have that $P_{\alpha}^{\comp j}(\mathcal{R})$ is based on some interval $I_{n,j}$ of level $n$, where $P_{\alpha}^{\comp j}(\mathcal{R})$ is viewed as a rectangle with
		$$\partial^{h,0}P_{\alpha}^{\comp j}(\mathcal{R})=P_{\alpha}^{\comp j}(\varphi_{I_n}^{-1}(\{(y,0):x-4\leq y\leq x-3\}))$$
		and
		$$\partial^{h,1}P_{\alpha}^{\comp j}(\mathcal{R})=P_{\alpha}^{\comp j}(\varphi_{I_n}^{-1}(\{(y,1):x-4\leq y\leq x-3\})).$$
		Then the width $\mathcal{W}(P_{\alpha}^{\comp j}(\mathcal{R}))=1$. By Lemma \ref{l825a} there exists a geodesic $\gamma_j$ of $\hat{\mathbb{C}}\setminus\overline{\Delta_{\alpha}}$ connecting $\partial^{h,0}P_{\alpha}^{\comp j}(\mathcal{R})$ and $\partial^{h,1}P_{\alpha}^{\comp j}(\mathcal{R})$ contained in $P_{\alpha}^{\comp j}(\mathcal{R})$.
		Observe also that $\mathcal{R}$ is non-winding based on $I_n$ with respect to $\overline{\Delta_{\alpha}}$ and $\tilde{\beta}_{I_n}=\varphi_{I_n}^{-1}(\{(y,z):y=x-4\ {\rm and}\ 0\leq z\leq1\})$.
		Thus $P_{\alpha}^{\comp j}(\mathcal{J}(\tilde{\beta}_{I_n}))\subseteq O_{I_{n,j}}(\gamma_j)\subseteq\Lambda_\alpha^n$. At last, we need only to choose an appropriate $j_0\in\{1,2,\cdots, q_{n+1}\}$ so that $I_{n,j_0}$ has an endpoint $c_0$. Then $P_{\alpha}^{\comp j_0}(\mathcal{J}(\tilde{\beta}_{I_n}))\subseteq O_{I_{n,j_0}}(\gamma_{j_0})\subseteq\Omega_{c_0}^n$.
		
	\end{proof}
	
	For all $n\geq-1$, we denote by $\mathfrak{D}_{\geq n}$ the set consisting of all intervals $I$ of level $\geq n$ such that there exists a dam of the same level based on $I$ with respect to $\{\hat{\Delta}_{\alpha}^n\}_{n\geq-1}$. Let $\tilde{\Delta}_{\alpha}^{n}$ be the filling-in of
	$\overline{\Delta_\alpha}\cup\bigcup\limits_{I\in\mathfrak{D}_{\geq n}}\tilde{\beta}_I$, that is $\overline{\Delta_\alpha}\cup\bigcup\limits_{I\in\mathfrak{D}_{\geq n}}\mathcal{J}(\tilde{\beta}_I)$, and let 
	$\tilde{\Delta}_{\alpha}$ be the filling-in of $P_{\alpha}(\tilde{\Delta}_{\alpha}^{-1})\cup\tilde{\Delta}_{\alpha}^{-1}$, that is $\overline{\Delta_\alpha}\cup\bigcup\limits_{I\in\mathfrak{D}_{\geq n}}\mathcal{J}(\tilde{\beta}_I)\cup\bigcup\limits_{I\in\mathfrak{D}_{\geq n}}P_{\alpha}(\mathcal{J}(\tilde{\beta}_I))$.
	For all $n\geq\mathfrak{q}_2$, we set
	$$\Lambda_{\alpha}^{*(n+2)}:=\bigcup_{x\in{\rm EP}(\mathfrak{D}_{n+2}(\alpha))\setminus\{x_{q_{n+2}-1},x_{q_{n+3}-1}\}}\Omega_x^{n+2}.$$
	It is clear that
	\begin{itemize}
		\item $\hat{\Delta}_{\alpha}^{-1}\subseteq\tilde{\Delta}_{\alpha}^{-1}$ (Due to $\mathcal{J}(\beta_I)\subseteq\mathcal{J}(\tilde{\beta}_I)\subseteq\mathbb{C}$ for all $I\in\mathfrak{D}_{\geq -1}$),
		\item $\tilde{\Delta}_{\alpha}\cup\Lambda_{\alpha}^{*(n+2)}$ is a bounded closed set (Due to Lemma \ref{l841}).
	\end{itemize}
	Furthermore, we have the following result:
	\begin{corollary}
		\label{c20251027}For all $n\geq\mathfrak{q}_2$,
		$P_{\alpha}(c_0)\in\partial(\tilde{\Delta}_{\alpha}\cup\Lambda_{\alpha}^{*(n+2)})$ and there exists a Jordan arc $\Gamma_{\alpha}$ outside $(\tilde{\Delta}_{\alpha}\cup\Lambda_{\alpha}^{*(n+2)})\setminus\{P_{\alpha}(c_0)\}$ that connects $P_{\alpha}(c_0)$ and $\infty$.
	\end{corollary}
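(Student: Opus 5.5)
We argue in two parts: first the boundary claim $P_{\alpha}(c_0)\in\partial(\tilde{\Delta}_{\alpha}\cup\Lambda_{\alpha}^{*(n+2)})$, then the construction of $\Gamma_\alpha$.

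\emph{Membership.} By Lemma \ref{l26821}, $P_{\alpha}(c_0)$ lies in $\partial\hat{\Delta}_{\alpha}^n$ for every $n$. By (P2), it does not lie in the closure of any big fjord; the reason is that its combinatorial distance to $x_{q_{m+1}-1}\in{\rm CP}_m$ equals $\iota_{m+1}<11\iota_{m+1}$. Since $\mathcal{J}(\tilde{\beta}_I)\subseteq\mathcal{J}(\hat{\beta}_I)$, the point $P_\alpha(c_0)$ is not in the closure of any $\mathcal{J}(\tilde{\beta}_I)$ with $I\in\mathfrak{D}_{\geq -1}$.

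For the image part $P_\alpha(\mathcal{J}(\tilde{\beta}_I))$, we argue as follows. $P_\alpha$ is a local homeomorphism off $c_0$. The point $c_0$ is not in $\overline{\mathcal{J}(\tilde{\beta}_I)}$, by the same (P2) argument applied to the distance from $c_0$ to $x_{q_{m+1}}$. Hence $P_\alpha(\overline{\mathcal{J}(\tilde\beta_I)})$ meets $\partial\Delta_\alpha$ only along $P_\alpha(I)$, away from $P_\alpha(c_0)$.

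Next we consider $\Lambda_\alpha^{*(n+2)}$. The interval of $\mathfrak{D}_{n+2}$ containing $P_\alpha(c_0)$ is $I_{[x_{q_{n+3}-1},x_{q_{n+2}-1}]}$, and its endpoints are exactly the two excluded points. So every mountain $\Omega_x^{n+2}$ with $x\neq x_{q_{n+2}-1},x_{q_{n+3}-1}$ is based on intervals not containing $P_\alpha(c_0)$. Such a mountain touches $\partial\Delta_\alpha$ only in the interior of $I_{l,x}\cup I_{r,x}$, and its closure meets $\partial\Delta_\alpha$ only in that closed set, which excludes $P_\alpha(c_0)$.

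Thus $P_\alpha(c_0)\in\partial\Delta_\alpha\subseteq\tilde\Delta_\alpha$, and a neighborhood of it outside $\overline{\Delta_\alpha}$ is free of the added sets. This gives the membership in the boundary.

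\emph{The arc.} We construct $\Gamma_\alpha$ by first leaving $P_\alpha(c_0)$ radially in the exterior uniformizing coordinate of $\hat{\mathbb{C}}\setminus\overline{\Delta_\alpha}$, and then escaping to $\infty$. The natural candidate is the external ray of $\hat{\mathbb{C}}\setminus\overline{\Delta_\alpha}$ (hyperbolic geodesic from $\infty$) landing at $P_\alpha(c_0)$. We must check that this ray avoids every mountain $\Omega_x^{n+2}$, every $\mathcal{J}(\tilde\beta_I)$, and every $P_\alpha(\mathcal{J}(\tilde\beta_I))$.

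\emph{Mountains.} By Lemma \ref{l841}, all $\Omega_x^{n+2}$ with $n+2\geq\mathfrak{q}_2$ are non-winding. So $O(\gamma_x)$ is the bounded region cut off by a geodesic with both endpoints on one side of $P_\alpha(c_0)$. The ray ending at $P_\alpha(c_0)$ is a geodesic disjoint from $\gamma_x$: two geodesics of a simply connected hyperbolic domain whose endpoint pairs are unlinked on the ideal boundary do not cross. The ray cannot enter the region $O(\gamma_x)$, because its endpoint $P_\alpha(c_0)$ and the point $\infty$ both lie outside it.

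\emph{Fjords.} The same argument works for $\mathcal{J}(\tilde\beta_I)\subseteq\mathcal{J}(\hat\beta_I)$. Here $\hat\beta_I$ is a non-winding geodesic with endpoints in $I$, and $P_\alpha(c_0)\notin I$ by the distance estimate above.

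\emph{Images of fjords.} This is the main obstacle. These sets are not bounded by geodesics, so the unlinking argument does not apply directly. We handle them via Lemma \ref{l20257191} and Corollary \ref{l8191}. The first step applies Lemma \ref{l20257191} with $j=1$ to get $P_\alpha(\mathcal{J}(\tilde\beta_I))\subseteq\Lambda_\alpha^{m}$, where $m$ is the level of $I$. The inclusion in $\Lambda^m_\alpha$ then places each image inside a union of non-winding mountains, at least for levels $m\geq\mathfrak{q}_2$.

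It remains to check that none of these mountains is based on the interval carrying $P_\alpha(c_0)$ in a way that traps the ray. For this we use the geodesics $\gamma_j$ built in the proof of Lemma \ref{l20257191}: the image $P_\alpha(\mathcal{J}(\tilde\beta_I))$ lies in $O_{I_{m,1}}(\gamma_1)$, where $\gamma_1$ is a geodesic with endpoints in $I_{m,1}=P_\alpha(I)$ at combinatorial distance $\ge 11\iota_{m+1}$ from the endpoints. The point $P_\alpha(c_0)$ is the image of $c_0$, which is far from $I$. So the endpoints of $\gamma_1$ do not separate $P_\alpha(c_0)$ from $\infty$, and the unlinking argument applies again.

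The finitely many low levels $m<\mathfrak{q}_2$ are treated by the same $\gamma_1$ argument. This works provided non-winding holds, which follows because $\mathcal{R}$ in that lemma is non-winding and $P_\alpha$ maps it homeomorphically. Compactness of the union gives the boundedness already noted, and the ray is then the required Jordan arc $\Gamma_\alpha$.
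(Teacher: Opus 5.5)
Your proof is correct in substance and uses the same ingredients as the paper. These are the three families of non-winding caps: the mountains $\Omega_x^{n+2}$ with $x\neq x_{q_{n+2}-1},x_{q_{n+3}-1}$ (Lemma \ref{l841}), the fjords $\mathcal{J}(\tilde\beta_I)\subseteq\mathcal{J}(\hat\beta_I)$ ((P2)), and the images $P_\alpha(\mathcal{J}(\tilde\beta_I))$ enclosed by the geodesic $\gamma_1$ from the proof of Lemma \ref{l20257191}, which is the paper's $\gamma_I$. The comparison $\iota_{m+1}<11\iota_{m+1}$ then places $P_\alpha(c_0)$ outside every base arc.

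The difference is the last step. The paper forms $F(\Delta_\alpha)$, the filling-in of $\Delta_\alpha$ together with all $\gamma_I,\tilde\beta_I$, and notes $\tilde\Delta_\alpha\subseteq F(\Delta_\alpha)$. It then asserts, with only a brief justification, that $\partial(F(\Delta_\alpha)\cup\Lambda_\alpha^{*(n+2)})$ is a Jordan curve through $P_\alpha(c_0)$, so some arc to $\infty$ exists. You instead exhibit the arc: the hyperbolic ray of $\hat{\mathbb{C}}\setminus\overline{\Delta_\alpha}$ from $\infty$ to $P_\alpha(c_0)$, checked cap by cap. This gives an explicit $\Gamma_\alpha$ without the Jordan-curve claim, and only the arc is used later.

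Your route also disposes of the filling-in in the definition of $\tilde\Delta_\alpha$, but you must say so. The open ray is connected, contains $\infty$ and misses the union, so it lies in the unbounded complementary component and misses the filling-in. This also gives $P_\alpha(c_0)\in\partial(\tilde\Delta_\alpha\cup\Lambda_\alpha^{*(n+2)})$, which your ``Membership'' paragraph does not fully establish because it ignores the filling-in. For the non-crossing step, argue with the ray itself rather than with unlinked endpoint pairs, since $\infty$ is an interior point. The ray meets a geodesic $\gamma$ at most once, so once inside the cap it could not leave it, yet its endpoint lies outside the closed cap.

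Several justifications are misstated, although the correct fact is already in your text.
\begin{itemize}
\item In the fjord step, ``$P_\alpha(c_0)\notin I$'' is false whenever a dam sits on $I_{[x_{q_{m+1}-1},x_{q_m-1}]}$. What you need, and what your estimate gives, is that $P_\alpha(c_0)$ lies outside the sub-arc of $I$ spanned by $\hat\beta_I$.
\item In the image step, $c_0$ need not be far from $I$, since it can be an endpoint of $I$. Also $x_{q_{m+1}}\notin{\rm CP}_m$, so your membership justification is misdirected. The right reason is that $c_0\in{\rm CP}_m$ is never interior to a level-$m$ interval. Hence it lies outside the sub-arc spanned by $\hat\beta_I$, which contains the base of $\mathcal{R}$. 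Since $P_\alpha$ is injective on $\partial\Delta_\alpha$, $P_\alpha(c_0)$ then lies outside the base of $\gamma_1$.
\item $P_\alpha(I)$ is not a level-$m$ interval when $c_0$ is an endpoint of $I$; this is why the paper has $10\iota_{m+1}$ in (b2). It is harmless here.
\item Non-winding of $\gamma_1$ does not follow from $P_\alpha$ being a homeomorphism on $\mathcal{R}$ alone. It needs injectivity on the whole cap cut off by the outer side of $\mathcal{R}$ (Lemma \ref{l8271a}), the same input the paper uses implicitly.
\end{itemize}
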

	\begin{proof}
		For every interval $I\in\mathfrak{D}_{\geq-1}$ with the level $m$, as that in the proof of Lemma \ref{l20257191}, we 
		can obtain that there exists a non-winding geodesic $\gamma_I$ of $\hat{\mathbb{C}}\setminus\overline{\Delta_{\alpha}}$ (with respect to $\overline{\Delta_{\alpha}}$) based on some interval $I_{m}$ of level $m$ such that 
		\begin{itemize}
			\item[(${\rm a}2$)] $P_{\alpha}(\mathcal{J}(\tilde{\beta}_I))\subseteq O_{I_m}(\gamma_I)$,
			\item[(${\rm b}2$)] the combinatorial distance between two endpoints of $\gamma_I$ and two endpoints of $I_m$ is at least $10\iota_{m+1}$.
		\end{itemize}
		We denote by $F(\Delta_{\alpha})$ the filling-in of $\Delta_{\alpha}\cup\bigcup\limits_{I\in\mathfrak{D}_{\geq-1}}(\gamma_I\cup\tilde{\beta}_I)$. It is clear that $\tilde{\Delta}_{\alpha}^{-1}\subseteq F(\Delta_{\alpha})$, and by (a2) we have $P_{\alpha}(\tilde{\Delta}_{\alpha}^{-1})\subseteq F(\Delta_{\alpha})$.
		Then
		\begin{equation}
			\label{e251027a}\tilde{\Delta}_{\alpha}\subseteq F(\Delta_{\alpha}).
		\end{equation}
		Observe that for any $I\in\mathfrak{D}_{\geq-1}$ with the level $m$, if $P_{\alpha}(c_0)\in I$, then the combinatorial distance between two endpoints of $I$ and $P_{\alpha}(c_0)$ is  $\iota_{m+1}$. Then by (b2) and (P2) in Section \ref{s2.3} we have $P_{\alpha}(c_0)\not\in\overline{O_I(\tilde{\beta}_I)\cup O_{I}(\gamma_{I'})}$, where $I'\in\mathfrak{D}_m$  and $\gamma_{I'}$ is based on $I$. Thus
		\begin{equation}
			\label{e20260302a}P_{\alpha}(c_0)\in\partial F(\Delta_{\alpha}).
		\end{equation}
		For all $x\in{\rm EP}(\mathfrak{D}_{n+2}(\alpha))\setminus\{x_{q_{n+2}-1},x_{q_{n+3}-1}\}$, by $n\geq\mathfrak{q}_2$ and Lemma \ref{l841}, $\gamma_x^{n+2}$ is non-winding based on $\overline{\partial\Delta_{\alpha}\setminus I_{[x_{q_{n+2}-1},x_{q_{n+3}-1}]}}$ with respect to $\overline{\Delta_{\alpha}}$.
		This implies $\overline{\Lambda_{\alpha}^{*(n+2)}}\cap {\rm int}(I_{[x_{q_{n+2}-1},x_{q_{n+3}-1}]})=\emptyset$.
		Since $P_{\alpha}(c_0)\in{\rm int}(I_{[x_{q_{n+2}-1},x_{q_{n+3}-1}]})$, we have $P_{\alpha}(c_0)\not\in\overline{\Lambda_{\alpha}^{*(n+2)}}$. By combining (\ref{e20260302a}), it follows that $P_{\alpha}(c_0)\in\partial (F(\Delta_{\alpha})\cup\Lambda_{\alpha}^{*(n+2)})$.
		Since the above each $\gamma_x^{n+2}$ is a non-winding geodesic of $\hat{\mathbb{C}}\setminus\overline{\Delta_{\alpha}}$ based on $\overline{\partial\Delta_{\alpha}\setminus I_{[x_{q_{n+2}-1},x_{q_{n+3}-1}]}}$ with respect to $\overline{\Delta_{\alpha}}$ and each $\gamma_I$ (resp. $\tilde{\beta}_I$), $I\in\mathfrak{D}_{\geq-1}$ is a non-winding geodesic of $\hat{\mathbb{C}}\setminus\overline{\Delta_{\alpha}}$ based on $I$ with respect to $\overline{\Delta_{\alpha}}$,
		we have that
		$\partial(F(\Delta_{\alpha})\cup\Lambda_{\alpha}^{*(n+2)})$ is a Jordan curve on $\mathbb{C}$. Thus there exists a Jordan arc $\Gamma_{\alpha}$ outside $(F(\Delta_{\alpha})\cup\Lambda_{\alpha}^{*(n+2)})\setminus\{P_{\alpha}(c_0)\}$ that connects $P_{\alpha}(c_0)$ and $\infty$.
		By (\ref{e251027a}), we have $$(\tilde{\Delta}_{\alpha}\cup\Lambda_{\alpha}^{*(n+2)})\setminus\{P_{\alpha}(c_0)\}\subseteq(F(\Delta_{\alpha})\cup\Lambda_{\alpha}^{*(n+2)})\setminus\{P_{\alpha}(c_0)\}.$$
		Thus $\Gamma_{\alpha}$ is outside $(\tilde{\Delta}_{\alpha}\cup\Lambda_{\alpha}^{*(n+2)})\setminus\{P_{\alpha}(c_0)\}$ and connects $P_{\alpha}(c_0)$ and $\infty$.
		Together with $P_{\alpha}(c_0)\in\tilde{\Delta}_{\alpha}^{-1}\subseteq\tilde{\Delta}_{\alpha}$, it gives 
		$P_{\alpha}(c_0)\in\partial(\tilde{\Delta}_{\alpha}\cup\Lambda_{\alpha}^{*(n+2)})$.
		
	\end{proof}
	
	Let $\Gamma_{\alpha}$ be the same as that in Corollary \ref{c20251027}. Since $\hat{\mathbb{C}}\setminus\Gamma_{\alpha}$ is a simple connected region not containing critical values of $P_{\alpha}$, $P_{\alpha}^{-1}$ has two single-value analytic branch: $h_{\Gamma_{\alpha}}^{(1)}$ and $h_{\Gamma_{\alpha}}^{(2)}$, where $h_{\Gamma_{\alpha}}^{(1)}(0)=0$ and $h_{\Gamma_{\alpha}}^{(2)}(0)\not=0$.
	For all $n\geq-1$, we
	define $$\tilde{U}_0^{n}:=\overline{h_{\Gamma_{\alpha}}^{(2)}(\tilde{\Delta}_{\alpha}^{n}\setminus\{P_{\alpha}(c_0)\})}\ \left(=h_{\Gamma_{\alpha}}^{(2)}(\tilde{\Delta}_{\alpha}^{n}\setminus\{P_{\alpha}(c_0)\})\cup\{c_0\}\right),$$
	that is the closure of the component of $P_{\alpha}^{-1}(\tilde{\Delta}_{\alpha}^{n})\setminus\{c_0\}$ not containing $0$.
	\begin{lemma}
		\label{l20257193}For all $n\geq\mathfrak{q}_2$, $\tilde{U}_0^{-1}\cap\left(\tilde{\Delta}_{\alpha}^{-1}\cup(\Lambda_{\alpha}^{n+\mathfrak{q}_2}\setminus\Omega_{c_0}^n)\right)=\{c_0\}$.
	\end{lemma}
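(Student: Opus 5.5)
Apply $P_{\alpha}$ and compare images. Since $P_{\alpha}$ maps $\tilde{U}_0^{-1}$ onto $\tilde{\Delta}_{\alpha}^{-1}$, and $h_{\Gamma_{\alpha}}^{(1)}$ and $h_{\Gamma_{\alpha}}^{(2)}$ are the two branches of $P_{\alpha}^{-1}$ on $\hat{\mathbb{C}}\setminus\Gamma_{\alpha}$, any point of $\tilde{U}_0^{-1}\setminus\{c_0\}$ lies in the $h^{(2)}_{\Gamma_\alpha}$-sheet. So it suffices to show that the set $\tilde{\Delta}_{\alpha}^{-1}\cup(\Lambda_{\alpha}^{n+\mathfrak{q}_2}\setminus\Omega_{c_0}^n)$, minus $c_0$, is contained in the $h^{(1)}_{\Gamma_\alpha}$-sheet. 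More precisely, I would show the following two things.
\begin{itemize}
\item[(i)] $P_{\alpha}$ maps this set into $\tilde{\Delta}_{\alpha}\cup\Lambda_{\alpha}^{*(n+2)}$, possibly after enlarging by $\overline{\Delta_\alpha}$.
\item[(ii)] The set is connected to $0$ within a region on which $P_\alpha$ agrees with the inverse of $h^{(1)}_{\Gamma_\alpha}$, i.e. away from $c_0$.
\end{itemize}
Then a point $z\neq c_0$ in the intersection would satisfy $z=h^{(2)}_{\Gamma_\alpha}(P_\alpha z)=h^{(1)}_{\Gamma_\alpha}(P_\alpha z)$, which is impossible because the two branches have disjoint images off $c_0$.

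For $\tilde{\Delta}_{\alpha}^{-1}$, the definition of $\tilde{\Delta}_\alpha$ gives $P_\alpha(\tilde{\Delta}_{\alpha}^{-1})\subseteq\tilde{\Delta}_\alpha$. By Corollary \ref{c20251027}, $\Gamma_\alpha$ avoids $\tilde{\Delta}_\alpha\setminus\{P_\alpha(c_0)\}$. Hence $h^{(1)}_{\Gamma_\alpha}$ is defined on the connected set $\tilde{\Delta}_\alpha\setminus\{P_\alpha(c_0)\}$, which is connected because it is a filling-in of a quasidisk with attached fjords touching $P_\alpha(c_0)$ only at a boundary point. The set $\tilde{\Delta}^{-1}_\alpha\setminus\{c_0\}$ is connected, contains $0$, and is mapped by $P_\alpha$ into $\tilde{\Delta}_\alpha\setminus\{P_\alpha(c_0)\}$, where the fact that $c_0$ is the only preimage of $P_\alpha(c_0)$ is used. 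By continuation from $0$, $h^{(1)}_{\Gamma_\alpha}\circ P_\alpha=\mathrm{id}$ there. So $\tilde{\Delta}^{-1}_\alpha\setminus\{c_0\}$ lies in the image of $h^{(1)}_{\Gamma_\alpha}$, which is disjoint from the image of $h^{(2)}_{\Gamma_\alpha}$.

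For a mountain $\Omega_x^{n+\mathfrak{q}_2}$ with $x\neq c_0$ (or more precisely one not contained in $\Omega^n_{c_0}$), the argument is similar. First, Lemma \ref{l841} gives that $\gamma_x^{n+\mathfrak{q}_2}$ is non-winding, so the mountain is a bounded region attached to $\partial\Delta_\alpha$ along an interval, and its union with $\overline{\Delta_\alpha}$ minus $c_0$ stays connected to $0$. Next I need $P_\alpha(\Omega_x^{n+\mathfrak{q}_2})\subseteq\Lambda^{*(n+2)}_\alpha\cup\overline{\Delta_\alpha}$. Mountains at $x$ away from $c_0$ map to mountains at $P_\alpha(x)$ of comparable level; this follows from a pullback-of-wide-rectangle argument as in Lemma \ref{l8251}, using Lemma \ref{l7301} and Lemma \ref{l825a}. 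The image must avoid the two excluded mountains at $x_{q_{n+2}-1}$ and $x_{q_{n+3}-1}$, whose base interval contains $P_\alpha(c_0)$. Its preimage intervals border $c_0$ at level $\approx n$, and a mountain of level $n+\mathfrak{q}_2$ mapping near them must sit inside $\Omega^n_{c_0}$ by the non-crossing argument of Lemma \ref{l04231} and Corollary \ref{l8191}. Those mountains are excluded by hypothesis, since we remove $\Omega^n_{c_0}$.

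Having shown the images avoid $\Gamma_\alpha$, with the exception of the point $P_\alpha(c_0)$, I can continue $h^{(1)}_{\Gamma_\alpha}$ as before and obtain the same conclusion. I expect the main obstacle to be the second step: showing that mountains of level $n+\mathfrak{q}_2$ not inside $\Omega_{c_0}^n$ have images avoiding the excluded mountains around $P_\alpha(c_0)$. This requires a careful combinatorial placement of the base intervals relative to $c_0$ and the critical-value interval, together with the wide-rectangle non-crossing argument.
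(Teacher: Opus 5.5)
Your proposal is essentially the paper's proof: your $h^{(1)}_{\Gamma_\alpha}$- and $h^{(2)}_{\Gamma_\alpha}$-sheets are the paper's components $W_1^{n+2}\ni 0$ and $W_2^{n+2}$ of $P_\alpha^{-1}\bigl((\tilde{\Delta}_{\alpha}\cup\Lambda_{\alpha}^{*(n+2)})\setminus\{P_\alpha(c_0)\}\bigr)$, whose closures meet only at $c_0$ by Corollary \ref{c20251027}, and the mountain step is exactly your pullback of a wide non-winding rectangle. The step you flag as the obstacle is settled by a direct choice, and Lemma \ref{l04231} and Corollary \ref{l8191} are not needed there: for $b\in{\rm EP}(\mathfrak{D}_{n+\mathfrak{q}_2})\setminus{\rm int}(I\cup J)$, with $I,J$ the level-$n$ intervals at $c_0$, pick adjacent $I_1,J_1\in\mathfrak{D}_{n+2}$ with common endpoint $e$, $c_0\notin I_1\cup J_1$, and $b$ away from the outer level-$(n+8)$ ends, so that $P_\alpha(I_1),P_\alpha(J_1)\in\mathfrak{D}_{n+2}$ and $P_\alpha(e)\notin\{x_{q_{n+2}-1},x_{q_{n+3}-1}\}$; then Lemmas \ref{l7301}, \ref{l841} and \ref{l825a} put $\Omega_b^{n+\mathfrak{q}_2}$ inside the region cut off by the pulled-back rectangle, whose image lies in $\Omega^{n+2}_{P_\alpha(e)}\cup\overline{\Delta_\alpha}\subseteq\Lambda_{\alpha}^{*(n+2)}\cup\overline{\Delta_\alpha}$.
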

	\begin{proof}
		By Corollary \ref{c20251027},  $P_{\alpha}^{-1}\left((\tilde{\Delta}_{\alpha}\cup\Lambda_{\alpha}^{*(n+2)})\setminus\{P_{\alpha}(c_0)\}\right)$ has two components: $W_1^{n+2}$ (containing $0$) and $W_2^{n+2}$ (not containing $0$), and $\overline{W_1^{n+2}}\cap \overline{W_2^{n+2}}=\{c_0\}$.
		Since $\tilde{\Delta}_{\alpha}^{-1}\subseteq\tilde{\Delta}_{\alpha}\cup\Lambda_{\alpha}^{*(n+2)}$, we have 
		\begin{equation}
			\label{e251030a}\tilde{U}_0^{-1}\subseteq\overline{W_2^{n+2}}.
		\end{equation}
		Since $\tilde{\Delta}_{\alpha}^{-1}$ is contained in the closure of the component of $P_{\alpha}^{-1}\left(\tilde{\Delta}_{\alpha}\setminus\{P_{\alpha}(c_0)\}\right)$ containing $0$, we have 
		\begin{equation}
			\label{e251030b}\tilde{\Delta}_{\alpha}^{-1}\subseteq\overline{W_1^{n+2}}.
		\end{equation}
		
		Next, we prove that for all $n\geq\mathfrak{q}_2$,
		\begin{equation}
			\label{e251030c}\Lambda_{\alpha}^{n+\mathfrak{q}_2}\setminus\Omega_{c_0}^n\subseteq\overline{W_1^{n+2}}.
		\end{equation}
		Let $I$ and $J$ be two adjacent intervals of $\mathfrak{D}_{n}$ having the common endpoint $c_0$.
		For all $b\in{\rm EP}(\mathfrak{D}_{n+\mathfrak{q}_2}(\alpha))$ with $b\not\in{\rm int}(I\cup J)$,
		there exist two adjacent intervals $I_1$ and $J_1$ of $\mathfrak{D}_{n+2}$ having a common endpoint $e$ such that $b\in (I_1\cup J_1)\setminus(I_2\cup I_2'\cup J_2\cup J_2')$, where $I_2\subseteq I_1$ (resp. $ J_2\subseteq J_1$) is an interval of $\mathfrak{D}_{n+8}$ having a common endpoint as $I_1$ (resp. $J_1$) which is not $e$; $I_2'\subseteq I_1$ (resp. $J_2'\subseteq J_1$) is an interval of $\mathfrak{D}_{n+8}$ adjacent to $I_2$ (resp. $J_2$). 
		It is easy to see
		that $c_0\not\in I_1\cup J_1$. Then
		$e\in{\rm EP}(\mathfrak{D}_{n+2}(\alpha))\setminus\{c_0,x_{q_{n+2}}\}$ and hence
		\begin{equation} 
			\label{e20260303b}P_{\alpha}(e)\in{\rm EP}(\mathfrak{D}_{n+2}(\alpha))\setminus\{x_{q_{n+2}-1},x_{q_{n+3}-1}\}.
		\end{equation} 
		Let $I_3$ and $J_3$
		be two intervals of $\mathfrak{D}_{n+\mathfrak{q}_2}$ having the common endpoint $b$. Let $I_4$
		and $J_4$ be two components of $\overline{P_{\alpha}(I_1\cup J_1)\setminus P_{\alpha}(I_3\cup J_3)}$ with $I_4<P_{\alpha}(I_3\cup J_3)<J_4$. Observe that $c_0\not\in I_1\cup J_1$ and $I_2\cup I_3\cup J_3\cup J_2\subseteq I_1\cup J_1$. Then
		$P_{\alpha}(I_1),P_{\alpha}(J_1)\in\mathfrak{D}_{n+2}$; $P_{\alpha}(I_2),P_{\alpha}(J_2)\in\mathfrak{D}_{n+8}$; $P_{\alpha}(I_3),P_{\alpha}(J_3)\in\mathfrak{D}_{n+\mathfrak{q}_2}$. 
		Since each component of $\overline{I_1\cup J_1\setminus(I_3\cup J_3)}$ contains $I_2$ or $J_2$, we have that each component of $\overline{P_{\alpha}(I_1\cup J_1)\setminus P_{\alpha}(I_3\cup J_3)}$ contains $P_{\alpha}(I_2)$ or $P_{\alpha}(J_2)$.
		By Lemma \ref{l7301} $\mathcal{W}_{\overline{\Delta_{\alpha}}}^+(I_4,J_4)\gg1$. 
		By (\ref{e8223}) $\mathcal{W}(\mathcal{G}_{\overline{\Delta_{\alpha}}}^+(I_4,J_4))\gg1$.
		Lemma \ref{l841} gives that $\mathcal{G}_{\overline{\Delta_{\alpha}}}^+(I_4,J_4)$ is non-winding based on $P_{\alpha}(I_1\cup J_1)$ with respect to $\overline{\Delta_{\alpha}}$. Since $P_{\alpha}(c_0)\not\in P_{\alpha}(I_1\cup J_1)$, the rectangle
		$\mathcal{G}_{\overline{\Delta_{\alpha}}}^+(I_4,J_4)$ can be conformally pulled back under $P_{\alpha}$ along
		$P_{\alpha}(I_1\cup J_1)$, $I_1\cup J_1$. We write  $\mathcal{G}_{\overline{\Delta_{\alpha}}}^+(I_4,J_4)^{-1}$ as the pullback and view $\mathcal{G}_{\overline{\Delta_{\alpha}}}^+(I_4,J_4)^{-1}$ as a rectangle with $P_{\alpha}(\partial^{h,0}\mathcal{G}_{\overline{\Delta_{\alpha}}}^+(I_4,J_4)^{-1})=\partial^{h,0}\mathcal{G}_{\overline{\Delta_{\alpha}}}^+(I_4,J_4)$ $(=I_4)$ and $P_{\alpha}(\partial^{h,1}\mathcal{G}_{\overline{\Delta_{\alpha}}}^+(I_4,J_4)^{-1})=\partial^{h,1}\mathcal{G}_{\overline{\Delta_{\alpha}}}^+(I_4,J_4)$ $(=J_4)$. Then $\mathcal{W}(\mathcal{G}_{\overline{\Delta_{\alpha}}}^+(I_4,J_4)^{-1})=\mathcal{W}(\mathcal{G}_{\overline{\Delta_{\alpha}}}^+(I_4,J_4))\gg1$. 
		Observe also that $$\partial^{h,0}\mathcal{G}_{\overline{\Delta_{\alpha}}}^+(I_4,J_4)^{-1}\#(I_3\cup J_3)\#\partial^{h,1}\mathcal{G}_{\overline{\Delta_{\alpha}}}^+(I_4,J_4)^{-1}.$$
		Then Lemma \ref{l825a} gives 
		\begin{equation}
			\label{e20260303c}\Omega_b^{n+\mathfrak{q}_2}\subseteq O_{I_1\cup J_1}(\mathcal{G}_{\overline{\Delta_{\alpha}}}^+(I_4,J_4)^{-1}).
		\end{equation}
		By
		(\ref{e20260303b}) and the definition of $\Lambda_{\alpha}^{*(n+2)}$, we have $\Omega_{P_{\alpha}(e)}^{n+2}\subseteq\Lambda_{\alpha}^{*(n+2)}$ and hence
		$\mathcal{G}_{\overline{\Delta_{\alpha}}}^+(I_4,J_4)\subseteq\Lambda_{\alpha}^{*(n+2)}\cup\overline{\Delta_{\alpha}}$. Then $O_{I_1\cup J_1}(\mathcal{G}_{\overline{\Delta_{\alpha}}}^+(I_4,J_4)^{-1})\subseteq\overline{W_1^{n+2}}$. By combining (\ref{e20260303c}),
		it follows that
		$\Omega_b^{n+\mathfrak{q}_2}\subseteq\overline{W_1^{n+2}}$. 
		Then the arbitrariness of $b$ gives that for all $n\geq\mathfrak{q}_2$,
		$\Lambda_{\alpha}^{n+\mathfrak{q}_2}\setminus\Omega_{c_0}^n\subseteq\overline{W_1^{n+2}}$. 
		
		At last, by (\ref{e251030a}), (\ref{e251030b}) and (\ref{e251030c}), we have that for all $n\geq\mathfrak{q}_2$, $$\tilde{U}_0^{-1}\cap\left(\tilde{\Delta}_{\alpha}^{-1}\cup(\Lambda_{\alpha}^{n+\mathfrak{q}_2}\setminus\Omega_{c_0}^n)\right)\subseteq\overline{W_1^{n+2}}\cap\overline{W_2^{n+2}}=\{c_0\}.$$
		Observe also that $c_0\in\tilde{U}_0^{-1}$ and $c_0\in\tilde{\Delta}_{\alpha}^{-1}$. Thus 
		$$\tilde{U}_0^{-1}\cap\left(\tilde{\Delta}_{\alpha}^{-1}\cup(\Lambda_{\alpha}^{n+\mathfrak{q}_2}\setminus\Omega_{c_0}^n)\right)=\{c_0\}.$$
	\end{proof}

	\begin{lemma}
		\label{l819}
		Assuming $n\geq\mathfrak{q}_3$, for all
		$z\in\Lambda^{n+t\mathfrak{q}_3}$ with
		$P_{\alpha}^{\comp m}(z)\not\in\Lambda^{n}\cup\overline{\Delta_{\alpha}}$ for some two positive integer $m$ and $t$, we have that there exist positive integers $m_1<m_2<\cdots<m_t$ such that for all $1\leq j\leq t$,
		$$P_{\alpha}^{\comp m_j}(z)\in\tilde{\Omega}_{c_0}^{n+t\mathfrak{q}_3-j\mathfrak{q}_3}\setminus\Lambda_\alpha^{n+t\mathfrak{q}_3-(j-1)\mathfrak{q}_3}$$
		and
		$$\left\{P_{\alpha}^{\comp s}(z):\ 0\leq s\leq m_j\right\}\subseteq\Lambda_\alpha^{n+t\mathfrak{q}_3-j\mathfrak{q}_3}.$$
	\end{lemma}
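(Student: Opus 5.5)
We argue by induction on $t$, the inductive step being an application of the case $t=1$ with a shifted base level. Thus the heart of the matter is the case $t=1$: for $n\geq\mathfrak{q}_3$, if $z\in\Lambda_\alpha^{n+\mathfrak{q}_3}$ and $P_\alpha^{\comp m}(z)\notin\Lambda_\alpha^n\cup\overline{\Delta_\alpha}$ for some $m\geq1$, we must find $m_1\geq1$ such that
$$P_\alpha^{\comp m_1}(z)\in\tilde{\Omega}_{c_0}^{n}\setminus\Lambda_\alpha^{n+\mathfrak{q}_3}\quad\text{and}\quad \{P_\alpha^{\comp s}(z):0\leq s\leq m_1\}\subseteq\Lambda_\alpha^n .$$

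\emph{The case $t=1$.} Write $z\in\Omega_a^{n+\mathfrak{q}_3}$ with $a\in{\rm EP}(\mathfrak{D}_{n+\mathfrak{q}_3}(\alpha))$. Since $\mathfrak{q}_3=3\mathfrak{q}_2\geq\mathfrak{q}_2$, Lemma \ref{l8251} applied at level $n+\mathfrak{q}_3$ yields $k\geq0$ with $P_\alpha^{\comp j}(\Omega_a^{n+\mathfrak{q}_3})\subseteq\Lambda_\alpha^{n+\mathfrak{q}_3-\mathfrak{q}_2}\subseteq\Lambda_\alpha^n$ for $0\leq j\leq k$ and $P_\alpha^{\comp k}(z)\in\Omega_{c_0}^{n+2\mathfrak{q}_2}$.

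Next we let $N$ be the first time at which the orbit of $z$ leaves $\Lambda_\alpha^n\cup\overline{\Delta_\alpha}$; it exists, is $\leq m$, and satisfies $N>k$. Among the times $s\in[0,N)$ with $P_\alpha^{\comp s}(z)\in\Omega_{c_0}^n$ (nonempty, containing $k$) we take the last one, $s_0$. We then claim that $P_\alpha^{\comp s_0}(z)\notin\hat\Delta_\alpha^n$ and $P_\alpha^{\comp s_0}(z)\notin\Lambda_\alpha^{n+\mathfrak{q}_3}$, and set $m_1=s_0$; if $s_0=0$ we first shift by one return.

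To see that the point is outside $\Lambda_\alpha^{n+\mathfrak{q}_3}$, we argue by contradiction: if it were inside, Lemma \ref{l8251} again shows the orbit stays in $\Lambda_\alpha^n$ until it enters $\Omega_{c_0}^{n+2\mathfrak{q}_2}\subseteq\Omega_{c_0}^n$ at a strictly later time. That later entry must occur before the orbit escapes, which contradicts the maximality of $s_0$ (the orbit escapes only at time $N$). To see that the point is outside $\hat\Delta_\alpha^n$, we use that points of $\hat\Delta_\alpha^n\setminus\overline{\Delta_\alpha}$ lie in fjords $\mathcal{J}(\beta_I)\subseteq\mathcal{J}(\tilde\beta_I)$ of level $\geq n$. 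By Lemma \ref{l20257191}, their orbits stay in $\Lambda_\alpha^{\text{level}}\subseteq\Lambda_\alpha^n$ and revisit $\Omega_{c_0}$ of that level, hence $\Omega_{c_0}^n$, within $q_{\text{level}+1}$ steps. This again contradicts maximality, unless that revisit itself lands deeper, in which case we iterate. Finally, the passage out of $\Omega_{c_0}^n$ is controlled by Corollary \ref{l8191}, $P_\alpha(\Omega_{c_0}^{n+\mathfrak{q}_1})\subseteq\Lambda_\alpha^n\cup\overline{\Delta_\alpha}$, together with Lemma \ref{l20257193}, which separates the preimage $\tilde U_0^{-1}$ from $\Lambda_\alpha^{n+\mathfrak{q}_2}\setminus\Omega_{c_0}^n$. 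These guarantee that escaping requires passing through the shallow part of $\Omega_{c_0}^n$ rather than through the deep mountains.

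\emph{Induction on $t$.} Suppose the statement holds for $t-1$ (at every base level $\geq\mathfrak{q}_3$). Apply it with base level $n'=n+\mathfrak{q}_3$: since $P_\alpha^{\comp m}(z)\notin\Lambda_\alpha^n\supseteq\Lambda_\alpha^{n'}$, we obtain $m_1<\dots<m_{t-1}$ with the required properties at levels $n+t\mathfrak{q}_3-j\mathfrak{q}_3$, $j\leq t-1$. This uses the monotonicity $\Lambda_\alpha^{n'}\subseteq\Lambda_\alpha^n$ for $n'\geq n$, which I expect to follow from Lemma \ref{l825a}: deeper geodesic mountains are contained in shallower ones. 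Then apply the case $t=1$ to $w=P_\alpha^{\comp m_{t-1}}(z)$, which lies in $\Lambda_\alpha^{n+\mathfrak{q}_3}$ and escapes $\Lambda_\alpha^n$ at time $m-m_{t-1}>0$; note $m>m_{t-1}$ since the orbit stayed in $\Lambda_\alpha^{n+\mathfrak{q}_3}$ up to $m_{t-1}$. Setting $m_t=m_{t-1}+m_1(w)$, the orbit-containment for $s\leq m_{t-1}$ follows from the previous step together with monotonicity.

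The main obstacle is the $t=1$ claim that the last visit to $\Omega_{c_0}^n$ before escape avoids both $\hat\Delta_\alpha^n$ and $\Lambda_\alpha^{n+\mathfrak{q}_3}$. This is a topological "no shortcut" argument. It needs the non-winding results (Lemmas \ref{l04231}, \ref{l841}) to ensure that escaping points cannot bypass the shallow region, and it requires care with fjords of very deep level, whose return times $q_{\text{level}+1}$ are long.
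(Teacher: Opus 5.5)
Your architecture is sound and matches the paper's. The induction on $t$ (apply the $(t-1)$-statement at base level $n+\mathfrak{q}_3$, then the case $t=1$ to $P_\alpha^{\comp m_{t-1}}(z)$) is the paper's ``apply the same process to $P_\alpha^{\comp m_1}(z)$'' read in the other direction. Your device of taking $s_0$ as the \emph{last} visit to $\Omega_{c_0}^n$ before the first escape time $N$ is a tidier form of the paper's ``repeat until'' iteration, since it gives termination for free. It also gives the orbit inclusion in $\Lambda_\alpha^n$ for free, because $\overline{\Delta_\alpha}$ is forward invariant and so is never touched before $N$. Your treatment of $P_\alpha^{\comp s_0}(z)\in\hat{\Delta}_\alpha^n$ via Lemma \ref{l20257191} is correct as it stands. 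That lemma gives a return to $\Omega_{c_0}^{\ell}\subseteq\Omega_{c_0}^n$ at some $j_0\geq 1$ with the orbit in $\Lambda_\alpha^{\ell}\subseteq\Lambda_\alpha^n$ meanwhile, so $s_0+j_0<N$ and maximality is contradicted at once. No iteration is needed, and the length of $q_{\ell+1}$ is irrelevant.

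The step that fails is the case $P_\alpha^{\comp s_0}(z)\in\Lambda_\alpha^{n+\mathfrak{q}_3}$. Lemma \ref{l8251} only provides some $k\geq 0$, and $k=0$ occurs whenever the point already lies in $\Omega_{c_0}^{n+2\mathfrak{q}_2}$ (for instance in $\Omega_{c_0}^{n+\mathfrak{q}_3}$). In that case no strictly later return is produced and maximality is not contradicted. The repair, which is exactly how the paper's proof begins, is to push the point out of the deep main mountain:
\begin{itemize}
\item after the $k$ steps from Lemma \ref{l8251}, the point lies in $\Omega_{c_0}^{n+2\mathfrak{q}_2}$;
\item Corollary \ref{l8191}, with base level $n+2\mathfrak{q}_2-\mathfrak{q}_1\geq\mathfrak{q}_2$, puts the next iterate in $\Lambda_\alpha^{n+2\mathfrak{q}_2-\mathfrak{q}_1}\cup\overline{\Delta_\alpha}$, and $\overline{\Delta_\alpha}$ is excluded because the orbit escapes;
\item Lemma \ref{l8251} applied again at level $n+2\mathfrak{q}_2-\mathfrak{q}_1\geq\mathfrak{q}_3$ returns the orbit to $\Omega_{c_0}^{n+\mathfrak{q}_2-\mathfrak{q}_1}\subseteq\Omega_{c_0}^n$ at a time at least $k+1\geq 1$, staying inside $\Lambda_\alpha^n$.
\end{itemize}
You do cite Corollary \ref{l8191}, but only for a vague ``no shortcut'' statement rather than for this step.

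With the fix in place, maximality finishes the case $t=1$; Lemma \ref{l20257193} and the non-winding Lemmas \ref{l04231}, \ref{l841} play no role. Two small points. First, $s_0\geq 1$ is automatic, since otherwise the claim would put $z$ outside $\Lambda_\alpha^{n+\mathfrak{q}_3}$. Second, the monotonicity $\Lambda_\alpha^{n'}\subseteq\Lambda_\alpha^n$ for $n'\geq n$ (and likewise for $\Omega_{c_0}$) does not come from Lemma \ref{l825a}. It follows from nesting of hyperbolic geodesics in $\hat{\mathbb{C}}\setminus\overline{\Delta_\alpha}$, because each $I^{n'}_{l,y}\cup I^{n'}_{r,y}$ lies in some $I^{n}_{l,a}\cup I^{n}_{r,a}$.
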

	\begin{proof}
		If $z\in\Omega_{c_0}^{n+t\mathfrak{q}_3}$, by Corollary \ref{l8191} and $P_{\alpha}^{\comp m}(z)\not\in\Lambda^{n}\cup\overline{\Delta_{\alpha}}$, we have $P_{\alpha}(z)\in\Lambda_\alpha^{n+t\mathfrak{q}_3-\mathfrak{q}_1}$. Then by Lemma \ref{l8251} 
		there exists a nonnegative integer $k_1$ such that  $$P_{\alpha}^{\comp k_1}(P_{\alpha}(z))\in\Omega_{c_0}^{n+t\mathfrak{q}_3-\mathfrak{q}_1-\mathfrak{q}_2}\subseteq\Omega_{c_0}^{n+(t-1)\mathfrak{q}_3}$$
		and
		$$P_{\alpha}^{\comp j}(P_{\alpha}(z))\in\Lambda_{\alpha}^{n+t\mathfrak{q}_3-\mathfrak{q}_1-\mathfrak{q}_2}\subseteq\Lambda_{\alpha}^{n+(t-1)\mathfrak{q}_3}$$
		for all $0\leq j\leq k_1$.
		
		If $P_{\alpha}^{\comp( k_1+1)}(z)\not\in\tilde{\Omega}_{c_0}^{n+(t-1)\mathfrak{q}_3}$, then there is a dam $\beta_{I_{l_1}}$ of level $l_1\geq n+(t-1)\mathfrak{q}_3$ based on $I_{l_1}\in\mathfrak{D}_{l_1}$ such that $P_{\alpha}^{\comp( k_1+1)}(z)\in\mathcal{J}(\beta_{I_{l_1}})$.
		Then by Lemma \ref{l20257191}
		$$ \{P_{\alpha}^{\comp j}(P_{\alpha}^{\comp( k_1+1)}(z)): j=0,1,\cdots,q_{l_1+1}\}\subseteq\Lambda_\alpha^{l_1}\subseteq\Lambda_\alpha^{n+(t-1)\mathfrak{q}_3}$$
		and
		$$P_{\alpha}^{\comp b_1}(P_{\alpha}^{\comp( k_1+1)}(z))\subseteq\Omega_{c_0}^{l_1}\subseteq\Omega_{c_0}^{n+(t-1)\mathfrak{q}_3},$$
		where $b_1\in\{1,2,\cdots,q_{l_1+1}\}$.
		If $P_{\alpha}^{\comp( k_1+b_1+1)}(z)\not\in\tilde{\Omega}_{c_0}^{n+(t-1)\mathfrak{q}_3}$, then similarly, there exists a positive integer $l_2\geq n+(t-1)\mathfrak{q}_3$ such that 
		$$ \{P_{\alpha}^{\comp j}(P_{\alpha}^{\comp( k_1+b_1+1)}(z)): j=0,1,\cdots,q_{l_2+1}\}\subseteq\Lambda_\alpha^{l_2}\subseteq\Lambda_\alpha^{n+(t-1)\mathfrak{q}_3}$$
		and
		$$P_{\alpha}^{\comp b_2}(P_{\alpha}^{\comp( k_1+b_1+1)}(z))\subseteq\Omega_{c_0}^{l_2}\subseteq\Omega_{c_0}^{n+(t-1)\mathfrak{q}_3},$$
		where $b_2\in\{1,2,\cdots,q_{l_2+1}\}$.
		In general, if $P_{\alpha}^{\comp( k_1+\sum_{j=1}^kb_j+1)}(z)\not\in\tilde{\Omega}_{c_0}^{n+(t-1)\mathfrak{q}_3}$, then there exists a positive integer $l_{k+1}\geq n+(t-1)\mathfrak{q}_3$ such that 
		$$ \{P_{\alpha}^{\comp j}(P_{\alpha}^{\comp( k_1+\sum_{j=1}^kb_j+1)}(z)): j=0,1,\cdots,q_{l_{k+1}+1}\}\subseteq\Lambda_\alpha^{l_{k+1}}\subseteq\Lambda_\alpha^{n+(t-1)\mathfrak{q}_3}$$
		and
		$$P_{\alpha}^{\comp b_{k+1}}(P_{\alpha}^{\comp( k_1+\sum_{j=1}^kb_j+1)}(z))\subseteq\Omega_{c_0}^{l_{k+1}}\subseteq\Omega_{c_0}^{n+(t-1)\mathfrak{q}_3},$$
		where $b_{k+1}\in\{1,2,\cdots,q_{l_{k+1}+1}\}$.
		Since $P_{\alpha}^{\comp m}(z)\not\in\Lambda_{\alpha}^{n}\cup\overline{\Delta_{\alpha}}$, we have $k_1+\sum_{j=1}^kb_j+1<m$, which implies that $k$ is finite.
		Thus there is a positive integer $t_1<m$
		such that 
		$$ \{P_{\alpha}^{\comp j}(z): j=0,1,\cdots,t_1\}\subseteq\Lambda_\alpha^{n+(t-1)\mathfrak{q}_3}$$
		and
		$$P_{\alpha}^{\comp t_1}(z)\subseteq\tilde{\Omega}_{c_0}^{n+(t-1)\mathfrak{q}_3}.$$
		If $P_{\alpha}^{\comp t_1}(z)\in\Lambda_\alpha^{n+t\mathfrak{q}_3}$, then
		we can apply the above progress again until
		there exists a positive integer $m_1<m$ such that 
		$$ \{P_{\alpha}^{\comp j}(z): j=0,1,\cdots,m_1\}\subseteq\Lambda_\alpha^{n+(t-1)\mathfrak{q}_3}$$
		and
		$$P_{\alpha}^{\comp m_1}(z)\subseteq\tilde{\Omega}_{c_0}^{n+(t-1)\mathfrak{q}_3}\setminus\Lambda_\alpha^{n+t\mathfrak{q}_3}.$$
		
		If $t=1$, then the lemma holds. If $t\geq2$, then, applying the same progress to
		$P_{\alpha}^{\comp m_1}(z)$, we have that
		there exists a positive integer $m_2-m_1$ ($m_2<m$) such that
		$$ \{P_{\alpha}^{\comp(m_1+j)}(z)=P_{\alpha}^{\comp j}(P_{\alpha}^{\comp m_1}(z)): j=0,1,\cdots,m_2-m_1\}\subseteq\Lambda_\alpha^{n+(t-2)\mathfrak{q}_3}$$
		and
		$$P_{\alpha}^{\comp m_2}(z)=P_{\alpha}^{\comp (m_2-m_1)}(P_{\alpha}^{\comp m_1}(z))\subseteq\tilde{\Omega}_{c_0}^{n+(t-2)\mathfrak{q}_3}\setminus\Lambda_\alpha^{n+(t-1)\mathfrak{q}_3}.$$
		Repeating the above progress, we can obtain the lemma.
		
	\end{proof}
	
	We say that the orbit of $z$ essentially visits $\tilde{\Omega}_{c_0}^n$
	$k$ times if there exist $k$ distinct nonnegative integers $m_1,m_2,\cdots,m_k$ such that for all $j\in\{m_1,m_2,\cdots,m_k\}$, $P_{\alpha}^{\comp j}(z)\in\tilde{\Omega}_{c_0}^n\setminus\Lambda_\alpha^{n+\mathfrak{q}_3}$.

	\subsection{Pulling back pseudo-Siegel disks\label{s3.5}}
	For all $n\geq\mathfrak{q}_3$, we call $\tilde{\Omega}_{c_0}^n\setminus\Lambda_\alpha^{n+\mathfrak{q}_3}$ the hovered main (geodesic) mountain of level $n$.
	
	\vspace{0.2cm}
	\noindent 3.4.1. {\bf Geometry of hovered main mountains.}
	
	\noindent
	\begin{lemma}
		\label{l25070716}There exists a positive integer $\mathfrak{q}_4$ {\rm(}$\gg\mathfrak{q}_3$ and independent of $\alpha${\rm)} such that for all $n\geq\mathfrak{q}_4$ and all $z\in\Lambda_{\alpha}^n$, we have $$\rho_{\mathbb{C}\setminus\overline{\Delta_{\alpha}}}(z)\asymp\rho_{\hat{\mathbb{C}}\setminus\overline{\Delta_{\alpha}}}(z)\ {\rm and}\ \rho_{\mathbb{C}\setminus\overline{\Delta_{\alpha}}}(z)\asymp\frac{1}{{\rm dist}(z,\Delta_\alpha)}.$$
	\end{lemma}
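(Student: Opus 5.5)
The plan is to treat the two comparisons separately, using two standard facts. First, for a simply connected hyperbolic domain $V$ the Koebe $1/4$ estimate gives $\frac{1}{4{\rm dist}(z,\partial V)}\leq\rho_V(z)\leq\frac{1}{{\rm dist}(z,\partial V)}$. Second, on an annulus-type domain such as $\mathbb{C}\setminus\overline{\Delta_{\alpha}}$ the density is comparable to $1/{\rm dist}(z,\partial)$ as long as $z$ stays at bounded ratio from the boundary relative to the size of $\Delta_\alpha$.

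\textbf{Step 1: uniform smallness of $\Lambda_{\alpha}^n$.} I first show that there is $\mathfrak{q}_4\gg\mathfrak{q}_3$, independent of $\alpha$, such that for $n\geq\mathfrak{q}_4$ every mountain $\Omega_x^n$ has diameter at most $\epsilon\,{\rm diam}(\Delta_\alpha)$. Here $\epsilon$ is a small universal constant.

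By Lemma \ref{l841}, each $\gamma_x^n$ with $n\geq\mathfrak{q}_2$ is non-winding, so $\Omega_x^n$ is a bounded region cut off by a geodesic of $\hat{\mathbb{C}}\setminus\overline{\Delta_{\alpha}}$ over $I_{l,x}^n\cup I_{r,x}^n$. By Lemma \ref{l7301}, applied with levels $\mathfrak{q}_2$ and $n$, the extremal width of curves in $\hat{\mathbb{C}}\setminus\Delta_\alpha$ joining $I_{l,x}^n\cup I_{r,x}^n$ to the complement of a bounded neighbourhood interval is $\preceq 1/(n-\mathfrak{q}_2)$. Equivalently, there is a separating annulus of modulus $\succeq n-\mathfrak{q}_2$ between $\Omega_x^n$ and most of $\partial\Delta_\alpha$. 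This includes a point $w\in\partial\Delta_\alpha$ at distance $\asymp{\rm diam}(\Delta_\alpha)$ from $\Omega_x^n$.

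I would make this concrete via the rectangles $\mathcal{G}^+_{\overline{\Delta_\alpha}}$ from that proof together with Lemma \ref{l825a}. They confine the geodesic $\gamma_x^n$ inside a nest of $\asymp n$ disjoint wide rectangles. Teichmüller's modulus estimate then forces ${\rm diam}(\Omega_x^n)\leq e^{-c(n-\mathfrak{q}_2)}{\rm diam}(\Delta_\alpha)$, possibly after normalizing by the bounded distortion of the pseudo-Siegel disk $\hat\Delta_\alpha^{-1}$.

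\textbf{Step 2: comparison of the two densities.} Let $z\in\Lambda_\alpha^n$ and $d={\rm dist}(z,\Delta_\alpha)$. Then $d\leq{\rm diam}(\Omega_x^n)\leq\epsilon\,{\rm diam}(\Delta_\alpha)$.

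Since $\hat{\mathbb{C}}\setminus\overline{\Delta_\alpha}$ is simply connected, Koebe gives $\rho_{\hat{\mathbb{C}}\setminus\overline{\Delta_\alpha}}(z)\asymp 1/d$. The inequality $\rho_{\mathbb{C}\setminus\overline{\Delta_\alpha}}\geq\rho_{\hat{\mathbb{C}}\setminus\overline{\Delta_\alpha}}$ is monotonicity.

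For the reverse inequality, consider the disk $B=B(z,d)$, which lies in $\mathbb{C}\setminus\overline{\Delta_\alpha}$. Then $\rho_{\mathbb{C}\setminus\overline{\Delta_\alpha}}(z)\leq\rho_B(z)=1/d$. This yields $\rho_{\mathbb{C}\setminus\overline{\Delta_\alpha}}(z)\asymp 1/d\asymp\rho_{\hat{\mathbb{C}}\setminus\overline{\Delta_\alpha}}(z)$. Both claimed estimates then follow with universal constants.

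Notice that this upper bound needs no smallness at all. Step 1 is really needed only if one wants the uniform $\asymp$ to come from comparing with a punctured-disk model. So the only place requiring care is Step 1, where I must make sure that no point of $\Lambda_\alpha^n$ can be far from $\Delta_\alpha$, i.e. that the constants stay universal.

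The main obstacle is Step 1: extracting a uniform Euclidean bound on mountains from the conformal width estimates, independent of $\alpha$. I would try first to use the non-winding property together with a lower bound on harmonic measure. That is, a non-winding geodesic over an interval of small harmonic measure, seen from $\infty$, stays in a small Euclidean neighbourhood relative to ${\rm diam}(\Delta_\alpha)$. This follows from the Beurling estimate in the exterior uniformizing coordinate, and the harmonic measure of level-$n$ intervals is $\preceq$ a geometrically decaying quantity by Lemma \ref{l7301}.
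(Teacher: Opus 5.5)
There is a genuine gap, in both steps.

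\textbf{Step~2: the Koebe lower bound is misapplied.} This is exactly where the content of the lemma sits. The bound $\rho_V(z)\geq 1/(4\,{\rm dist}(z,\partial V))$ holds for simply connected $V\subseteq\mathbb{C}$, because it comes from the $1/4$-theorem for a univalent map $\mathbb{D}\to V\subseteq\mathbb{C}$. The domain $\hat{\mathbb{C}}\setminus\overline{\Delta_{\alpha}}$ contains $\infty$, so the bound does not apply.

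In fact, as $z\to\infty$ one has $\rho_{\hat{\mathbb{C}}\setminus\overline{\Delta_{\alpha}}}(z)\asymp{\rm cap}(\overline{\Delta_{\alpha}})/|z|^2$ and $\rho_{\mathbb{C}\setminus\overline{\Delta_{\alpha}}}(z)\asymp 1/(|z|\log|z|)$. For $\overline{\mathbb{D}}$ these are $2/(|z|^2-1)$ and $1/(|z|\log|z|)$. So \emph{both} comparisons in the lemma fail for points far from $\Delta_\alpha$.

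Your upper bound $\rho_{\mathbb{C}\setminus\overline{\Delta_{\alpha}}}(z)\leq\rho_{\mathbb{B}(z,d)}(z)$ and the monotonicity step are fine. The lower bound $\rho_{\hat{\mathbb{C}}\setminus\overline{\Delta_{\alpha}}}(z)\succeq 1/d$, however, needs $d\preceq 1$. Given $d\leq C$, you can repair it as follows. Send a point of $\partial\Delta_\alpha$ nearest to $z$ to $0$, and a point of $\partial\Delta_\alpha$ at distance $\geq 1/4$ from it to $\infty$, by a M\"obius map. Such a point exists because ${\rm diam}(\Delta_\alpha)\geq 1/2$, as $0\in\Delta_\alpha$ and $c_0\in\partial\Delta_\alpha$. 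The image domain lies in $\mathbb{C}$, and Koebe then gives $\rho_{\hat{\mathbb{C}}\setminus\overline{\Delta_{\alpha}}}(z)\succeq_C 1/d$. So, contrary to your remark, a uniform-in-$\alpha$ statement keeping $\Lambda^n_\alpha$ away from $\infty$ is indispensable.

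\textbf{Step~1: the claimed statement is false in general, and your tools cannot give it.} The mountain $\Omega^n_x$ contains every fjord $\mathcal{J}(\beta_I)$ whose base $I$, of level $\geq n$, lies in $I^n_{l,x}\cup I^n_{r,x}$, since the dam and $\gamma^n_x$ are non-crossing geodesics. Such fjords can have Euclidean depth comparable to ${\rm diam}(\Delta_\alpha)$ at arbitrarily deep levels. For example, one enormous partial quotient at a deep position makes the Siegel disk nearly collapse while $c_0$ stays on its boundary. So ${\rm diam}(\Omega^n_x)\leq\epsilon\,{\rm diam}(\Delta_\alpha)$ cannot hold uniformly in $\alpha$.

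Lemma~\ref{l7301}, the rectangles $\mathcal{G}^+_{\overline{\Delta_\alpha}}$ and harmonic measure from $\infty$ are conformal data of $\hat{\mathbb{C}}\setminus\overline{\Delta_\alpha}$ only. The rectangles lie in the exterior and give no round annulus in $\mathbb{C}$, since curves through $\Delta_\alpha$ are uncontrolled. The exterior Riemann map also has unbounded distortion near fjords. No Teichm\"uller or Beurling argument converts these data into Euclidean smallness.

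\textbf{What is true and sufficient is the conformal statement the paper proves.} For $n\geq\mathfrak{q}_4$, build the rectangle $\mathcal{G}^+_{\overline{\Delta_\alpha}}(I_l,J_r)$ from adjacent level-$\mathfrak{q}_3$ intervals. It is non-winding by Lemma~\ref{l841}, has width $\gg1$ by Lemma~\ref{l7301} and (\ref{e8223}), and separates $\Omega^n_a$ from $\infty$. Hence $\Omega^n_a$ misses the unit hyperbolic ball $B$ about $\infty$. Otherwise every vertical curve would meet $B$, forcing the width to be at most ${\rm mod}(\hat{\mathbb{C}}\setminus(\overline{\Delta_\alpha}\cup B))^{-1}\asymp1$.

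From here the paper compares the $\mathbb{D}$ and $\mathbb{D}^*$ metrics in the exterior uniformization and applies McMullen's Theorem~2.3. Alternatively, since ${\rm cap}(\overline{\Delta_\alpha})\asymp1$, this yields $d\preceq 1$, and your Step~2 with the M\"obius repair above finishes the proof. Your closing idea about small harmonic measure of the base is the right one, provided the target is ``outside a fixed hyperbolic neighbourhood of $\infty$'' rather than Euclidean smallness.
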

	\begin{proof}
		For all $n\geq\mathfrak{q}_3+10$ and $z\in\Lambda_{\alpha}^n$, there exists $\Omega_{a}^n$ such that $z\in\Omega_{a}^n\subseteq\mathbb{C}$.
		Let $I_6$ and $J_6$ be the adjacent intervals of $\mathfrak{D}_{n}$ with the common endpoint $a$.
		Since
		every interval of $\mathfrak{D}_{\mathfrak{q}_3}(\alpha)$ can be divided into at least $5$ intervals of $\mathfrak{D}_{\mathfrak{q}_3+6}(\alpha)$,
		we can choose a pair of adjacent intervals $I$ and $J$ in $\mathfrak{D}_{\mathfrak{q}_3}(\alpha)$ with a common endpoint $b$ such that $I_6\cup J_6\in\overline{(I\cup J)\setminus(I_5\cup J_5)}$,
		where
		$I_5$ ($\subseteq I$) and $J_5$ ($\subseteq J$) be the two intervals of $\mathfrak{D}_{\mathfrak{q}_3+6}(\alpha)$ such that $I_5$ (resp. $J_5$) has a common endpoint as $I$ (resp. $J$), but not $b$.
		Let $I_{l}$ and $J_{r}$ be the two components of
		$\overline{(I\cup J)\setminus(I_6\cup J_6)}$.
		Since $I_{l}$ (resp. $J_{r}$) contains $I_5$ or $J_5$, by Lemma \ref{l7301} there exists a positive integer $\mathfrak{q}_4$ {\rm(}$\gg\mathfrak{q}_3${\rm)} ( independent of $z$ and $\alpha$) such that for all $n\geq\mathfrak{q}_4$, we have $\mathcal{W}_{\overline{\Delta_{\alpha}}}^+(I_{l},J_{r})\gg1$. By (\ref{e8223}) for all $n\geq\mathfrak{q}_4$ and $z\in\Lambda_{\alpha}^n$, $\mathcal{W}(\mathcal{G}_{\overline{\Delta_{\alpha}}}^+(I_{l},J_{r}))\gg1$. By $n\geq\mathfrak{q}_4>\mathfrak{q}_2$, $\mathcal{G}_{\overline{\Delta_{\alpha}}}^+(I_{l},J_{r})$ is non-winding based on $I\cup J$ with respect to $\overline{\Delta_{\alpha}}$. Then $\mathcal{G}_{\overline{\Delta_{\alpha}}}^+(I_{l},J_{r})$ separates $z$ and $\infty$ in $\hat{\mathbb{C}}\setminus\overline{\Delta_{\alpha}}$. Let $B_{\hat{\mathbb{C}}\setminus\overline{\Delta_{\alpha}}}$ be the closed unit hyperbolic ball centering at $\infty$ with radius $1$ with respect to the hyperbolic metric on  $\hat{\mathbb{C}}\setminus\overline{\Delta_{\alpha}}$.
		We claim that $\Omega_a^n\cap B_{\hat{\mathbb{C}}\setminus\overline{\Delta_{\alpha}}}=\emptyset$. In fact, if not, then every vertical curve of $\mathcal{G}_{\overline{\Delta_{\alpha}}}^+(I_{l},J_{r})$ passes through $B_{\hat{\mathbb{C}}\setminus\overline{\Delta_{\alpha}}}$, and hence ${\rm mod}(\hat{\mathbb{C}}\setminus(\overline{\Delta_{\alpha}}\cup B_{\hat{\mathbb{C}}\setminus\overline{\Delta_{\alpha}}}))^{-1}\geq\mathcal{W}(\mathcal{G}_{\overline{\Delta_{\alpha}}}^+(I_{l},J_{r}))\gg1$. This is impossible. By $z\in\Omega_a^n$ and the above claim, we have $z\in (\hat{\mathbb{C}}\setminus\overline{\Delta_{\alpha}})\setminus B_{\hat{\mathbb{C}}\setminus\overline{\Delta_{\alpha}}}$.
		Then by applying a conformal map $\phi$, mapping
		$\infty$ to $0$, to transfer $\hat{\mathbb{C}}\setminus\overline{\Delta_{\alpha}}$ to $\mathbb{D}$ and an immediate computation on the ratio of  hyperbolic metrics on $\mathbb{D}$ and $\mathbb{D}^*$, we can get that
		for all $n\geq\mathfrak{q}_4$ and $z\in\Lambda_{\alpha}^n$,
		$$\rho_{\mathbb{C}\setminus\overline{\Delta_{\alpha}}}(z)\asymp\rho_{\hat{\mathbb{C}}\setminus\overline{\Delta_{\alpha}}}(z).$$
		Moreover, it follows from ${\rm mod}(\hat{\mathbb{C}}\setminus(\overline{\Delta_{\alpha}}\cup B_{\hat{\mathbb{C}}\setminus\overline{\Delta_{\alpha}}}))\asymp1$ and [Theorem $2.3$, \cite{McM94}] that  $$\rho_{\mathbb{C}\setminus\overline{\Delta_{\alpha}}}(z)\asymp\frac{1}{{\rm dist}(z,\Delta_\alpha)}.$$
	\end{proof}

	\begin{lemma}
		\label{l8221}Let $a_1,a_2,a_3,a_4,a_5,a_6\in\partial\Delta_{\alpha}$ with $a_1<a_2<a_3\leq a_4<a_5<a_6<a_1$. 
		Let $\gamma_1$ be the geodesic with respect to $\hat{\mathbb{C}}\setminus\overline{\Delta_{\alpha}}$ connecting $a_1$ and $a_3$; let $\gamma_2$ be the geodesic with respect to $\hat{\mathbb{C}}\setminus\overline{\Delta_{\alpha}}$ connecting $a_4$ and $a_6$; let $\gamma_3$ be the geodesic with respect to $\hat{\mathbb{C}}\setminus\overline{\Delta_{\alpha}}$ connecting $a_2$ and $a_5$. Let $\gamma_3'$ be the subarc of $\gamma_3$ bounded by $\gamma_1$ and $\gamma_2$.
		Let $I_1$ be the interval on $\partial\Delta_{\alpha}$ bounded by $a_1$ and $a_2$ not containing $a_3$; let $I_2$ be the interval on $\partial\Delta_{\alpha}$ bounded by $a_2$ and $a_3$ not containing $a_1$; let $I_3$ be the interval on $\partial\Delta_{\alpha}$ bounded by $a_4$ and $a_5$ not containing $a_6$; let $I_4$ be the interval on $\partial\Delta_{\alpha}$ bounded by $a_5$ and $a_6$ not containing $a_4$. 
		For all $c>0$, if $\mathcal{W}_{\overline{\Delta_{\alpha}}}^+(I_s,I_t)\succeq_c1$ for all $s\in\{1,2\}$ and all $t\in\{3,4\}$,
		then $l_{\hat{\mathbb{C}}\setminus\overline{\Delta_{\alpha}}}(\gamma_3')\preceq_c1$.
	\end{lemma}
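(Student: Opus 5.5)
We transfer everything to the unit disk and make it a statement about hyperbolic geodesics there. Let $\phi:\hat{\mathbb{C}}\setminus\overline{\Delta_{\alpha}}\to\mathbb{D}$ be a conformal map, extended homeomorphically to the boundary. Extremal widths of families in $\hat{\mathbb{C}}\setminus{\rm int}(\Delta_\alpha)$ joining boundary arcs are conformally invariant. Thus $\mathcal{W}^+_{\overline{\Delta_{\alpha}}}(I_s,I_t)=\mathcal{W}^-_{\overline{\mathbb{D}}}(\phi(I_s),\phi(I_t))$. The geodesics $\gamma_1,\gamma_2,\gamma_3$ go to hyperbolic geodesics of $\mathbb{D}$, i.e. circular arcs orthogonal to $\partial\mathbb{D}$. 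It therefore suffices to prove the following. For points $b_j=\phi(a_j)$ in this cyclic order on $\partial\mathbb{D}$, if the four extremal widths between $\{\phi(I_1),\phi(I_2)\}$ and $\{\phi(I_3),\phi(I_4)\}$ are $\succeq_c 1$, then the piece of the geodesic $[b_2,b_5]$ between the geodesics $[b_1,b_3]$ and $[b_4,b_6]$ has hyperbolic length $\preceq_c1$.

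\textbf{Step 1: reduce the widths to cross-ratios.} A lower bound $\mathcal{W}^-_{\overline{\mathbb{D}}}(J,K)\succeq_c1$ for two disjoint arcs means the quadrilateral $(\mathbb{D};J,K)$ has modulus bounded above. By the Log-Rule (already used for (\ref{e8222})), this says the cross-ratio of the endpoints is controlled. Concretely, after a Möbius normalization, the two complementary gaps between $J$ and $K$ are not much longer than $\min\{|J|,|K|\}$. Apply this with $(J,K)=(\phi(I_2),\phi(I_3))$, where one gap is $[b_3,b_4]$, possibly degenerate. Apply it also with $(\phi(I_1),\phi(I_4))$, where one gap is $[b_6,b_1]$. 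In both cases the other gap is controlled as well.

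\textbf{Step 2: normalize and locate the geodesics.} Normalize by a Möbius automorphism so that $\gamma_3$ becomes the diameter $[-1,1]$, with $b_2=-1$ and $b_5=1$. Then $b_1,b_3$ lie on opposite sides of $-1$ and $b_4,b_6$ lie on opposite sides of $1$. The length of $\gamma_3'$ is the hyperbolic distance between the points where $[-1,1]$ meets $[b_1,b_3]$ and $[b_4,b_6]$. This is $\preceq_c1$ exactly when both crossing points stay a bounded hyperbolic distance from $0$. Equivalently, the Euclidean arcs $[b_1,b_3]\ni -1$ and $[b_4,b_6]\ni 1$ must each have angular length bounded below by a constant depending on $c$.

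\textbf{Step 3: get the angular lower bound (the main obstacle).} Suppose instead that $[b_1,b_3]$ is tiny around $-1$. Then $I_1\cup I_2$ is a tiny arc and $I_3\cup I_4$ lies in the complementary long arc. One of the pairs $(I_s,I_t)$ then has a gap that is long compared with the smaller interval. Indeed, $I_2$ and $I_3$ are separated by at least the arc from $b_3$ to $b_4$ or from $b_1$ around to the other side. More precisely, the width $\mathcal{W}^-(I_s,I_t)$ is at most the width of curves joining the tiny arc $[b_1,b_3]$ to the far side $[b_4,b_6]$. That width is $\asymp 1/\log(1/\text{length})$, which tends to $0$. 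This contradicts $\succeq_c1$ and bounds the angular length of $[b_1,b_3]$ below; the same argument applies to $[b_4,b_6]$.

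The delicate points are these. First, $b_3=b_4$ may occur. Second, the estimate must pick the right pair $(s,t)$ so that the separating region actually isolates the tiny arc; using the pair $(I_1,I_4)$ together with $(I_2,I_3)$ covers both sides. Third, the extremal width must be converted to a quantitative angular bound, for example by comparison with the standard annulus $\{r<|z+1|<2\}\cap\mathbb{D}$, whose vertical width is $\preceq 1/\log(1/r)$. Combining the bounds on both ends gives $l_{\hat{\mathbb{C}}\setminus\overline{\Delta_{\alpha}}}(\gamma_3')\preceq_c 1$.
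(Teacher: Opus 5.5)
Your transfer to $\mathbb{D}$ and the cross-ratio reading of the hypotheses in Step 1 are fine. Steps 2--3, however, aim at a statement that is false.

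\emph{Where it fails.} Normalizing only $b_2=-1$, $b_5=1$ leaves the one-parameter group of hyperbolic translations along the diameter $[-1,1]$ free. These translations preserve all four widths and the length of $\gamma_3'$. So nothing in the hypotheses can force the crossing points to stay near $0$, or force the arcs $[b_1,b_3]\ni-1$ and $[b_4,b_6]\ni 1$ to have angular length bounded below. (For the same reason, ``exactly when'' in Step 2 is wrong.)

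Concretely, work in the half-plane model where $b_2,b_5$ correspond to $0,\infty$, so $\gamma_3$ is the imaginary axis, and write $x_j$ for the image of $a_j$. Take $x_1=-\epsilon$, $x_3=\epsilon$, $x_4=2\epsilon$, $x_6=-2\epsilon$. This is a dilate of a fixed configuration, so all four widths are independent of $\epsilon$ and $\gamma_3'$ always has length $\log 2$. Yet $[b_1,b_3]$ shrinks to the point $-1$ as $\epsilon\to0$.

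The example also shows exactly where your Step 3 estimate breaks. The arc $[b_4,b_6]$ is not a ``far side'': it is almost the whole circle. It is separated from $[b_1,b_3]$ only by gaps comparable in size to $[b_1,b_3]$ itself, and by no gap at all when $b_3=b_4$. Hence the width of curves joining the two arcs is $\asymp 1$, not $\preceq 1/\log(1/\ell)$ with $\ell$ the angular length of $[b_1,b_3]$, and no contradiction arises.

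\emph{What is needed instead.} The control must be relative (cross-ratios), not absolute. In the same half-plane model, $\gamma_1$ and $\gamma_2$ meet the imaginary axis at heights $\sqrt{|x_1|x_3}$ and $\sqrt{|x_6|x_4}$. Therefore
\[
l_{\hat{\mathbb{C}}\setminus\overline{\Delta_{\alpha}}}(\gamma_3')=\frac12\log\frac{x_4}{x_3}+\frac12\log\frac{|x_6|}{|x_1|}.
\]
Each term depends on a single cross-ratio:
\[
\frac{x_4}{x_3}\preceq_c1 \iff \mathcal{W}^+_{\overline{\Delta_{\alpha}}}(I_2,I_3)\succeq_c1,
\qquad
\frac{|x_6|}{|x_1|}\preceq_c1 \iff \mathcal{W}^+_{\overline{\Delta_{\alpha}}}(I_1,I_4)\succeq_c1.
\]
So your instinct to use the pairs $(I_2,I_3)$ and $(I_1,I_4)$ is right, and these two alone suffice; they just have to be used through this formula.

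The paper gets the same result by fixing three points, $a_2,a_3,a_5\mapsto 0,1,\infty$. From the width hypotheses it deduces $\phi(a_4)\asymp_c 1$ and $\phi(a_1)\asymp_c\phi(a_6)\asymp_c -1$, and then reads off that both crossing heights are $\asymp_c 1$. Either version works, but a full three-point normalization, or an explicitly Möbius-invariant formulation like the one above, is indispensable.
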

	
	\begin{proof}
		Let $\phi$ be the conformal map from $\Delta_{\alpha}$ to $\mathbb{H}_-:=\{z=x+yi\in\mathbb{C}:\ x\in\mathbb{R}, y<0\}$ such that the boundary extension, also written as $\phi$, maps $a_2, a_3, a_5$ to $0, 1, \infty$. 
		Then $\phi(a_6)<\phi(a_1)<\phi(a_2)=0<\phi(a_3)=1\leq\phi(a_4)$ and $\mathcal{W}_{\mathbb{H}}^-(\phi(I_s),\phi(I_t))\succeq_c1$ for all $s\in\{1,2\}$ and all $t\in\{3,4\}$.
		Observe that $\phi(I_1)=[\phi(a_1),\phi(a_2)]$,  $\phi(I_2)=[\phi(a_2),\phi(a_3)]=[0,1]$, $\phi(I_3)=[\phi(a_4),+\infty]$ and $\phi(I_4)=[-\infty,\phi(a_6)]$. Thus
		\begin{equation}
			\label{e20260305a}1=\phi(a_3)\leq\phi(a_4)\asymp_c1,\ \phi(a_1)\asymp_c-1\ {\rm and}\ -1\asymp_c\phi(a_6)<\phi(a_1).
		\end{equation}
		Let $b_1$ (resp. $b_2$) be the intersection point between $\phi(\gamma_1)$ (resp. $\phi(\gamma_2)$) and the imaginary axis. Observe that $\phi(\gamma_3)$ is the positive imaginary axis. Then $\phi(\gamma_3')$ is the segment with endpoints $b_1$ and $b_2$ in the positive imaginary axis. Since $\phi(\gamma_1)$ (resp. $\phi(\gamma_2)$) is the geodesic with endpoints $\phi(a_1)$ and $\phi(a_3)$ (resp. $\phi(a_4)$ and $\phi(a_6)$) with respect to the hyperbolic metric on $\mathbb{H}$, (\ref{e20260305a}) gives $1\asymp_cb_1<b_2\asymp_c1$. Thus $l_{\hat{\mathbb{C}}\setminus\overline{\Delta_{\alpha}}}(\gamma_3')=l_{\mathbb{H}}(\phi(\gamma_3'))=\ln\frac{b_2}{b_1}\preceq_c1$.
		
	\end{proof}
	
	\begin{lemma}
		\label{l20257193a}
		For all $n\geq\mathfrak{q}_4$, ${\rm diam}_{\mathbb{C}\setminus\overline{\Delta_{\alpha}}}\tilde{\Omega}_{c_0}^n\setminus\Lambda_\alpha^{n+\mathfrak{q}_3}\preceq_{\mathfrak{q}_3}1$.
	\end{lemma}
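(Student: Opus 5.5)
We need to bound the hyperbolic diameter, in $\mathbb{C}\setminus\overline{\Delta_{\alpha}}$, of the hovered main mountain $\tilde{\Omega}_{c_0}^n\setminus\Lambda_\alpha^{n+\mathfrak{q}_3}$. The plan is to use the upper half-plane model and reduce the estimate to Lemma \ref{l8221}. Lemma \ref{l25070716} lets us replace $\rho_{\mathbb{C}\setminus\overline{\Delta_{\alpha}}}$ by $\rho_{\hat{\mathbb{C}}\setminus\overline{\Delta_{\alpha}}}$ on $\Lambda_\alpha^n$, since $n\geq\mathfrak{q}_4$. So it suffices to bound the diameter in $\hat{\mathbb{C}}\setminus\overline{\Delta_{\alpha}}$.

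Normalize by a conformal map $\phi:\hat{\mathbb{C}}\setminus\overline{\Delta_{\alpha}}\to\mathbb{H}$ sending $c_0$ to $\infty$. Let $I,J\in\mathfrak{D}_n$ be the two intervals adjacent at $c_0$, and let $I',J'\in\mathfrak{D}_{n+\mathfrak{q}_3}$ be the two intervals adjacent at $c_0$. Then $\Omega_{c_0}^n$ is the region above the half-circle $\phi(\gamma_{c_0}^n)$ with feet at $\phi(y)<\phi(z)$. Points of $\Omega_{c_0}^n$ that are not in $\Lambda_\alpha^{n+\mathfrak{q}_3}$ lie outside the mountain $\Omega_{c_0}^{n+\mathfrak{q}_3}$, i.e. below the half-circle $\phi(\gamma_{c_0}^{n+\mathfrak{q}_3})$. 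They also lie outside every mountain $\Omega_x^{n+\mathfrak{q}_3}$, $x\in{\rm EP}(\mathfrak{D}_{n+\mathfrak{q}_3})$, which together cover a neighbourhood of $I\cup J$ away from $c_0$. Removing $\hat{\Delta}_\alpha^n$ is harmless: it only shrinks the set.

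The key step is to show that, after the rescaling in which $\phi(I\cup J)$ is the complement of a unit interval around $0$, this set lies in a compact region of $\mathbb{H}$ of hyperbolic diameter $O_{\mathfrak{q}_3}(1)$. Concretely, I would check two things.

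First, the top boundary $\phi(\gamma_{c_0}^{n+\mathfrak{q}_3})$ is at Euclidean height comparable, up to a factor depending on $\mathfrak{q}_3$, to the height of $\phi(\gamma_{c_0}^n)$. This follows from Lemma \ref{l8221}, applied with $a_2,a_5$ chosen at endpoints of level-$n$ and level-$(n+\mathfrak{q}_3)$ intervals. The needed widths $\mathcal{W}_{\overline{\Delta_{\alpha}}}^+(I_s,I_t)\succeq_{\mathfrak{q}_3}1$ come from Lemma \ref{l831}. That lemma applies because all relevant endpoints are among the points of Lemma \ref{l9301}, hence well-grounded. The combinatorial lengths of the projected intervals differ by at most a factor $M^{O(\mathfrak{q}_3)}$, by (P1).

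Second, near the real axis the set is excluded by the consecutive level-$(n+\mathfrak{q}_3)$ mountains $\Omega_x^{n+\mathfrak{q}_3}$. Between two consecutive geodesic humps the uncovered region has bounded hyperbolic size, by the same Lemma \ref{l8221} estimate together with the bounded geometry from Lemma \ref{l8271}. There are only $O_{\mathfrak{q}_3}(M^{\mathfrak{q}_3})$ humps between $\gamma_{c_0}^{n+\mathfrak{q}_3}$ and $\gamma_{c_0}^n$. Chaining these bounded pieces along, together with the bounded vertical segment from the first point, gives the total bound $\preceq_{\mathfrak{q}_3}1$.

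The main obstacle is the second point: showing that the union of the geodesic humps of level $n+\mathfrak{q}_3$ leaves only boundedly-sized hyperbolic "gaps". I would first try to connect any point of the set to the center arc $\phi^{-1}(i\mathbb{R}_+)$ by a path that crosses $O_{\mathfrak{q}_3}(1)$ regions. Each such region is bounded by geodesics whose feet satisfy the width hypotheses of Lemma \ref{l8221}.
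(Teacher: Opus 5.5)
There is a genuine gap, and it is in the step you flagged as the main obstacle. Dropping $\hat{\Delta}_{\alpha}^n$ is logically fine for an upper bound. But the larger set $\Omega_{c_0}^n\setminus\Lambda_\alpha^{n+\mathfrak{q}_3}$ does \emph{not} have hyperbolic diameter bounded independently of $\alpha$, so no argument about that set can work. The false claim is that there are only $O_{\mathfrak{q}_3}(M^{\mathfrak{q}_3})$ humps. The constant $M$ of (P1) bounds the combinatorics of the tiling $\mathcal{T}$ of $\partial\hat{\Delta}_{\alpha}^n$, i.e.\ only \emph{after} the fjords have been dammed. On $\partial\Delta_\alpha$ itself, a level-$m$ interval splits into a number of level-$(m+1)$ intervals governed by a partial quotient of $\alpha$. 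That number is unbounded, and it is large precisely for the $\alpha_k$ the paper needs.

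Suppose such a partial quotient is huge for some $n\leq m<n+\mathfrak{q}_3$. Then there is a deep fjord of level $m$ based inside $\overline{I\setminus I'}$. Conformally it is a long strip whose two walls carry many level-$(m+1)$ intervals; Lemma \ref{l8271} only gives same-level bounded geometry and says nothing about this cumulative depth. The level-$(n+\mathfrak{q}_3)$ mountains hug the two walls and reach only about halfway across the strip. They therefore leave uncovered a channel along its middle whose hyperbolic length grows with the depth of the fjord. Even if each gap between consecutive humps were bounded, their number is not, so your chaining gives no uniform bound.

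The paper keeps $\hat{\Delta}_{\alpha}^n$ exactly to remove these fjords. The boundary of $\tilde{\Omega}_{c_0}^n\setminus\Lambda_\alpha^{n+\mathfrak{q}_3}$ then consists of $O(\mathfrak{q}_3)$ geodesic arcs of two kinds.
\begin{enumerate}
\item Truncations $\gamma_e^{n+\mathfrak{q}_3}$ of geodesics over level-$m$ intervals, or pairs of adjacent ones, with $n\leq m\leq n+\mathfrak{q}_3$. Their length is $\preceq_{\mathfrak{q}_3}1$ by Lemma \ref{l831} applied on $\hat{\Delta}_\alpha^m$ (the endpoints are well-grounded), followed by Lemma \ref{l8221}. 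This is (\ref{e05042}); your first step is essentially this estimate and is fine.
\item Truncated dams $\beta^{n+\mathfrak{q}_3}$ of levels $n\leq m<n+\mathfrak{q}_3$. Since $\beta$ is a geodesic, its truncation is no longer than a connected chain of arcs of the first kind joining its endpoints. That chain runs over the parts $T_l,T_r$ of the dammed interval outside the dam, where (P1) \emph{does} bound the number $\#E$ of level-$(m+1)$ pieces.
\end{enumerate}
Bounded total boundary length gives bounded diameter, and Lemma \ref{l25070716} transfers the bound to $\mathbb{C}\setminus\overline{\Delta_\alpha}$. Your second step needs to be replaced by this dam argument, or by some equivalent use of $\hat{\Delta}^n_\alpha$.
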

	\begin{proof}
		For all $n\leq m\leq n+\mathfrak{q}_3$, we let $I$ be an interval in $\mathfrak{D}_{m}$ containing at least two intervals in $\mathfrak{D}_{n+\mathfrak{q}_3}$ or be the union of two adjacent intervals in $\mathfrak{D}_{m}$. Let $a$ and $b$ be two endpoints of $I$. Let $I_1$ and $I_2$ be two adjacent intervals in $\mathfrak{D}_{n+\mathfrak{q}_3}$  having the common endpoint $a$; let $J_1$ and $J_2$ be two adjacent intervals in $\mathfrak{D}_{n+\mathfrak{q}_3}$ having the common endpoint $b$. Let $\gamma_I$ be a geodesic connecting two endpoints of $a$ and $b$ with respect to the hyperbolic metric on $\hat{\mathbb{C}}\setminus\overline{\Delta_{\alpha}}$.
		We denote by $\gamma_{I}^{n+\mathfrak{q}_3}$ the subarc of $\gamma_{I}$ bounded by $\gamma_b^{n+\mathfrak{q}_3}$ and $\gamma_a^{n+\mathfrak{q}_3}$.
		Observe that each of endpoints of $I_1$, $I_2$, $J_1$ and $J_2$ has a combinatorial distance at most $2\iota_{m+1}$ with ${\rm EP}(\mathfrak{D}_{m})$. Then $I_1$, $I_2$, $J_1$ and $J_2$ are well-grounded intervals on $\partial\hat{\Delta}_{\alpha}^m$. We let $\hat{I}_1$, $\hat{I}_2$, $\hat{J}_1$ and $\hat{J}_2$ be projections of $I_1$, $I_2$, $J_1$ and $J_2$ onto $\partial\hat{\Delta}_{\alpha}^m$, respectively.
		Then $$\frac{1}{M^{n+\mathfrak{q}_3+1}}\leq|\hat{I}_1|_{\hat{\Delta}_{\alpha}^m},|\hat{I}_2|_{\hat{\Delta}_{\alpha}^m},|\hat{J}_1|_{\hat{\Delta}_{\alpha}^m},|\hat{J}_2|_{\hat{\Delta}_{\alpha}^m}\leq\frac{1}{M^{n+\mathfrak{q}_3}}$$ and for all $s,t\in\{1,2\}$, 
		$${\rm dist}_{\hat{\Delta}_{\alpha}^m}(\hat{I}_s,\hat{J}_t)=0\ {\rm or}\ \frac{1}{M^{n+\mathfrak{q}_3+1}}\leq{\rm dist}_{\hat{\Delta}_{\alpha}^m}(\hat{I}_s,\hat{J}_t)\leq\frac{1}{M^{n-1}}.$$
		By Lemma \ref{l831} $\mathcal{W}_{\overline{\Delta_{\alpha}}}^+(I_s,J_t)\succeq_{\mathfrak{q}_3}1$ for $s=1,2$ and $t=1,2$. Thus
		by Lemma \ref{l8221}, 
		\begin{equation}
			\label{e05042}l_{\hat{\mathbb{C}}\setminus\overline{\Delta_{\alpha}}}(\gamma_{I}^{n+\mathfrak{q}_3})\preceq_{\mathfrak{q}_3}1.
		\end{equation}
		
		We assume that $\beta$ is a dam of an interval $T\in\mathfrak{D}_m$ ($n\leq m\leq n+\mathfrak{q}_3-1$). 
		Let $a(\beta)$ $(\in{\rm CP}_{m+1}\setminus{\rm CP}_m)$ and $b(\beta)$ $(\in{\rm CP}_{m+1}\setminus{\rm CP}_m)$ be two endpoints of $\beta$ and let $a(T)$ and $b(T)$ be two endpoints of $T$ such that $a(T)<a(\beta)<b(\beta)<b(T)$. Let $T_l$ be the subinterval of $T$ such that $a(\beta)$ and $a(T)$
		are two endpoints of $T_l$; let $T_r$ be the subinterval of $T$ such that $b(\beta)$ and $b(T)$
		are two endpoints of $T_r$. For all $x\in\{a(T),b(T),a(\beta),b(\beta)\}$ and $j\in\{m+1,n+\mathfrak{q}_3\}$, we denote by $I_{x}^{j}$ the union of two adjacent intervals of level $j$ having a common endpoint $x$.
		We denote by $E$ the set consisting of the union of two adjacent intervals of level $m+1$ contained in $T_l\cup T_r$.
		By (P1) in Section \ref{s2.3} we have that $\#E$, the number of elements of $E$, satisfies
		\begin{equation}
			\label{e20260306a}\#E\preceq1.
		\end{equation}
		If $a(\beta)$ and $b(\beta)$ are not two endpoints of some common interval of level $n+\mathfrak{q}_3$, then there exists the geodesic arc
		$\beta^{n+\mathfrak{q}_3}$ that is the subarc of $\beta$ bounded by $\gamma_{a(\beta)}^{n+\mathfrak{q}_3}$ and $\gamma_{b(\beta)}^{n+\mathfrak{q}_3}$. 
		It is easy to see that
		\begin{itemize}
			\item[(a3)] two endpoints of $\beta^{n+\mathfrak{q}_3}$ are contained in $\gamma_{I_{a(\beta)}^{n+\mathfrak{q}_3}}^{n+\mathfrak{q}_3}\cup\gamma_{I_{b(\beta)}^{n+\mathfrak{q}_3}}^{n+\mathfrak{q}_3}$.
			\item[(b3)] $\gamma_T^{n+\mathfrak{q}_3}\cup\bigcup\limits_{e\in E}\gamma_e^{n+\mathfrak{q}_3}\cup\gamma_{I_{a(T)}^{m+1}}^{n+\mathfrak{q}_3}\cup\gamma_{I_{b(T)}^{m+1}}^{n+\mathfrak{q}_3}\cup\gamma_{I_{a(\beta)}^{n+\mathfrak{q}_3}}^{n+\mathfrak{q}_3}\cup\gamma_{I_{b(\beta)}^{n+\mathfrak{q}_3}}^{n+\mathfrak{q}_3}$ is a connected set.
		\end{itemize} 
		It follows from (a3), (b3) and the fact that $\beta^{n+\mathfrak{q}_3}$ is a geodesic that
		\begin{align*}
			l_{\hat{\mathbb{C}}\setminus\overline{\Delta_{\alpha}}}(\beta^{n+\mathfrak{q}_3})\leq & l_{\hat{\mathbb{C}}\setminus\overline{\Delta_{\alpha}}}(\gamma_T^{n+\mathfrak{q}_3})+\sum\limits_{e\in E}l_{\hat{\mathbb{C}}\setminus\overline{\Delta_{\alpha}}}(\gamma_e^{n+\mathfrak{q}_3})\\
			&+l_{\hat{\mathbb{C}}\setminus\overline{\Delta_{\alpha}}}(\gamma_{I_{a(T)}^{m+1}}^{n+\mathfrak{q}_3})+l_{\hat{\mathbb{C}}\setminus\overline{\Delta_{\alpha}}}(\gamma_{I_{b(T)}^{m+1}}^{n+\mathfrak{q}_3})+l_{\hat{\mathbb{C}}\setminus\overline{\Delta_{\alpha}}}(\gamma_{I_{a(\beta)}^{n+\mathfrak{q}_3}}^{n+\mathfrak{q}_3})+l_{\hat{\mathbb{C}}\setminus\overline{\Delta_{\alpha}}}(\gamma_{I_{b(\beta)}^{n+\mathfrak{q}_3}}^{n+\mathfrak{q}_3}).
		\end{align*}
		By (\ref{e05042}) and (\ref{e20260306a}) we have
		\begin{align*}
			1\succeq_{\mathfrak{q}_3} & l_{\hat{\mathbb{C}}\setminus\overline{\Delta_{\alpha}}}(\gamma_T^{n+\mathfrak{q}_3})+\sum\limits_{e\in E}l_{\hat{\mathbb{C}}\setminus\overline{\Delta_{\alpha}}}(\gamma_e^{n+\mathfrak{q}_3})\\
			&+l_{\hat{\mathbb{C}}\setminus\overline{\Delta_{\alpha}}}(\gamma_{I_{a(T)}^{m+1}}^{n+\mathfrak{q}_3})+l_{\hat{\mathbb{C}}\setminus\overline{\Delta_{\alpha}}}(\gamma_{I_{b(T)}^{m+1}}^{n+\mathfrak{q}_3})+l_{\hat{\mathbb{C}}\setminus\overline{\Delta_{\alpha}}}(\gamma_{I_{a(\beta)}^{n+\mathfrak{q}_3}}^{n+\mathfrak{q}_3})+l_{\hat{\mathbb{C}}\setminus\overline{\Delta_{\alpha}}}(\gamma_{I_{b(\beta)}^{n+\mathfrak{q}_3}}^{n+\mathfrak{q}_3}).
		\end{align*}
		Thus
		\begin{equation}
			\label{e251019a}l_{\hat{\mathbb{C}}\setminus\overline{\Delta_{\alpha}}}(\beta^{n+\mathfrak{q}_3})\preceq_{\mathfrak{q}_3}1.
		\end{equation}
		
		By Lemma \ref{l25070716}, to prove that for all $n\geq\mathfrak{q}_4$,
		$${\rm diam}_{\mathbb{C}\setminus\overline{\Delta_{\alpha}}}\tilde{\Omega}_{c_0}^n\setminus\Lambda_\alpha^{n+\mathfrak{q}_3}\preceq_{\mathfrak{q}_3}1,$$
		we need only to prove $${\rm diam}_{\hat{\mathbb{C}}\setminus\overline{\Delta_{\alpha}}}\tilde{\Omega}_{c_0}^n\setminus\Lambda_\alpha^{n+\mathfrak{q}_3}\preceq_{\mathfrak{q}_3}1.$$
		The boundary $\partial(\tilde{\Omega}_{c_0}^n\setminus\Lambda_\alpha^{n+\mathfrak{q}_3})$ consists of finitely many ($\asymp\mathfrak{q}_3$) end-to-end geodesics on $\hat{\mathbb{C}}\setminus\overline{\Delta_{\alpha}}$, and each of these geodesics must be one of the following form: (see Figure \ref{f20260720e})
		\begin{itemize}
			\item $\gamma_{e}^{n+\mathfrak{q}_3}$ or its subarcs, where $e$ is the union of two adjacent intervals in $\mathfrak{D}_{m}$
			($m=n\ {\rm or}\ n+\mathfrak{q}_3$), 
			\item $\beta^{n+\mathfrak{q}_3}$,
			where $\beta$ is a dam of an interval of level $m$ with $n\leq m\leq n+\mathfrak{q}_3-1$.
		\end{itemize}
		\begin{figure}
			\centering
			\includegraphics[scale=0.5]{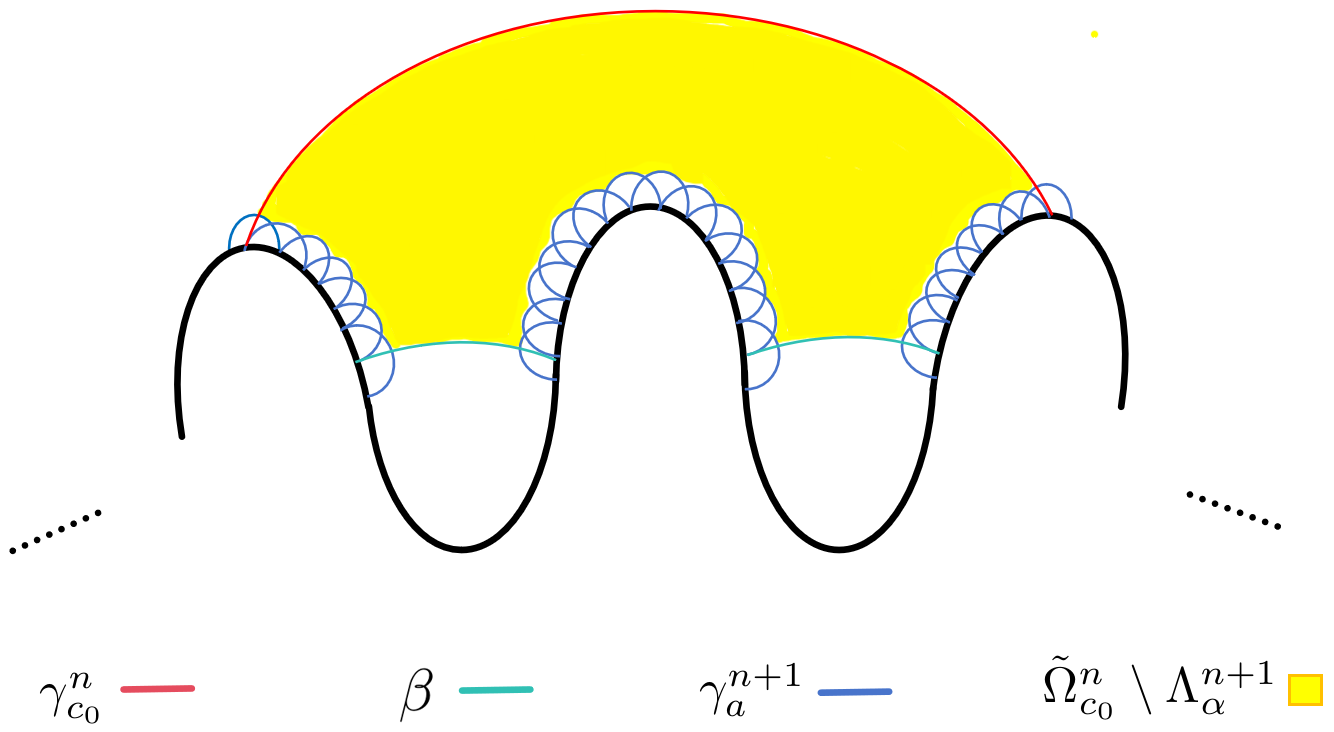}
			\caption{For simplicity, we present an illustration of $\tilde{\Omega}_{c_0}^n\setminus\Lambda_\alpha^{n+\mathfrak{q}_3}$ for only $\mathfrak{q}_3=1$.}
			\label{f20260720e}
		\end{figure}
		By (\ref{e05042}) and (\ref{e251019a}), we have
		$${\rm diam}_{\hat{\mathbb{C}}\setminus\overline{\Delta_{\alpha}}}\tilde{\Omega}_{c_0}^n\setminus\Lambda_\alpha^{n+\mathfrak{q}_3}\preceq_{\mathfrak{q}_3}1.$$
	\end{proof}
	
	\noindent 3.4.2. {\bf Preimages of pseudo-Siegel disks in hovered main mountains.}
	
	\begin{lemma}
		\label{l727a1}Let $D$ $(\subseteq\mathbb{C})$ be a $K$-quasidisk with $P_\alpha(c_0)\in\partial D$. Let $D^{-1}$ be a component of $P_{\alpha}^{-1}(D)$. Then $D^{-1}$ is a $K'$-quasidisk, where $K'$ is determined by $K$ and independent of $\alpha$.
	\end{lemma}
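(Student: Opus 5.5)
Because the critical value $v:=P_\alpha(c_0)$ lies on $\partial D$ and not in ${\rm int}(D)$, the map $P_\alpha$ has no critical values in ${\rm int}(D)$, so $P_\alpha^{-1}({\rm int}(D))$ consists of two components, each mapped conformally onto ${\rm int}(D)$. Hence each component $D^{-1}$ is a Jordan domain whose closure is mapped homeomorphically onto $D$ by $P_\alpha$, except that $c_0$ is the single boundary point lying over $v$. The plan is to express the local branch of $P_\alpha^{-1}$ as a composition of an affine map and a square root. Then the quasiconformal reflection of $\partial D$ can be transported to $\partial D^{-1}$, with constants controlled only by $K$.

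Concretely, I would write $P_\alpha(z)=(z-c_0)^2+v$, so that $P_\alpha^{-1}(w)=c_0\pm\sqrt{w-v}$. Up to translation, $D^{-1}$ is then the image of $D-v$ under one branch of $\sqrt{\cdot}$. Since $D-v$ is a $K$-quasidisk with $0\in\partial(D-v)$, it suffices to show the following claim: if $E$ is a $K$-quasidisk with $0\in\partial E$, then each component of $\{\zeta:\zeta^2\in E\}$ is a $K'$-quasidisk. This claim involves no $\alpha$ at all, and that is exactly why $K'$ is independent of $\alpha$.

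I would prove the claim using Ahlfors' three-point condition. Normalise $E$ by a complex scaling, which commutes with $\zeta\mapsto\zeta^2$ up to another scaling. The boundary $\partial E$ satisfies the three-point property with constant $C(K)$, and I need to check that $\partial E^{1/2}$ satisfies it with a constant $C'(K)$. Away from $0$, the square root is bi-Lipschitz at every scale, with distortion bounded on annuli $\{r<|w|<2r\}$. So the three-point condition transfers for pairs of points in comparable dyadic annuli, and it transfers for small arcs by Koebe-type control. Near $0$, I would use the fact that a $K$-quasicircle through $0$ is locally a quasi-line. Its two germs at $0$ are separated in angle in the sense of the quasi-conformal reflection, so ${\rm int}(E)$ omits a definite proportion of every small circle $|w|=r$ (a ``bounded turning'' statement). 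Consequently the two preimage arcs of $\partial E$ at $0$ close up to a single Jordan curve through $0$ with no cusp, and the three-point ratio for triples straddling $0$ is controlled through $|\zeta^2|=|\zeta|^2$.

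An alternative, cleaner route would be to take the $K$-quasiconformal reflection $\sigma$ across $\partial E$ and lift it to the conjugate $\sqrt{\sigma(\zeta^2)}$, choosing the branch that lands on the complementary side. The obstacle here is that the complement of $E$ is not preserved under the square root: both preimages of the exterior meet near $0$. So I would instead rely on the three-point approach.

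The main obstacle is the behaviour at $0$. I need to show that $\partial E$ does not spiral around $0$ in a way that would make the branch cut of $\sqrt{\cdot}$ wrap, since wrapping would destroy the bounded turning of the preimage. I plan to handle this with the standard fact that a $K$-quasicircle is the image of $\mathbb{R}$ under a $K$-quasiconformal map $f$ with $f(0)=0$. Quasiconformal spiralling is bounded logarithmically, but that bound alone is not enough. What I actually need is that on each circle $|w|=r$, the two arcs of $\partial E$ near $0$ split the circle into two sectors, each containing a definite fraction of the angle in the quasisymmetric sense. That fraction is what gives the uniform three-point constant. I would cite this from standard quasiconformal theory (for example, via a quasisymmetric extension argument) rather than reprove it.
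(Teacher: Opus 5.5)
Your reduction is right: since $P_\alpha(z)=(z-c_0)^2+P_\alpha(c_0)$, the lemma is equivalent to an $\alpha$-free claim. The claim is that for a $K$-quasidisk $E$ with $0\in\partial E$, each component $\tilde E$ of $\{\zeta:\zeta^2\in E\}$ is a $K'(K)$-quasidisk.

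The gap is in the proof of that claim, exactly at the step you call the main obstacle. You do not prove it, and the fact you propose to cite is neither correct as stated nor what is needed.
\begin{itemize}
\item A $K$-quasicircle through $0$ may cross a circle $\{|w|=r\}$ several times, for instance a bi-Lipschitz image of a line that makes a switchback. So ``the two arcs at $0$ split the circle into two sectors'' fails in general.
\item Spiralling is harmless. With $f_\beta(z)=z|z|^{i\beta}$ one has $f_\beta(\zeta)^2=f_\beta(\zeta^2)$. Hence for $E=f_\beta(\mathbb{H})$, whose boundary spirals infinitely often around $0$, $\tilde E$ is $f_\beta$ of a quadrant, which is a quasidisk.
\item There is no branch-cut issue either. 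The branch of $\sqrt{\cdot}$ on the simply connected set ${\rm int}(E)\not\ni 0$ extends, by Carath\'eodory, to a homeomorphism of closures. So $\partial\tilde E$ is always a Jordan curve, mapped homeomorphically onto $\partial E$ by squaring.
\item Your ``dyadic annuli'' step is not an argument as it stands. $\sqrt{\cdot}$ is not single-valued on an annulus, and the relevant arcs of $\partial E$ may leave the annulus.
\end{itemize}

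The missing idea is that distances to $0$ transform by $|\zeta|=|w|^{1/2}$, so bounded turning relative to the point $0\in\partial E$ is all you need. Let $C=C(K)$ be the bounded-turning constant of $\Gamma=\partial E$. Take $\zeta_1,\zeta_2\in\partial\tilde E$ with $|\zeta_1|\ge|\zeta_2|$, and put $w_i=\zeta_i^2$.
\begin{itemize}
\item Case $|\zeta_1-\zeta_2|\ge|\zeta_1|/(6C)$. Bounded turning for the pairs $(w_i,0)$ gives arcs of $\Gamma$ from $0$ to $w_i$ inside $\{|w|\le C|w_1|\}$, hence an arc from $w_1$ to $w_2$ there. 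Its lift lies in $\{|\zeta|\le\sqrt{C}\,|\zeta_1|\}$, so its diameter is at most $12C^{3/2}|\zeta_1-\zeta_2|$.
\item Case $|\zeta_1-\zeta_2|<|\zeta_1|/(6C)$. Then $|w_1-w_2|\le 2|\zeta_1||\zeta_1-\zeta_2|\le |w_1|/(3C)$. The short arc between $w_1$ and $w_2$ therefore lies in $\mathbb{B}(w_1,|w_1|/2)$. Its lift is its image under the branch of $\sqrt{\cdot}$ through $\zeta_1$, which is $(\sqrt2|\zeta_1|)^{-1}$-Lipschitz there. This gives diameter at most $\sqrt2\,C|\zeta_1-\zeta_2|$.
\end{itemize}
No angular information at $0$ enters.

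Alternatively, the lifting idea you discarded works if you lift a uniformizing map rather than a reflection. Take a $K$-quasiconformal $f$ of $\hat{\mathbb{C}}$ with $f(0)=0$, $f(\infty)=\infty$ and $f(B_0)=E$, where $B_0$ is a round disk with $0\in\partial B_0$. It lifts through $\zeta\mapsto\zeta^2$ to a $K$-quasiconformal homeomorphism $F$. $F$ carries a component of $\{\zeta^2\in B_0\}$ (a fixed quasidisk with a right-angle corner) onto $\tilde E$.

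That is exactly the paper's proof, phrased via Beltrami coefficients. The paper integrates $\mu_{\varphi\circ P_\alpha}$, which has the same sup norm as $\mu_\varphi$ (where $\varphi(\partial D)=\mathbb{S}^1$), to a $K$-quasiconformal $\phi$. It identifies $\varphi\circ P_\alpha\circ\phi^{-1}=(z-1)^2+1$. It concludes that $D^{-1}$ is $\phi^{-1}$ of a component of $1+\sqrt{\mathbb{D}-1}$, hence a $KK_1$-quasidisk.
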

	\begin{proof}
		Let $\varphi$ be a $K$-quasiconformal map from $\mathbb{C}$ to $\mathbb{C}$ such that $\varphi(\partial D)=\mathbb{S}^1$ and $\varphi(P_\alpha(c_0))=1$.
		An immediate computation gives 
		$$\mu_{\varphi\comp P_\alpha}(z)=\mu_{\varphi}(P_\alpha(z))\frac{\overline{P'_{\alpha}(z)}}{P'_{\alpha}(z)},$$
		where $\mu_{\varphi\comp P_\alpha}$ and $\mu_{\varphi}$ are Beltrami coefficients of $\varphi\comp P_\alpha$ and $\varphi$, respectively.
		Then
		$||\mu_{\varphi\comp P_\alpha}||_{\infty}=||\mu_{\varphi}||_{\infty}<1$ and hence Ahlfors-Bers theorem gives that there exists a $K$-quasiconformal map $\phi$ from $\mathbb{C}$ to $\mathbb{C}$ such that $\mu_{\phi}(z)=\frac{\phi_{\overline{z}}(z)}{\phi_z(z)}=\mu_{\varphi\comp P_\alpha}(z)$, $\phi(c_0)=1$ and $\phi$ maps some zero of $\varphi\comp P_\alpha-2$ to $0$.
		Then $\varphi\comp P_\alpha\comp\phi^{-1}$ is an analytic branched covering of degree $2$ on $\mathbb{C}$ having the critical point $1$, fixes $1$ and maps $0$ to $2$.
		Thus $\varphi\comp P_\alpha\comp\phi^{-1}(z)$ is the polynomial $(z-1)^2+1$.
		It follows that
		$(\phi(z)-1)^2+1=\varphi\comp P_\alpha(z)$ and hence
		$$(\phi(D^{-1})-1)^2=\varphi\comp P_\alpha(D^{-1})-1=\mathbb{D}-1.$$
		This implies that
		$\phi(D^{-1})$ is a component of $\sqrt{\mathbb{D}-1}+1$ and hence
		$\phi(D^{-1})$ is a  $K_1$-quasidisk, where $K_1>1$ is an absolute constant. We write $K':=KK_1$.
		Thus $D^{-1}$ is a $K'$-quasidisk.
		
	\end{proof}

	\begin{lemma}
		\label{l822}For $n\geq\mathfrak{q}_4$ and $1\leq\mathfrak{l}\ll\mathfrak{q}_2$, we have that for all $z\in\tilde{\Omega}_{c_0}^n\setminus\Lambda_\alpha^{n+\mathfrak{q}_3}$, there exists an Euclidean ball $B$ on $\mathbb{C}\setminus\overline{\Delta_{\alpha}}$ such that
		$$B\subseteq\hat{U}_0^{n+\mathfrak{l}}\cap\tilde{\Omega}_{c_0}^n\setminus\Lambda_\alpha^{n+\mathfrak{q}_3},\ {\rm diam}_{\mathbb{C}\setminus\overline{\Delta_{\alpha}}}{B}\asymp1\ {\rm and}\ {\rm dist}_{\mathbb{C}\setminus\overline{\Delta_{\alpha}}}(B,z)=O_{\mathfrak{q}_3}(1).$$
	\end{lemma}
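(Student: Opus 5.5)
Since $\tilde{\Omega}_{c_0}^n\setminus\Lambda_\alpha^{n+\mathfrak{q}_3}$ has hyperbolic diameter $\preceq_{\mathfrak{q}_3}1$ in $\mathbb{C}\setminus\overline{\Delta_{\alpha}}$ by Lemma \ref{l20257193a}, it suffices to find \emph{one} Euclidean ball $B$ in $\hat{U}_0^{n+\mathfrak{l}}\cap\tilde{\Omega}_{c_0}^n\setminus\Lambda_\alpha^{n+\mathfrak{q}_3}$ with ${\rm diam}_{\mathbb{C}\setminus\overline{\Delta_{\alpha}}}B\asymp1$. Then ${\rm dist}_{\mathbb{C}\setminus\overline{\Delta_{\alpha}}}(B,z)=O_{\mathfrak{q}_3}(1)$ for every $z$ in the hovered main mountain. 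By Lemma \ref{l25070716}, the condition ${\rm diam}_{\mathbb{C}\setminus\overline{\Delta_{\alpha}}}B\asymp1$ is equivalent to ${\rm diam}(B)\asymp{\rm dist}(B,\Delta_\alpha)$. So the task is to produce a point $w$ deep inside $\hat{U}_0^{n+\mathfrak{l}}$, lying in the hovered main mountain, with ${\rm dist}(w,\partial\hat{U}_0^{n+\mathfrak{l}}\cup\partial(\tilde{\Omega}_{c_0}^n\setminus\Lambda_\alpha^{n+\mathfrak{q}_3}))\succeq{\rm dist}(w,\Delta_\alpha)$.

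\textbf{Construction of the candidate region.} Let $I,J\in\mathfrak{D}_{n+\mathfrak{l}+10}$ be the two intervals adjacent at $c_0$. Their images under $P_\alpha$ form the interval $T_2\ni P_\alpha(c_0)$ of level $n+\mathfrak{l}+10$, sitting inside $T_1\in\mathfrak{D}_{n+\mathfrak{l}}$. Take the annulus $\mathcal{G}$ of Lemma \ref{l832}, built with $m=n+\mathfrak{l}$, whose inner disk contains $\hat{T}_2$ together with a definite piece of $\hat{\Delta}_\alpha^{n+\mathfrak{l}}$ near $P_\alpha(c_0)$. By Lemma \ref{l832}, ${\rm mod}(\mathcal{G})\succeq1$. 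Pulling back by $P_\alpha$, as in the paragraph preceding (\ref{e85a}), gives $\mathcal{G}^{-1}$ of modulus $\succeq1$ around $c_0$, and the rectangle $\mathcal{R}_+^{(-1)}$ of width $\succeq1$ separating $c_0$ from far parts of $\partial\Delta_\alpha$.

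\textbf{Choice of the point $w$.} Inside $\mathcal{G}^{-1}$ the set $\hat{U}_0^{n+\mathfrak{l}}$ is the preimage branch of $\hat{\Delta}_\alpha^{n+\mathfrak{l}}$ not containing $0$. By Lemma \ref{l26821} and Lemma \ref{l727a1}, $\hat{U}_0^{n+\mathfrak{l}}$ is a $K'$-quasidisk touching $\hat{\Delta}_\alpha^{n+\mathfrak{l},-1}$ only at $c_0$. Using the Koebe-type control provided by the quasidisk property and the bounded modulus, I would pick $w\in\hat{U}_0^{n+\mathfrak{l}}$ at Euclidean distance $\asymp r$ from $c_0$. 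Here $r$ is the scale of the level-$(n+\mathfrak{l}+10)$ intervals at $c_0$, and it should be comparable to the scale of $\Omega_{c_0}^{n+\mathfrak{l}+10}$ by (\ref{e8221}) and Lemma \ref{l8271}. The quasidisk property then gives a ball $B$ around $w$ of radius $\asymp r$ inside $\hat{U}_0^{n+\mathfrak{l}}$. Since $\hat{U}_0^{n+\mathfrak{l}}\cap\overline{\Delta_\alpha}=\{c_0\}$ (compare Lemma \ref{l20257193}), $B$ lies in $\mathbb{C}\setminus\overline{\Delta_\alpha}$, and ${\rm dist}(B,\Delta_\alpha)\preceq r$.

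\textbf{Main obstacle.} The hard step is showing that $B$ lies in the main mountain $\Omega_{c_0}^n$ but outside $\Lambda_\alpha^{n+\mathfrak{q}_3}$ and outside $\hat{\Delta}_\alpha^n$. The condition $\mathfrak{l}\ll\mathfrak{q}_2\le\mathfrak{q}_3$ is meant to make this work: the scale $n+\mathfrak{l}$ lies strictly between $n$ and $n+\mathfrak{q}_3$.

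For $B\subseteq\Omega_{c_0}^n$, apply Lemma \ref{l825a} together with the wide rectangles of Lemma \ref{l7301} between level $n$ and level $n+\mathfrak{l}+10$. These show that the geodesic $\gamma_{c_0}^n$, which is non-winding by Lemma \ref{l04231}, stays far outside scale $r$.

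For $B\cap\Lambda_\alpha^{n+\mathfrak{q}_3}=\emptyset$, use Lemma \ref{l7301} again, now between levels $n+\mathfrak{l}+10$ and $n+\mathfrak{q}_3$. This should show that every mountain of level $n+\mathfrak{q}_3$ near $c_0$ has size $\ll r$, and that the mountains are based on intervals not meeting the preimage picture at scale $r$. The mountains centered away from $c_0$ are excluded by Lemma \ref{l20257193}.

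For $B\cap\hat{\Delta}_\alpha^n=\emptyset$, the fjords of level $\ge n$ near $c_0$ are protected by (P2), and $c_0$ lies at combinatorial distance $\ge11\iota$ from the big-fjord endpoints. Hence these fjords do not enter the region $\mathcal{R}_+^{(-1)}$ at scale $r$.

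If this direct geometric argument turns out too delicate, I would first try to work entirely in the model given by the conformal map of Lemma \ref{l8221} (sending $\Delta_\alpha$ to a half-plane). There each constraint becomes an explicit bounded-geometry inequality between hyperbolic geodesics.
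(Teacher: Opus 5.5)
There is a genuine gap: your reduction to finding a single ball (via Lemmas \ref{l20257193a} and \ref{l25070716}) matches the paper, but two steps fail as written.

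\textbf{Uniformity of the quasidisk.} Lemma \ref{l26821} only says that $\partial\hat{\Delta}_\alpha^{n+\mathfrak{l}}$ is \emph{some} quasicircle. Property (P1) controls its geometry only at levels $\geq n+\mathfrak{l}$, and it may still carry arbitrarily deep unfilled fjords of lower levels, so its dilatation is not uniform in $\alpha$. Feeding it into Lemma \ref{l727a1} gives a $K'$ that depends on $\alpha$. Every ``$\asymp r$'' in your choice of $w$ and $B$ inherits this dependence, which destroys ${\rm diam}_{\mathbb{C}\setminus\overline{\Delta_\alpha}}B\asymp1$.

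The paper localizes instead. Let $D_{c_0}^{n+\mathfrak{l}}$ be bounded by the projection of $I_{[x_{q_{n+\mathfrak{l}+1}-1},x_{q_{n+\mathfrak{l}}-1}]}$ onto $\partial\hat{\Delta}_\alpha^{n+\mathfrak{l}}$ and the hyperbolic geodesic of ${\rm int}(\hat{\Delta}_\alpha^{n+\mathfrak{l}})$ joining its endpoints. This is a uniform quasidisk by Lemma \ref{l84a}. Hence its preimage $\hat{U}_{0,c_0}^{n+\mathfrak{l}}\subseteq\hat{U}_0^{n+\mathfrak{l}}$ attached at $c_0$ is a uniform quasidisk by Lemma \ref{l727a1}.

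\textbf{The shielding step.} What you call the main obstacle is the actual content of the lemma, and your sketch does not supply the mechanism. The single annulus of Lemma \ref{l832} only has modulus $\succeq1$, which does not even guarantee the hypothesis $\mathcal{W}>\frac12$ of Lemma \ref{l825a}. With $\mathfrak{l}$ possibly equal to $1$, the $\mathfrak{l}+10$ levels between $n$ and your scale $r$ are too few for Lemma \ref{l7301} to shield $B$ from $\gamma_{c_0}^n$. Moreover, ``Euclidean distance $\asymp r$ from $c_0$'' is not a criterion for lying in $\Omega_{c_0}^n$ but outside deeper main mountains, because their Euclidean position is exactly what must be proved.

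Lemma \ref{l20257193} also does not just discard mountains centered away from $c_0$. It says $\hat{U}_0^{n+\mathfrak{l}}$ meets $\Lambda_\alpha^{n'+\mathfrak{q}_2}$ only inside the single main mountain $\Omega_{c_0}^{n'}$. It also says $\hat{U}_0^{n+\mathfrak{l}}$ meets $\hat{\Delta}_\alpha^{-1}$ only at $c_0$, which disposes of $\hat{\Delta}_\alpha^n$ with no (P2) fjord analysis. So you need $B$ to avoid a main mountain of some level $n'\leq n+\mathfrak{q}_3-\mathfrak{q}_2$, i.e.\ you need an inner shield as well as an outer one.

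The paper builds both shields at once:
\begin{enumerate}
\item It stacks three annuli $\mathcal{G}_1,\mathcal{G}_2,\mathcal{G}_3$, each spanning $\lfloor\mathfrak{q}_2/4\rfloor$ levels starting at $n+\mathfrak{l}$. Each has modulus $\gg1$ by Corollary \ref{c831}.
\item It pulls them back to rectangles $\mathcal{R}_{s,+}^{(-1)}$ of width $\gg1$ by (\ref{e85a}).
\item Using $\mathfrak{l}\ll\mathfrak{q}_2$, the horizontal sides of $\mathcal{R}_{2,+}^{(-1)}$ lie strictly between $\partial\Delta_\alpha\setminus\overline{\Omega_{c_0}^n}$ and $\partial\Delta_\alpha\cap\overline{\Omega_{c_0}^{n+\mathfrak{q}_2}}$. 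Lemma \ref{l825a} then shows that $\gamma_{c_0}^n$ and $\gamma_{c_0}^{n+\mathfrak{q}_2}$ both miss $\mathcal{R}_{2,+}^{(-1)}$.
\item Applying Lemma \ref{l20257193} with $n'=n+\mathfrak{q}_2$, the set $\mathcal{R}_{2,+}^{(-1)}\cap\hat{U}_{0,c_0}^{n+\mathfrak{l}}$ lies in the hovered main mountain.
\end{enumerate}

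The ball is then found in this intersection. The quasidisk $\hat{U}_{0,c_0}^{n+\mathfrak{l}}$ meets both vertical sides $\gamma_1,\gamma_2$ of $\mathcal{R}_{2,+}^{(-1)}$. Since ${\rm mod}(\mathcal{G}_2^{-1})\gg1$, one has ${\rm dist}(\gamma_1,\gamma_2)\succeq{\rm diam}(\gamma_1)$. Uniform quasidisk geometry in the middle third between $\gamma_1$ and $\gamma_2$ then yields $B$ with ${\rm diam}(B)\asymp{\rm dist}(\gamma_1,\gamma_2)\asymp{\rm dist}(B,\Delta_\alpha)$.
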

	
	\begin{proof}
		Let $\gamma_{n+\mathfrak{l}}$ be the geodesic connecting two endpoints of $\hat{I}_{[x_{q_{n+\mathfrak{l}+1}-1},x_{q_{n+\mathfrak{l}}-1}]}$
		with respect to the hyperbolic metric on ${\rm int}\hat{\Delta}_{\alpha}^{n+\mathfrak{l}}$ (here $\hat{I}_{[x_{q_{n+\mathfrak{l}+1}-1},x_{q_{n+\mathfrak{l}}-1}]}$ is the projection of $I_{[x_{q_{n+\mathfrak{l}+1}-1},x_{q_{n+\mathfrak{l}}-1}]}$ to $\partial\hat{\Delta}_{\alpha}^{n+\mathfrak{l}}$).
		We denote by $D_{c_0}^{n+\mathfrak{l}}$ the bounded region surrounded by
		$\gamma_{n+\mathfrak{l}}$ and $\hat{I}_{[x_{q_{n+\mathfrak{l}+1}-1},x_{q_{n+\mathfrak{l}}-1}]}$.
		Let $\hat{U}_{0,c_0}^{n+\mathfrak{l}}$ be the closure of the component of $P_{\alpha}^{-1}(D_{c_0}^{n+\mathfrak{l}})$ not intersecting $\overline{\Delta_{\alpha}}$.
		By Lemmas \ref{l84a} and \ref{l727a1}, we have that $\hat{U}_{0,c_0}^{n+\mathfrak{l}}$ is a closed ${\bf K'}$-quasidisk,
		where ${\bf K'}$ is determined by ${\bf K}$ and independent of $\alpha$.
		Moreover, it follows from Lemma \ref{l20257193} that
		\begin{equation}
			\label{e84a1}\hat{U}_{0,c_0}^{n+\mathfrak{l}}\subseteq\hat{U}_0^{n+\mathfrak{l}}\subseteq(\hat{U}_0^{n+\mathfrak{l}}\setminus\Lambda_\alpha^{n+2\mathfrak{q}_2})\cup\Omega_{c_0}^{n+\mathfrak{q}_2}\
			{\rm and}\ \hat{U}_{0,c_0}^{n+\mathfrak{l}}\cap\hat{\Delta}_{\alpha}^{-1}=\{c_0\}.
		\end{equation}
		
		For all $1\leq s\leq3$, we let $I_{s,l}$ and  $I_{s,r}$ be the two components of $$\overline{I_{[x_{q_{n+\mathfrak{l}+(s-1)\lfloor\frac{\mathfrak{q}_2}{4}\rfloor}-1},x_{q_{n+\mathfrak{l}+1+(s-1)\lfloor\frac{\mathfrak{q}_2}{4}\rfloor}-1}]}\setminus  I_{[x_{q_{n+\mathfrak{l}+s\lfloor\frac{\mathfrak{q}_2}{4}\rfloor}-1},x_{q_{n+\mathfrak{l}+1+s\lfloor\frac{\mathfrak{q}_2}{4}\rfloor}-1}]}}$$
		with
		$I_{s,l}<I_{[x_{q_{n+\mathfrak{l}+s\lfloor\frac{\mathfrak{q}_2}{4}\rfloor}-1},x_{q_{n+\mathfrak{l}+1+s\lfloor\frac{\mathfrak{q}_2}{4}\rfloor}-1}]}<I_{s,r}$.
		For all $1\leq s\leq3$,
		we denote by 
		$\hat{I}_{s,l}$ and  $\hat{I}_{s,r}$ projections of
		$I_{s,l}$ and  $I_{s,r}$ onto $\partial\hat{\Delta}_{\alpha}^{n+\mathfrak{l}}$, respectively.
		We define $\mathcal{G}_s$ as the interior of the union $\mathcal{G}^-_{\hat{\Delta}_{\alpha}^{n+\mathfrak{l}}}(\hat{I}_{s,l},\hat{I}_{s,r})\cup\hat{\mathcal{G}}^+_{\overline{\Delta_{\alpha}}}(\hat{I}_{s,l},\hat{I}_{s,r})$.
		By Corollary \ref{c831}, we have that for all $1\leq s\leq3$, ${\rm mod}(\mathcal{G}_s)\gg1$.
		By Lemma \ref{l841}, 
		for all $1\leq s\leq3$, $\mathcal{G}_s\subseteq\mathbb{C}$ and $P_{\alpha}(c_0)$ is contained in the bounded component of $\mathbb{C}\setminus\mathcal{G}_s$. 
		For all $1\leq s\leq3$, we define $\mathcal{G}_s^{-1}:=P_{\alpha}^{-1}(\mathcal{G}_s)$.
		Since $P_{\alpha}(c_0)$ is contained in the bounded component of $\mathbb{C}\setminus\mathcal{G}_s$, we have that $P_{\alpha}|_{\mathcal{G}_s^{-1}}$ is a covering of degree $2$ from $\mathcal{G}_s^{-1}$ to $\mathcal{G}_s$. Then ${\rm mod}(\mathcal{G}_s^{-1})=\frac{{\rm mod}(\mathcal{G}_s)}{2}\gg1$.
		
		For all $1\leq s\leq3$, we let $I_{s,l}^{-1}$ and $I_{s,r}^{-1}$ be pre-images of
		$I_{s,l}$ and $I_{s,r}$
		contained in $\partial\Delta_{\alpha}$ under $P_{\alpha}$, respectively.
		We denote by $\mathcal{R}_{s,+}^{(-1)}$ the rectangle bounded by $I_{s,l}^{-1}\cup I_{s,r}^{-1}\cup\partial^{v,0}\overline{\mathcal{G}_s^{-1}\setminus\hat{\Delta}_{\alpha}^{n+\mathfrak{l},-1}}\cup\partial^{v,1}\overline{\mathcal{G}_s^{-1}\setminus\hat{\Delta}_{\alpha}^{n+\mathfrak{l},-1}}$ with
		$\partial^{h,0}\mathcal{R}_{s,+}^{(-1)}=I_{s,l}^{-1}$ and $\partial^{h,1}\mathcal{R}_{s,+}^{(-1)}=I_{s,r}^{-1}$. 
		By (\ref{e85a})
		$$\mathcal{W}(\mathcal{R}_{s,+}^{(-1)})\succeq{\rm mod}(\mathcal{G}_s)\gg1.$$
		Since $\partial\Delta_\alpha\setminus\overline{\Omega_{c_0}^{n}}<\partial^{h,0}\mathcal{R}_{1,+}^{(-1)}<\partial^{h,0}\mathcal{R}_{2,+}^{(-1)}<\partial^{h,0}\mathcal{R}_{3,+}^{(-1)}<\partial\Delta_\alpha\cap\overline{\Omega_{c_0}^{n+\mathfrak{q}_2}}<\partial^{h,1}\mathcal{R}_{3,+}^{(-1)}<\partial^{h,1}\mathcal{R}_{2,+}^{(-1)}<\partial^{h,1}\mathcal{R}_{1,+}^{(-1)}$ (Due to $1\leq\mathfrak{l}\ll\mathfrak{q}_2$),
		by Lemma \ref{l825a} we have that $\gamma_{c_0}^n\cap\mathcal{R}_{2,+}^{(-1)}=\emptyset$ and $\gamma_{c_0}^{n+\mathfrak{q}_2}\cap\mathcal{R}_{2,+}^{(-1)}=\emptyset$.
		It follows that
		\begin{equation}
			\label{e20260307b}\Omega_{c_0}^{n+\mathfrak{q}_2}\cap\mathcal{R}_{2,+}^{(-1)}=\emptyset\ {\rm and}\ 
			\mathcal{R}_{2,+}^{(-1)}\setminus(\partial^{h,0}\mathcal{R}_{2,+}^{(-1)}\cup\partial^{h,1}\mathcal{R}_{2,+}^{(-1)})\subseteq\Omega_{c_0}^{n}.
		\end{equation}
		By (\ref{e84a1}) and (\ref{e20260307b}), we have
		\begin{equation}
			\label{e251102a}\mathcal{R}_{2,+}^{(-1)}\cap\hat{U}_{0,c_0}^{n+\mathfrak{l}}\subseteq\hat{U}_0^{n+\mathfrak{l}}\cap\tilde{\Omega}_{c_0}^n\setminus\Lambda_\alpha^{n+2\mathfrak{q}_2}\subseteq\hat{U}_0^{n+\mathfrak{l}}\cap\tilde{\Omega}_{c_0}^n\setminus\Lambda_\alpha^{n+\mathfrak{q}_3}.
		\end{equation}
		If
		$${\rm diam}(\partial^{v,0}\mathcal{R}_{2,+}^{(-1)})\leq{\rm diam}(\partial^{v,1}\mathcal{R}_{2,+}^{(-1)}),$$
		then we set $\gamma_1=\partial^{v,0}\mathcal{R}_{2,+}^{(-1)}$ and $\gamma_2=\partial^{v,1}\mathcal{R}_{2,+}^{(-1)}$; if $${\rm diam}(\partial^{v,1}\mathcal{R}_{2,+}^{(-1)})\leq{\rm diam}(\partial^{v,0}\mathcal{R}_{2,+}^{(-1)}),$$
		then we set $\gamma_1=\partial^{v,1}\mathcal{R}_{2,+}^{(-1)}$ and $\gamma_2=\partial^{v,0}\mathcal{R}_{2,+}^{(-1)}$.
		Observe that $\mathcal{G}_2^{-1}$ separates $\gamma_1$ and $\gamma_2$. Then it follows from ${\rm mod}(\mathcal{G}_2^{-1})\gg1$ that ${\rm dist}(\gamma_1,\gamma_2)\succeq{\rm diam}(\gamma_1)$.
		
		We set
		$$F_1:=\left\{z\in\mathbb{C}:\frac{{\rm dist}(\gamma_1,\gamma_2)}{3}\leq{\rm dist}(\gamma_1,z)\leq\frac{2{\rm dist}(\gamma_1,\gamma_2)}{3}\right\}.$$
		Since $\mathcal{G}^-_{\hat{\Delta}_{\alpha}^{n+\mathfrak{l}}}(\hat{I}_{2,l},\hat{I}_{2,r})\subseteq  D_{c_0}^{n+\mathfrak{l}}$, we have $$\partial^{v,0}\mathcal{R}_{2,+}^{(-1)}\cap\hat{U}_{0,c_0}^{n+\mathfrak{l}}\not=\emptyset\ {\rm and}\ \partial^{v,1}\mathcal{R}_{2,+}^{(-1)}\cap\hat{U}_{0,c_0}^{n+\mathfrak{l}}\not=\emptyset,$$
		that is $$\hat{U}_{0,c_0}^{n+\mathfrak{l}}\cap\gamma_1\not=\emptyset\ {\rm and}\ \hat{U}_{0,c_0}^{n+\mathfrak{l}}\cap \gamma_2\not=\emptyset.$$
		Together with $\hat{U}_{0,c_0}^{n+\mathfrak{l}}\cap\overline{\Delta_\alpha}=\{c_0\}$ (Due to (\ref{e84a1})), it gives
		$$\partial\hat{U}_{0,c_0}^{n+\mathfrak{l}}\cap\gamma_1\not=\emptyset\ {\rm and}\ \partial\hat{U}_{0,c_0}^{n+\mathfrak{l}}\cap \gamma_2\not=\emptyset.$$
		We take $a\in\partial\hat{U}_{0,c_0}^{n+\mathfrak{l}}\cap\gamma_1$ and $b\in\partial\hat{U}_{0,c_0}^{n+\mathfrak{l}}\cap\gamma_2$.
		Let $\gamma_3$ and $\gamma_4$ be the two arcs of $\partial\hat{U}_{0,c_0}^{n+\mathfrak{l}}$ connecting $a$ and $b$. 
		By the definition of $F_1$, we have that ${\rm dist}(\gamma_s\cap F_1,\gamma_t)\geq\frac{{\rm dist}(\gamma_1,\gamma_2)}{3}$ for $s\in\{3,4\}$ and $t\in\{1,2\}$.
		Since $\partial\hat{U}_{0,c_0}^{n+\mathfrak{l}}$ is a ${\bf K'}$-quasicircle,
		we have
		\begin{equation}
			\label{e84a2}{\rm dist}(\gamma_3\cap F_1,\gamma_4\cap F_1)\succeq{\rm dist}(\gamma_1,\gamma_2).
		\end{equation}
		
		We claim that there exists $e\in F_1\cap{\rm int}(\hat{U}_{0,c_0}^{n+1})\cap\mathcal{R}_{2,+}^{(-1)}$ such that 
		${\rm dist}(\gamma_3\cap F_1,e)\succeq{\rm dist}(\gamma_1,\gamma_2)$,
		${\rm dist}(e,\gamma_4\cap F_1)\succeq{\rm dist}(\gamma_1,\gamma_2)$ and ${\rm dist}(e,\gamma_1)=\frac{{\rm dist}(\gamma_1,\gamma_2)}{2}$. In fact, by (\ref{e84a2}), we could consider $l$-neighborhoods $F_2$ and $F_3$ of $\gamma_3\cap F_1$ and $\gamma_4\cap F_1$, where $l\asymp{\rm dist}(\gamma_1,\gamma_2)$ and $l<\frac{{\rm dist}(\gamma_1,\gamma_2)}{6}$, such that $\overline{F_2}\cap\overline{F_3}=\emptyset$. We take a path $\gamma\subseteq\hat{U}_{0,c_0}^{n+\mathfrak{l}}$ connecting $a$ and $b$ such that $\gamma([0,1])\cap(F_2\cup F_3)=\emptyset$ and $\gamma([0,1])\cap\partial\hat{U}_{0,c_0}^{n+\mathfrak{l}}=\{a,b\}$. Since $\hat{U}_{0,c_0}^{n+\mathfrak{l}}\cap\overline{\Delta_\alpha}=\{c_0\}$ (Due to (\ref{e84a1})), $\gamma$ passes through $\mathcal{R}_{2,+}^{(-1)}$. Then there exists a point $e\in\gamma\cap\mathcal{R}_{2,+}^{(-1)}$ such that ${\rm dist}(e,\gamma_1)=\frac{{\rm dist}(\gamma_1,\gamma_2)}{2}$. Thus $e\in F_1\cap{\rm int}(\hat{U}_{0,c_0}^{n+\mathfrak{l}})\cap\mathcal{R}_{2,+}^{(-1)}$ with 
		${\rm dist}(\gamma_3\cap F_1,e)\succeq{\rm dist}(\gamma_1,\gamma_2)$,
		${\rm dist}(e,\gamma_4\cap F_1)\succeq{\rm dist}(\gamma_1,\gamma_2)$ and ${\rm dist}(e,\gamma_1)=\frac{{\rm dist}(\gamma_1,\gamma_2)}{2}$.
		
		By the above claim, there exists an Euclidean ball $B\subseteq F_1\cap{\rm int}(\hat{U}_{0,c_0}^{n+\mathfrak{l}})\cap\mathcal{R}_{2,+}^{(-1)}$ centering at $e$ such that ${\rm diam}(B)\asymp{\rm dist}(\gamma_1,\gamma_2)$, ${\rm diam}(B)\leq\frac{{\rm dist}(\gamma_1,\gamma_2)}{12}$,
		${\rm dist}(\gamma_3\cap F_1,B)\succeq{\rm dist}(\gamma_1,\gamma_2)$ and
		${\rm dist}(B,\gamma_4\cap F_1)\succeq{\rm dist}(\gamma_1,\gamma_2)$.
		Thus $B\subseteq
		\mathcal{R}_{2,+}^{(-1)}\cap\hat{U}_{0,c_0}^{n+\mathfrak{l}}$ with ${\rm diam}(B)\asymp{\rm dist}(\gamma_1,\gamma_2)$,
		${\rm diam}(B)\asymp{\rm dist}(B,\gamma_1)$ and ${\rm dist}(B,\partial\hat{U}_{0,c_0}^{n+\mathfrak{l}})\succeq{\rm dist}(\gamma_1,\gamma_2)$.
		Since ${\rm diam}(B)\asymp{\rm dist}(\gamma_1,\gamma_2)$,
		${\rm diam}(B)\asymp{\rm dist}(B,\gamma_1)$,  ${\rm dist}(\gamma_1,\gamma_2)\succeq{\rm diam}(\gamma_1)$ and $\gamma_1\cap\overline{\Delta_{\alpha}}\not=\emptyset$, we have
		${\rm diam}(B)\succeq{\rm dist}(B,\Delta_{\alpha})$.
		Since $B\subseteq
		\hat{U}_{0,c_0}^{n+\mathfrak{l}}$, ${\rm dist}(B,\partial\hat{U}_{0,c_0}^{n+\mathfrak{l}})\succeq{\rm dist}(\gamma_1,\gamma_2)$, ${\rm diam}(B)\asymp{\rm dist}(\gamma_1,\gamma_2)$ and $\hat{U}_{0,c_0}^{n+\mathfrak{l}}\cap\Delta_\alpha=\emptyset$, we have
		${\rm diam}(B)\preceq{\rm dist}(B,\Delta_{\alpha})$.
		Thus ${\rm diam}(B)\asymp{\rm dist}(B,\Delta_{\alpha})$.
		By (\ref{e251102a})
		$$B\subseteq
		\mathcal{R}_{2,+}^{(-1)}\cap\hat{U}_{0,c_0}^{n+\mathfrak{l}}\subseteq\hat{U}_0^{n+\mathfrak{l}}\cap\tilde{\Omega}_{c_0}^n\setminus\Lambda_\alpha^{n+\mathfrak{q}_3}\subseteq\Lambda_{\alpha}^n$$ and hence by Lemma \ref{l25070716} and $n\geq\mathfrak{q}_4$ we have ${\rm diam}_{\mathbb{C}\setminus\overline{\Delta_{\alpha}}}(B)\asymp1$.
		Since $z\in\tilde{\Omega}_{c_0}^n\setminus\Lambda_\alpha^{n+\mathfrak{q}_3}$ and $B\subseteq\tilde{\Omega}_{c_0}^n\setminus\Lambda_\alpha^{n+\mathfrak{q}_3}$,
		by Lemma \ref{l20257193a} ${\rm dist}_{\mathbb{C}\setminus\overline{\Delta_{\alpha}}}(B,z)=O_{\mathfrak{q}_3}(1)$.
		
	\end{proof}
	
	\noindent 3.4.3. {\bf Pulling back pseudo-Siegel disks through hovered main mountains.}
	Given any $w\in\Lambda_{\alpha}^{n+1}$ with $n\geq2\mathfrak{q}_4$, we assume that there exist
	positive integers $m_1<m_2<\cdots<m_k$ ($k\gg1$) such that for all $1\leq j\leq k$,
	$$P_{\alpha}^{\comp m_j}(w)\in\tilde{\Omega}_{c_0}^{n_j}\setminus\Lambda^{n_j+\mathfrak{q}_3},\ n_j\geq n+1$$ and
	$$\left\{P_{\alpha}^{\comp t}(w):\ 0\leq t\leq m_k\right\}\subseteq\Lambda_{\alpha}^{n+1}.$$
	By Lemma \ref{l822}, for all $1\leq j\leq k$ and $1\leq\mathfrak{l}\ll\mathfrak{q}_2$,
	there exists an Euclidean ball $B_j$ $\subseteq\mathbb{C}\setminus\overline{\Delta_{\alpha}}$ such that
	$$B_j\subseteq\hat{U}_0^{n_j+\mathfrak{l}}\cap\tilde{\Omega}_{c_0}^{n_j}\setminus\Lambda^{n_j+\mathfrak{q}_3},\ {\rm diam}_{\mathbb{C}\setminus\overline{\Delta_{\alpha}}}{B_j}\asymp1\ {\rm and}\ {\rm dist}_{\mathbb{C}\setminus\overline{\Delta_{\alpha}}}(B_j,P_{\alpha}^{\comp m_j}(w))=O_{\mathfrak{q}_3}(1).$$
	Let $\tilde{B}_j$ be the Euclidean ball with the same center as $B_j$ and a half radius of $B_j$. Then combined with Lemma \ref{l25070716}, we have
	\begin{equation}
		\label{e20260327d1}\tilde{B}_j\subseteq\hat{U}_0^{n_j+\mathfrak{l}}\cap\tilde{\Omega}_{c_0}^{n_j}\setminus\Lambda^{n_j+\mathfrak{q}_3},\ {\rm diam}_{\mathbb{C}\setminus\overline{\Delta_{\alpha}}}{\tilde{B}_j}\asymp1\ {\rm and}\ {\rm dist}_{\mathbb{C}\setminus\overline{\Delta_{\alpha}}}(\tilde{B}_j,P_{\alpha}^{\comp m_j}(w))=O_{\mathfrak{q}_3}(1).
	\end{equation}
	Let $\gamma_j\subseteq \mathbb{C}\setminus\overline{\Delta_{\alpha}}$ be the geodesic connecting $\tilde{B}_j$ and $P_{\alpha}^{\comp m_j}(w)$ satisfying $$l_{\mathbb{C}\setminus\overline{\Delta_{\alpha}}}(\gamma_j)={\rm dist}_{\mathbb{C}\setminus\overline{\Delta_{\alpha}}}(\tilde{B}_j,P_{\alpha}^{\comp m_j}(w)).$$
	Then $l_{\mathbb{C}\setminus\overline{\Delta_{\alpha}}}(\gamma_j)=O_{\mathfrak{q}_3}(1)$. For all $0\leq s\leq m_j$, we
	let $\gamma_j^{(s)}$ be the component of $P_{\alpha}^{-s}(\gamma_j)$ containing $P_{\alpha}^{\comp(m_j-s)}(w)$ and $B_j^{(s)}$ be the component of $P_{\alpha}^{-s}(\tilde{B}_j)$ whose closure intersects $\gamma_j^{(s)}$. Then by the Schwarz lemma we have
	\begin{equation}
		\label{e20260326a}l_{\mathbb{C}\setminus\overline{\Delta_{\alpha}}}(\gamma_j^{(s)})\leq l_{\mathbb{C}\setminus\overline{\Delta_{\alpha}}}(\gamma_j)=O_{\mathfrak{q}_3}(1)\ {\rm and}\ {\rm diam}_{\mathbb{C}\setminus\overline{\Delta_{\alpha}}}{B_j^{(s)}}\leq{\rm diam}_{\mathbb{C}\setminus\overline{\Delta_{\alpha}}}{\tilde{B}_j}\asymp1.
	\end{equation}
	
	\vspace{0.2cm}
	\noindent{\bf Claim 1:} There exists a positive integer $\chi$ $(=\mathfrak{q}_4)$, independent of $n$ ($\geq2\mathfrak{q}_4$) and $\alpha$, such that ${\rm dist}_{\mathbb{C}\setminus\overline{\Delta_{\alpha}}}(\partial\Lambda_{\alpha}^{n-\chi},\partial\Lambda_{\alpha}^{n})\gg_{\mathfrak{q}_3}1$. As a consequence, for all $1\leq j\leq k$ and $0\leq s\leq m_j$,
	$\gamma_j^{(s)}\cup B_j^{(s)}\subseteq\Lambda_{\alpha}^{n+1-\chi}$ (Due to $P_{\alpha}^{\comp(m_j-s)}(w)\in\Lambda_{\alpha}^{n+1}$ and (\ref{e20260326a})).
	\begin{proof}
		For all $n\geq2\mathfrak{q}_4$ and $b\in{\rm EP}(\mathfrak{D}_{n})$, there exist two adjacent intervals $I, J\in\mathfrak{D}_{n-\mathfrak{q}_4}$ having a common endpoint $a$ such that
		$b\in(I\cup J)\setminus(I_1\cup J_1\cup I_2\cup J_2)$, where $I_1\subseteq I$ (resp. $ J_1\subseteq J$) is an interval of $\mathfrak{D}_{n-\mathfrak{q}_4+6}$ having a common endpoint as $I$ (resp. $J$) which is not $a$; $I_2\subseteq I$ (resp. $J_2\subseteq J$) is an interval of $\mathfrak{D}_{n-\mathfrak{q}_4+6}$ adjacent to $I_1$ (resp. $J_1$). Let $I_3$ and $J_3$ be two adjacent intervals of $\mathfrak{D}_{n}$ having the common endpoint $b$. Let $I_{3,l}$ and $J_{3,r}$ be the two components of $\overline{(I\cup J)\setminus(I_3\cup J_3)}$ such that $I_{3,l}<\overline{(I\cup J)\setminus(I_3\cup J_3)}<J_{3,r}$. Then $I_{3,l}$ (resp. $J_{3,r}$) contains $I_1$ or $J_1$. By Lemma \ref{l7301} $\mathcal{W}_{\overline{\Delta_{\alpha}}}^+(I_{3,l},J_{3,r})\succeq{\mathfrak{q}_3}$ (Due to
		$\mathfrak{q}_4>\mathfrak{q}_3$). 
		Then by Lemma \ref{l825a} $\mathcal{W}:=\mathcal{W}(\mathcal{G}_{\overline{\Delta_{\alpha}}}^+(I_{3,l},J_{3,r}))\succeq{\mathfrak{q}_3}$.
		
		For all $z\in{\rm int}(\partial^{v,0}\mathcal{G}_{\overline{\Delta_{\alpha}}}^+(I_{3,l},J_{3,r}))$ and a hyperbolic closed ball $B_z$ centering at $z$ with radius $r=\log(\frac{e^{4\pi/\mathcal{W}}+1}{e^{4\pi/\mathcal{W}}-1})$ on $\hat{\mathbb{C}}\setminus\overline{\Delta_{\alpha}}$, we have
		\begin{equation}
			\label{e20260530a}{\rm mod}(\hat{\mathbb{C}}\setminus(\overline{\Delta_{\alpha}}\cup B_z))=\frac{1}{2\pi}\log\frac{e^r+1}{e^r-1}=\frac{2}{\mathcal{W}}.
		\end{equation}
		We claim that $B_z\cap\partial^{v,1}\mathcal{G}_{\overline{\Delta_{\alpha}}}^+(I_{3,l},J_{3,r})=\emptyset$. In fact, if not, then every vertical curve of $\mathcal{G}_{\overline{\Delta_{\alpha}}}^+(I_{3,l},J_{3,r})$ passes through $B_z$ and hence ${\rm mod}(\hat{\mathbb{C}}\setminus(\overline{\Delta_{\alpha}}\cup B_z))^{-1}\geq\mathcal{W}$. This contradicts (\ref{e20260530a}).
		
		Observe that $\partial^{v,0}\mathcal{G}_{\overline{\Delta_{\alpha}}}^+(I_{3,l},J_{3,r})=\gamma_a^{n-\mathfrak{q}_4}$ and $\partial^{v,1}\mathcal{G}_{\overline{\Delta_{\alpha}}}^+(I_{3,l},J_{3,r})=\gamma_b^{n}$. 
		Then by the above claim we have
		${\rm dist}_{\hat{\mathbb{C}}\setminus\overline{\Delta_{\alpha}}}(\gamma_b^{n},\gamma_a^{n-\mathfrak{q}_4})\geq r\gg_{\mathfrak{q}_3}1$ and hence
		${\rm dist}_{\hat{\mathbb{C}}\setminus\overline{\Delta_{\alpha}}}(\gamma_b^{n},\partial\Lambda_{\alpha}^{n-\mathfrak{q}_4})\gg_{\mathfrak{q}_3}1$. This implies ${\rm dist}_{\hat{\mathbb{C}}\setminus\overline{\Delta_{\alpha}}}(\partial\Lambda_{\alpha}^{n-\mathfrak{q}_4},\partial\Lambda_{\alpha}^{n})\gg_{\mathfrak{q}_3}1$.
		By Lemma \ref{l25070716} and $n\geq2\mathfrak{q}_4$, ${\rm dist}_{\mathbb{C}\setminus\overline{\Delta_{\alpha}}}(\partial\Lambda_{\alpha}^{n-\mathfrak{q}_4},\partial\Lambda_{\alpha}^{n})\gg_{\mathfrak{q}_3}1$.
		At last, by taking $\chi=\mathfrak{q}_4$, we complete the proof.
	\end{proof}
	\begin{corollary}
		\label{c20260331a}Let $D$ be a closed quasidisk on $\mathbb{C}$ and $\mathcal{R}$ be a rectangle such that $\partial^{h,0}\mathcal{R}\cup\partial^{h,1}\mathcal{R}\subseteq\partial D$, $\mathcal{R}\setminus(\partial^{h,0}\mathcal{R}\cup\partial^{h,1}\mathcal{R})\subseteq{\rm int}(D)$. Then ${\rm dist}_{{\rm int}(D)}(\partial^{v,0}\mathcal{R},\partial^{v,1}\mathcal{R})\geq\log(\frac{e^{4\pi/\mathcal{W}(\mathcal{R})}+1}{e^{4\pi/\mathcal{W}(\mathcal{R})}-1})$.
	\end{corollary}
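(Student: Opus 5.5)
The plan is to transport the statement to the upper half-plane and reduce it to an elementary extremal-length computation there. This is the same mechanism as in the proof of Claim 1, now with ${\rm int}(D)$ in place of $\hat{\mathbb{C}}\setminus\overline{\Delta_{\alpha}}$.

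Let $\mathcal{W}:=\mathcal{W}(\mathcal{R})$ and $r:=\log(\frac{e^{4\pi/\mathcal{W}}+1}{e^{4\pi/\mathcal{W}}-1})$. Since $D$ is a closed quasidisk, fix a conformal map $\phi:{\rm int}(D)\to\mathbb{D}$; it extends to a homeomorphism $\overline{D}\to\overline{\mathbb{D}}$. Conformal invariance of the hyperbolic metric and of extremal width lets us replace $D$ by $\overline{\mathbb{D}}$ and $\mathcal{R}$ by $\phi(\mathcal{R})$. The vertical sides $\partial^{v,0}\mathcal{R}$ and $\partial^{v,1}\mathcal{R}$ are arcs in $D$ whose interiors lie in ${\rm int}(D)$ and whose endpoints lie on $\partial D$.

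Argue by contradiction. Suppose there are points $z\in\partial^{v,0}\mathcal{R}\cap{\rm int}(D)$ and $w\in\partial^{v,1}\mathcal{R}\cap{\rm int}(D)$ with ${\rm dist}_{{\rm int}(D)}(z,w)<r$. (The infimum defining the distance is realized over interior points, so endpoints on $\partial D$ cause no trouble.) Let $B_z$ be the closed hyperbolic ball in ${\rm int}(D)$ of center $z$ and radius $r$; then $w\in B_z$. The region ${\rm int}(D)\setminus B_z$ is an annulus $A$. Normalizing $z$ to $0$ in $\mathbb{D}$, the ball $B_z$ becomes the Euclidean disk of radius $\tanh(r/2)$, which gives
$${\rm mod}(A)=\frac{1}{2\pi}\log\frac{1}{\tanh(r/2)}=\frac{1}{2\pi}\log\frac{e^r+1}{e^r-1}.$$
By the choice of $r$ this equals $\frac{2}{\mathcal{W}}$, exactly as in (\ref{e20260530a}) of the proof of Claim 1.

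Next, every vertical curve of $\mathcal{R}$ meets $B_z$, for the following reason. The points $z$ and $w$ lie on the two opposite vertical sides and are joined by the hyperbolic geodesic segment $[z,w]\subseteq B_z$. Since hyperbolic balls are hyperbolically convex, $B_z$ contains the segment $[z,w]$. Thus $B_z\cap\mathcal{R}$ contains a connected set joining the two vertical sides, more precisely the portion of $[z,w]$ between them. Such a set separates $\partial^{h,0}\mathcal{R}$ from $\partial^{h,1}\mathcal{R}$ inside $\mathcal{R}$, so every curve of $\mathcal{F}^{full}(\mathcal{R})$ crosses it. Making this topological step rigorous is the main obstacle. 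I would handle it by working in $E_x$ via the uniformizing map of $\mathcal{R}$: the image of $[z,w]\cap\mathcal{R}$ contains a component joining $\{0\}\times[0,1]$ to $\{x\}\times[0,1]$, and such a component meets every vertical segment $\{t\}\times[0,1]$.

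Each vertical curve of $\mathcal{R}$ runs from $\partial^{h,0}\mathcal{R}\subseteq\partial D$ to $\partial^{h,1}\mathcal{R}\subseteq\partial D$ and passes through $B_z$. Hence it contains two subcurves, each connecting $\partial D$ to $\partial B_z$ inside $\overline{A}$. So $\mathcal{F}^v(\mathcal{R})$ overflows $\mathcal{F}^v(A)$, and by the overflow law and ${\rm mod}(A)=1/\mathcal{W}(\mathcal{F}^v(A))$ we get $\mathcal{W}\le 1/{\rm mod}(A)=\frac{\mathcal{W}}{2}$. This is a contradiction, since $\mathcal{W}>0$. (One could even get the sharper bound $\mathcal{W}\le\frac{1}{2{\rm mod}(A)}$ using the two disjoint subcurves, but it is not needed.) Therefore ${\rm dist}_{{\rm int}(D)}(\partial^{v,0}\mathcal{R},\partial^{v,1}\mathcal{R})\geq r$, which is the claimed bound.
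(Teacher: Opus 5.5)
Your proposal is correct and is essentially the paper's proof. The paper simply reruns the claim from inside Claim 1: the hyperbolic ball of radius $\log\frac{e^{4\pi/\mathcal{W}(\mathcal{R})}+1}{e^{4\pi/\mathcal{W}(\mathcal{R})}-1}$ about a point of $\partial^{v,0}\mathcal{R}$ leaves an annulus of modulus $2/\mathcal{W}(\mathcal{R})$, and if the ball met $\partial^{v,1}\mathcal{R}$ every vertical curve would pass through it, forcing $\mathcal{W}(\mathcal{R})\le\mathcal{W}(\mathcal{R})/2$. The paper leaves the crossing step implicit, so your justification of it adds something. To make it airtight, take the last point of $[z,w]$ on $\partial^{v,0}\mathcal{R}$ and the next point after it on $\partial^{v,1}\mathcal{R}$; between them the segment avoids both cross-cuts, so it lies in ${\rm int}(\mathcal{R})$, and any path inside the ball would work equally well, so convexity is not needed.
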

	\begin{proof}
		As the claim in the proof of Claim $1$, for all $z\in{\rm int}(\partial^{v,0}\mathcal{R})$ and a hyperbolic closed ball $B_z$ centering at $z$ with radius $\log(\frac{e^{4\pi/\mathcal{W}(\mathcal{R})}+1}{e^{4\pi/\mathcal{W}(\mathcal{R})}-1})$ on ${\rm int}(D)$, we can obtain that $B_z\cap\partial^{v,1}\mathcal{R}=\emptyset$. Thus ${\rm dist}_{{\rm int}(D)}(\partial^{v,0}\mathcal{R},\partial^{v,1}\mathcal{R})\geq\log(\frac{e^{4\pi/\mathcal{W}(\mathcal{R})}+1}{e^{4\pi/\mathcal{W}(\mathcal{R})}-1})$.
	\end{proof}
	
	In the proof of Claim $1$ we have proved that for all $n\geq2\mathfrak{q}_4$,
	$${\rm dist}_{\hat{\mathbb{C}}\setminus\overline{\Delta_{\alpha}}}(\partial\Lambda_{\alpha}^{n-\mathfrak{q}_4},\partial\Lambda_{\alpha}^{n})\gg_{\mathfrak{q}_3}1.$$
	Then for any $w\in\Lambda_{\alpha}^{2\mathfrak{q}_4}$, there exists a hyperbolic ball contained in $\Lambda_{\alpha}^{\mathfrak{q}_4}$ centering at $w$ with radius $\gg_{\mathfrak{q}_3}1$ with respect to the hyperbolic metric on $\hat{\mathbb{C}}\setminus\overline{\Delta_\alpha}$. Thus by Lemma \ref{l25070716} the hyperbolic ball $\mathbb{B}(w)$ centering at $w$ with the radius $\asymp_{\mathfrak{q}_3}1$ with respect to the hyperbolic ball on $\mathbb{C}\setminus\overline{\Delta_\alpha}$ is well defined.
	
	\begin{lemma}
		\label{l8241}For all nonnegative integer $s$ and $w\in\mathbb{C}\setminus\overline{\Delta_{\alpha}}$ with $P_{\alpha}^{\comp s}(w)\in\Lambda_{\alpha}^{2\mathfrak{q}_4}$, we let $\mathbb{B}(P_{\alpha}^{\comp s}(w))$ be an open hyperbolic ball centering at $P_{\alpha}^{\comp s}(w)$ with the hyperbolic radius $r\preceq_{\mathfrak{q}_3}1$ on $\mathbb{C}\setminus\overline{\Delta_{\alpha}}$, and let $\mathbb{B}^{-1}(P_{\alpha}^{\comp s}(w))$ be the component of the preimage of $\mathbb{B}(P_{\alpha}^{\comp s}(w))$ containing $w$ under $P_{\alpha}^{\comp s}$. Then for all $w'\in\mathbb{B}^{-1}(P_{\alpha}^{\comp s}(w))$, we have that 
		\begin{equation}
			\label{e20251210a1}|(P_{\alpha}^{\comp s})'(w)|\cdot\rho_{\mathbb{C}\setminus\overline{\Delta_{\alpha}}}(P_{\alpha}^{\comp s}(w))\asymp_{\mathfrak{q}_3}|(P_{\alpha}^{\comp s})'(w')|\cdot\rho_{\mathbb{C}\setminus\overline{\Delta_{\alpha}}}(P_{\alpha}^{\comp s}(w')).
		\end{equation}
		Moreover, if $E$ {\rm(}resp. $F${\rm)} is a subset of $\mathbb{B}(P_{\alpha}^{\comp s}(w))$ with at least two points and $E^{-1}$ {\rm(}resp. $F^{-1}${\rm)} is the component of the preimage of $E$ {\rm(}resp. $F${\rm)} under $P_{\alpha}^{\comp s}$ contained in $\mathbb{B}^{-1}(P_{\alpha}^{\comp s}(w))$, then 
		\begin{equation}
			\label{e20251210a2}
			\frac{{\rm diam}_{\mathbb{C}\setminus\overline{\Delta_{\alpha}}}(E^{-1})}{{\rm diam}_{\mathbb{C}\setminus\overline{\Delta_{\alpha}}}(F^{-1})}\asymp_{\mathfrak{q}_3}\frac{{\rm diam}_{\mathbb{C}\setminus\overline{\Delta_{\alpha}}}(E)}{{\rm diam}_{\mathbb{C}\setminus\overline{\Delta_{\alpha}}}(F)}.
		\end{equation}
		As a consequence,
		for any rectifiable curve $\gamma$ in $\mathbb{B}(P_{\alpha}^{\comp s}(w))$ with $l_{\mathbb{C}\setminus\overline{\Delta_{\alpha}}}(\gamma)>0$, we let $\gamma^{-1}$ be the component of the preimage of $\gamma$ under $P_{\alpha}^{\comp s}$ contained in $\mathbb{B}^{-1}(P_{\alpha}^{\comp s}(w))$. Then we have
		\begin{equation}
			\label{e20260328a}
			\frac{l_{\mathbb{C}\setminus\overline{\Delta_{\alpha}}}(\gamma^{-1})}{{\rm diam}_{\mathbb{C}\setminus\overline{\Delta_{\alpha}}}(E^{-1})}\asymp_{\mathfrak{q}_3}\frac{l_{\mathbb{C}\setminus\overline{\Delta_{\alpha}}}(\gamma)}{{\rm diam}_{\mathbb{C}\setminus\overline{\Delta_{\alpha}}}(E)}.
		\end{equation}
	\end{lemma}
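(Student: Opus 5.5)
This is a Koebe distortion argument. The plan is to realise the pullback of $\mathbb{B}(P_{\alpha}^{\comp s}(w))$ as the image of a univalent branch of $P_{\alpha}^{-s}$ defined on a definitely larger hyperbolic ball, and then read off (\ref{e20251210a1})--(\ref{e20260328a}) from distortion estimates expressed in hyperbolic terms.

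\emph{Step 1 (univalent branch on a larger ball).} By the discussion following Corollary \ref{c20260331a}, for $P_{\alpha}^{\comp s}(w)\in\Lambda_{\alpha}^{2\mathfrak{q}_4}$ there is a hyperbolic ball $\mathbb{B}'$ of $\hat{\mathbb{C}}\setminus\overline{\Delta_{\alpha}}$ centred at $P_{\alpha}^{\comp s}(w)$, of radius $R\gg_{\mathfrak{q}_3}1$, contained in $\Lambda_{\alpha}^{\mathfrak{q}_4}$. By Lemma \ref{l25070716}, on $\Lambda_\alpha^{\mathfrak{q}_4}$ the metrics of $\mathbb{C}\setminus\overline{\Delta_{\alpha}}$ and $\hat{\mathbb{C}}\setminus\overline{\Delta_{\alpha}}$ are comparable. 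Hence, shrinking $R$ by a bounded factor, the ball $\mathbb{B}'$ is also a hyperbolic ball of $\mathbb{C}\setminus\overline{\Delta_{\alpha}}$ of radius $R'\gg_{\mathfrak{q}_3}1$, and in particular $R'\ge r+1$ can be arranged. Since $\mathbb{B}'$ is simply connected (a hyperbolic disk in a simply connected domain), the critical values of $P_{\alpha}^{\comp s}$ must be excluded from it. These are $P_{\alpha}^{\comp j}(c_0)$, $1\le j\le s$. Because $P_\alpha(c_0)\in\partial\Delta_\alpha$ (Lemma \ref{l26821}), every such critical value lies on $\partial\Delta_{\alpha}$, hence outside $\mathbb{C}\setminus\overline{\Delta_{\alpha}}$. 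So the branch of $P_{\alpha}^{-s}$ sending $P_{\alpha}^{\comp s}(w)$ to $w$ extends univalently to $\mathbb{B}'$, and its image $\mathbb{B}'^{-1}$ lies in $\mathbb{C}\setminus\overline{\Delta_{\alpha}}$, since $P_\alpha^{-1}(\overline{\Delta_\alpha})\supseteq\overline{\Delta_\alpha}$ and preimages of the complement remain in the complement. In particular $\mathbb{B}^{-1}(P_{\alpha}^{\comp s}(w))$ is compactly contained in $\mathbb{B}'^{-1}$, with hyperbolic collar of width at least $R'-r\succeq1$ inside the disk $\mathbb{B}'$.

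\emph{Step 2 (distortion).} Uniformise $\mathbb{B}'$ by the unit disk so that $\mathbb{B}$ corresponds to a disk of radius $\rho<1$ bounded away from $1$ in terms of $\mathfrak{q}_3$. Then apply the Koebe distortion theorem to the inverse branch $g:\mathbb{B}'\to\mathbb{B}'^{-1}$. This gives $|g'(u)|\asymp|g'(u')|$ for $u,u'\in\mathbb{B}$, and $g(\mathbb{B})$ has bounded shape. In addition, I need the density $\rho_{\mathbb{C}\setminus\overline{\Delta_\alpha}}$ to be roughly constant, up to a factor depending on $\mathfrak{q}_3$, on $\mathbb{B}$ and on $g(\mathbb{B})$.
\begin{itemize}
\item On $\mathbb{B}$ this follows from the bounded hyperbolic radius together with $\rho\asymp1/{\rm dist}(\cdot,\Delta_\alpha)$ (Lemma \ref{l25070716}).
\item On $g(\mathbb{B})$, the Schwarz lemma gives hyperbolic diameter $\le 2r$. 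For simply connected hyperbolic domains $\rho\asymp 1/{\rm dist}(\cdot,\partial)$, and the boundary here is $\partial\Delta_{\alpha}$, so the density varies by a bounded factor on sets of bounded hyperbolic diameter.
\end{itemize}
Combining these yields (\ref{e20251210a1}), because $|(P_\alpha^{\comp s})'(w)|=1/|g'(P_\alpha^{\comp s}(w))|$.

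\emph{Step 3 (consequences).} Hyperbolic diameters and lengths inside $\mathbb{B}$ and inside $g(\mathbb{B})$ are, up to constants depending on $\mathfrak{q}_3$, Euclidean diameters and lengths multiplied by the near-constant densities of Step 2. Euclidean ratios are preserved up to constants by $g$ by Koebe. The densities cancel in the ratios, which gives (\ref{e20251210a2}) and (\ref{e20260328a}).

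The main obstacle is Step 1. The point needing care is that the hyperbolic ball coming from Claim 1 is taken in $\hat{\mathbb{C}}\setminus\overline{\Delta_\alpha}$, while the statement is phrased for $\mathbb{C}\setminus\overline{\Delta_\alpha}$. The fix is to use the comparison of Lemma \ref{l25070716}, which is where the restriction to $\Lambda_\alpha^{2\mathfrak{q}_4}$ is needed. One must also make sure that $\infty$ is not in the ball, so that the ball avoids the postcritical set $\partial\Delta_\alpha\cup\{\infty\}$.
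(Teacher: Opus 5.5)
The proposal proves (\ref{e20251210a1}) correctly, but its route to (\ref{e20251210a2}) has a gap in the density bound on the preimage side.

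\textbf{What works.} Your handling of (\ref{e20251210a1}) is essentially the paper's argument. You take a univalent branch $g$ of $P_\alpha^{-s}$ on a ball with a definite collar around $\mathbb{B}(P_\alpha^{\comp s}(w))$, apply Koebe to $g'$, and use comparability of $\rho_{\mathbb{C}\setminus\overline{\Delta_\alpha}}$ on $\mathbb{B}(P_\alpha^{\comp s}(w))\subseteq\Lambda_\alpha^{\mathfrak{q}_4}$ via Lemma \ref{l25070716}. Two small corrections to Step 1. No ``shrinking'' is needed: $\rho_{\hat{\mathbb{C}}\setminus\overline{\Delta_\alpha}}\le\rho_{\mathbb{C}\setminus\overline{\Delta_\alpha}}$ already places $\mathbb{B}(P_\alpha^{\comp s}(w))$ inside the $\hat{\mathbb{C}}\setminus\overline{\Delta_\alpha}$-ball of the same radius. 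And the critical values lie on $\partial\Delta_\alpha$ because $c_0\in\partial\Delta_\alpha$, not because of Lemma \ref{l26821}.

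\textbf{The gap.} It is in the second bullet of Step 2, on which all of Step 3 rests. You assert that $\rho_{\mathbb{C}\setminus\overline{\Delta_\alpha}}$ is nearly constant on $g(\mathbb{B})$ because ``for simply connected domains $\rho\asymp 1/{\rm dist}(\cdot,\partial)$ and the boundary here is $\partial\Delta_\alpha$''. But $\mathbb{C}\setminus\overline{\Delta_\alpha}$ is not simply connected: it has a cusp at $\infty$. There $\rho_{\mathbb{C}\setminus\overline{\Delta_\alpha}}(z)\approx 1/(|z|\log|z|)\ll 1/{\rm dist}(z,\partial\Delta_\alpha)$, and near a cusp the density is not comparable at points of bounded hyperbolic distance. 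This is exactly why the paper needs Lemma \ref{l25070716}, and that lemma is only available on $\Lambda_\alpha^{\mathfrak{q}_4}$. Since $w$ is an arbitrary point with $P_\alpha^{\comp s}(w)\in\Lambda_\alpha^{2\mathfrak{q}_4}$, the set $g(\mathbb{B})$ need not lie there. You raised the $\mathbb{C}$ versus $\hat{\mathbb{C}}$ issue on the image side, where Lemma \ref{l25070716} settles it, but it is on the preimage side that it actually matters.

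\textbf{How to repair it.} The claim is true but needs a global argument.
By the proof of Lemma \ref{l25070716}, $\Lambda_\alpha^{\mathfrak{q}_4}$ avoids the hyperbolic unit ball of $\hat{\mathbb{C}}\setminus\overline{\Delta_\alpha}$ about $\infty$, so it lies in a disk of universal radius.
Since $|z|\ge 3$ forces $|P_\alpha(z)|\ge 2|z|$, it follows that $g(\mathbb{B})\subseteq\{|z|\le R_0\}$ with $R_0$ universal.
Because $\overline{\Delta_\alpha}$ is a continuum of diameter $\ge|c_0|=\frac{1}{2}$, its capacity is bounded below. Hence the Riemann map $\hat{\mathbb{C}}\setminus\overline{\Delta_\alpha}\to\mathbb{D}$, $\infty\mapsto 0$, sends $\{|z|\le R_0\}$ into $\{|u|\ge c\}$ with $c$ universal.
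On that set $\rho_{\mathbb{C}\setminus\overline{\Delta_\alpha}}\asymp\rho_{\hat{\mathbb{C}}\setminus\overline{\Delta_\alpha}}\asymp 1/{\rm dist}(\cdot,\partial\Delta_\alpha)$.
Separately, the bound $2r$ on the hyperbolic diameter of $g(\mathbb{B})$ should be justified properly: $P_\alpha^{\comp s}$ is a local isometry from the smaller domain $\mathbb{C}\setminus P_\alpha^{-s}(\overline{\Delta_\alpha})$, so $g$ contracts $\rho_{\mathbb{C}\setminus\overline{\Delta_\alpha}}$.

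\textbf{Comparison with the paper.} The paper avoids the issue with an intrinsic argument. The covering $P_\alpha^{\comp s}:\mathbb{C}\setminus\overline{P_\alpha^{-s}(\Delta_\alpha)}\to\mathbb{C}\setminus\overline{\Delta_\alpha}$ transports hyperbolic diameters exactly. For sets inside $\mathbb{B}^{-1}(P_\alpha^{\comp s}(w))$, the ratio of hyperbolic diameters in any domain $U\supseteq W=\tilde{\mathbb{B}}^{-1}(P_\alpha^{\comp s}(w))$ is comparable to the ratio in $W$; this uses a collar of definite modulus plus Koebe for $\psi_U\comp p_W$. Applying this with $U=\mathbb{C}\setminus\overline{P_\alpha^{-s}(\Delta_\alpha)}$ and with $U=\mathbb{C}\setminus\overline{\Delta_\alpha}$ gives (\ref{e20251210a2}) without any information about where $w$ lies. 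Your Euclidean route, once repaired, needs the extra global input above.
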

	\begin{proof}
		Let $\tilde{\mathbb{B}}(P_{\alpha}^{\comp s}(w))$ be the hyperbolic ball  centering at $P_{\alpha}^{\comp s}(w)$ with the radius $2r$ on $\mathbb{C}\setminus\overline{\Delta_{\alpha}}$.
		By pulling $\tilde{\mathbb{B}}(P_{\alpha}^{\comp s}(w))$ back to the universal covering space $\mathbb{D}$ under a holomorphic covering map $p_1:\mathbb{D}\to\mathbb{C}\setminus\overline{\Delta_{\alpha}}$ such that $0$ is a preimage of $P_{\alpha}^{\comp s}(w)$, it is clear that 
		${\rm mod}(\tilde{\mathbb{B}}(P_{\alpha}^{\comp s}(w))\setminus\overline{\mathbb{B}(P_{\alpha}^{\comp s}(w))})\asymp_{\mathfrak{q}_3}1$.
		Let $\tilde{\phi}$ be the branch of $P_{\alpha}^{-s}$ mapping $P_{\alpha}^{\comp s}(w)$ to $w$ on $\tilde{\mathbb{B}}(P_{\alpha}^{\comp s}(w))$. The distortion theorem gives that for all $z,z'\in\mathbb{B}(P_{\alpha}^{\comp s}(w))$, $|\tilde{\phi}'(z)|\asymp_{\mathfrak{q}_3}|\tilde{\phi}'(z')|$,
		and hence 
		\begin{equation}
			\label{e8251}|(P_{\alpha}^{\comp s})'(\tilde{\phi}(z))|\asymp_{\mathfrak{q}_3}|(P_{\alpha}^{\comp s})'(\tilde{\phi}(z'))|.
		\end{equation}
		Let $\psi$ be a branch of $p_1^{-1}$ on $\tilde{\mathbb{B}}(P_{\alpha}^{\comp s}(w))$ such that $\psi(P_{\alpha}^{\comp s}(w))=0$.
		The distortion theorem gives that for all $z,z'\in\mathbb{B}(P_{\alpha}^{\comp s}(w))$, $|\psi'(z)|\asymp_{\mathfrak{q}_3}|\psi'(z')|$. Observe that $\psi(\mathbb{B}(P_{\alpha}^{\comp s}(w)))$ is a hyperbolic ball centering at $0$ with the radius $r\preceq_{\mathfrak{q}_3}1$ on $\mathbb{D}$. Then a simple computation gives that for all $z,z'\in\mathbb{B}(P_{\alpha}^{\comp s}(w))$, 
		$\rho_{\mathbb{D}}(\psi(z))\asymp_{\mathfrak{q}_3}\rho_{\mathbb{D}}(\psi(z'))$.
		Thus for all $z,z'\in\mathbb{B}(P_{\alpha}^{\comp s}(w))$,
		\begin{equation}
			\label{e20251208b}\rho_{\mathbb{C}\setminus\overline{\Delta_{\alpha}}}(z)=|\psi'(z)|\cdot\rho_{\mathbb{D}}(\psi(z))\asymp_{\mathfrak{q}_3}|\psi'(z')|\cdot\rho_{\mathbb{D}}(\psi(z'))=\rho_{\mathbb{C}\setminus\overline{\Delta_{\alpha}}}(z').
		\end{equation}
		Combining (\ref{e8251}) and (\ref{e20251208b}), we have that for all $w'\in\mathbb{B}^{-1}(P_{\alpha}^{\comp s}(w))$,
		$$|(P_{\alpha}^{\comp s})'(w)|\cdot\rho_{\mathbb{C}\setminus\overline{\Delta_{\alpha}}}(P_{\alpha}^{\comp s}(w))\asymp_{\mathfrak{q}_3}|(P_{\alpha}^{\comp s})'(w')|\cdot\rho_{\mathbb{C}\setminus\overline{\Delta_{\alpha}}}(P_{\alpha}^{\comp s}(w')).$$
		
		Since $P_{\alpha}^{\comp s}$ is a holomorphic covering map from $\mathbb{C}\setminus\overline{P_{\alpha}^{-s}(\Delta_{\alpha}})$ to $\mathbb{C}\setminus\overline{
			\Delta_{\alpha}}$, we have that
		\begin{equation}
			\label{e20251211a1}{\rm diam}_{\mathbb{C}\setminus\overline{\Delta_{\alpha}}}(E)={\rm diam}_{\mathbb{C}\setminus\overline{P_{\alpha}^{-s}(\Delta_{\alpha}})}(E^{-1})\ {\rm and}\
			{\rm diam}_{\mathbb{C}\setminus\overline{\Delta_{\alpha}}}(F)={\rm diam}_{\mathbb{C}\setminus\overline{P_{\alpha}^{-s}(\Delta_{\alpha}})}(F^{-1}).
		\end{equation}
		Let $\tilde{\mathbb{B}}^{-1}(P_{\alpha}^{\comp s}(w))$ be the component of the preimage of $\tilde{\mathbb{B}}(P_{\alpha}^{\comp s}(w))$ containing $w$ under $P_{\alpha}^{\comp s}$.
		Then it follows from ${\rm mod}(\tilde{\mathbb{B}}(P_{\alpha}^{\comp s}(w))\setminus\overline{\mathbb{B}(P_{\alpha}^{\comp s}(w))})\asymp_{\mathfrak{q}_3}1$ that 
		$${\rm mod}(\tilde{\mathbb{B}}^{-1}(P_{\alpha}^{\comp s}(w))\setminus\overline{\mathbb{B}^{-1}(P_{\alpha}^{\comp s}(w))})\asymp_{\mathfrak{q}_3}1.$$
		
		For any hyperbolic region $U\subseteq\mathbb{C}$ with $\tilde{\mathbb{B}}^{-1}(P_{\alpha}^{\comp s}(w))\subseteq U$, we let  $p_U:\mathbb{D}\to U$ be a holomorphic covering map such that $0$ is a preimage of $w$ and let $\psi_U$ be the branch of $p_U^{-1}$ on $\tilde{\mathbb{B}}^{-1}(P_{\alpha}^{\comp s}(w))$ such that $\psi_U(w)=0$.
		Since
		$\psi_U(\tilde{\mathbb{B}}^{-1}(P_{\alpha}^{\comp s}(w))\setminus\overline{\mathbb{B}^{-1}(P_{\alpha}^{\comp s}(w))}))\subseteq\mathbb{D}\setminus\psi_U(\overline{\mathbb{B}^{-1}(P_{\alpha}^{\comp s}(w))})$, we have
		\begin{equation}
			\label{e20260316c}{\rm mod}(\mathbb{D}\setminus\psi_U(\overline{\mathbb{B}^{-1}(P_{\alpha}^{\comp s}(w))}))\geq{\rm mod}(\tilde{\mathbb{B}}^{-1}(P_{\alpha}^{\comp s}(w))\setminus\overline{\mathbb{B}^{-1}(P_{\alpha}^{\comp s}(w))})\asymp_{\mathfrak{q}_3}1.
		\end{equation}
		Then for any two subsets $X,Y\subseteq\overline{\mathbb{B}^{-1}(P_{\alpha}^{\comp s}(w))}$ both of whose diameters are $>0$, 
		we have that
		\begin{equation}
			\label{e20260316d}
			\frac{{\rm diam}(\psi_U(X))}{{\rm diam}(\psi_U(Y))}\asymp_{\mathfrak{q}_3}\frac{{\rm diam}_{\mathbb{D}}(\psi_U(X))}{{\rm diam}_{\mathbb{D}}(\psi_U(Y))}.
		\end{equation}
		We write $W:=\tilde{\mathbb{B}}^{-1}(P_{\alpha}^{\comp s}(w))$. Observe that $p_{W}$ is a conformal map from $\mathbb{D}$ to $W$.
		Then we consider the composition $\psi_U\comp p_{W}: \mathbb{D}\to\mathbb{D}$, that maps $\psi_W(X)$ and $\psi_W(Y)$ to $\psi_U(X)$ and $\psi_U(Y)$, respectively.
		Observe that $\psi_W(X)\cup\psi_W(Y)\subseteq\psi_W(\overline{\mathbb{B}^{-1}(P_{\alpha}^{\comp s}(w))})$
		and ${\rm mod}(\mathbb{D}\setminus\psi_W(\overline{\mathbb{B}^{-1}(P_{\alpha}^{\comp s}(w))}))\succeq_{\mathfrak{q}_3}1$ (Due to (\ref{e20260316c})).
		Then the distortion theorem gives
		\begin{equation}
			\label{e20260316e}
			\frac{{\rm diam}(\psi_W(X))}{{\rm diam}(\psi_W(Y))}\asymp_{\mathfrak{q}_3}\frac{{\rm diam}(\psi_U(X))}{{\rm diam}(\psi_U(Y))}.
		\end{equation}
		By (\ref{e20260316d}) and (\ref{e20260316e}) we have
		$$\frac{{\rm diam}_{\mathbb{D}}(\psi_U(X))}{{\rm diam}_{\mathbb{D}}(\psi_U(Y))}\asymp_{\mathfrak{q}_3}\frac{{\rm diam}_{\mathbb{D}}(\psi_W(X))}{{\rm diam}_{\mathbb{D}}(\psi_W(Y))}$$
		and hence
		$$\frac{{\rm diam}_{U}(X)}{{\rm diam}_{U}(Y)}\asymp_{\mathfrak{q}_3}\frac{{\rm diam}_{W}(X)}{{\rm diam}_{W}(Y)}.$$
		Then by taking $X=E^{-1}$, $Y=F^{-1}$ and $U=\mathbb{C}\setminus\overline{P_{\alpha}^{-s}(\Delta_{\alpha}})$ or $\mathbb{C}\setminus\overline{\Delta_{\alpha}}$,
		we have that
		\begin{equation}
			\label{e20251211a2}\frac{{\rm diam}_{\mathbb{C}\setminus\overline{P_{\alpha}^{-s}(\Delta_{\alpha}})}(E^{-1})}{{\rm diam}_{\mathbb{C}\setminus\overline{P_{\alpha}^{-s}(\Delta_{\alpha}})}(F^{-1})}\asymp_{\mathfrak{q}_3}\frac{{\rm diam}_{\mathbb{C}\setminus\overline{\Delta_{\alpha}}}(E^{-1})}{{\rm diam}_{\mathbb{C}\setminus\overline{\Delta_{\alpha}}}(F^{-1})}.
		\end{equation}
		By (\ref{e20251211a1}) and (\ref{e20251211a2}),
		we have
		$$\frac{{\rm diam}_{\mathbb{C}\setminus\overline{\Delta_{\alpha}}}(E^{-1})}{{\rm diam}_{\mathbb{C}\setminus\overline{\Delta_{\alpha}}}(F^{-1})}\asymp_{\mathfrak{q}_3}\frac{{\rm diam}_{\mathbb{C}\setminus\overline{\Delta_{\alpha}}}(E)}{{\rm diam}_{\mathbb{C}\setminus\overline{\Delta_{\alpha}}}(F)}.$$
		
		We divide $\gamma$ into some sufficiently short arcs $\gamma_j$, $j=1,2,\cdots,n$ such that 
		\begin{itemize}
			\item $l_{\mathbb{C}\setminus\overline{\Delta_{\alpha}}}(\gamma)\asymp\sum_{j=1}^n{\rm diam}_{\mathbb{C}\setminus\overline{\Delta_{\alpha}}}(\gamma_j)$,
			\item $l_{\mathbb{C}\setminus\overline{\Delta_{\alpha}}}(\gamma^{-1})\asymp\sum_{j=1}^n{\rm diam}_{\mathbb{C}\setminus\overline{\Delta_{\alpha}}}(\gamma_j^{-1})$, where each $\gamma_j^{-1}$ is the component of the preimage of $\gamma_j$ under $P_{\alpha}^{\comp s}$ contained in $\mathbb{B}^{-1}(P_{\alpha}^{\comp s}(w))$.
		\end{itemize}
		Then (\ref{e20251210a2}) gives that for all $1\leq j\leq n$, 
		$$
		\frac{{\rm diam}_{\mathbb{C}\setminus\overline{\Delta_{\alpha}}}(\gamma_j^{-1})}{{\rm diam}_{\mathbb{C}\setminus\overline{\Delta_{\alpha}}}(E^{-1})}\asymp_{\mathfrak{q}_3}\frac{{\rm diam}_{\mathbb{C}\setminus\overline{\Delta_{\alpha}}}(\gamma_j)}{{\rm diam}_{\mathbb{C}\setminus\overline{\Delta_{\alpha}}}(E)}.
		$$
		Thus
		$$\frac{l_{\mathbb{C}\setminus\overline{\Delta_{\alpha}}}(\gamma^{-1})}{{\rm diam}_{\mathbb{C}\setminus\overline{\Delta_{\alpha}}}(E^{-1})}
		\asymp\frac{\sum_{j=1}^n{\rm diam}_{\mathbb{C}\setminus\overline{\Delta_{\alpha}}}(\gamma_j^{-1})}{{\rm diam}_{\mathbb{C}\setminus\overline{\Delta_{\alpha}}}(E^{-1})}\asymp_{\mathfrak{q}_3}\frac{\sum_{j=1}^n{\rm diam}_{\mathbb{C}\setminus\overline{\Delta_{\alpha}}}(\gamma_j)}{{\rm diam}_{\mathbb{C}\setminus\overline{\Delta_{\alpha}}}(E)}\asymp\frac{l_{\mathbb{C}\setminus\overline{\Delta_{\alpha}}}(\gamma)}{{\rm diam}_{\mathbb{C}\setminus\overline{\Delta_{\alpha}}}(E)}.
		$$
	\end{proof}
	For all $1\leq j\leq k$, by (\ref{e20260327d1}) and (\ref{e20260326a}), ${\rm diam}_{\mathbb{C}\setminus\overline{\Delta_{\alpha}}}{\tilde{B}_{j}}\asymp1\ {\rm and}\ {\rm dist}_{\mathbb{C}\setminus\overline{\Delta_{\alpha}}}(\tilde{B}_{j},P_{\alpha}^{\comp m_{j}}(w))=O_{\mathfrak{q}_3}(1)$, 
	$l_{\mathbb{C}\setminus\overline{\Delta_{\alpha}}}(\gamma_j)=O_{\mathfrak{q}_3}(1)$.
	Then there exists a hyperbolic ball $\hat{B}_j$ on $\mathbb{C}\setminus\overline{\Delta_{\alpha}}$ centering at $P_{\alpha}^{\comp m_{j}}(w)$ with radius $\preceq_{\mathfrak{q}_3}1$ containing $\tilde{B}_{j}\cup\gamma_j$. For all $0\leq s\leq m_j$, we let $\hat{B}_j^{(s)}$ be the component of $\hat{B}_{j}$ containing $P_{\alpha}^{\comp(m_{j}-s)}(w)$ under $P_{\alpha}^{\comp s}$.
	Then $B_{j}^{(s)}\cup\gamma_j^{(s)}\subseteq\hat{B}_j^{(s)}$. Observe $P_{\alpha}^{\comp m_j}(w)\in\Lambda_{\alpha}^{2\mathfrak{q}_4+1}$. Thus by (\ref{e20251210a1}) of Lemma \ref{l8241} we have that for all $w'\in\hat{B}_j^{(s)}$, 
	\begin{equation}
		\label{e20260528a}|(P_{\alpha}^{\comp s})'(w')|\cdot\rho_{\mathbb{C}\setminus\overline{\Delta_{\alpha}}}(P_{\alpha}^{\comp(m_j-s)}(w'))\asymp_{\mathfrak{q}_3}|(P_{\alpha}^{\comp s})'(P_{\alpha}^{\comp(m_j-s)}(w))|\cdot\rho_{\mathbb{C}\setminus\overline{\Delta_{\alpha}}}(P_{\alpha}^{\comp m_{j}}(w)).
	\end{equation}
	Together with ${\rm diam}_{\mathbb{C}\setminus\overline{\Delta_{\alpha}}}(\hat{B}_{j}^{(s)})\preceq_{\mathfrak{q}_3}1$ (Due to the Schwarz lemma), $P_{\alpha}^{\comp(m_{j}-s)}(w)\in\hat{B}_{j}^{(s)}$ and $P_{\alpha}^{\comp(m_j-s)}(w)\in\Lambda_{\alpha}^{2\mathfrak{q}_4+1}$, (\ref{e20251210a1}) of Lemma \ref{l8241} gives that
	for all $w'\in\hat{B}_{j}^{(s)}$, 
	\begin{equation}
		\label{e20260528d}\rho_{\mathbb{C}\setminus\overline{\Delta_{\alpha}}}(w')\asymp_{\mathfrak{q}_3}\rho_{\mathbb{C}\setminus\overline{\Delta_{\alpha}}}(P_{\alpha}^{\comp(m_j-s)}(w)).
	\end{equation}
	
	\vspace{0.2cm}
	\noindent{\bf Claim 2:}
	There exists a constant $0<\xi<1$ determined by $\mathfrak{q}_3$, independent of $\alpha$, such that
	\begin{itemize}
		\item ${\rm diam}_{\mathbb{C}\setminus\overline{\Delta_{\alpha}}}(B_{j+t}^{(m_{j+t})})=O_{\mathfrak{q}_3}(\xi^t)\cdot{\rm diam}_{\mathbb{C}\setminus\overline{\Delta_{\alpha}}}(B_j^{(m_j)})$
		for all $1\leq j<j+t\leq k$,
		\item ${\rm diam}_{\mathbb{C}\setminus\overline{\Delta_{\alpha}}}(B_j^{(m_j)})=O_{\mathfrak{q}_3}(\xi^j)$ for all $1\leq j\leq k$,
		\item $l_{\mathbb{C}\setminus\overline{\Delta_{\alpha}}}(\gamma_{j}^{(m_j)})=O_{\mathfrak{q}_3}({\rm diam}_{\mathbb{C}\setminus\overline{\Delta_{\alpha}}}(B_j^{(m_j)}))$ for all $1\leq j\leq k$,
		\item for all $1\leq j\leq k$, $1\leq t\leq j$ and $m_{t-1}< s\leq m_t$ (Set $m_0=0$),
		$${\rm diam}_{\mathbb{C}\setminus\overline{\Delta_{\alpha}}}(B_j^{(m_j-s)})=O_{\mathfrak{q}_3}(\xi^{j-t})\
		{\rm and}\
		l_{\mathbb{C}\setminus\overline{\Delta_{\alpha}}}(\gamma_j^{(m_j-s)})=O_{\mathfrak{q}_3}(\xi^{j-t}).$$
	\end{itemize}
	
	\begin{proof}
		By Lemma
		\ref{l20257193}
		we have that for all $n\geq\mathfrak{q}_4$, $\tilde{U}_0^{-1}\cap\left(\tilde{\Delta}_{\alpha}^{-1}\cup(\Lambda_{\alpha}^{n+\mathfrak{q}_2}\setminus\Omega_{c_0}^n)\right)=\{c_0\}$, and hence
		for all $1\leq j\leq k$, 
		\begin{equation*}
			U_0\cap\left(\Lambda_{\alpha}^{n_j+\mathfrak{q}_3}\setminus\Omega_{c_0}^{n_j+\mathfrak{q}_3-\mathfrak{q}_2}\right)=\emptyset. 
		\end{equation*}
		Since
		$\gamma_{c_0}^{n_j}\setminus\Lambda_{\alpha}^{n_j+\mathfrak{q}_3}=\gamma_{c_0}^{n_j}\setminus\left(\Lambda_{\alpha}^{n_j+\mathfrak{q}_3}\setminus\Omega_{c_0}^{n_j+\mathfrak{q}_3-\mathfrak{q}_2}\right)$, we have
		that for all $1\leq j\leq k$, 
		\begin{align*}
			U_0\cap\gamma_{c_0}^{n_j}
			&=U_0\setminus\left(\Lambda_{\alpha}^{n_j+\mathfrak{q}_3}\setminus\Omega_{c_0}^{n_j+\mathfrak{q}_3-\mathfrak{q}_2}\right)\cap\gamma_{c_0}^{n_j}\\
			&=U_0\cap\left(\gamma_{c_0}^{n_j}\setminus\left(\Lambda_{\alpha}^{n_j+\mathfrak{q}_3}\setminus\Omega_{c_0}^{n_j+\mathfrak{q}_3-\mathfrak{q}_2}\right)\right)\\
			&=U_0\cap\left(\gamma_{c_0}^{n_j}\setminus\Lambda_{\alpha}^{n_j+\mathfrak{q}_3}\right)\\
			&\subseteq U_0\cap\left(\tilde{\Omega}_{c_0}^{n_j}\setminus\Lambda_\alpha^{n_j+\mathfrak{q}_3}\right).
		\end{align*}
		By (\ref{e20251126a})
		for all $1\leq j\leq k$, $U_0\cap\gamma_{c_0}^{n_j}\not=\emptyset$.
		Thus for all $1\leq j\leq k$, $U_0\cap\left(\tilde{\Omega}_{c_0}^{n_j}\setminus\Lambda_\alpha^{n_j+\mathfrak{q}_3}\right)\not=\emptyset$. Observe that for all $1\leq j\leq k$,
		$P_{\alpha}^{\comp m_j}(w)\in\tilde{\Omega}_{c_0}^{n_j}\setminus\Lambda^{n_j+\mathfrak{q}_3}$. Then Lemma \ref{l20257193a} gives that for all $1\leq j\leq k$,
		$${\rm dist}_{\mathbb{C}\setminus\overline{\Delta_{\alpha}}}(U_0,P_{\alpha}^{\comp m_j}(w))=O_{\mathfrak{q}_3}(1).$$
		By [Proposition $4.4.2$, \cite{McM96}], for all $1\leq j\leq k-1$,
		\begin{equation}
			\label{e250707}\rho_{\mathbb{C}\setminus\overline{\Delta_{\alpha}}}(P_{\alpha}^{\comp m_j}(w))\leq\xi\cdot|P_{\alpha}'(P_{\alpha}^{\comp m_j}(w))|\cdot\rho_{\mathbb{C}\setminus\overline{\Delta_{\alpha}}}(P_{\alpha}^{\comp(m_j+1)}(w)),
		\end{equation}
		where $0<\xi<1$ is a constant determined by $\mathfrak{q}_3$ and independent of $\alpha$.
		By applying the above inequality and the Schwarz lemma repeatedly, we have that for all $1\leq j<j+t\leq k$, 
		\begin{equation}
			\label{e20251127a}\rho_{\mathbb{C}\setminus\overline{\Delta_{\alpha}}}(P_{\alpha}^{\comp m_j}(w))\leq\xi^{t}\cdot|(P_{\alpha}^{\comp(m_{j+t}-m_{j})})'(P_{\alpha}^{\comp m_j}(w))|\cdot\rho_{\mathbb{C}\setminus\overline{\Delta_{\alpha}}}(P_{\alpha}^{\comp m_{j+t}}(w)).
		\end{equation}
		By (\ref{e20260528a}), (\ref{e20260528d}) and (\ref{e20251127a}), we have that for all $w'\in\hat{B}_{j+t}^{(m_{j+t}-m_j)}\supseteq B_{j+t}^{(m_{j+t}-m_j)}$,
		$$\rho_{\mathbb{C}\setminus\overline{\Delta_{\alpha}}}(w')\leq O_{\mathfrak{q}_3}(\xi^{t})\cdot|(P_{\alpha}^{\comp(m_{j+t}-m_{j})})'(w')|\cdot\rho_{\mathbb{C}\setminus\overline{\Delta_{\alpha}}}(P_{\alpha}^{\comp(m_{j+t}-m_j)}(w'))$$
		and hence
		$${\rm diam}_{\mathbb{C}\setminus\overline{\Delta_{\alpha}}}(B_{j+t}^{(m_{j+t}-m_j)})\leq O_{\mathfrak{q}_3}(\xi^t)\cdot{\rm diam}_{\mathbb{C}\setminus\overline{\Delta_{\alpha}}}(\tilde{B}_{j+t})=O_{\mathfrak{q}_3}(\xi^t).$$
		By combining ${\rm diam}_{\mathbb{C}\setminus\overline{\Delta_{\alpha}}}(\tilde{B}_{j})\asymp1$, it follows that
		\begin{equation}
			\label{e20251210e1}{\rm diam}_{\mathbb{C}\setminus\overline{\Delta_{\alpha}}}(B_{j+t}^{(m_{j+t}-m_j)})=O_{\mathfrak{q}_3}(\xi^t)\cdot {\rm diam}_{\mathbb{C}\setminus\overline{\Delta_{\alpha}}}(\tilde{B}_{j}).
		\end{equation}
		By ${\rm diam}_{\mathbb{C}\setminus\overline{\Delta_{\alpha}}}(B_{j+t}^{(m_{j+t}-m_j)})=O_{\mathfrak{q}_3}(\xi^t)$, ${\rm diam}_{\mathbb{C}\setminus\overline{\Delta_{\alpha}}}(\tilde{B}_{j})\asymp1$, $l_{\mathbb{C}\setminus\overline{\Delta_{\alpha}}}(\gamma_{j})=O_{\mathfrak{q}_3}(1)$ and 
		$l_{\mathbb{C}\setminus\overline{\Delta_{\alpha}}}(\gamma_{j+t}^{(m_{j+t}-m_j)})\preceq_{\mathfrak{q}_3}1$ (Due to the Schwarz lemma), we have that for all $w'\in B_{j+t}^{(m_{j+t}-m_j)}$ and $w''\in \tilde{B}_{j}$, 
		$${\rm dist}_{\mathbb{C}\setminus\overline{\Delta_{\alpha}}}(w',w'')\preceq_{\mathfrak{q}_3}1.$$
		We choose a hyperbolic ball centering at $P_{\alpha}^{\comp m_j}(w)$ with radius $\preceq_{\mathfrak{q}_3}1$ containing $B_{j+t}^{(m_{j+t}-m_j)}\cup\tilde{B}_{j}$.
		Pulling back the ball by $P_{\alpha}^{\comp m_j}$ and then applying (\ref{e20251210a2}) of Lemma \ref{l8241} to $B_{j+t}^{(m_{j+t}-m_j)}$ and $\tilde{B}_{j}$, by (\ref{e20251210e1}) we have 
		\begin{equation*}
			\label{e2507071}{\rm diam}_{\mathbb{C}\setminus\overline{\Delta_{\alpha}}}(B_{j+t}^{(m_{j+t})})= O_{\mathfrak{q}_3}(\xi^t)\cdot{\rm diam}_{\mathbb{C}\setminus\overline{\Delta_{\alpha}}}(B_j^{(m_j)}).
		\end{equation*}
		By the Schwarz lemma ${\rm diam}_{\mathbb{C}\setminus\overline{\Delta_{\alpha}}}(B_1^{(m_1)})\preceq1$.
		Thus
		\begin{equation*}
			\label{e2507072}{\rm diam}_{\mathbb{C}\setminus\overline{\Delta_{\alpha}}}(B_{j}^{(m_{j})})\leq O_{\mathfrak{q}_3}(\xi^{j-1})\cdot{\rm diam}_{\mathbb{C}\setminus\overline{\Delta_{\alpha}}}(B_1^{(m_1)})= O_{\mathfrak{q}_3}(\xi^{j}).
		\end{equation*}
		Observe that $l_{\mathbb{C}\setminus\overline{\Delta_{\alpha}}}(\gamma_{j})=O_{\mathfrak{q}_3}(1)=O_{\mathfrak{q}_3}({\rm diam}_{\mathbb{C}\setminus\overline{\Delta_{\alpha}}}(\tilde{B}_j))$ and ${\rm dist}_{\mathbb{C}\setminus\overline{\Delta_{\alpha}}}(w',w'')\preceq_{\mathfrak{q}_3}1$ for all $w'\in\gamma_{j}$ and $w''\in\tilde{B}_j$. Then pulling back a hyperbolic ball with radius $\preceq_{\mathfrak{q}_3}1$ containing $\gamma_{j}\cup\tilde{B}_j$ by $P_{\alpha}^{\comp m_j}$,
		by (\ref{e20260328a}) of Lemma \ref{l8241} we have
		\begin{equation*}
			\label{e2507073}
			l_{\mathbb{C}\setminus\overline{\Delta_{\alpha}}}(\gamma_{j}^{(m_j)})=O_{\mathfrak{q}_3}({\rm diam}_{\mathbb{C}\setminus\overline{\Delta_{\alpha}}}(B_j^{(m_j)}))
		\end{equation*}
		for all $1\leq j\leq k$. By (\ref{e250707}) and the Schwarz lemma, we have that
		for all $1\leq j\leq k$, $1\leq t\leq j$ and $m_{t-1}< s\leq m_t$,  
		$$\rho_{\mathbb{C}\setminus\overline{\Delta_{\alpha}}}(P_{\alpha}^{\comp s}(w))\leq\xi^{j-t}\cdot|(P_{\alpha}^{\comp(m_{j}-s)})'(P_{\alpha}^{\comp s}(w))|\cdot\rho_{\mathbb{C}\setminus\overline{\Delta_{\alpha}}}(P_{\alpha}^{\comp m_{j}}(w)).$$
		By (\ref{e20260528a}), (\ref{e20260528d}) and the above inequality, we have that by (\ref{e20251210a1}) of Lemma \ref{l8241} we have that for all $w''\in \hat{B}_j^{(m_j-s)}\supseteq B_j^{(m_j-s)}\cup\gamma_j^{(m_j-s)}$,
		\begin{equation}
			\label{e20260525b}\rho_{\mathbb{C}\setminus\overline{\Delta_{\alpha}}}(w'')\leq O_{\mathfrak{q}_3}(\xi^{j-t})\cdot|(P_{\alpha}^{\comp(m_{j}-s)})'(w'')|\cdot\rho_{\mathbb{C}\setminus\overline{\Delta_{\alpha}}}(P_{\alpha}^{\comp(m_{j}-s)}(w'')).
		\end{equation}
		Then
		$${\rm diam}_{\mathbb{C}\setminus\overline{\Delta_{\alpha}}}(B_j^{(m_j-s)})\leq O_{\mathfrak{q}_3}(\xi^{j-t})\cdot{\rm diam}_{\mathbb{C}\setminus\overline{\Delta_{\alpha}}}(\tilde{B}_j)=O_{\mathfrak{q}_3}(\xi^{j-t})$$
		and
		$$l_{\mathbb{C}\setminus\overline{\Delta_{\alpha}}}(\gamma_j^{(m_j-s)})\leq O_{\mathfrak{q}_3}(\xi^{j-t})\cdot l_{\mathbb{C}\setminus\overline{\Delta_{\alpha}}}(\gamma_j)=O_{\mathfrak{q}_3}(\xi^{j-t}).$$
	\end{proof}
	
	Recall that $B_j$ ($\subseteq\mathbb{C}\setminus\overline{\Delta_{\alpha}}$) is an Euclidean ball with the same center and twice the radius as $\tilde{B}_j$, and $P_{\alpha}^{\comp m_j}|_{B_j^{(m_j)}}$ is a conformal map from $B_j^{(m_j)}$ to $\tilde{B}_j$.
	Applying the distortion theorem to the branch of $P_{\alpha}^{-m_j}$ on $B_j$ mapping $\tilde{B}_j$ to $B_j^{(m_j)}$, we obtain  
	that there exists an Euclidean ball ${\bf B}_j$ such that
	\begin{equation}
		\label{e20260326b}{\bf B}_j\subseteq B_j^{(m_j)}\ {\rm and}\ {\rm diam}({\bf B}_j)\asymp {\rm diam}(B_j^{(m_j)})\asymp l(\partial B_j^{(m_j)}).
	\end{equation}
	Further, we have the following lemma:
	\begin{lemma}
		\label{l20260330a}	The above Euclidean balls ${\rm\bf B}_j$, $1\leq j\leq k$ satisfy
		\begin{itemize}
			\item[{\rm(a4)}] ${\rm diam}({\bf B}_{j+t})\leq C\xi^t\cdot{\rm diam}({\bf B}_j)$ for all $1\leq j<j+t\leq k$, where $C$ is a constant depending on $\mathfrak{q}_3$ and independent of $\alpha$,
			\item[{\rm(b4)}] ${\rm diam}({\bf B}_j)={\rm dist}(w,\partial\Delta_{\alpha})\cdot O_{\mathfrak{q}_3}(\xi^j)$ for all $1\leq j\leq k$,
			\item[{\rm(c4)}] ${\rm dist}({\bf B}_j,w)=O_{\mathfrak{q}_3}({\rm diam}({\bf B}_j))$ for all $1\leq j\leq k$,
			\item[{\rm(d4)}] for all $1\leq j\leq k$ and all $w'\in{\bf B}_j$, $$\left\{P_{\alpha}^{\comp t}(w'):\ 0\leq t\leq m_j\right\}\subseteq\Lambda_{\alpha}^{n+1-\chi}\ {\rm and}\ P_{\alpha}^{\comp m_j}(w')\in\hat{U}_0^{n_j+\mathfrak{l}}\cap\tilde{\Omega}_{c_0}^{n_j}\setminus\Lambda^{n_j+\mathfrak{q}_3}.$$
			Moreover, for all $1\leq t\leq j$ and $m_{t-1}\leq s\leq m_t$, there exists a curve $\tilde{\gamma}_{j}^{(s)}$ connecting $P_{\alpha}^{\comp s}(w')$ and $P_{\alpha}^{\comp s}(w)$ such that
			$l_{\mathbb{C}\setminus\overline{\Delta_{\alpha}}}(\tilde{\gamma}_{j}^{(s)})=O_{\mathfrak{q}_3}(\xi^{j-t})$
			and 
			$P_{\alpha}^{\comp (m_j-s)}\comp\tilde{\gamma}_{j}^{(s)}=\tilde{\gamma}_{j}^{(m_j)}$.
		\end{itemize}
	\end{lemma}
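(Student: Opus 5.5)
The plan is to convert the hyperbolic estimates of Claim 2 into Euclidean ones at the base point $w$. The tools are Lemma \ref{l25070716} ($\rho_{\mathbb{C}\setminus\overline{\Delta_{\alpha}}}(z)\asymp 1/{\rm dist}(z,\Delta_\alpha)$ on $\Lambda_\alpha^{\mathfrak{q}_4}$), Claim 1, and the Koebe-type choice (\ref{e20260326b}).

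\textbf{Step 1: everything stays in a bounded hyperbolic ball around $w$.} By Claim 1, all sets $B_j^{(s)}\cup\gamma_j^{(s)}$ lie in $\Lambda_\alpha^{n+1-\chi}\subseteq\Lambda_\alpha^{\mathfrak{q}_4}$, since $n\geq 2\mathfrak{q}_4$. By Claim 2 with $s=m_j$, we have ${\rm diam}_{\mathbb{C}\setminus\overline{\Delta_{\alpha}}}(B_j^{(m_j)})=O_{\mathfrak{q}_3}(\xi^j)$ and $l_{\mathbb{C}\setminus\overline{\Delta_{\alpha}}}(\gamma_j^{(m_j)})=O_{\mathfrak{q}_3}(\xi^j)$. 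Moreover, $\gamma_j^{(m_j)}$ joins $w$ to $\overline{B_j^{(m_j)}}$. Hence $B_j^{(m_j)}\cup\gamma_j^{(m_j)}$ lies in a hyperbolic ball of radius $O_{\mathfrak{q}_3}(1)$ centred at $w$. On such a ball the density $\rho$ is comparable to $\rho(w)\asymp 1/{\rm dist}(w,\Delta_\alpha)$, by Lemma \ref{l25070716} and a standard hyperbolic-ball comparison. Hyperbolic diameters and lengths therefore equal ${\rm dist}(w,\partial\Delta_\alpha)$ times Euclidean ones, up to $\asymp_{\mathfrak{q}_3}$.

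\textbf{Step 2: (a4), (b4), (c4).} Combining Step 1 with (\ref{e20260326b}) gives ${\rm diam}({\bf B}_j)\asymp{\rm dist}(w,\partial\Delta_\alpha)\,{\rm diam}_{\mathbb{C}\setminus\overline{\Delta_{\alpha}}}(B_j^{(m_j)})$.
\begin{itemize}
\item (b4) then follows from the second bullet of Claim 2.
\item (a4) follows from the first bullet of Claim 2. Both balls sit in the same comparability region around $w$, so the common factor ${\rm dist}(w,\partial\Delta_\alpha)$ cancels.
\item For (c4), the arc $\gamma_j^{(m_j)}$ connects $w$ to $B_j^{(m_j)}$. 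Its Euclidean length is $\preceq_{\mathfrak{q}_3}{\rm dist}(w,\partial\Delta_\alpha)\, l_{\mathbb{C}\setminus\overline{\Delta_{\alpha}}}(\gamma_j^{(m_j)})=O_{\mathfrak{q}_3}({\rm diam}(B_j^{(m_j)}))$ by the third bullet of Claim 2. Then ${\rm dist}({\bf B}_j,w)\le l(\gamma_j^{(m_j)})+{\rm diam}(B_j^{(m_j)})=O_{\mathfrak{q}_3}({\rm diam}({\bf B}_j))$.
\end{itemize}

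\textbf{Step 3: (d4).} Take $w'\in{\bf B}_j\subseteq B_j^{(m_j)}$. Then $P_\alpha^{\comp m_j}(w')\in\tilde B_j$, which lies in $\hat U_0^{n_j+\mathfrak{l}}\cap\tilde\Omega_{c_0}^{n_j}\setminus\Lambda^{n_j+\mathfrak{q}_3}$ by (\ref{e20260327d1}). For $0\le t\le m_j$, the point $P_\alpha^{\comp t}(w')$ lies in $B_j^{(m_j-t)}$. By Claim 1, $B_j^{(m_j-t)}\subseteq\Lambda_\alpha^{n+1-\chi}$.

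For the curves, join $w'$ to the endpoint of $\gamma_j^{(m_j)}$ on $\partial B_j^{(m_j)}$ inside $\overline{B_j^{(m_j)}}$, and then follow $\gamma_j^{(m_j)}$ back to $w$. The first piece should be the pullback of a hyperbolic geodesic in $\tilde B_j$, so that its image under $P_\alpha^{\comp m_j}$ has length $\preceq 1$. Call the concatenation $\tilde\gamma_j^{(0)}$, and set $\tilde\gamma_j^{(s)}:=P_\alpha^{\comp s}\comp\tilde\gamma_j^{(0)}$. This makes the relation $P_\alpha^{\comp(m_j-s)}\comp\tilde\gamma_j^{(s)}=\tilde\gamma_j^{(m_j)}$ automatic.

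Now $\tilde\gamma_j^{(s)}\subseteq B_j^{(m_j-s)}\cup\gamma_j^{(m_j-s)}\subseteq\hat B_j^{(m_j-s)}$. On this set, the derivative-contraction estimate (\ref{e20260525b}) holds. Integrating it along the curve gives $l_{\mathbb{C}\setminus\overline{\Delta_{\alpha}}}(\tilde\gamma_j^{(s)})\le O_{\mathfrak{q}_3}(\xi^{j-t})\, l(\tilde\gamma_j^{(m_j)})=O_{\mathfrak{q}_3}(\xi^{j-t})$, because the image curve has length $O_{\mathfrak{q}_3}(1)$.

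\textbf{Main obstacle.} The delicate point is the endpoint case $s=m_{t-1}$, which falls in the previous block of the range used in the last bullet of Claim 2. I will handle it by applying that bullet with $t-1$; this only costs a factor $\xi^{-1}$, which is absorbed into $O_{\mathfrak{q}_3}$. Alternatively, for $s=0$ I will use the (a4)/(b4)-type bound directly.
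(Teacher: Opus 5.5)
Your proposal is correct and is essentially the paper's argument: you transfer the hyperbolic bounds of Claim 2 to Euclidean ones at $w$, using $\rho_{\mathbb{C}\setminus\overline{\Delta_{\alpha}}}\asymp_{\mathfrak{q}_3}\rho_{\mathbb{C}\setminus\overline{\Delta_{\alpha}}}(w)$ on $\gamma_j^{(m_j)}\cup B_j^{(m_j)}$ (Claim 1 with Lemma \ref{l25070716}) together with (\ref{e20260326b}); you then build $\tilde\gamma_j^{(s)}$ from $\gamma_j^{(m_j-s)}$ and the pullback of a short arc in $\tilde B_j$, and integrate (\ref{e20260525b}). Two cosmetic remarks: at $s=m_{t-1}$ the previous block yields $O_{\mathfrak{q}_3}(\xi^{j-t+1})$, which is a gain of a factor $\xi$ rather than a loss; and the arc in $\overline{\tilde B_j}$ need not be a geodesic---any arc of bounded hyperbolic length (e.g.\ a straight segment) suffices, as in the paper.
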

	\begin{proof}
		For all $1\leq j\leq k$, by Claim $1$ $\gamma_j^{(m_j)}\cup B_j^{(m_j)}\subseteq\Lambda_{\alpha}^{n+1-\chi}\subseteq\Lambda_{\alpha}^{\mathfrak{q}_4+1}$.
		Then together with $l_{\mathbb{C}\setminus\overline{\Delta_{\alpha}}}(\gamma_j^{(m_j)})=O_{\mathfrak{q}_3}(1)$ and $l_{\mathbb{C}\setminus\overline{\Delta_{\alpha}}}(\partial B_j^{(m_j)})=O_{\mathfrak{q}_3}(1)$ (Due to the Schwarz lemma and (\ref{e20260528d}) with $s=0$),
		Lemma \ref{l25070716} gives 
		$${\rm diam}(\gamma_j^{(m_j)}\cup B_j^{(m_j)})\preceq_{\mathfrak{q}_3}{\rm dist}(\gamma_j^{(m_j)}\cup B_j^{(m_j)},\Delta_{\alpha})$$
		and hence for all $w'\in\overline{\gamma_j^{(m_j)}\cup B_j^{(m_j)}}$,
		\begin{equation}
			\label{e20260525a}\rho_{\mathbb{C}\setminus\overline{\Delta_{\alpha}}}(w')\asymp_{\mathfrak{q}_3}\rho_{\mathbb{C}\setminus\overline{\Delta_{\alpha}}}(w).
		\end{equation}
		Then it follows from (\ref{e20260326b}) and (\ref{e20260525a}) that for all $1\leq j\leq k$,  
		\begin{align*}
			\rho_{\mathbb{C}\setminus\overline{\Delta_{\alpha}}}(w)\cdot{\rm diam}({\bf B}_j)&\preceq_{\mathfrak{q}_3}{\rm diam}_{\mathbb{C}\setminus\overline{\Delta_{\alpha}}}({\bf B}_j)\\
			&\leq{\rm diam}_{\mathbb{C}\setminus\overline{\Delta_{\alpha}}}(B_j^{(m_j)})\\
			&\preceq_{\mathfrak{q}_3}\rho_{\mathbb{C}\setminus\overline{\Delta_{\alpha}}}(w)\cdot l(\partial B_j^{(m_j)})\\
			&\asymp\rho_{\mathbb{C}\setminus\overline{\Delta_{\alpha}}}(w)\cdot{\rm diam}({\bf B}_j),
		\end{align*}
		that is
		\begin{equation}
			\label{e20260327a1}{\rm diam}_{\mathbb{C}\setminus\overline{\Delta_{\alpha}}}(B_j^{(m_j)})\asymp_{\mathfrak{q}_3}\rho_{\mathbb{C}\setminus\overline{\Delta_{\alpha}}}(w)\cdot{\rm diam}({\bf B}_j).
		\end{equation}
		By Claim $2$ we have that for all $1\leq j<j+t\leq k$,
		\begin{equation}
			\label{e20260327b1}{\rm diam}_{\mathbb{C}\setminus\overline{\Delta_{\alpha}}}(B_{j+t}^{(m_{j+t})})=O_{\mathfrak{q}_3}(\xi^t)\cdot{\rm diam}_{\mathbb{C}\setminus\overline{\Delta_{\alpha}}}(B_j^{(m_j)})
		\end{equation}
		and for all $1\leq j\leq k$,
		\begin{equation}
			\label{e20260327b2}{\rm diam}_{\mathbb{C}\setminus\overline{\Delta_{\alpha}}}(B_j^{(m_j)})=O_{\mathfrak{q}_3}(\xi^j).
		\end{equation}
		By substituting (\ref{e20260327a1}) into the equality (\ref{e20260327b1}), we get (a4).
		By substituting (\ref{e20260327a1}) into the equality (\ref{e20260327b2}), we get
		$$\rho_{\mathbb{C}\setminus\overline{\Delta_{\alpha}}}(w)\cdot{\rm diam}({\bf B}_j)=O_{\mathfrak{q}_3}(\xi^j).$$
		By combining $\rho_{\mathbb{C}\setminus\overline{\Delta_{\alpha}}}(w)\asymp\frac{1}{{\rm dist}(w,\Delta_{\alpha})}$ (Due to Lemma \ref{l25070716}),
		it follows that (b4) holds.
		For all $1\leq j\leq k$, we have
		\begin{align*}
			{\rm dist}(w,{\bf B}_j)&\leq{\rm dist}(w,B_j^{(m_j)})+{\rm diam}(B_j^{(m_j)})\\
			&\leq l(\gamma_{j}^{(m_j)})+O({\rm diam}({\bf B}_j))\ {\rm(Due\ to\ (\ref{e20260326b}))}\\
			&\leq\frac{O_{\mathfrak{q}_3}(l_{\mathbb{C}\setminus\overline{\Delta_{\alpha}}}(\gamma_{j}^{(m_j)}))}{\rho_{\mathbb{C}\setminus\overline{\Delta_{\alpha}}}(w)}+O({\rm diam}({\bf B}_j))\ {\rm(Due\ to\ (\ref{e20260525a}))}\\
			&\leq\frac{O_{\mathfrak{q}_3}({\rm diam}_{\mathbb{C}\setminus\overline{\Delta_{\alpha}}}(B_j^{(m_j)}))}{\rho_{\mathbb{C}\setminus\overline{\Delta_{\alpha}}}(w)}+O({\rm diam}({\bf B}_j))\ {\rm(Due\ to\ Claim\ 2)}\\
			&\leq O_{\mathfrak{q}_3}({\rm diam}({\bf B}_j))\ {\rm(Due\ to\ (\ref{e20260327a1}))}.
		\end{align*}
		This implies (c4).
		At last, we will prove (d4). For all $1\leq j\leq k$, $0\leq t\leq m_j$ and all $w'\in{\bf B}_j$, we have
		$$P_{\alpha}^{\comp t}(w')\in P_{\alpha}^{\comp t}({\bf B}_j)\subseteq P_{\alpha}^{\comp t}(B_j^{(m_j)})=B_j^{(m_j-t)},$$
		and in particular,
		$$P_{\alpha}^{\comp m_j}(w')\in P_{\alpha}^{\comp m_j}({\bf B}_j)\subseteq\tilde{B}_j.$$
		Then by $B_j^{(m_j-t)}\subseteq\Lambda_{\alpha}^{n+1-\chi}$ (Due to Claim $1$) and $\tilde{B}_j\subseteq\hat{U}_0^{n_j+\mathfrak{l}}\cap\tilde{\Omega}_{c_0}^{n_j}\setminus\Lambda^{n_j+\mathfrak{q}_3}$ (Due to (\ref{e20260327d1})), we have
		$$\left\{P_{\alpha}^{\comp t}(w'):\ 0\leq t\leq m_j\right\}\subseteq\Lambda_{\alpha}^{n+1-\chi}\ {\rm and}\ P_{\alpha}^{\comp m_j}(w')\in\hat{U}_0^{n_j+\mathfrak{l}}\cap\tilde{\Omega}_{c_0}^{n_j}\setminus\Lambda^{n_j+\mathfrak{q}_3}.$$
		
		Recall that for all $1\leq j\leq k$, $\gamma_j^{(0)}$ connects $P_{\alpha}^{\comp m_j}(w)$ and $B_j^{(0)}=\tilde{B}_j$. We denote by $b_{m_j}$ the endpoint of $\gamma_j^{(0)}$ on $\partial B_j^{(0)}$.
		We let $\beta_{m_j}$ be a curve connecting $b_{m_j}$ and $P_{\alpha}^{\comp m_j}(w')$ on $B_j^{(0)}$ such that $l_{\mathbb{C}\setminus\overline{\Delta_{\alpha}}}(\beta_{m_j})=O_{\mathfrak{q}_3}(1)$   (Due to (\ref{e20260327d1}) and (\ref{e20260528d})), and for all $0\leq s\leq m_j$, we let $\beta_{m_j-s}$ be the preimage of $\beta_{m_j}$ under $P_{\alpha}^{\comp s}$ intersecting $P_{\alpha}^{\comp (m_j-s)}(w')$.
		Now we define $\tilde{\gamma}_{j}^{(m_j-s)}$ as the union curve $\gamma_j^{(s)}+\beta_{m_j-s}$.
		It is clear that 
		for all $1\leq t\leq j$ and $m_{t-1}\leq s\leq m_t$, the curve $\tilde{\gamma}_{j}^{(s)}$ connects $P_{\alpha}^{\comp s}(w')$ and $P_{\alpha}^{\comp s}(w)$, and 
		$P_{\alpha}^{\comp (m_j-s)}\comp\tilde{\gamma}_{j}^{(s)}=\tilde{\gamma}_{j}^{(m_j)}$.
		It follows from Claim $2$ that
		\begin{equation}
			\label{e20260525c}l_{\mathbb{C}\setminus\overline{\Delta_{\alpha}}}(\gamma_j^{(m_j-s)})=O_{\mathfrak{q}_3}(\xi^{j-t}).
		\end{equation}
		Observe that $\beta_{s}\subseteq B_j^{(m_j-s)}$ and $P_{\alpha}^{\comp (m_j-s)}\comp\beta_{s}=\beta_{m_j}$. Integrating both sides of (\ref{e20260525b}) on $\beta_{s}$ respectively, we have
		\begin{equation}
			\label{e20260525d}l_{\mathbb{C}\setminus\overline{\Delta_{\alpha}}}(\beta_{s})=O_{\mathfrak{q}_3}(\xi^{j-t}) l_{\mathbb{C}\setminus\overline{\Delta_{\alpha}}}(\beta_{m_j})=O_{\mathfrak{q}_3}(\xi^{j-t}).
		\end{equation}
		Then by (\ref{e20260525c}) and (\ref{e20260525d}), we have $$l_{\mathbb{C}\setminus\overline{\Delta_{\alpha}}}(\tilde{\gamma}_{j}^{(s)})\leq l_{\mathbb{C}\setminus\overline{\Delta_{\alpha}}}(\gamma_j^{(m_j-s)})+l_{\mathbb{C}\setminus\overline{\Delta_{\alpha}}}(\beta_s)=O_{\mathfrak{q}_3}(\xi^{j-t}).$$
		Thus (d4) holds.
	\end{proof}

	\section{Filling-in of fjords\label{S4}}
	
	\subsection{Uniform bounds for area rates of measurable subsets of a square}
	We denote by $\mathbb{B}(z,r)$ the Euclidean ball in $\mathbb{C}$ centering at $z$ with radius $r>0$. The following lemma follows from [Lemma $6.23$, \cite{AL22}] or [Lemma $3.1$, \cite{QQZ23}].
	\begin{lemma}
		\label{L1}Let $c>0$ and $0<\eta<1$ be two real numbers.
		For any large enough positive integer $N$, we have:
		for any square $S\subseteq\mathbb{C}$ with side length $l$ and any nonempty measurable subset $E$ of $S$,
		if there exist $N$ mappings $r_1,r_2,\cdots,r_N$ from $E$ to $\mathbb{R}$
		and $N$ mappings $y_n:E\to\mathbb{C}$, $1\leq n\leq N$
		such that for all $x\in E$,
		the following conditions hold:
		\begin{itemize}
			\item[{\rm(a5)}] $0<r_1(x)\leq l\cdot\eta$,
			\item[{\rm(b5)}] $0<r_{n+1}(x)\leq\eta\cdot r_n(x)$ for all $1\leq n<N$,
			\item[{\rm(c5)}] $y_n(x)\in\mathbb{B}(x,c\cdot r_n(x))$ for all $1\leq n\leq N$,
			\item[{\rm(d5)}] $\mathbb{B}(y_n(x),r_n(x))\cap E=\emptyset$ for all $1\leq n\leq N$,
		\end{itemize}
		then
		$${\rm area}(E)\leq\lambda^N\cdot{\rm area}(S),$$
		where $0<\lambda<1$ is a constant only depending on $c$ and $\eta$.
	\end{lemma}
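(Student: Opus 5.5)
The plan is a density/porosity argument via Lebesgue's density theorem together with an iterative covering (Vitali-type) scheme. Two preliminary reductions come first. If $\lambda$ is allowed to depend on $N$ the statement is trivial, so the point is a geometric rate. If $l\eta$ is comparable to $l$, condition (a5) only says that the first hole has size at most $\eta l$. I will treat the nested scales $r_1(x)>r_2(x)>\cdots>r_N(x)$ as $N$ successive ``porosity'' scales at which $E$ has a hole. Conditions (c5) and (d5) say that the ball $\mathbb{B}(x,(c+1)r_n(x))$ contains a ball of radius $r_n(x)$ disjoint from $E$. So at scale $r_n(x)$ around $x$, the set $E$ misses a definite proportion $\delta:=1/(c+1)^2$ of the area.

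The key step is to prove by induction on $N$ that ${\rm area}(E\cap S)\le(1-\delta')^N{\rm area}(S)$, after first passing to a slightly larger square to absorb boundary effects. For each $x\in E$, set $Q_1(x):=\mathbb{B}(x,(c+1)r_1(x))$. By (a5), and after enlarging $S$ by a factor depending only on $c$ and $\eta$, these balls lie in a fixed enlarged square $\hat S$. By the Vitali covering lemma, choose a disjoint subfamily $\{Q_1(x_i)\}$ whose 5-fold enlargements cover $E$. Inside each $Q_1(x_i)$ there is a hole of area $\ge\delta\,{\rm area}(Q_1(x_i))$ disjoint from $E$. Iterating this directly is awkward, because the scales of different points are not synchronized.

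The cleaner route is the one in [Lemma 6.23, \cite{AL22}]: argue at a density point and use the ratio $\eta$ to separate scales. For $x\in E$ and each $n$, compare the density of $E$ in $\mathbb{B}(x,Kr_n(x))$ with its density in $\mathbb{B}(x,Kr_{n+1}(x))$, where $K=c+1$. Because $r_{n+1}\le\eta r_n$, the ball at level $n+1$ is tiny compared with the hole at level $n$. I therefore define a Vitali-type stopping-time covering level by level: the set of points of $E$ that ``survive'' $n$ levels is covered by disjoint balls $\mathbb{B}_{n,i}$ of radius $\asymp r_n$. Inside each such ball, the part of $E$ that survives to level $n+1$ has relative measure at most $1-\delta''$, since a fixed fraction is removed by the hole. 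Summing over the disjoint family gives the geometric factor $\lambda=1-\delta''$ per level. The factor $5^2$ coming from Vitali enlargement is harmless as long as $\eta$ is small relative to the constants. If $\eta$ is not small, I would group $m$ consecutive levels with $\eta^m$ small and take $\lambda$ to be the $m$-th root of the resulting rate, with $m=m(c,\eta)$.

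The main obstacle is making the per-level estimate rigorous when $r_n(x)$ varies from point to point. The enlarged Vitali balls around different centres overlap, and the holes must not be double counted. The approach I would try first is to pass to dyadic squares. For each $x$ and $n$, take the dyadic square $D_n(x)$ of side $\in[r_n(x)/(2\sqrt2 K'),\,r_n(x)/(\sqrt2 K')]\cdot$(const) containing a neighbourhood of the hole. Then $D_n(x)$ lies within distance $O_c(r_n(x))$ of $x$, and some dyadic square of comparable size, $O_c(1)$ generations up, contains both $x$ and a full sub-square missing $E$. Dyadic squares are nested or disjoint, and $\eta$-separation forces $D_{n+1}(x)\subsetneq D_n(x)$ with a generation gap of at least $\log_2(1/\eta)-O_c(1)$. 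The maximal such squares form disjoint families, and each square loses a fixed fraction $\lambda_0(c)$ of its area to the hole. A standard nested-family induction then gives ${\rm area}(E)\le(1-\lambda_0)^{N/m}{\rm area}(S)$, where the large-$N$ hypothesis absorbs the constant lost at the top level, which comes from (a5) and the enlargement of $S$.
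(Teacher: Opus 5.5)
The paper does not prove this lemma; it cites Avila--Lyubich (Lemma 6.23) and QQZ23 (Lemma 3.1). So the question is whether your sketch closes, and as written it does not.

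Your Vitali paragraphs are not yet an argument. Every point of $E$ carries all $N$ holes, so ``the points that survive $n$ levels'' is not a well-defined object, and you rightly abandon that scheme. The dyadic plan is the right kind of argument, but the step it rests on is false for a single dyadic grid. It is not true that some dyadic square $O_c(1)$ generations above $D_n(x)$ contains both $x$ and a sub-square of the hole. Suppose $x$ lies just left of the line ${\rm Re}\,z=0$ (or of any grid line much coarser than $r_n(x)$), and $\mathbb{B}(y_n(x),r_n(x))$ lies just to its right. Then no dyadic square of side $O_c(r_n(x))$ contains both. Nothing prevents this from happening for every $x\in E$ and every $n$, so the covering you describe need not exist at all.

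Two additions make your route work.

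First, use finitely many shifted nested systems: the one-third trick gives nine translated dyadic systems $\mathcal{D}^t$ such that every disc of radius $\rho$ lies in a square of some $\mathcal{D}^t$ of side at most $12\rho$.
\begin{itemize}
\item Apply this to $\mathbb{B}(x,(c+1)r_n(x))$. Each pair $(x,n)$ then gets, for some $t$, a square $D_n(x)\in\mathcal{D}^t$ containing $x$, of side $\asymp_c r_n(x)$.
\item That square contains a $\mathcal{D}^t$-square $H_n(x)\subseteq\mathbb{B}(y_n(x),r_n(x))$ of side $\ge\epsilon(c)\,{\rm side}(D_n(x))$.
\item By pigeonhole, each $x$ has one $t$ that works for at least $N/9$ of its scales. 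Split $E=\bigcup_t E_t$ and work in one system at a time.
\end{itemize}
This costs a factor $9$ and replaces $N$ by $N/9$. That loss, and not only the top-level constant from (a5), is why the lemma needs $N$ large.

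Second, the claim ``each square loses a fixed fraction to the hole'' is exactly where non-synchronised scales must be handled. You need to explain why the hole $H$ of a maximal level-$k$ square $Q$ is not re-covered by finer squares of other points. Write $D^{(k)}(x)$ for the $k$-th retained square of $x$, retaining every $m$-th good scale with $m=m(c,\eta)$ chosen so that $6\eta^m<\epsilon(c)$. Let $\mathcal{M}_k$ be the maximal ones.
\begin{itemize}
\item Any $Q'\in\mathcal{M}_{k+1}$ with $Q'\subseteq Q$ equals $D^{(k+1)}(x')$ with $D^{(k)}(x')\subseteq Q$, by maximality.
\item Hence ${\rm side}(Q')\le 6\eta^m\,{\rm side}(Q)<{\rm side}(H)$.
\item $Q'$ and $H$ lie in one nested system, and $Q'$ contains a point of $E$ while $H$ contains none. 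So $Q'\cap H=\emptyset$.
\end{itemize}
This gives $\sum_{\mathcal{M}_{k+1}}{\rm area}\le(1-\epsilon^2)\sum_{\mathcal{M}_k}{\rm area}$. With these two points supplied, your nested-family induction proves the lemma.
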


	\begin{lemma}
		\label{key4}Let $S\subseteq\mathbb{C}$ be a square with side length $l$, $E$ be a nonempty measurable subset of $S$ and $F$ be a nonempty measurable subset of $E$.
		Assume $0<\lambda<1$ be a real number.
		If there exists a map $r$ from $F$ to $\mathbb{R}$ such that for all $x\in F$
		\begin{itemize}
			\item[{\rm(a6)}] $0<r(x)\leq l$,
			\item[{\rm(b6)}] $\mathbb{B}(x,r(x))\cap (S\setminus E)=\emptyset$,
			\item[{\rm(c6)}] ${\rm area}(\mathbb{B}(x,r(x))\cap F)\leq\lambda\cdot{\rm area}(\mathbb{B}(x,r(x)))$,
		\end{itemize}
		then we have
		$${\rm area}(F)\leq c\cdot\lambda\cdot{\rm area}(E),$$
		where $c>0$ is a universal constant.
	\end{lemma}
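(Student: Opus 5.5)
The plan is a Vitali-type covering argument. The family of balls $\{\mathbb{B}(x,r(x)):x\in F\}$ covers $F$. By (a6) all radii are at most $l$, so the Vitali $5r$-covering lemma applies. It gives a countable (or finite) subfamily of pairwise disjoint balls $\mathbb{B}(x_i,r(x_i))$, $i\in\mathcal{I}$, with
$$F\subseteq\bigcup_{i\in\mathcal{I}}\mathbb{B}(x_i,5r(x_i)).$$
Measurability is not a problem here: $F$ is measurable by hypothesis, and we only ever measure $F$ intersected with balls.

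The estimate would go ball by ball, but hypothesis (c6) controls $F$ only inside $\mathbb{B}(x_i,r(x_i))$, not inside the dilated ball $\mathbb{B}(x_i,5r(x_i))$. So I will not bound ${\rm area}(F\cap\mathbb{B}(x_i,5r_i))$ directly. Instead I write ${\rm area}(F)\le\sum_i {\rm area}(\mathbb{B}(x_i,5r_i))=25\sum_i{\rm area}(\mathbb{B}(x_i,r_i))$ and compare each ${\rm area}(\mathbb{B}(x_i,r_i))$ with ${\rm area}(\mathbb{B}(x_i,r_i)\cap E)$. This is where (a6) and (b6) enter. Since $x_i\in S$ and $r_i\le l$, a fixed proportion of the disc lies inside the square: ${\rm area}(\mathbb{B}(x_i,r_i)\cap S)\ge c_0\,{\rm area}(\mathbb{B}(x_i,r_i))$ for a universal $c_0>0$, because a quarter-disc sector of radius $r_i\le l$ at any point of $S$ points into $S$. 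By (b6), $\mathbb{B}(x_i,r_i)\cap S\subseteq E$. Disjointness of the chosen balls then gives $\sum_i{\rm area}(\mathbb{B}(x_i,r_i))\le c_0^{-1}{\rm area}(E)$, so ${\rm area}(F)\le 25c_0^{-1}{\rm area}(E)$. This bound lacks the factor $\lambda$.

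To obtain the factor $\lambda$, I would apply the covering to density points of $F$ instead of to all of $F$. I use the Besicovitch covering theorem, which gives bounded overlap and is valid in $\mathbb{R}^2$ for arbitrary radius functions bounded by $l$. It yields subfamilies covering $F$ with the balls $\mathbb{B}(x,r(x))$ themselves, not their dilates, and with overlap multiplicity at most a universal $N_2$. Then
$${\rm area}(F)\le\sum_i{\rm area}(F\cap\mathbb{B}(x_i,r_i))\le\lambda\sum_i{\rm area}(\mathbb{B}(x_i,r_i))\le\lambda c_0^{-1}\sum_i{\rm area}(\mathbb{B}(x_i,r_i)\cap S).$$
The last sum is bounded by $\lambda c_0^{-1}N_2\,{\rm area}(E)$, using (b6) and bounded overlap. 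The constant is $c=N_2/c_0$.

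The key step is choosing Besicovitch rather than Vitali, since (c6) only controls the undilated balls. The main technical obstacles are two. The first is checking that Besicovitch applies with centers in $F$, a possibly non-closed set; the standard version only needs a bounded set of centers and $\sup r<\infty$, both of which hold here. The second is verifying the geometric constant $c_0$ for discs centered at points of $S$ with radius at most $l$ (one may take $c_0=1/4$).
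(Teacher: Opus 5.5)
Your final argument is correct and is essentially the paper's proof: a Besicovitch covering of $F$ by the undilated balls into boundedly many disjoint subfamilies, (c6) on each chosen ball, and (a6)+(b6) giving ${\rm area}(\mathbb{B}(x,r(x)))\le 4\,{\rm area}(\mathbb{B}(x,r(x))\cap E)$; this yields $c=4K$ exactly as in the paper, and the Vitali detour and the remark about density points are not needed. One small correction: the ``quarter-disc pointing into $S$'' justification only holds literally for $r\le l/2$ (for $x$ the center of $S$ and $r=l$, every sector of radius $l$ leaves $S$), but ${\rm area}(\mathbb{B}(x,r)\cap S)\ge\frac14{\rm area}(\mathbb{B}(x,r))$ is still true for $r\le l$ by a slicing argument, and in any case a quarter-disc of radius $r/2$ already gives the universal constant $c_0=1/16$, which is all the lemma needs.
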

	\begin{proof}
		Applying the Besicovitch covering theorem to
		$\{\overline{\mathbb{B}(x,r(x))}:x\in F\}$ (Due to (a6)), we have that there exist $K$ (a universal constant) subsets $\{\overline{\mathbb{B}(x,r(x))}:x\in F_j\}$, $j=1,2,\cdots,K$ of $\{\overline{\mathbb{B}(x,r(x))}:x\in F\}$ such that 
		\begin{itemize}
			\item[(I)] $F\subseteq\bigcup_{x\in\cup_{j=1}^K F_j}\overline{\mathbb{B}(x,r(x))}$, 
			\item[(II)] for all $1\leq j\leq K$ and all $x,y\in F_j$ with $x\not=y$, $\overline{\mathbb{B}(x,r(x))}\cap\overline{\mathbb{B}(y,r(y))}=\emptyset$.
		\end{itemize}
		Then
		\begin{align*}
			{\rm area}(F)&={\rm area}(F\cap\bigcup_{x\in\cup_{j=1}^K F_j}\overline{\mathbb{B}(x,r(x))})\ {\rm(Due\ to\ (I))}\\
			&={\rm area}(\bigcup_{x\in\cup_{j=1}^K F_j}(F\cap\overline{\mathbb{B}(x,r(x))}))\\
			&\leq\sum_{1\leq j\leq K}\sum_{x\in F_j}{\rm area}(F\cap\overline{\mathbb{B}(x,r(x))})\\
			&\leq\sum_{1\leq j\leq K}\sum_{x\in F_j}\lambda\cdot{\rm area}(\overline{\mathbb{B}(x,r(x))})\ {\rm(Due\ to\ (c6))}\\
			&\leq\sum_{1\leq j\leq K}4\lambda\cdot{\rm area}(E)\ {\rm(Due\ to\ (a6),\ (b6)\ and\ (II))}\\
			&=4K\lambda\cdot{\rm area}(E).
		\end{align*}
		
	\end{proof}

	\begin{lemma}
		\label{key3}Let $S\subseteq\mathbb{C}$ be a square with center $a$, $E$ be a measurable subset of $S$ and
		$0<\lambda<1$ be a real number such that ${\rm area}(E)\leq\lambda\cdot{\rm area}(S)$. Assume that
		$f$ is a univalent map on ${\rm int}(S)$. Then there exists an Euclidean ball $B$ {\rm(}$\subseteq f({\rm int}(S))${\rm)} centering at $f(a)$ such that
		$${\rm area}(B\cap f(E))\leq c\cdot\lambda\cdot{\rm area}(B),$$
		where $c$ {\rm(}$>0${\rm)} is an absolute constant.
	\end{lemma}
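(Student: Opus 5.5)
The plan is to use the Koebe distortion theorem on a smaller concentric square, and to choose the ball $B$ inside the image of that smaller square. Write $S=\{z:|{\rm Re}(z-a)|,|{\rm Im}(z-a)|\le l/2\}$. Let $D_0:=\mathbb{B}(a,l/2)\subseteq{\rm int}(S)$ be the inscribed open disk, and let $D_1:=\mathbb{B}(a,l/4)$. Set $g(z):=f(a+\tfrac{l}{2}z)$, which is univalent on $\mathbb{D}$.

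Koebe's one-quarter theorem gives $f(D_0)\supseteq\mathbb{B}\bigl(f(a),\tfrac14\cdot\tfrac l2|f'(a)|\bigr)$. The Koebe distortion theorem, applied on $|z|\le 1/2$, gives universal constants $0<c_1<c_2$ such that
\begin{equation*}
c_1|f'(a)|\le|f'(z)|\le c_2|f'(a)|\quad\mbox{and}\quad \mathbb{B}\bigl(f(a),c_1 l|f'(a)|\bigr)\subseteq f(D_1)\subseteq\mathbb{B}\bigl(f(a),c_2 l|f'(a)|\bigr)
\end{equation*}
for all $z\in D_1$. For instance, $c_1$ can be taken from the growth bound $|g(z)-g(0)|\ge|g'(0)|\,|z|/(1+|z|)^2$ at $|z|=1/2$.

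We then take $B:=\mathbb{B}(f(a),r)$ with $r:=c_1 l|f'(a)|$. By the inclusion above, $B\subseteq f(D_1)\subseteq f({\rm int}(S))$. The preimage $f^{-1}(B)$ is therefore contained in $D_1$, where the Jacobian $|f'|^2$ is at most $c_2^2|f'(a)|^2$. The change of variables formula then gives
\begin{align*}
{\rm area}(B\cap f(E))&=\int_{E\cap f^{-1}(B)}|f'|^2\le c_2^2|f'(a)|^2\,{\rm area}(E)\\
&\le c_2^2|f'(a)|^2\lambda l^2=\frac{c_2^2}{\pi c_1^2}\,\lambda\cdot\pi r^2=\frac{c_2^2}{\pi c_1^2}\,\lambda\cdot{\rm area}(B).
\end{align*}
Thus the statement holds with the absolute constant $c:=c_2^2/(\pi c_1^2)$.

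The only delicate step is making sure the ball stays inside the region where the derivative is controlled. Distortion fails near $\partial S$, so we must not take $B$ as large as $f({\rm int}(S))$ allows. Passing to the half-radius disk $D_1$ is what keeps both the containment $B\subseteq f({\rm int}(S))$ and the uniform Jacobian bound valid simultaneously.
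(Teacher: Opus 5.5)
Your proof is correct and follows essentially the same route as the paper: restrict to the concentric disk $\mathbb{B}(a,l/4)$ inside the inscribed disk $\mathbb{B}(a,l/2)$, and use Koebe distortion to get $|f'|\asymp|f'(a)|$ there. Then choose a ball centered at $f(a)$ inside $f(\mathbb{B}(a,l/4))$ of comparable size, and conclude by change of variables. The differences are harmless: you make the constants explicit, and you bound directly by ${\rm area}(E)$ rather than by ${\rm area}(E\cap\mathbb{B}(a,l/4))$.
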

	\begin{proof}
		Let $l$ be the side length of $S$ and $B_{l/2}$ be the ball centering at $a$ with radius $\frac{l}{2}$. Then $f$ a univalent map on $B_{l/2}$. Let $B_{l/4}$ be the ball centering at $a$ with radius $\frac{l}{4}$.
		Then
		$${\rm area}(E\cap B_{l/4})\leq \lambda\cdot\frac{{\rm area}(S)}{{\rm area}(B_{l/4})}\cdot{\rm area}(B_{l/4})=\frac{16}{\pi}\lambda\cdot{\rm area}(B_{l/4}).$$
		Applying the distortion theorem to $f|_{B_{l/2}}$, we have
		\begin{align*}
			{\rm area}(f(E)\cap f(B_{l/4}))&=\int_{E\cap B_{l/4}}|f'(z)|^2dxdy\\
			&\asymp|f'(a)|^2\cdot{\rm area}(E\cap B_{l/4})\\
			&\leq|f'(a)|^2\cdot\frac{16}{\pi}\lambda\cdot{\rm area}(B_{l/4})\\
			&=\frac{16}{\pi}\lambda\cdot\int_{B_{l/4}}|f'(a)|^2dxdy\\
			&\asymp\frac{16}{\pi}\lambda\cdot\int_{B_{l/4}}|f'(z)|^2dxdy\\
			&=\frac{16}{\pi}\lambda\cdot{\rm area}(f(B_{l/4})).
		\end{align*}
		By the distortion theorem, there exists an Euclidean ball $B$ {\rm(}$\subseteq f(B_{l/4})${\rm)} centering at $f(a)$ such that
		$${\rm area}(B)\asymp {\rm area}(f(B_{l/4})).$$
		Then
		\begin{align*}
			{\rm area}(f(E)\cap B)&\leq{\rm area}(f(E)\cap f(B_{l/4}))\\
			&\preceq\frac{16}{\pi}\lambda\cdot{\rm area}(f(B_{l/4}))\\
			&\asymp\frac{16}{\pi}\lambda\cdot{\rm area}(B).
		\end{align*}
		Thus
		$${\rm area}(B\cap f(E))\leq c\cdot\lambda\cdot{\rm area}(B),$$
		where $c$ {\rm(}$>0${\rm)} is an absolute constant.
		
	\end{proof}

	\subsection{Pre-filling-in of fjords\label{s4.2}}
	For any interval $I$ of level $n$ {\rm(}$\geq\mathfrak{q}_4${\rm)}, if $\hat{\Delta}_{\alpha}^{n+1}\not=\hat{\Delta}_{\alpha}^{n}$, then $\mathcal{J}(\beta_I)\not=\emptyset$ and $\mathcal{J}(\tilde{\beta}_I)\not=\emptyset$, where $\beta_I$ is the dam based on $I$. Furthermore, we have the following lemma.
	\begin{lemma}
		\label{l20257192}For any  $z\in\mathcal{J}(\beta_I)$, there exists a square $S_z$ centering at $z$ such that $S_z\subseteq\mathcal{J}(\tilde{\beta}_I)$ and ${\rm diam}_{\mathbb{C}\setminus\overline{\Delta_{\alpha}}}(S_z)\asymp1$.
	\end{lemma}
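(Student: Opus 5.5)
The key observation is that every $z\in\mathcal{J}(\beta_I)$ lies deep inside the rectangle structure around the dam. Between $\partial\mathcal{J}(\beta_I)$ and $\partial\mathcal{J}(\tilde{\beta}_I)$ outside $\overline{\Delta_{\alpha}}$ sits the rectangle $\overline{\mathcal{J}(\tilde{\beta}_I)\setminus\mathcal{J}(\beta_I)}$. By (\ref{e20251214a1}) its width is at least $\Delta\gg1$. I would convert this large width into a definite hyperbolic separation, and then turn hyperbolic separation into a Euclidean square via Lemma \ref{l25070716}.

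\emph{Step 1: hyperbolic separation of $\beta_I$ from $\tilde{\beta}_I$.} I would apply the argument of Corollary \ref{c20260331a} to the rectangle $\mathcal{R}=\overline{\mathcal{J}(\tilde{\beta}_I)\setminus\mathcal{J}(\beta_I)}$, whose vertical sides are $\beta_I$ and $\tilde{\beta}_I$ and whose horizontal sides lie on $\partial\Delta_{\alpha}$. The ambient domain is now $\hat{\mathbb{C}}\setminus\overline{\Delta_{\alpha}}$ rather than ${\rm int}(D)$, but the claim inside Claim 1 is exactly this configuration. Take a hyperbolic ball $B_x$ centered at a point of $\beta_I$ with radius $r=\log\frac{e^{4\pi/\mathcal{W}}+1}{e^{4\pi/\mathcal{W}}-1}$. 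If $B_x$ met $\tilde{\beta}_I$, every vertical curve of the dual family would pass through $B_x$, forcing ${\rm mod}(\hat{\mathbb{C}}\setminus(\overline{\Delta_{\alpha}}\cup B_x))^{-1}\ge\mathcal{W}$, a contradiction. This gives
\[
{\rm dist}_{\hat{\mathbb{C}}\setminus\overline{\Delta_{\alpha}}}(\beta_I,\tilde{\beta}_I)\ge r\succeq 1 .
\]
Since $\mathcal{J}(\beta_I)\subseteq\mathcal{J}(\tilde{\beta}_I)$ and the only boundary of $\mathcal{J}(\tilde{\beta}_I)$ off $\partial\Delta_{\alpha}$ is $\tilde{\beta}_I$, every $z\in\mathcal{J}(\beta_I)$ satisfies ${\rm dist}_{\hat{\mathbb{C}}\setminus\overline{\Delta_{\alpha}}}(z,\tilde{\beta}_I)\succeq1$. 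Hence the hyperbolic ball of radius $r_0\asymp1$ about $z$ stays inside $\mathcal{J}(\tilde{\beta}_I)$: a path leaving $\mathcal{J}(\tilde{\beta}_I)$ without entering $\Delta_{\alpha}$ must cross $\tilde{\beta}_I$.

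\emph{Step 2: from hyperbolic to Euclidean.} Because $n\ge\mathfrak{q}_4$, Lemma \ref{l20257191} gives $\mathcal{J}(\tilde{\beta}_I)\subseteq\Lambda_{\alpha}^n$. Lemma \ref{l25070716} then yields $\rho_{\mathbb{C}\setminus\overline{\Delta_{\alpha}}}\asymp\rho_{\hat{\mathbb{C}}\setminus\overline{\Delta_{\alpha}}}\asymp 1/{\rm dist}(\cdot,\Delta_{\alpha})$ there. A hyperbolic ball of radius $r_0\asymp1$ about $z$ therefore contains the Euclidean disk $\mathbb{B}(z,c\,{\rm dist}(z,\Delta_{\alpha}))$ for a universal $c>0$. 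To make this rigorous I would apply the Koebe-type estimate $\rho\asymp1/d$ along the radial segment, which stays in the region where $\rho\asymp1/d$ holds. Let $S_z$ be the square centered at $z$ of side $c\,{\rm dist}(z,\Delta_{\alpha})$, with $c$ shrunk so that $S_z$ fits in that disk and avoids $\overline{\Delta_{\alpha}}$. Then $S_z\subseteq\mathcal{J}(\tilde{\beta}_I)$. On $S_z$ the density is comparable to $1/{\rm dist}(z,\Delta_{\alpha})$, so ${\rm diam}_{\mathbb{C}\setminus\overline{\Delta_{\alpha}}}(S_z)\asymp1$.

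\emph{Main obstacle.} I expect the delicate point to be Step 1 near the base of the rectangle, where $z$ is close to $\partial\Delta_{\alpha}$ inside the fjord. There one must make sure the separation argument is applied with the horizontal sides of the rectangle on $\partial\Delta_{\alpha}$, so that the modulus comparison is valid in $\hat{\mathbb{C}}\setminus\overline{\Delta_{\alpha}}$. One must also confirm that the non-winding property from (P2) rules out the escape route through $\infty$. Near $\partial\Delta_{\alpha}$ the Euclidean size shrinks proportionally to the distance, which is exactly what the comparability $\rho\asymp1/d$ handles.
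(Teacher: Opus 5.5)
Your proposal is correct and follows essentially the paper's argument. It uses the same three ingredients: the width bound (\ref{e20251214a1}), the hyperbolic separation of $\beta_I$ from $\tilde\beta_I$ via Corollary \ref{c20260331a}, and the comparability $\rho\asymp 1/{\rm dist}(\cdot,\Delta_\alpha)$ on $\Lambda_\alpha^n$ from Lemma \ref{l25070716}. The only difference is organizational: the paper takes the largest Euclidean ball $\mathbb{B}_z\subseteq\mathcal{J}(\tilde\beta_I)$ and splits into cases by whether $\partial\mathbb{B}_z$ meets $\partial\Delta_\alpha$ or $\tilde\beta_I$, while you place $\mathbb{B}(z,c\,{\rm dist}(z,\Delta_\alpha))$ directly inside a hyperbolic ball of radius $\asymp1$ contained in $\mathcal{J}(\tilde\beta_I)$. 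For that containment, the Schwarz-lemma bound $\rho\preceq 1/{\rm dist}(\cdot,\Delta_\alpha)$, valid everywhere, already suffices, so you do not need to know beforehand that the radial segment lies in $\Lambda_\alpha^n$.
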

	\begin{proof}
		For any $z\in\mathcal{J}(\beta_I)$, we denote by $\mathbb{B}_z$ the largest Euclidean ball centering at $z$ contained in $\mathcal{J}(\tilde{\beta}_I)$. We define $S_z$
		as the square centering at $z$ having a side  vertical to the $x$-axis with the length $l_z=\frac{\sqrt{2}}{4}{\rm diam}(\mathbb{B}_z)$.
		Then $S_z\subseteq\mathbb{B}_z\subseteq\mathcal{J}(\tilde{\beta}_I)$.
		If $\partial\mathbb{B}_z\cap\partial\Delta_{\alpha}\not=\emptyset$, then by Lemma \ref{l25070716} and $\mathbb{B}_z\subseteq\Lambda_{\alpha}^n\subseteq\Lambda_{\alpha}^{\mathfrak{q}_4}$,
		we have ${\rm diam}_{\mathbb{C}\setminus\overline{\Delta_{\alpha}}}(S_z)\asymp1$.
		If $\partial\mathbb{B}_z\cap\partial\Delta_{\alpha}=\emptyset$, then $\partial\mathbb{B}_z\cap\tilde{\beta}_I\not=\emptyset$. Thus by Lemma \ref{l25070716} 
		\begin{equation}
			\label{e20260330a2}{\rm diam}_{\mathbb{C}\setminus\overline{\Delta_{\alpha}}}(\mathbb{B}_z)\geq{\rm dist}_{\mathbb{C}\setminus\overline{\Delta_{\alpha}}}(\beta_I,\tilde{\beta}_I)\asymp{\rm dist}_{\hat{\mathbb{C}}\setminus\overline{\Delta_{\alpha}}}(\beta_I,\tilde{\beta}_I).
		\end{equation}
		By (\ref{e20251214a1})
		we have
		$\mathcal{W}\left(\overline{\mathcal{J}(\tilde{\beta}_n)\setminus\mathcal{J}(\beta_n)}\right)\gg1.$
		By Corollary \ref{c20260331a}
		\begin{equation}
			\label{e20260721a}{\rm dist}_{\hat{\mathbb{C}}\setminus\overline{\Delta_{\alpha}}}(\beta_I,\tilde{\beta}_I)\gg1.
		\end{equation}
		By (\ref{e20260330a2}) and (\ref{e20260721a}), we have
		$${\rm diam}_{\mathbb{C}\setminus\overline{\Delta_{\alpha}}}(\mathbb{B}_z)\gg1.$$
		Observe $\mathbb{B}_z\subseteq\Lambda_{\alpha}^{\mathfrak{q}_4}$. Then Lemma \ref{l25070716} gives
		${\rm dist}(z,\Delta_{\alpha})\gg{\rm dist}(\mathbb{B}_z,\Delta_{\alpha})$. Thus
		${\rm dist}(S_z,\Delta_{\alpha})\asymp l_z$.
		At last, by 
		Lemma \ref{l25070716} we have
		$${\rm diam}_{\mathbb{C}\setminus\overline{\Delta_{\alpha}}}(S_z)\asymp1.$$
	\end{proof}
	
	We call $S_z$, described in the above lemma, a filled-in square of $\mathcal{J}(\beta_I)$ centering at $z$. We denote by $l_z$ the side length of $S_z$.
	
	Assume $\hat{\Delta}_{\alpha}^{n+s_0+1}\not=\hat{\Delta}_{\alpha}^{n+s_0}$ with $s_0\geq\chi+1$ and $n\geq2\mathfrak{q}_4$. 
	For all $I\in\mathfrak{D}_{n+s_0}$ and $z\in\mathcal{J}(\beta_I)$, we take $w\in S_z$ and assume that there exist
	positive integers $m_1<m_2<\cdots<m_k$ ($k\gg_{\mathfrak{q}_3}1$) such that for all $1\leq j\leq k$,
	$$P_{\alpha}^{\comp m_j}(w)\in\tilde{\Omega}_{c_0}^{n_j}\setminus\Lambda_\alpha^{n_j+\mathfrak{q}_3},\ n_j\geq n+1+\chi$$ and
	$$\left\{P_{\alpha}^{\comp t}(w):\ 0\leq t\leq m_k\right\}\subseteq\Lambda_{\alpha}^{n+1+\chi}.$$
	By Lemma \ref{l20260330a} there exist Euclidean balls ${\bf B}_j$ ($1\leq j\leq k$) such that
	\begin{itemize}
		\item ${\rm diam}({\bf B}_{j+t})\leq C\xi^t\cdot{\rm diam}({\bf B}_j)$ for all $1\leq j<j+t\leq k$, where $C$ is a constant depending on $\mathfrak{q}_3$ and independent of $\alpha$,
		\item ${\rm diam}({\bf B}_j)={\rm dist}(w,\partial\Delta_{\alpha})\cdot O_{\mathfrak{q}_3}(\xi^j)$ for all $1\leq j\leq k$,
		\item ${\rm dist}({\bf B}_j,w)=O_{\mathfrak{q}_3}({\rm diam}({\bf B}_j))$ for all $1\leq j\leq k$,
		\item for all $1\leq j\leq k$ and all $w'\in{\bf B}_j$, $$\left\{P_{\alpha}^{\comp t}(w'):\ 0\leq t\leq m_j\right\}\subseteq\Lambda_{\alpha}^{n+1}\ {\rm and}\ P_{\alpha}^{\comp m_j}(w')\in\hat{U}_0^{n_j+\mathfrak{l}}\cap\tilde{\Omega}_{c_0}^{n_j}\setminus\Lambda_\alpha^{n_j+\mathfrak{q}_3}.$$
		Moreover, for all $1\leq t\leq j$ and $m_{t-1}\leq s\leq m_t$ (where $m_0=0$), there exists a curve $\tilde{\gamma}_{j}^{(s)}$ connecting $P_{\alpha}^{\comp s}(w')$ and $P_{\alpha}^{\comp s}(w)$ such that
		$l_{\mathbb{C}\setminus\overline{\Delta_{\alpha}}}(\tilde{\gamma}_{j}^{(s)})=O_{\mathfrak{q}_3}(\xi^{j-t})$
		and 
		$P_{\alpha}^{\comp (m_j-s)}\comp\tilde{\gamma}_{j}^{(s)}=\tilde{\gamma}_{j}^{(m_j)}$.
	\end{itemize}
	
	We take ${\bf t}$ ($\asymp_{\mathfrak{q}_3} 1$) large enough so that
	$C\xi^{\bf t}<\eta<1$ for some real number $\eta$. It follows from the proof of Lemma \ref{l20257192} that $l_z\asymp{\rm dist}(w,\partial\Delta_{\alpha})$.
	Then there exists
	a sufficiently big positive integer $j_0$ ($\asymp_{\mathfrak{q}_3} 1$) such that ${\rm diam}({\bf B}_{j_0})\leq l_z\cdot{\bf\eta}$, since ${\rm diam}({\bf B}_j)={\rm dist}(w,\partial\Delta_{\alpha})\cdot O_{\mathfrak{q}_3}(\xi^j)$ for all $1\leq j\leq k$.
	We take $c>1$ (depending on $\mathfrak{q}_3$) so that ${\rm dist}({\bf B}_j,w)\leq\frac{c-1}{2}\cdot{\rm diam}({\bf B}_j)$ for all $1\leq j\leq k$.
	Set $j_s:=j_0+s{\bf t}$ for all $0\leq s\leq\lfloor\frac{k-j_0}{\bf t}\rfloor$.
	Then we have
	\begin{itemize}
		\item ${\rm diam}({\bf B}_{j_0})\leq l_z\cdot{\bf\eta}$,
		\item ${\rm diam}({\bf B}_{j_{s+1}})\leq {\bf\eta}\cdot{\rm diam}({\bf B}_{j_s})$ for all $0\leq s\leq\lfloor\frac{k-j_0}{\bf t}\rfloor-1$,
		\item ${\rm dist}({\bf B}_{j_s},w)\leq\frac{c-1}{2}\cdot{\rm diam}({\bf B}_{j_s})$ for all $0\leq s\leq\lfloor\frac{k-j_0}{\bf t}\rfloor$,
		\item $P_{\alpha}^{\comp t}({\bf B}_{j_s})\subseteq\Lambda_{\alpha}^{n+1}$\
		{ for} { all} $0\leq s\leq\lfloor\frac{k-j_0}{\bf t}\rfloor$ and $0\leq t\leq m_{j_s}$,
		\item
		$P_{\alpha}^{\comp m_{j_s}}({\bf B}_{j_s})\subseteq\hat{U}_0^{n_{j_s}+\mathfrak{l}}\cap\tilde{\Omega}_{c_0}^{n_{j_s}}\setminus\Lambda_{\alpha}^{n_{j_s}+\mathfrak{q}_3}$ for all $0\leq s\leq\lfloor\frac{k-j_0}{\bf t}\rfloor$,
		\item for all $0\leq s\leq\lfloor\frac{k-j_0}{\bf t}\rfloor$, $w'\in{\bf B}_{j_s}$, $1\leq t\leq j_s$ and $m_{t-1}\leq l\leq m_t$,
		there exists a curve $\tilde{\gamma}_{j_s}^{(l)}$ connecting $P_{\alpha}^{\comp l}(w')$ and $P_{\alpha}^{\comp l}(w)$ such that
		$l_{\mathbb{C}\setminus\overline{\Delta_{\alpha}}}(\tilde{\gamma}_{j_s}^{(l)})=O_{\mathfrak{q}_3}(\eta^{j_s-t})$
		and 
		$P_{\alpha}^{\comp (m_{j_s}-l)}\comp\tilde{\gamma}_{j_s}^{(l)}=\tilde{\gamma}_{j_s}^{(m_{j_s})}$.
	\end{itemize}

	\subsection{Pseudo-filling-in of fjords}
	Firstly, we take two positive integers $p$ and $\mathfrak{l}$ such that $p\gg\mathfrak{q}_4$ and $1\leq\mathfrak{l}\ll\mathfrak{q}_2$.
	We fix a positive integer $n$ ($\geq2\mathfrak{q}_4$) and assume that 
	\begin{equation}
		\label{e20260410a}\hat{\Delta}_{\alpha}^{n+p+\mathfrak{l}}\not=\hat{\Delta}_{\alpha}^{n+p}\ {\rm and}\
		\hat{\Delta}_{\alpha}^{n+j}=\hat{\Delta}_{\alpha}^{n+p}\ {\rm for\ all}\ \mathfrak{l}\leq j\leq p.
	\end{equation}
	We fix a positive integer $\mathfrak{q}_5$ such that $\mathfrak{q}_4\ll\mathfrak{q}_5\ll p$.
	We set
	$$\hat{U}_{0,p}^{n}:=\overline{(\hat{U}_0^{n+p}\cap\Omega_{c_0}^{n+2\mathfrak{q}_5})\setminus\Omega_{c_0}^{n+p-2\mathfrak{q}_5}}.$$
	
	\noindent $4.3.1.$ {\bf Thickening of $\hat{U}_{0,p}^{n}$.}
	
	\vspace{0.2cm}
	\noindent We fix a positive integer $N$ and will thicken $\hat{U}_{0,p}^{n}$ $N$ times.
	To do it, we firstly introduce some notations.
	We view	$$\tilde{\mathcal{S}}^{(1)}(\hat{U}_{0,p}^{n}):=\overline{\Omega_{c_0}^{n+\mathfrak{q}_5}\setminus\Omega_{c_0}^{n+2\mathfrak{q}_5}}$$
	as a rectangle with $\partial^{v,1}\tilde{\mathcal{S}}^{(1)}(\hat{U}_{0,p}^{n})=\overline{\partial\Omega_{c_0}^{n+\mathfrak{q}_5}\setminus\partial\Delta_{\alpha}}$ and $\partial^{v,0}\tilde{\mathcal{S}}^{(1)}(\hat{U}_{0,p}^{n})=\overline{\partial\Omega_{c_0}^{n+2\mathfrak{q}_5}\setminus\partial\Delta_{\alpha}}$.
	We view	$$\tilde{\mathcal{S}}^{(2)}(\hat{U}_{0,p}^{n}):=\overline{\Omega_{c_0}^{n+p-2\mathfrak{q}_5}\setminus\Omega_{c_0}^{n+p-\mathfrak{q}_5}}$$
	as a rectangle with $\partial^{v,0}\tilde{\mathcal{S}}^{(2)}(\hat{U}_{0,p}^{n})=\overline{\partial\Omega_{c_0}^{n+p-2\mathfrak{q}_5}\setminus\partial\Delta_{\alpha}}$ and $\partial^{v,1}\tilde{\mathcal{S}}^{(2)}(\hat{U}_{0,p}^{n})=\overline{\partial\Omega_{c_0}^{n+p-\mathfrak{q}_5}\setminus\partial\Delta_{\alpha}}$.
	Due to Lemma \ref{l7301}, we let $\mathfrak{q}_5$ sufficiently big (if necessary, we could increase $p$) so that 
	\begin{equation}
		\label{e20260406}\mathcal{W}(\tilde{\mathcal{S}}^{(1)}(\hat{U}_{0,p}^{n}))\geq\Delta\ {\rm and}\ \mathcal{W}(\tilde{\mathcal{S}}^{(2)}(\hat{U}_{0,p}^{n}))\geq\Delta.
	\end{equation}
	Since $\Delta\gg1$, by Corollary \ref{c20260331a} we have that
	$${\rm dist}_{\hat{\mathbb{C}}\setminus\overline{\Delta_{\alpha}}}(\partial\Omega_{c_0}^{n+\mathfrak{q}_5}, \partial\Omega_{c_0}^{n+2\mathfrak{q}_5})\gg1\ {\rm and}\ {\rm dist}_{\hat{\mathbb{C}}\setminus\overline{\Delta_{\alpha}}}(\partial\Omega_{c_0}^{n+p-2\mathfrak{q}_5}, \partial\Omega_{c_0}^{n+p-\mathfrak{q}_5})\gg1.$$ Then by Lemma \ref{l25070716} we have 
	\begin{equation}
		\label{e7241}{\rm dist}_{\mathbb{C}\setminus\overline{\Delta_{\alpha}}}(\partial\Omega_{c_0}^{n+\mathfrak{q}_5}, \partial\Omega_{c_0}^{n+2\mathfrak{q}_5})\gg1\ {\rm and}\ {\rm dist}_{\mathbb{C}\setminus\overline{\Delta_{\alpha}}}(\partial\Omega_{c_0}^{n+p-2\mathfrak{q}_5}, \partial\Omega_{c_0}^{n+p-\mathfrak{q}_5})\gg1.
	\end{equation}
	Let
	$$h_1: \tilde{\mathcal{S}}^{(1)}(\hat{U}_{0,p}^{n})\to E_{x_1}$$ be the conformal map for some positive real number $x_1$ with $$h_1^{-1}(\{(x_1,y):0\leq y\leq 1\})=\partial^{v,1}\tilde{\mathcal{S}}^{(1)}(\hat{U}_{0,p}^{n})$$
	and $$h_1^{-1}(\{(0,y):0\leq y\leq 1\})=\partial^{v,0}\tilde{\mathcal{S}}^{(1)}(\hat{U}_{0,p}^{n}).$$
	For all positive integer $1\leq j\leq N$, we denote by $\tilde{\mathcal{S}}_{j}^{(1)}(\hat{U}_{0,p}^{n})$ the rectangle $$h_1^{-1}\{(x,y):\frac{j-1}{N}x_1\leq x\leq\frac{j}{N}x_1, 0\leq y\leq 1\}$$
	with $\partial^{v,0}\tilde{\mathcal{S}}_{j}^{(1)}(\hat{U}_{0,p}^{n})=h_1^{-1}\{(\frac{j-1}{N}x_1,y):0\leq y\leq 1\}$ and $\partial^{v,1}\tilde{\mathcal{S}}_{j}^{(1)}(\hat{U}_{0,p}^{n})=h_1^{-1}\{(\frac{j}{N}x_1,y):0\leq y\leq 1\}$.
	Let
	$$h_2: \tilde{\mathcal{S}}^{(2)}(\hat{U}_{0,p}^{n})\to E_{x_2}$$ be the conformal map for some positive real number $x_2$ with $$h_2^{-1}\{(x_2,y):0\leq y\leq 1\}=\partial^{v,1}\tilde{\mathcal{S}}^{(2)}(\hat{U}_{0,p}^{n})$$
	and $$h_2^{-1}\{(0,y):0\leq y\leq 1\}=\partial^{v,0}\tilde{\mathcal{S}}^{(2)}(\hat{U}_{0,p}^{n}).$$
	For all positive integer $1\leq j\leq N$, we denote by $\tilde{\mathcal{S}}_{j}^{(2)}(\hat{U}_{0,p}^{n})$ the rectangle $$h_2^{-1}\{(x,y):\frac{j-1}{N}x_2\leq x\leq\frac{j}{N}x_2,0\leq y\leq 1\}$$
	with $\partial^{v,0}\tilde{\mathcal{S}}_{j}^{(2)}(\hat{U}_{0,p}^{n})=h_2^{-1}\{(\frac{j-1}{N}x_2,y):0\leq y\leq 1\}$ and $\partial^{v,1}\tilde{\mathcal{S}}_{j}^{(2)}(\hat{U}_{0,p}^{n})=h_2^{-1}\{(\frac{j}{N}x_2,y):0\leq y\leq 1\}$.
	
	For all $I\in\bigcup\limits_{j=0}^{\mathfrak{l}-1}\mathfrak{D}_{n+p+j}$ with $\beta_I\not=\emptyset$, we denote by $\tilde{\mathcal{S}}^{(3)}(\beta_I)$ the rectangle $\overline{\mathcal{J}(\tilde{\beta}_I)\setminus\mathcal{J}(\beta_I)}$ with 
	$\partial^{v,0}\tilde{\mathcal{S}}^{(3)}(\beta_I)=\beta_I$ and $\partial^{v,1}\tilde{\mathcal{S}}^{(3)}(\beta_I)=\tilde{\beta}_I$.
	Let
	$$h_{3,I}: \tilde{\mathcal{S}}^{(3)}(\beta_I)\to E_{x_{3,I}}$$ be the conformal map for some positive real number $x_{3,I}$ with $$h_{3,I}^{-1}\{(x_{3,I},y):0\leq y\leq 1\}=\partial^{v,1}\tilde{\mathcal{S}}^{(3)}(\beta_I)$$
	and $$h_{3,I}^{-1}\{(0,y):0\leq y\leq 1\}=\partial^{v,0}\tilde{\mathcal{S}}^{(3)}(\beta_I).$$
	For all positive integer $1\leq j\leq N$, we denote by $\tilde{\mathcal{S}}_j^{(3)}(\beta_I)$ the rectangle $$h_{3,I}^{-1}\{(x,y):\frac{j-1}{N}x_{3,I}\leq x\leq\frac{j}{N}x_{3,I},0\leq y\leq 1\}$$
	with $\partial^{v,0}\tilde{\mathcal{S}}_j^{(3)}(\beta_I)=h_{3,I}^{-1}\{(\frac{j-1}{N}x_{3,I},y):0\leq y\leq 1\}$ and $\partial^{v,1}\tilde{\mathcal{S}}_j^{(3)}(\beta_I)=h_{3,I}^{-1}\{(\frac{j}{N}x_{3,I},y):0\leq y\leq 1\}$.
	We set
	$$\tilde{\mathcal{S}}_{j}^{(3)}(\hat{U}_{0,p}^{n}):=\bigcup_{s=0}^{\mathfrak{l}-1}\bigcup_{I\in\mathfrak{D}_{n+p+s},\beta_I\not=\emptyset}\tilde{\mathcal{S}}_j^{(3)}(\beta_I)$$
	and let $U_0(\tilde{\mathcal{S}}_{j}^{(3)}(\hat{U}_{0,p}^{n}))$ be the union of components of
	$P_{\alpha}^{-1}(\tilde{\mathcal{S}}_{j}^{(3)}(\hat{U}_{0,p}^{n}))$ intersecting $\hat{U}_0^{n+p}$.
	
	Now we can define thickenings of $\hat{U}_{0,p}^{n}$. For all positive integer $1\leq j\leq N$, the $j$-th thickening of $\hat{U}_{0,p}^{n}$ is defined by
	$$\mathcal{S}_{j}(\hat{U}_{0,p}^{n}):=\mathcal{S}_j^{(1)}(\hat{U}_{0,p}^{n})\cup\mathcal{S}_j^{(2)}(\hat{U}_{0,p}^{n})\cup\mathcal{S}_j^{(3)}(\hat{U}_{0,p}^{n}),$$
	where
	$$\mathcal{S}_j^{(1)}(\hat{U}_{0,p}^{n}):=\left(\hat{U}_0^{n+p}\cup\bigcup\limits_{s=1}^jU_0(\tilde{\mathcal{S}}_{s}^{(3)}(\hat{U}_{0,p}^{n}))\right)\cap\tilde{\mathcal{S}}_j^{(1)}(\hat{U}_{0,p}^{n}),$$
	$$\mathcal{S}_j^{(2)}(\hat{U}_{0,p}^{n}):=\left(\hat{U}_0^{n+p}\cup\bigcup\limits_{s=1}^jU_0(\tilde{\mathcal{S}}_{s}^{(3)}(\hat{U}_{0,p}^{n}))\right)\cap\tilde{\mathcal{S}}_j^{(2)}(\hat{U}_{0,p}^{n})$$
	and
	$$\mathcal{S}_{j}^{(3)}(\hat{U}_{0,p}^{n}):=U_0(\tilde{\mathcal{S}}_{j}^{(3)}(\hat{U}_{0,p}^{n}))\cap\left(\overline{\Omega_{c_0}^{n+2\mathfrak{q}_5}\setminus\Omega_{c_0}^{n+p-2\mathfrak{q}_5}}\cup\bigcup_{s=1}^j\tilde{\mathcal{S}}_{s}^{(1)}(\hat{U}_{0,p}^{n})\cup\bigcup_{s=1}^j\tilde{\mathcal{S}}_{s}^{(2)}(\hat{U}_{0,p}^{n})\right).$$
	Figure \ref{f20260720f} illustrates thickenings of $\hat{U}_{0,p}^{n}$ for the case of $N=2$.
	\begin{figure}
		\centering
		\includegraphics[scale=0.4]{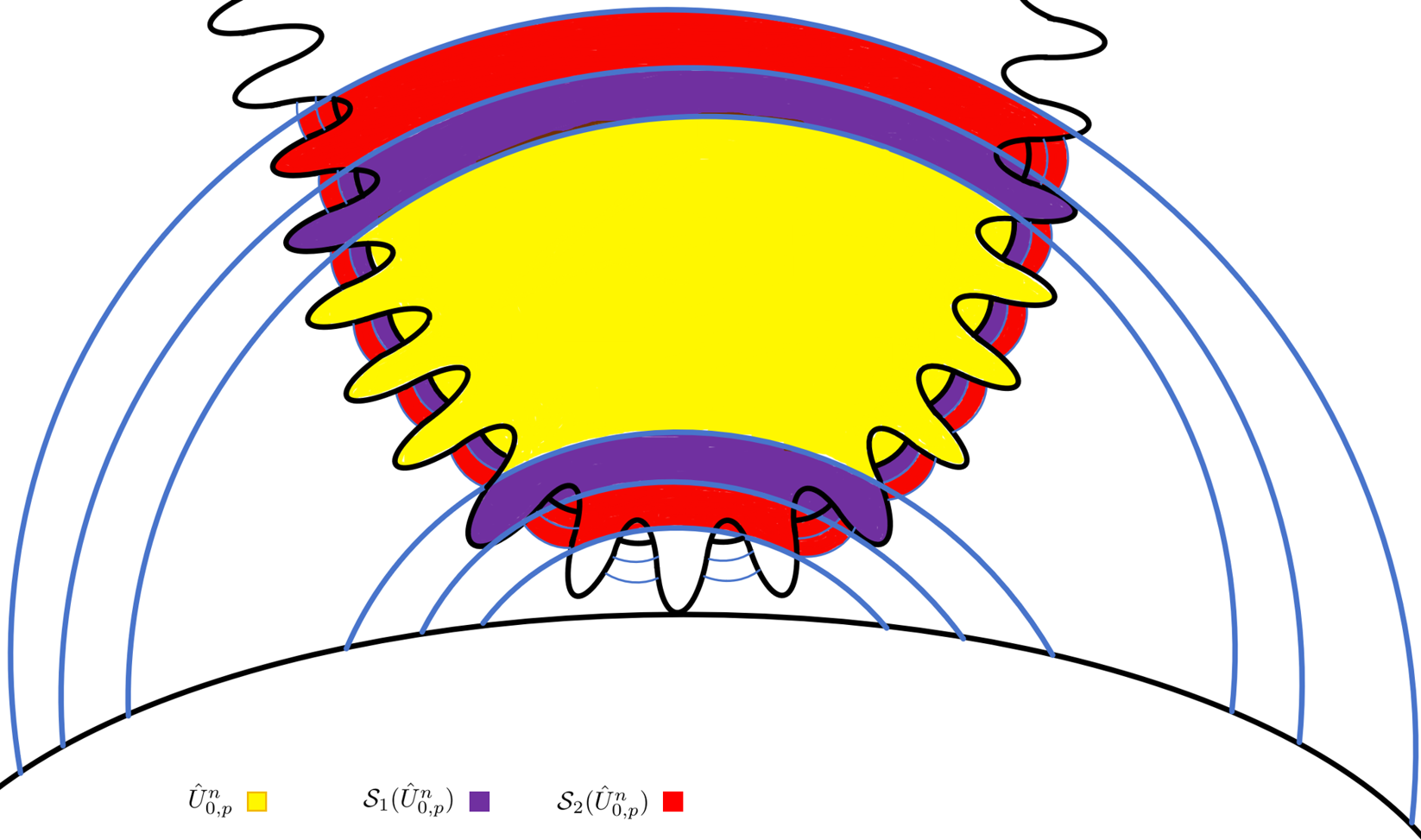}
		\caption{Thickenings of $\hat{U}_{0,p}^{n}$ for $N=2$.}
		\label{f20260720f}
	\end{figure}
	
	For all $1\leq j\leq N$, the following equality is  easy to be checked:
	\begin{align}
		\label{e20260406c}\hat{U}_{0,p}^{n}\cup\bigcup\limits_{s=1}^{j}\mathcal{S}_s(\hat{U}_{0,p}^{n})=&
		\left(\hat{U}_0^{n+p}\cup\bigcup\limits_{s=1}^jU_0(\tilde{\mathcal{S}}_{s}^{(3)}(\hat{U}_{0,p}^{n}))\right)\qquad\qquad\qquad\qquad\qquad\qquad
	\end{align}
	$$\qquad\qquad\qquad\qquad\qquad\cap\left(\overline{\Omega_{c_0}^{n+2\mathfrak{q}_5}\setminus\Omega_{c_0}^{n+p-2\mathfrak{q}_5}}\cup\bigcup_{s=1}^j\tilde{\mathcal{S}}_{s}^{(1)}(\hat{U}_{0,p}^{n})\cup\bigcup_{s=1}^j\tilde{\mathcal{S}}_{s}^{(2)}(\hat{U}_{0,p}^{n})\right).$$
	\begin{lemma}
		\label{l12615}There exists a strictly increasing real function $\kappa:\mathbb{R}_+\to\mathbb{R}_+$, independent of $\alpha$, such that
		for all $N\geq2$, $1\leq j\leq N-1$ and $z\in\hat{U}_{0,p}^{n}\cup\bigcup\limits_{s=1}^j\mathcal{S}_s(\hat{U}_{0,p}^{n})$ with $P_{\alpha}(z)\not\in\hat{\Delta}_{\alpha}^{n+p+\mathfrak{l}}$, we have that for all $z'\in\mathbb{C}$ with the curve $\gamma$ connecting $z$ and $z'$ satisfying $l_{\mathbb{C}\setminus\overline{\Delta_{\alpha}}}(P_{\alpha}(\gamma))<
		\kappa(\frac{\Delta}{N})$, $$z'\in\hat{U}_{0,p}^{n}\cup\left(\bigcup_{s=1}^{j}\mathcal{S}_s(\hat{U}_{0,p}^{n})\right)\cup{\rm int}(\mathcal{S}_{j+1}(\hat{U}_{0,p}^{n})).$$
	\end{lemma}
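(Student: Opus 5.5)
The idea is to push everything forward by $P_{\alpha}$, use the fact that each thickening layer is a conformal sub-rectangle of width $\asymp\frac{\Delta}{N}$ (or a pullback of one), and then pull back along a univalent branch. Concretely, set $X_j:=\hat{U}_{0,p}^{n}\cup\bigcup_{s=1}^j\mathcal{S}_s(\hat{U}_{0,p}^{n})$ and use the description (\ref{e20260406c}). A point $z\in X_j$ with $P_{\alpha}(z)\notin\hat{\Delta}_{\alpha}^{n+p+\mathfrak{l}}$ then has one of two forms. Either $z\in\hat{U}_0^{n+p}\setminus P_{\alpha}^{-1}(\hat{\Delta}_{\alpha}^{n+p+\mathfrak{l}})$, so that by (\ref{e20260410a}) $P_{\alpha}(z)$ lies in a fjord $\mathcal{J}(\beta_I)$ with $I\in\mathfrak{D}_{n+p+s}$, $0\le s\le\mathfrak{l}-1$. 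Or $z\in U_0(\tilde{\mathcal{S}}^{(3)}_{s}(\hat{U}_{0,p}^{n}))$ with $s\le j$, so that $P_{\alpha}(z)\in\tilde{\mathcal{S}}^{(3)}_s(\beta_I)$. In both cases $P_{\alpha}(z)$ lies in $\mathcal{J}(\beta_I)\cup\bigcup_{s\le j}\tilde{\mathcal{S}}_s^{(3)}(\beta_I)=h_{3,I}^{-1}$-preimage of the region $\{x\le \frac jN x_{3,I}\}$ of the fjord. In addition, $z$ lies in the ``horizontal'' constraint region $\overline{\Omega_{c_0}^{n+2\mathfrak{q}_5}\setminus\Omega_{c_0}^{n+p-2\mathfrak{q}_5}}\cup\bigcup_{s\le j}(\tilde{\mathcal{S}}_s^{(1)}\cup\tilde{\mathcal{S}}_s^{(2)})$.

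The key step is a quantitative separation in the image. I would show that the hyperbolic distance in $\mathbb{C}\setminus\overline{\Delta_{\alpha}}$ between $\partial^{v,1}\tilde{\mathcal{S}}^{(3)}_{j}(\beta_I)$ and $\partial^{v,1}\tilde{\mathcal{S}}^{(3)}_{j+1}(\beta_I)$ is at least some $\kappa_0(\mathcal{W}(\tilde{\mathcal{S}}^{(3)}_{j+1}(\beta_I)))$. This follows by the argument of Claim 1 and Corollary \ref{c20260331a}: a hyperbolic ball of radius $\log\frac{e^{4\pi/\mathcal{W}}+1}{e^{4\pi/\mathcal{W}}-1}$ centered on one vertical side cannot meet the other side. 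Here $\mathcal{W}\ge\Delta/N$ by (\ref{e20251214a1}). I would pass between $\hat{\mathbb{C}}\setminus\overline{\Delta_\alpha}$ and $\mathbb{C}\setminus\overline{\Delta_\alpha}$ using Lemma \ref{l25070716}, since everything lies in $\Lambda_\alpha^{\mathfrak{q}_4}$. The same estimate applies to the layers $\tilde{\mathcal{S}}_j^{(1)},\tilde{\mathcal{S}}_j^{(2)}$, whose widths are $\ge\Delta/N$ by (\ref{e20260406}). Then $\kappa(t):=c\log\frac{e^{4\pi/t}+1}{e^{4\pi/t}-1}$, with a universal $c$ absorbing the comparability constants, is strictly increasing and independent of $\alpha$.

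Now let $\gamma$ be a curve from $z$ to $z'$ with $l(P_\alpha(\gamma))<\kappa(\Delta/N)$. The image curve $P_{\alpha}(\gamma)$ cannot leave the region $\{x<\frac{j+1}{N}x_{3,I}\}$ of $\tilde{\mathcal{J}}$. It also cannot cross $\tilde\beta_I$ or $\partial\Delta_\alpha$: it stays in $\mathbb{C}\setminus\overline{\Delta_\alpha}$ because it has finite hyperbolic length. Hence $P_{\alpha}(\gamma)\subseteq\mathcal{J}(\beta_I)\cup\bigcup_{s\le j}\tilde{\mathcal{S}}^{(3)}_s(\beta_I)\cup{\rm int}\,\tilde{\mathcal{S}}^{(3)}_{j+1}(\beta_I)$. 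Lifting along the branch of $P_{\alpha}^{-1}$ that contains $z$ is legitimate because $\mathcal{J}(\tilde\beta_I)$ avoids $P_\alpha(c_0)$ by (P2). The lifted curve $\gamma$ therefore stays in $\hat{U}_0^{n+p}\cup\bigcup_{s\le j+1}U_0(\tilde{\mathcal{S}}_s^{(3)})$, because the component is the one meeting $\hat{U}_0^{n+p}$. The horizontal constraint is handled the same way, applied to $\gamma$ itself. By Schwarz (the contraction of $P_\alpha^{-1}$ branches in the hyperbolic metric), $l(\gamma)\le l(P_\alpha(\gamma))<\kappa$, so $\gamma$ cannot cross a whole layer $\tilde{\mathcal{S}}^{(1)}_{j+1}$ or $\tilde{\mathcal{S}}^{(2)}_{j+1}$. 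Intersecting the two constraints via (\ref{e20260406c}) gives $z'\in X_j\cup{\rm int}\,\mathcal{S}_{j+1}$.

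The main obstacle I expect is the bookkeeping at the junctions, in two places. The first is where the fjord pieces $\tilde{\mathcal{S}}^{(3)}$ meet the boundary pieces $\tilde{\mathcal{S}}^{(1)},\tilde{\mathcal{S}}^{(2)}$. The second is points $z$ lying on $\partial\hat{U}_0^{n+p}$ away from fjords, where $P_\alpha(z)$ sits on $\partial\hat\Delta_\alpha^{n+p}$ but is not in a fjord. The hypothesis $P_\alpha(z)\notin\hat{\Delta}_{\alpha}^{n+p+\mathfrak{l}}$ is designed to exclude exactly these points, and I would verify this carefully using (\ref{e20260410a}).
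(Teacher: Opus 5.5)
Your route is the paper's. You push $\gamma$ forward and use that every layer $\tilde{\mathcal{S}}^{(3)}_s(\beta_I)$, $\tilde{\mathcal{S}}^{(1)}_s$, $\tilde{\mathcal{S}}^{(2)}_s$ has width at least $\Delta/N$. With the estimate of Corollary~\ref{c20260331a} this confines $P_\alpha(\gamma)$ to ${\rm int}(\mathcal{J}(\beta_I)\cup\bigcup_{s\le j+1}\tilde{\mathcal{S}}^{(3)}_s(\beta_I))$. You then lift back, control the $\tilde{\mathcal{S}}^{(1)},\tilde{\mathcal{S}}^{(2)}$ constraint on $\gamma$ itself via the Schwarz lemma, and intersect using (\ref{e20260406c}) and the closedness of your $X_j$.

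A small simplification: you need neither Lemma~\ref{l25070716} nor a constant $c$. Since $\mathbb{C}\setminus\overline{\Delta_\alpha}\subset\hat{\mathbb{C}}\setminus\overline{\Delta_\alpha}$, the Schwarz lemma already gives ${\rm dist}_{\mathbb{C}\setminus\overline{\Delta_\alpha}}\geq{\rm dist}_{\hat{\mathbb{C}}\setminus\overline{\Delta_\alpha}}$. So $\kappa(t)=\log\frac{e^{4\pi/t}+1}{e^{4\pi/t}-1}$ works directly, and it is visibly strictly increasing and $\alpha$-independent.

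The step that is not justified as written is the lift.
\begin{itemize}
\item That $\mathcal{J}(\tilde\beta_I)$ avoids $P_\alpha(c_0)$ only tells you that $\gamma$ is the unique lift of $P_\alpha(\gamma)$ from $z$. Equivalently, $\gamma$ lies in the component $C$ of $P_\alpha^{-1}(R)$ containing $z$, where $R=\mathcal{J}(\beta_I)\cup\bigcup_{s\le j+1}\tilde{\mathcal{S}}^{(3)}_s(\beta_I)$.
\item Your conclusion needs more: $C$ must be the sheet from which $\hat U_0^{n+p}\setminus\{c_0\}$ and the sets $U_0(\tilde{\mathcal{S}}^{(3)}_s(\hat U^n_{0,p}))$ are built. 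Then $C\cap P_\alpha^{-1}(\mathcal{J}(\beta_I))\subseteq\hat U_0^{n+p}$. In particular, the piece of $C$ over the $(j+1)$-st layer, which $z$ need not meet, is a component of $P_\alpha^{-1}(\tilde{\mathcal{S}}^{(3)}_{j+1})$ meeting $\hat U_0^{n+p}$. Equivalently, the other preimage sheet never meets $\hat U_0^{n+p}$.
\item Your phrase ``the component meeting $\hat U_0^{n+p}$'' presupposes exactly this. It is a global fact about how $\tilde\Delta_\alpha^{n+p}$ sits relative to the critical value, not a consequence of merely avoiding it.
\end{itemize}

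The paper supplies this with Corollary~\ref{c20251027}. The arc $\Gamma_\alpha$ from $P_\alpha(c_0)$ to $\infty$ outside $\tilde\Delta_\alpha^{n+p}\setminus\{P_\alpha(c_0)\}$ gives two univalent branches $h^{(1)}_{\Gamma_\alpha},h^{(2)}_{\Gamma_\alpha}$ of $P_\alpha^{-1}$ on $\hat{\mathbb{C}}\setminus\Gamma_\alpha$ with disjoint images. Moreover $\hat U_0^{n+p}\setminus\{c_0\}=h^{(2)}_{\Gamma_\alpha}(\hat\Delta_\alpha^{n+p}\setminus\{P_\alpha(c_0)\})$ and $U_0(\tilde{\mathcal{S}}^{(3)}_s)=h^{(2)}_{\Gamma_\alpha}(\tilde{\mathcal{S}}^{(3)}_s)$. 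Since $z$ lies in the image of $h^{(2)}_{\Gamma_\alpha}$ and $P_\alpha(\gamma)\subseteq\hat{\mathbb{C}}\setminus\Gamma_\alpha$, it follows that $\gamma=h^{(2)}_{\Gamma_\alpha}(P_\alpha(\gamma))$. This is exactly what your sentence needs; with it inserted, your argument goes through.
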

	\begin{proof}
		For all $x>0$, we denote by $\mathcal{H}_x$ the set of all rectangles $H$ satisfying the following properties:
		\begin{itemize}
			\item $H\setminus(\partial^{h,0}H\cup\partial^{h,1}H)\subseteq\mathbb{C}\setminus\overline{\Delta_{\alpha}}$,
			\item $\partial^{h,0}H\subseteq\partial\Delta_{\alpha}$ and $\partial^{h,1}H\subseteq\partial\Delta_{\alpha}$,
			\item $\mathcal{W}(H)=x$.
		\end{itemize}
		For all $x>0$, we let $\kappa(x)$ be the infimum of ${\rm dist}_{\mathbb{C}\setminus\overline{\Delta_{\alpha}}}(\partial^{v,0}H,\partial^{v,1}H)$ for all $H\in\mathcal{H}_x$. It is clear that $\kappa(x)$ is independent of $\alpha$ and $\lim\limits_{x\to0^+}\kappa(x)=0$.
		By Corollary \ref{c20260331a} and the Schwarz lemma,
		$$\kappa(x)\geq\inf\limits_{H\in\mathcal{H}_x}{\rm dist}_{\hat{\mathbb{C}}\setminus\overline{\Delta_{\alpha}}}(\partial^{v,0}H,\partial^{v,1}H)\geq\log(\frac{e^{4\pi/x}+1}{e^{4\pi/x}-1}).$$
		This implies that $\kappa(x)$ is a strictly increasing positive function.
		
		Since $z\in\hat{U}_{0,p}^{n}\cup\bigcup\limits_{s=1}^j\mathcal{S}_s(\hat{U}_{0,p}^{n})$ and $P_{\alpha}(z)\not\in\hat{\Delta}_{\alpha}^{n+p+\mathfrak{l}}$, we have $P_{\alpha}(z)\in\mathcal{J}(\beta_I)\cup
		\bigcup\limits_{s=1}^j\tilde{\mathcal{S}}_s^{(3)}(\beta_I)$
		for some $I\in\bigcup\limits_{j=0}^{\mathfrak{l}-1}\mathfrak{D}_{n+p+j}$.
		Let $z'\in\mathbb{C}$ with the curve $\gamma$ connecting $z$ and $z'$ satisfying $l_{\mathbb{C}\setminus\overline{\Delta_{\alpha}}}(P_{\alpha}(\gamma))<
		\kappa(\frac{\Delta}{N})$. Observe that $\mathcal{W}(\tilde{\mathcal{S}}_s^{(3)}(\beta_I))\geq\Delta/N$ for all $1\leq s\leq N$.
		Then 
		\begin{equation}
			\label{e20260405a}P_{\alpha}(\gamma)\subseteq{\rm int}(\mathcal{J}(\beta_I)\cup
			\bigcup\limits_{s=1}^{j+1}\tilde{\mathcal{S}}_s^{(3)}(\beta_I)).
		\end{equation}
		Let $\Gamma_{\alpha}$ be the same as that in Corollary \ref{c20251027} and
		$h_{\Gamma_{\alpha}}^{(1)},h_{\Gamma_{\alpha}}^{(2)}$ be the single-value analytic branches of $P_{\alpha}^{-1}$ on 
		$\hat{\mathbb{C}}\setminus\Gamma_{\alpha}$ such that $h_{\Gamma_{\alpha}}^{(1)}(0)=0$ and $h_{\Gamma_{\alpha}}^{(2)}(0)\not=0$.
		It is known that $P_{\alpha}(c_{0})$ is not contained in the closure of any big fjord of level $\geq-1$, see Section \ref{s2.3}. Then
		\begin{equation}
			\label{e20260406d}\mathcal{J}(\beta_I)\cup\bigcup\limits_{s=1}^{N}\tilde{\mathcal{S}}_s^{(3)}(\beta_I)\subseteq\overline{\mathcal{J}(\tilde{\beta}_I)}\subseteq\tilde{\Delta}_{\alpha}^{n+p}\setminus\{P_{\alpha}(c_{0})\}.
		\end{equation}
		Corollary \ref{c20251027} gives 
		\begin{equation}
			\label{e20260406e}\tilde{\Delta}_{\alpha}^{n+p}\setminus\{P_{\alpha}(c_0)\}\subseteq\hat{\mathbb{C}}\setminus\Gamma_{\alpha}, 
		\end{equation}
		and hence
		$$P_{\alpha}(\gamma)\subseteq\mathcal{J}(\beta_I)\cup
		\bigcup\limits_{s=1}^{N}\tilde{\mathcal{S}}_s^{(3)}(\beta_I)\subseteq\hat{\mathbb{C}}\setminus\Gamma_{\alpha}.$$
		Then $h_{\Gamma_{\alpha}}^{(1)}(P_{\alpha}(\gamma))$ and $h_{\Gamma_{\alpha}}^{(2)}(P_{\alpha}(\gamma))$ are exactly two lifts of $P_{\alpha}(\gamma)$, and 
		\begin{equation}
			\label{e20260405b}h_{\Gamma_{\alpha}}^{(1)}(P_{\alpha}(\gamma))\cap h_{\Gamma_{\alpha}}^{(2)}(\hat{\mathbb{C}}\setminus\Gamma_{\alpha})=\emptyset.
		\end{equation}
		
		Recall the definitions of 
		$\hat{U}_{0}^{n+p}$ and $U_0(\tilde{\mathcal{S}}_{s}^{(3)}(\hat{U}_{0,p}^{n})), 1\leq s\leq N$, that is $\hat{U}_{0}^{n+p}$ is
		the closure of the component of $P_{\alpha}^{-1}(\hat{\Delta}_{\alpha}^{n+p})\setminus\{c_0\}$ not containing $0$
		and
		$U_0(\tilde{\mathcal{S}}_{s}^{(3)}(\hat{U}_{0,p}^{n}))$ is the union of components of
		$P_{\alpha}^{-1}(\tilde{\mathcal{S}}_{s}^{(3)}(\hat{U}_{0,p}^{n}))$ intersecting $\hat{U}_0^{n+p}$. Then it is clear that
		$$\hat{U}_0^{n+p}\setminus\{c_0\}=h_{\Gamma_{\alpha}}^{(2)}\left(\hat{\Delta}_{\alpha}^{n+p}\setminus\{P_{\alpha}(c_0)\}\right)\ {\rm and}\ 
		U_0(\tilde{\mathcal{S}}_{s}^{(3)}(\hat{U}_{0,p}^{n}))=h_{\Gamma_{\alpha}}^{(2)}\left(\tilde{\mathcal{S}}_{s}^{(3)}(\hat{U}_{0,p}^{n})\right), 1\leq s\leq N.$$
		Then by (\ref{e20260406c}), (\ref{e20260406d}), (\ref{e20260406e}) and the above two equalities, we have 
		\begin{align*}
			z\in\hat{U}_{0,p}^{n}\cup\bigcup\limits_{s=1}^{j}\mathcal{S}_s(\hat{U}_{0,p}^{n})&\subseteq
			\left(\hat{U}_0^{n+p}\setminus\{c_0\}\right)\cup\bigcup\limits_{s=1}^jU_0(\tilde{\mathcal{S}}_{s}^{(3)}(\hat{U}_{0,p}^{n}))\\
			&=h_{\Gamma_{\alpha}}^{(2)}\left(\hat{\Delta}_{\alpha}^{n+p}\setminus\{P_{\alpha}(c_0)\}\right)\cup \bigcup\limits_{s=1}^jh_{\Gamma_{\alpha}}^{(2)}\left(\tilde{\mathcal{S}}_{s}^{(3)}(\hat{U}_{0,p}^{n})\right)\\
			&\subseteq h_{\Gamma_{\alpha}}^{(2)}\left(\tilde{\Delta}_{\alpha}^{n+p}\setminus\{P_{\alpha}(c_0)\}\right)\\
			&\subseteq h_{\Gamma_{\alpha}}^{(2)}(\hat{\mathbb{C}}\setminus\Gamma_{\alpha}).
		\end{align*}
		Then
		\begin{equation*}
			\gamma\cap h_{\Gamma_{\alpha}}^{(2)}(\hat{\mathbb{C}}\setminus\Gamma_{\alpha})\not=\emptyset.
		\end{equation*}
		By combining (\ref{e20260405b}) and the fact that $\gamma$ is a lift of $P_{\alpha}(\gamma)$, it
		follows that $$\gamma=h_{\Gamma_{\alpha}}^{(2)}(P_{\alpha}(\gamma)).$$
		
		Now on the one hand, by (\ref{e20260405a}) we have
		\begin{equation}
			\label{e07171}\gamma=h_{\Gamma_{\alpha}}^{(2)}(P_{\alpha}(\gamma))\subseteq h_{\Gamma_{\alpha}}^{(2)}\left({\rm int}(\mathcal{J}(\beta_I)\cup
			\bigcup\limits_{s=1}^{j+1}\tilde{\mathcal{S}}_s^{(3)}(\beta_I))\right)
		\end{equation}
		\begin{equation*}
			\subseteq{\rm int}(\hat{U}_{0}^{n+p}\cup\bigcup\limits_{s=1}^{j+1}U_0(\tilde{\mathcal{S}}_{s}^{(3)}(\hat{U}_{0,p}^{n}))).
		\end{equation*}
		On the other hand, together with
		$$z\in\hat{U}_{0,p}^{n}\cup\bigcup\limits_{s=1}^j\mathcal{S}_s(\hat{U}_{0,p}^{n})\subseteq\overline{\Omega_{c_0}^{n+2\mathfrak{q}_5}\setminus\Omega_{c_0}^{n+p-2\mathfrak{q}_5}}\cup\bigcup\limits_{s=1}^j\tilde{\mathcal{S}}_{s}^{(1)}(\hat{U}_{0,p}^{n})\cup\bigcup\limits_{s=1}^j\tilde{\mathcal{S}}_{s}^{(2)}(\hat{U}_{0,p}^{n})$$
		and $$l_{\mathbb{C}\setminus\overline{\Delta_{\alpha}}}(\gamma)<
		\kappa(\frac{\Delta}{N})\ {\rm(by\ the\ Schwarz\ lemma)},$$
		(\ref{e20260406}) gives
		\begin{equation}
			\label{e20260721}\gamma\subseteq{\rm int}\left(\overline{\Omega_{c_0}^{n+2\mathfrak{q}_5}\setminus\Omega_{c_0}^{n+p-2\mathfrak{q}_5}}\cup\bigcup\limits_{s=1}^{j+1}\tilde{\mathcal{S}}_{s}^{(1)}(\hat{U}_{0,p}^{n})\cup\bigcup\limits_{s=1}^{j+1}\tilde{\mathcal{S}}_{s}^{(2)}(\hat{U}_{0,p}^{n})\right).
		\end{equation}
		Thus it follows from (\ref{e20260406c}), (\ref{e07171}) and  (\ref{e20260721}) that $\gamma\subseteq{\rm int}(\hat{U}_{0,p}^{n}\cup\bigcup_{s=1}^{j+1}\mathcal{S}_s(\hat{U}_{0,p}^{n}))$.
		Observe that $\hat{U}_{0,p}^{n}\cup\left(\bigcup_{s=1}^{j}\mathcal{S}_s(\hat{U}_{0,p}^{n})\right)$ is closed. Thus
		$${\rm int}(\hat{U}_{0,p}^{n}\cup\bigcup_{s=1}^{j+1}\mathcal{S}_s(\hat{U}_{0,p}^{n}))
		\subseteq\hat{U}_{0,p}^{n}\cup\left(\bigcup_{s=1}^{j}\mathcal{S}_s(\hat{U}_{0,p}^{n})\right)\cup{\rm int}(\mathcal{S}_{j+1}(\hat{U}_{0,p}^{n})).$$
		It follows that
		\begin{align*}
			\gamma\subseteq\hat{U}_{0,p}^{n}\cup\left(\bigcup_{s=1}^{j}\mathcal{S}_s(\hat{U}_{0,p}^{n})\right)\cup{\rm int}(\mathcal{S}_{j+1}(\hat{U}_{0,p}^{n})).
		\end{align*}
		This implies
		$$z'\in\hat{U}_{0,p}^{n}\cup\left(\bigcup_{s=1}^{j}\mathcal{S}_s(\hat{U}_{0,p}^{n})\right)\cup{\rm int}(\mathcal{S}_{j+1}(\hat{U}_{0,p}^{n})).$$
		
	\end{proof}
	
	\noindent $4.3.2.$ {\bf Pseudo-filling-in of fjords.}
	
	\vspace{0.2cm}
	\noindent For all positive integers $t$ and $s$ with $t>s$, we set
	$$K^{t-s}_{\alpha}:=\left\{z:\ P_{\alpha}^{\comp j}(z)\in\Lambda_{\alpha}^{t-s}\ {\rm for\ all}\ j\geq0\right\}$$
	and denote by $E_s^{\mathfrak{l}}(\hat{\Delta}_{\alpha}^{t})$ the union:
	$$\left\{z:\ \exists k\geq0\ {\rm s.t.}\ P_{\alpha}^{\comp k}(z)\in \hat{\Delta}_{\alpha}^{t+\mathfrak{l}-1}\ {\rm and}\
	P_{\alpha}^{\comp j}(z)\in\Lambda_{\alpha}^{t-s}\ {\rm for}\ {\rm all}\ 0\leq j\leq k-1\right\}\bigcup K^{t-s}_{\alpha}.$$
	Let $X$ and $Y$ be two subsets of $\mathbb{C}$ such that the area of $X$ satisfies $0<{\rm area}(X)<+\infty$. By ${\rm dens}_{X}Y$ we mean the density of $Y$ in $X$, that is
	$${\rm dens}_{X}Y=\frac{{\rm area}(X\cap Y)}{{\rm area}(X)}.$$
	We fix $n\geq2\mathfrak{q}_4$, $N\gg_{\mathfrak{q}_3}1$, $p\geq N\gg{\mathfrak{q}_5}$, $p\gg_{N}\mathfrak{q}_3$ and $1\leq\mathfrak{l}\ll\mathfrak{q}_2$. 
	Assume that (\ref{e20260410a}) holds. The following lemma is the main result of this section.
	\begin{lemma}
		\label{key1}There exists an absolute constant $0<\bm{\lambda}<1$ such that for all $I\in\bigcup\limits_{j=0}^{\mathfrak{l}-1}\mathfrak{D}_{n+p+j}$ with $\mathcal{J}(\beta_I)\not=\emptyset$ and all $z\in\mathcal{J}(\beta_I)$,
		$${\rm dens}_{S_z}S_z\setminus E_p^{\mathfrak{l}}(\hat{\Delta}_{\alpha}^{n+p+1})\leq\bm{\lambda}^N.$$
	\end{lemma}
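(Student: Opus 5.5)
The plan is to apply Lemma \ref{L1} to the square $S=S_z$ and the set $E:=S_z\setminus E_p^{\mathfrak{l}}(\hat{\Delta}_{\alpha}^{n+p+1})$, with $N$ scales, constants $\eta$ and $c$ depending only on $\mathfrak{q}_3$, and then absorb the dependence into a constant $\bm{\lambda}$. Fix $w\in E$. Since $w\notin E_p^{\mathfrak{l}}$, $w\notin K_\alpha^{n+1}$, so some iterate leaves $\Lambda_\alpha^{n+1}$. Moreover, no iterate lands in $\hat{\Delta}_{\alpha}^{n+p+\mathfrak{l}}$ before the orbit first leaves $\Lambda_\alpha^{n+1}$. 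The point $w$ starts in $S_z\subseteq\mathcal{J}(\tilde{\beta}_I)\subseteq\Lambda_\alpha^{n+p}$. By Lemma \ref{l20257191} and Lemma \ref{l819} with $t\asymp p/\mathfrak{q}_3$, the orbit therefore essentially visits hovered main mountains $\tilde{\Omega}_{c_0}^{n_j}\setminus\Lambda_\alpha^{n_j+\mathfrak{q}_3}$ at times $m_1<\dots<m_k$. Here $k\gg_{\mathfrak{q}_3}1$ because $p\gg_N\mathfrak{q}_3$, the levels satisfy $n_j\geq n+1+\chi$, and the orbit stays in $\Lambda_\alpha^{n+1+\chi}$ up to time $m_k$. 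We are then exactly in the setting of Section \ref{s4.2}, which gives balls ${\bf B}_{j_s}$, $0\le s\le N$, satisfying (a5)–(c5) of Lemma \ref{L1} with $r_s\asymp\mathrm{diam}({\bf B}_{j_s})$ and $y_s$ its center. This uses $\lfloor (k-j_0)/{\bf t}\rfloor\ge N$, which is where $p\gg_N\mathfrak{q}_3$ enters.

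The key step is (d5): each ${\bf B}_{j_s}$ must be disjoint from $E$, or at least a definite sub-ball of it must be. By construction $P_\alpha^{\circ m_{j_s}}({\bf B}_{j_s})\subseteq\hat{U}_0^{n_{j_s}+\mathfrak{l}}$, so one more iterate lands in $\hat{\Delta}_\alpha^{n_{j_s}+\mathfrak{l}}$, and the intermediate iterates lie in $\Lambda_\alpha^{n+1}$. If $n_{j_s}+\mathfrak{l}\ge n+p+\mathfrak{l}$, then $\hat{\Delta}_\alpha^{n_{j_s}+\mathfrak{l}}\subseteq\hat{\Delta}_\alpha^{n+p+\mathfrak{l}}$ and the ball lies in $E_p^{\mathfrak{l}}$, as required. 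The dangerous case is when $n_{j_s}$ lies between $n+1$ and $n+p$. Here I would use assumption (\ref{e20260410a}): for levels $n+\mathfrak{l}\le m\le n+p$ the pseudo-Siegel disks coincide with $\hat{\Delta}_\alpha^{n+p}$. Writing $\hat{\Delta}_\alpha^{n+p}=\hat{\Delta}_\alpha^{n+p+\mathfrak{l}}\cup(\text{fjords of levels } n+p,\dots,n+p+\mathfrak{l}-1)$, the image point is either already in $\hat{\Delta}_\alpha^{n+p+\mathfrak{l}-1}$ or lies in a fjord $\mathcal{J}(\beta_{I'})$ of level in $[n+p,n+p+\mathfrak{l}-1]$. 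In the second case the argument would iterate. This is exactly why the thickening $\hat{U}_{0,p}^n$ and the $N$ shells $\mathcal{S}_j$ were built: the ball pulled back from level $j$ lands in the $j$-th thickening.

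The paper's design suggests proving by induction on $j=N,N-1,\dots,1$ a density bound of the shape ${\rm dens}\le \bm{\lambda}^{N-j}$ for points whose first fjord-return lands in $\mathcal{S}_j$. Lemma \ref{l12615}, with $\kappa(\Delta/N)$ exceeding the pulled-back hyperbolic sizes $O_{\mathfrak{q}_3}(\eta^{\cdot})$, guarantees that small neighborhoods along the orbit stay inside the next thickening. Lemma \ref{key3} then transports a density estimate through the univalent branch $P_\alpha^{\circ m_{j_s}}$, whose distortion is controlled by Lemma \ref{l8241}, back to a ball ${\bf B}'_{j_s}\subseteq{\bf B}_{j_s}$ of comparable size in which $E$ has small density. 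Lemma \ref{key4} converts such pointwise "small density in a ball" statements into a global density bound on $E$. Combining this with the geometric decay of Lemma \ref{L1} gives ${\rm area}(E)\le\lambda_0^N{\rm area}(S_z)$, and hence the claim with $\bm{\lambda}=\lambda_0$ up to harmless constants.

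The main obstacle is this (d5)/recursion step. Lemma \ref{L1} demands genuine emptiness of $E$ in $\mathbb{B}(y_s,r_s)$, but only "most of the ball returns deeper" is available when the landing happens in a fjord of level between $n+p$ and $n+p+\mathfrak{l}-1$. I would first try to avoid the recursion by choosing $\mathfrak{l}$-level filled-in squares and using that, for $w'$ in ${\bf B}_{j_s}$, the image $P_\alpha^{\circ(m_{j_s}+1)}(w')$ lies within $\hat{\Delta}_\alpha^{n_{j_s}+\mathfrak{l}}$. If $n_{j_s}\ge n+p$ this suffices. Otherwise I would replace Lemma \ref{L1} by a version of it, proved from Lemma \ref{key4}, in which (d5) is weakened to "$E$ has density $\le\epsilon$ in the ball", and run the induction over the $N$ thickenings.
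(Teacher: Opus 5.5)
The gap is the step you flag yourself, and neither repair you sketch closes it. First, the branch you call sufficient never occurs in your construction. Lemma~\ref{l819} only produces hovered visits at levels $n_j=n+p-j\mathfrak{q}_3<n+p$. So by (\ref{e20260410a}) each ball of Section~\ref{s4.2} is only known to satisfy $P_\alpha^{\circ(m_{j_s}+1)}({\bf B}_{j_s})\subseteq\hat{\Delta}_\alpha^{n_{j_s}+\mathfrak{l}}=\hat{\Delta}_\alpha^{n+p}$, which still contains the fjords of levels $n+p,\dots,n+p+\mathfrak{l}-1$. Hence no ball is disjoint from $E=S_z\setminus E_p^{\mathfrak{l}}(\hat{\Delta}_\alpha^{n+p+1})$. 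Second, transporting densities with Lemmas~\ref{key3} and~\ref{key4} is circular inside one block. What you would pull back from a landing point $z'$ is ${\rm dens}_{S_{z'}}S_{z'}\setminus E_p^{\mathfrak{l}}(\hat{\Delta}_\alpha^{n+p+1})$ for $z'$ in a fjord of the same levels, i.e.\ the very quantity being bounded, and it comes back multiplied by constants $>1$ with no gain. The shell index does not move monotonically along returns, so your induction over $j=N,\dots,1$ has neither a base case nor a contraction. The paper uses such transport only in Section~\ref{s891}, across different blocks $p_s$, where it terminates because $\hat{\Delta}_\alpha^{n+\sum_s p_s+\mathfrak{l}}=\overline{\Delta_\alpha}$. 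Note also that the $\mathcal{S}_j$ are not level markers. They come from the $j$-th of $N$ slices of width $\geq\Delta/N$ of the collars around the fjords and around $\hat{U}_{0,p}^n$, and serve only as a one-shell buffer for shadowing errors.

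The missing idea is to give up (d5) for $E$ and instead cover $E$ by pieces that the balls avoid \emph{exactly}: $E\subseteq\mathcal{R}^{(1)}\cup\mathcal{R}^{(2)}\cup\bigcup_{1\le m,t\le N-1}\tilde{E}_{(m,t)}$. Here $\mathcal{R}^{(1)}$ consists of points making at least $N$ essential visits to $\tilde{\Omega}_{c_0}^{n+p}$ before leaving $\Lambda_\alpha^{n+1+\chi}$, and $\mathcal{R}^{(2)}$ of points never visiting $\hat{U}_{0,p}^n$. The set $\tilde{E}_{(m,t)}$ consists of points visiting $\hat{U}_{0,p}^n\cup\bigcup_{j<t}\mathcal{S}_j$ exactly $m$ times and never ${\rm int}\,\mathcal{S}_t$. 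The covering follows from a counting argument in $t$ (each entry into ${\rm int}\,\mathcal{S}_t$ adds a visit). It also uses that between consecutive visits the orbit re-enters a fjord of level $\geq n+p$, and hence, by Lemmas~\ref{l20257191} and~\ref{l819}, essentially visits $\tilde{\Omega}_{c_0}^{n+p}$. Each piece is then handled as follows.
\begin{itemize}
\item On $\mathcal{R}^{(1)}$ one has $n_j=n+p$, so the balls land in $\hat{U}_0^{n+p+\mathfrak{l}}$ and lie in $E_p^{\mathfrak{l}}$; this is the only place your good branch is realized, and it gives $\lambda_1^N$.
\item On $\mathcal{R}^{(2)}$ the intermediate-level balls land in $\hat{U}_{0,p}^n$ by (\ref{e20260410a}), giving $\lambda_2^p$.
\item On $\tilde{E}_{(m,t)}$ you restart Lemma~\ref{l819} after the $m$-th visit. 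For the deeper half of the balls the shadowing curves have hyperbolic length $O(\eta^{j_s})<\kappa(\Delta/N)$; this is where $p\gg_N\mathfrak{q}_3$ is used. Lemma~\ref{l12615} then forces every $w'\in{\bf B}_{j_s}\setminus E_p^{\mathfrak{l}}$ into one of three cases: it enters ${\rm int}\,\mathcal{S}_t$, or it leaves $\Lambda_\alpha^{n+1+\chi}$ before $m$ visits, or it repeats the $m$ visits at the same times and then makes an $(m+1)$-st visit to $\hat{U}_{0,p}^n$. Thus ${\bf B}_{j_s}\cap\tilde{E}_{(m,t)}=\emptyset$, and Lemma~\ref{L1} applies unmodified with $\asymp p$ scales.
\end{itemize}
Summing, $\lambda_1^N+\lambda_2^p+(N-1)^2\lambda_2^{p/3}\le\bm{\lambda}^N$ since $p\geq N$.
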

	\begin{proof}
		We denote by $\mathcal{R}^{(1)}$ the set
		of $w\in S_z\setminus E_p^{\mathfrak{l}}(\hat{\Delta}_{\alpha}^{n+p+1})$ such that the orbit of $w$ essentially visits $\tilde{\Omega}_{c_0}^{n+p}$ at least $N$ times before leaving $\Lambda_{\alpha}^{n+1+\chi}$ and
		by $\mathcal{R}^{(2)}$ the set of
		$w\in S_z\setminus E_p^{\mathfrak{l}}(\hat{\Delta}_{\alpha}^{n+p+1})$ such that the orbit of $w$ visits $\hat{U}_{0,p}^{n}$ $0$ times before leaving $\Lambda_{\alpha}^{n+1+\chi}$.
		We define
		$$\mathcal{N}:=\left\{{\bf n}=(m,t)\in\mathbb{N}_+\times\mathbb{N}_+:0<m<N,\ 0<t<N\right\}.$$
		For any ${\bf n}=(m,t)\in\mathcal{N}$, we denote by
		$\tilde{E}_{{\bf n}}$ the subset of $S_z$ consisting of $w\in S_z\setminus E_p^{\mathfrak{l}}(\hat{\Delta}_{\alpha}^{n+p+1})$ such that the orbit of $w$ visits $\hat{U}_{0,p}^{n}\cup\bigcup\limits_{j=1}^{t-1}\mathcal{S}_{j}(\hat{U}_{0,p}^{n})$ $m$ times and doesn't visit ${\rm int}(\mathcal{S}_t(\hat{U}_{0,p}^{n}))$ before leaving $\Lambda_{\alpha}^{n+1+\chi}$.
		
		Next, we will prove
		\begin{itemize}
			\item[(1)] $S_z\setminus E_p^{\mathfrak{l}}(\hat{\Delta}_{\alpha}^{n+p+1})\subseteq\mathcal{R}^{(1)}
			\cup\mathcal{R}^{(2)}\cup\bigcup_{{{\bf n}}\in\mathcal{N}}\tilde{E}_{{\bf n}}$,
			\item[(2)]
			${\rm area}(\mathcal{R}^{(1)})\leq\lambda_1^N\cdot{\rm area}(S_z),$ where $0<\lambda_1<1$ is a constant determined by $\mathfrak{q}_3$ and independent of $z$ and $\alpha$,
			\item[(3)]
			${\rm area}(\mathcal{R}^{(2)})\leq\lambda_2^p\cdot{\rm area}(S_z),$ where $0<\lambda_2<1$ is a constant determined by $\mathfrak{q}_3$ and independent of $z$ and $\alpha$,
			\item[(4)] for all ${\bf n}=(m,t)\in\mathcal{N}$,
			${\rm area}(\tilde{E}_{{\bf n}})\leq\lambda_2^{\frac{p}{3}}\cdot{\rm area}(S_z)$.
		\end{itemize}
		\emph{The proof of (1)}: 
		We take $w\in S_z\setminus E_p^{\mathfrak{l}}(\hat{\Delta}_{\alpha}^{n+p+1})$ and $w\not\in\mathcal{R}^{(2)}\cup\bigcup_{{\bf n}\in\mathcal{N}}\tilde{E}_{\bf n}$. Then the orbit of $w$ visits $\hat{U}_{0,p}^{n}\cup\bigcup_{1\leq j\leq N-1}\mathcal{S}_{j}(\hat{U}_{0,p}^{n})$ at least $N$ times before leaving $\Lambda_{\alpha}^{n+1+\chi}$. By Lemmas \ref{l20257191}, \ref{l20257193} and \ref{l20257192}, we have that
		\begin{itemize}
			\item[(I)] the orbit of $w$ will essentially visit $\tilde{\Omega}_{c_0}^{n+p}$
			at least $1$ times before firstly visiting $\hat{U}_{0,p}^{n}\cup\bigcup_{1\leq j\leq N-1}\mathcal{S}_{j}(\hat{U}_{0,p}^{n})$;
			\item[(II)] the orbit of $w$ will essentially visit $\tilde{\Omega}_{c_0}^{n+p}$
			at least $1$ times between two times visiting $\hat{U}_{0,p}^{n}\cup\bigcup_{1\leq j\leq N-1}\mathcal{S}_{j}(\hat{U}_{0,p}^{n})$;
			\item[(III)] the orbit of $w$ will essentially visit $\tilde{\Omega}_{c_0}^{n+p}$
			at least $1$ times after the $N$-th visiting $\hat{U}_{0,p}^{n}\cup\bigcup_{1\leq j\leq N-1}\mathcal{S}_{j}(\hat{U}_{0,p}^{n})$ before leaving $\Lambda_{\alpha}^{n+1+\chi}$.
		\end{itemize}
		Indeed, by Lemma \ref{l20257192},  $w\in\mathcal{J}(\tilde{\beta}_I)$. By Lemma \ref{l20257191}, there exists a positive integer $J_0$ such that 
		\begin{equation}
			\label{e20260512a}P_{\alpha}^{\comp J_0}(w)\in\Omega_{c_0}^{n+p}
		\end{equation}
		and
		\begin{equation}
			\label{e20260512b}
			P_{\alpha}^{\comp j}(w)\in\Lambda_{\alpha}^{n+p},\ 0\leq j\leq J_0.
		\end{equation}
		By Lemma \ref{l20257193}, for all $j\geq\mathfrak{q}_4$,
		$$\tilde{U}_0^{-1}\cap\left(\tilde{\Delta}_{\alpha}^{-1}\cup(\Lambda_{\alpha}^{j+\mathfrak{q}_2}\setminus\Omega_{c_0}^j)\right)=\{c_0\}.$$
		Then $\tilde{U}_0^{-1}\cap(\Lambda_{\alpha}^{n+p-\mathfrak{q}_5+\mathfrak{q}_2}\setminus\Omega_{c_0}^{n+p-\mathfrak{q}_5})=\emptyset$ and hence $\Lambda_{\alpha}^{n+p}\cap\tilde{U}_0^{-1}\setminus\Omega_{c_0}^{n+p-\mathfrak{q}_5}=\emptyset.$
		It follows from (\ref{e20260406c}) that 
		$$\hat{U}_{0,p}^{n}\cup\bigcup_{1\leq j\leq N-1}\mathcal{S}_{j}(\hat{U}_{0,p}^{n})\subseteq\tilde{U}_0^{-1}\setminus\Omega_{c_0}^{n+p-\mathfrak{q}_5}.$$
		Then
		\begin{equation}
			\label{e7201}\Lambda_{\alpha}^{n+p}\cap\left(\hat{U}_{0,p}^{n}\cup\bigcup_{1\leq j\leq N-1}\mathcal{S}_{j}(\hat{U}_{0,p}^{n})\right)=\emptyset.
		\end{equation}
		Thus
		\begin{equation}
			\label{e20260512c}\{P_{\alpha}^{\comp j}(w): 0\leq j\leq J_0\}\cap\left(\hat{U}_{0,p}^{n}\cup\bigcup_{1\leq j\leq N-1}\mathcal{S}_{j}(\hat{U}_{0,p}^{n})\right)=\emptyset.
		\end{equation}
		By (\ref{e20260512a}) we have $P_{\alpha}^{\comp J_0}(w)\in\tilde{\Omega}_{c_0}^{n+p}$ or $P_{\alpha}^{\comp J_0}(w)\in\Omega_{c_0}^{n+p}\setminus\tilde{\Omega}_{c_0}^{n+p}$.
		Firstly, we consider the case: $P_{\alpha}^{\comp J_0}(w)\in\tilde{\Omega}_{c_0}^{n+p}$. 
		If $P_{\alpha}^{\comp J_0}(w)\not\in\Lambda_{\alpha}^{n+p+\mathfrak{q}_3}$, then (I) holds. If $P_{\alpha}^{\comp J_0}(w)\in\Lambda_{\alpha}^{n+p+\mathfrak{q}_3}$, then
		by Lemma \ref{l819} and $P_{\alpha}^{\comp(J_0+\Delta J)}(w)\not\in\Lambda_{\alpha}^{n+p}\cup\overline{\Delta_{\alpha}}$ for some positive integer $\Delta J$ (Due to $w\in S_z\setminus E_p^{\mathfrak{l}}(\hat{\Delta}_{\alpha}^{n+p+1})$), there exists a positive integer $m_1$ such that 
		\begin{equation}
			\label{e20260512d}P_{\alpha}^{\comp (J_0+m_1)}(w)\in\tilde{\Omega}_{c_0}^{n+p}\setminus\Lambda_{\alpha}^{n+p+\mathfrak{q}_3} \ {\rm and}\ \left\{P_{\alpha}^{\comp(J_0+s)}(z):\ 0\leq s\leq m_1\right\}\subseteq\Lambda_{\alpha}^{n+p}.
		\end{equation}
		By combining (\ref{e7201}), it follows that
		\begin{equation}
			\label{e20260512e}\left\{P_{\alpha}^{\comp(J_0+s)}(z):\ 0\leq s\leq m_1\right\}\cap\left(\hat{U}_{0,p}^{n}\cup\bigcup_{1\leq j\leq N-1}\mathcal{S}_{j}(\hat{U}_{0,p}^{n})\right)=\emptyset.
		\end{equation}
		Thus it follows from $(\ref{e20260512b})$, $(\ref{e20260512c})$, $(\ref{e20260512d})$ and $(\ref{e20260512e})$ that (I) holds. Next, we consider the other case: $P_{\alpha}^{\comp J_0}(w)\in\Omega_{c_0}^{n+p}\setminus\tilde{\Omega}_{c_0}^{n+p}$. Then $P_{\alpha}^{\comp J_0}(w)\in\mathcal{J}(\tilde{\beta}_{I'})$ for some interval $I'$ of level $m'\geq n+p$.
		Thus we can repeat the above argument to $P_{\alpha}^{\comp J_0}(w)$. Observe that $P_{\alpha}^{\comp(J_0+\Delta J)}(w)\not\in\Lambda_{\alpha}^{n+p}\cup\overline{\Delta_{\alpha}}$ and
		$(\ref{e20260512b})$. So
		the second case can occur at most finitely many times, and hence the first case must occur. Then (I) holds. 
		
		In the remaining two cases (II) and (III), if $P_{\alpha}^{\comp j_2}(w)\in\hat{U}_{0,p}^{n}\cup\bigcup_{1\leq j\leq N-1}\mathcal{S}_{j}(\hat{U}_{0,p}^{n})$ for some positive integer $j_2$ and the orbit of $P_{\alpha}^{\comp (j_2+1)}(w)$ will escape from $\Lambda_{\alpha}^{n+p}\cup\overline{\Delta_{\alpha}}$, then $P_{\alpha}^{\comp (j_2+1)}(w)\in\mathcal{J}(\tilde{\beta}_{I''})$ for some interval
		$I''$ of level $m''\geq n+p$. In the same way as proving (I), we can obtain that $P_{\alpha}^{\comp (j_2+1)}(w)$ will essentially visit $\tilde{\Omega}_{c_0}^{n+p}$
		at least $1$ times before escaping from $\Lambda_{\alpha}^{n+p}\cup\overline{\Delta_{\alpha}}$. This implies that both (II) and (III) hold.
		
		By (I), (II) and (III), the orbit of $w$ will essentially visit $\tilde{\Omega}_{c_0}^{n+p}$
		at least $N$ times before leaving $\Lambda_{\alpha}^{n+1+\chi}$.
		This implies $w\in\mathcal{R}^{(1)}$,
		which completes the proof of (1).
		
		\vspace{0.2cm}
		\noindent\emph{The proof of (2)}: For all $w\in\mathcal{R}^{(1)}$, we have that $w\in S_z\setminus E_p^{\mathfrak{l}}(\hat{\Delta}_{\alpha}^{n+p+1})$ and the orbit of $w$ essentially visits $\tilde{\Omega}_{c_0}^{n+p}$ at least $N$ times before leaving $\Lambda_{\alpha}^{n+1+\chi}$.
		Then there exist
		positive integers $m_{1}<m_{2}<\cdots<m_{N-1}$ such that for all $1\leq j\leq N-1$,
		$$P_{\alpha}^{\comp m_j}(w)\in\tilde{\Omega}_{c_0}^{n+p}\setminus\Lambda_{\alpha}^{n+p+\mathfrak{q}_3}$$
		and
		$$\left\{P_{\alpha}^{\comp s}(w):\ 0\leq s\leq m_j\right\}\subseteq\Lambda_{\alpha}^{n+1+\chi}.$$
		
		Since $I\in\bigcup\limits_{j=0}^{\mathfrak{l}-1}\mathfrak{D}_{n+p+j}$ and $\mathcal{J}(\beta_I)\not=\emptyset$, there exists $s_0\in\{p,p+1,\cdots,p+\mathfrak{l}-1\}$ such that
		$\hat{\Delta}_{\alpha}^{n+s_0+1}\not=\hat{\Delta}_{\alpha}^{n+s_0}$ and $I$ is of level $n+s_0$. Then $s_0\geq p\gg\mathfrak{q}_4$, $n\geq2\mathfrak{q}_4$, $N-1\gg_{\mathfrak{q}_3}1$,
		$m_1<m_2<\cdots<m_{N-1}$ and for all $1\leq j\leq N-1$,
		$$P_{\alpha}^{\comp m_j}(w)\in\tilde{\Omega}_{c_0}^{n_j}\setminus\Lambda_\alpha^{n_j+\mathfrak{q}_3},\ n_j=n+p\geq n+1+\chi$$ and
		$$\left\{P_{\alpha}^{\comp t}(w):\ 0\leq t\leq m_{N-1}\right\}\subseteq\Lambda_{\alpha}^{n+1+\chi}.$$
		Thus by Section \ref{s4.2}, we have that there exist a sequence
		$\left\{j_s:=j_0+s{\bf t}\right\}_{s=0}^{\lfloor\frac{N-1-j_0}{\bf t}\rfloor}$ ($j_0\asymp_{\mathfrak{q}_3}1$ and ${\bf t}\asymp_{\mathfrak{q}_3}1$) and Euclidean balls ${\bf B}_{j_s}$ ($0\leq s\leq\lfloor\frac{N-1-j_0}{\bf t}\rfloor$) such that
		\begin{itemize}
			\item[(a7)] ${\rm diam}({\bf B}_{j_0})\leq l_z\cdot{\bf\eta}$,
			\item[(b7)] ${\rm diam}({\bf B}_{j_{s+1}})\leq {\bf\eta}\cdot{\rm diam}({\bf B}_{j_s})$ for all $0\leq s\leq\lfloor\frac{N-1-j_0}{\bf t}\rfloor-1$,
			\item[(c7)] ${\rm dist}({\bf B}_{j_s},w)\leq\frac{c-1}{2}\cdot{\rm diam}({\bf B}_{j_s})$ for all $0\leq s\leq\lfloor\frac{N-1-j_0}{\bf t}\rfloor$,
			\item[(d7)] $P_{\alpha}^{\comp t}({\bf B}_{j_s})\subseteq\Lambda_{\alpha}^{n+1}$
			for all $0\leq s\leq\lfloor\frac{N-1-j_0}{\bf t}\rfloor$ and $0\leq t\leq m_{j_s}$,
			\item[(e7)]
			$P_{\alpha}^{\comp m_{j_s}}({\bf B}_{j_s})\subseteq\hat{U}_0^{n+p+\mathfrak{l}}\cap\tilde{\Omega}_{c_0}^{n+p}\setminus\Lambda_{\alpha}^{n+p+\mathfrak{q}_3}$ for all $0\leq s\leq\lfloor\frac{N-1-j_0}{\bf t}\rfloor$.
		\end{itemize}
		By (d7) and (e7), for all $0\leq s\leq\lfloor\frac{N-1-j_0}{\bf t}\rfloor$, the orbit of
		every point of ${\bf B}_{j_s}$ stays in $\Lambda_{\alpha}^{n+1}$ all the time or visits 
		$\hat{U}_0^{n+p+\mathfrak{l}}$ before leaving $\Lambda_{\alpha}^{n+1}$, and hence
		${\bf B}_{j_s}\subseteq E_p^{\mathfrak{l}}(\hat{\Delta}_{\alpha}^{n+p+1})$.
		This implies ${\bf B}_{j_s}\cap \mathcal{R}^{(1)}=\emptyset$.
		Observe ${\lfloor\frac{N-1-j_0}{\bf t}\rfloor}\gg_{\mathfrak{q}_3}1$. Thus by Lemma \ref{L1} and (a7)(b7)(c7), we have
		$${\rm dens}_{S_z}\mathcal{R}^{(1)}\leq \lambda^{\lfloor\frac{N-1-j_0}{\bf t}\rfloor}\leq\lambda_1^N,$$
		where $0<\lambda<1$ is a constant only depending on $c$ and $\eta$ (determined by $\mathfrak{q}_3$), and $\lambda_1=\lambda^{\frac{1}{2\bf t}}$ (Due to $N-1\gg_{\mathfrak{q}_3}1$).

		\noindent\emph{The proof of (3)}: For all $w\in \mathcal{R}^{(2)}$,
		we have that
		for all $w\in S_z\setminus E_p^{\mathfrak{l}}(\hat{\Delta}_{\alpha}^{n+p+1})$ and the orbit of $w$ visits $\hat{U}_{0,p}^{n}$ $0$ times before leaving $\Lambda_{\alpha}^{n+1+\chi}$.
		
		Since $w\in S_z\setminus E_p^{\mathfrak{l}}(\hat{\Delta}_{\alpha}^{n+p+1})$, by Lemma \ref{l20257192} we have $w\in\mathcal{J}(\tilde{\beta}_I)\subseteq\Lambda_{\alpha}^{n+p}$ and 
		$P_{\alpha}^{\comp m}(w)\not\in\Lambda_{\alpha}^{n+1}\cup\overline{\Delta_{\alpha}}$ for some positive integer $m$.
		By Lemma \ref{l819}
		we have that there exist positive integers $\tilde{m}_1<\tilde{m}_2<\cdots<\tilde{m}_k$ ($k=\lfloor\frac{p-1-2\chi}{\mathfrak{q}_3}\rfloor$) such that for all $1\leq j\leq k$,
		$$P_{\alpha}^{\comp \tilde{m}_j}(w)\in\tilde{\Omega}_{c_0}^{n+p-j\mathfrak{q}_3}\setminus\Lambda_{\alpha}^{n+p-(j-1)\mathfrak{q}_3}$$
		and
		$$\left\{P_{\alpha}^{\comp s}(w):\ 0\leq s\leq \tilde{m}_k\right\}\subseteq\Lambda_{\alpha}^{n+1+2\chi}.$$
		By (\ref{e20260410a}) we have that for all $1\leq j\leq k$,
		$\hat{U}_0^{n+p-j\mathfrak{q}_3+\mathfrak{l}}=\hat{U}_0^{n+p}$.
		Then for all $$j\in\left\{2+\lfloor\frac{2\mathfrak{q}_5}{\mathfrak{q}_3}\rfloor,\ 3+\lfloor\frac{2\mathfrak{q}_5}{\mathfrak{q}_3}\rfloor,\ \cdots,\ \lfloor\frac{p-1-2\chi-2\mathfrak{q}_5}{\mathfrak{q}_3}\rfloor\right\},$$
		we have
		\begin{equation}
			\label{e20260410b1}\hat{U}_0^{n+p-j\mathfrak{q}_3+\mathfrak{l}}\cap\tilde{\Omega}_{c_0}^{n+p-j\mathfrak{q}_3}\setminus\Lambda_{\alpha}^{n+p-(j-1)\mathfrak{q}_3}\subseteq\overline{\hat{U}_0^{n+p}\cap\Omega_{c_0}^{n+1+2\chi+2\mathfrak{q}_5}\setminus\Omega_{c_0}^{n+p-2\mathfrak{q}_5}}\subseteq\hat{U}_{0,p}^{n}.
		\end{equation}
		We denote by $\tilde{k}$ the number of elements in the set $\left\{2+\lfloor\frac{2\mathfrak{q}_5}{\mathfrak{q}_3}\rfloor,\ 3+\lfloor\frac{2\mathfrak{q}_5}{\mathfrak{q}_3}\rfloor,\ \cdots,\ \lfloor\frac{p-1-2\chi-2\mathfrak{q}_5}{\mathfrak{q}_3}\rfloor\right\}$.
		For all $1\leq j\leq\tilde{k}$, we set
		$m_j:=\tilde{m}_{1+\lfloor\frac{2\mathfrak{q}_5}{\mathfrak{q}_3}\rfloor+j}$. Then
		$p>\tilde{k}\gg_{\mathfrak{q}_3}1$ (Due to $p\gg\mathfrak{q}_5$ and $p\gg_{\mathfrak{q}_3}\mathfrak{q}_3$),
		$m_1<m_2<\cdots<m_{\tilde{k}}$ and for all $1\leq j\leq\tilde{k}$,
		$$P_{\alpha}^{\comp m_j}(w)\in\tilde{\Omega}_{c_0}^{n_j}\setminus\Lambda_\alpha^{n_j+\mathfrak{q}_3},\ n_j=n+p-\left(1+\lfloor\frac{2\mathfrak{q}_5}{\mathfrak{q}_3}\rfloor+j\right)\mathfrak{q}_3\geq n+1+2\chi$$ and
		$$\left\{P_{\alpha}^{\comp t}(w):\ 0\leq t\leq m_{\tilde{k}}\right\}\subseteq\Lambda_{\alpha}^{n+1+2\chi}.$$
		Observe that
		$\hat{\Delta}_{\alpha}^{(n+\chi)+(s_0-\chi)+1}\not=\hat{\Delta}_{\alpha}^{(n+\chi)+(s_0-\chi)}$, $s_0-\chi\geq p-\chi\gg\mathfrak{q}_4$ and $n+\chi\geq2\mathfrak{q}_4$.
		Thus by Section \ref{s4.2}, we have that there exist a sequence
		$\left\{j_s:=j_0+s{\bf t}\right\}_{s=0}^{\lfloor\frac{\tilde{k}-j_0}{\bf t}\rfloor}$ ($j_0\asymp_{\mathfrak{q}_3}1$ and ${\bf t}\asymp_{\mathfrak{q}_3}1$)
		and Euclidean balls ${\bf B}_{j_s}$ ($0\leq s\leq\lfloor\frac{\tilde{k}-j_0}{\bf t}\rfloor$) such that
		\begin{itemize}
			\item[(a8)] ${\rm diam}({\bf B}_{j_0})\leq l_z\cdot{\bf\eta}$,
			\item[(b8)] ${\rm diam}({\bf B}_{j_{s+1}})\leq {\bf\eta}\cdot{\rm diam}({\bf B}_{j_s})$ for all $0\leq s\leq\lfloor\frac{\tilde{k}-j_0}{\bf t}\rfloor-1$,
			\item[(c8)] ${\rm dist}({\bf B}_{j_s},w)\leq\frac{c-1}{2}\cdot{\rm diam}({\bf B}_{j_s})$ for all $0\leq s\leq\lfloor\frac{\tilde{k}-j_0}{\bf t}\rfloor$,
			\item[(d8)] $P_{\alpha}^{\comp t}({\bf B}_{j_s})\subseteq\Lambda_{\alpha}^{n+1+\chi}$ for all $0\leq s\leq\lfloor\frac{\tilde{k}-j_0}{\bf t}\rfloor$ and $0\leq t\leq m_{j_s}$,
			\item[(e8)]
			$P_{\alpha}^{\comp m_{j_s}}({\bf B}_{j_s})\subseteq\hat{U}_0^{n+p-(1+\lfloor\frac{2\mathfrak{q}_5}{\mathfrak{q}_3}\rfloor+j_s){\mathfrak{q}_3}+\mathfrak{l}}\cap\tilde{\Omega}_{c_0}^{n+p-(1+\lfloor\frac{2\mathfrak{q}_5}{\mathfrak{q}_3}\rfloor+j_s)\mathfrak{q}_3}\setminus\Lambda_{\alpha}^{n+p-(\lfloor\frac{2\mathfrak{q}_5}{\mathfrak{q}_3}\rfloor+j_s)\mathfrak{q}_3}$ for all $0\leq s\leq\lfloor\frac{\tilde{k}-j_0}{\bf t}\rfloor$.
		\end{itemize}
		By (d8), (e8) and (\ref{e20260410b1}), we have that
		for all $1\leq s\leq\lfloor\frac{\tilde{k}-j_0}{\bf t}\rfloor$ and all $w'\in{\bf B}_{j_s}$, $w'\in E_p^{\mathfrak{l}}(\hat{\Delta}_{\alpha}^{n+p+1})$ or the orbit of
		$w'$ visits
		$\hat{U}_{0,p}^{n}$ at least $1$ times before leaving $\Lambda_{\alpha}^{n+1+\chi}$. Thus for all $1\leq s\leq\lfloor\frac{\tilde{k}-j_0}{\bf t}\rfloor$, sets
		$\mathcal{R}_2$ and
		${\bf B}_{j_s}$
		are disjoint. Observe $\lfloor\frac{\tilde{k}-j_0}{\bf t}\rfloor\gg_{\mathfrak{q}_3}1$.
		Then by Lemma \ref{L1} and (a8)(b8)(c8), we have
		$${\rm dens}_{S_z}\mathcal{R}_2\leq \lambda^{\lfloor\frac{\tilde{k}-j_0}{\bf t}\rfloor}.$$
		Since $p\gg\mathfrak{q}_5$ and $p\gg_{N}\mathfrak{q}_3$, we have that $\tilde{k}\geq\frac{p}{2{\mathfrak{q}_3}}$ and $\tilde{k}\gg_{\mathfrak{q}_3}1$, and hence $\lfloor\frac{\tilde{k}-j_0}{\bf t}\rfloor\geq\frac{p}{4{\bf t}\mathfrak{q}_3}$.
		We take $\lambda_2=\lambda^{\frac{1}{4{\bf t}\mathfrak{q}_3}}$. Then
		$${\rm dens}_{S_z}\mathcal{R}_2\leq \lambda^{\lfloor\frac{\tilde{k}-j_0}{\bf t}\rfloor}\leq\lambda_2^p.$$

		\noindent\emph{The proof of (4)}: For any ${\bf n}=(m,t)\in\mathcal{N}$ and
		$w\in\tilde{E}_{{\bf n}}$, we have that $w\in S_z\setminus E_p^{\mathfrak{l}}(\hat{\Delta}_{\alpha}^{n+p+1})$ and the orbit of $w$ visits $\hat{U}_{0,p}^{n}\cup\bigcup_{1\leq j<t}\mathcal{S}_{j}(\hat{U}_{0,p}^{n})$ $m$ times and doesn't visit ${\rm int}(\mathcal{S}_t(\hat{U}_{0,p}^{n}))$ before leaving $\Lambda_{\alpha}^{n+1+\chi}$.
		
		Let $P_{\alpha}^{\comp k_m}(w)$ correspond to the $m$-th visiting $\hat{U}_{0,p}^{n}\cup\bigcup_{1\leq j<t}\mathcal{S}_{j}(\hat{U}_{0,p}^{n})$ of the orbit of $w$.
		Then the orbit of $w$ doesn't visit $\hat{U}_{0,p}^{n}\cup\bigcup_{1\leq j<t}\mathcal{S}_{j}(\hat{U}_{0,p}^{n})$ after $P_{\alpha}^{\comp k_m}(w)$ before leaving $\Lambda_{\alpha}^{n+1+2\chi}$. Observe that $w\in S_z\setminus E_p^{\mathfrak{l}}(\hat{\Delta}_{\alpha}^{n+p+1})$ and $\{P_{\alpha}^{\comp j}(w): 0\leq j\leq k_m\}\subseteq\Lambda_{\alpha}^{n+1+\chi}$.
		Then $P_{\alpha}^{\comp(k_m+1)}(w)\in\mathcal{J}(\tilde{\beta}_I)$ $(\subseteq\Lambda_{\alpha}^{n+p})$ for some $I\in\bigcup\limits_{j=0}^{\mathfrak{l}-1}\mathfrak{D}_{n+p+j}(\alpha)$ and $P_{\alpha}^{\comp k_0}(P_{\alpha}^{\comp(k_m+1)}(w))\not\in\Lambda_{\alpha}^{n+1}\cup\overline{\Delta_{\alpha}}$ for some positive integer $k_0$.
		By Lemma \ref{l819} there exist
		positive integers $k_m+1<\tilde{m}_{1}<\tilde{m}_{2}<\cdots<\tilde{m}_{k}$ ($k=\lfloor\frac{p-1-2\chi}{\mathfrak{q}_3}\rfloor$) such that
		\begin{equation}
			\label{e20260421d1}P_{\alpha}^{\comp \tilde{m}_j}(w)\in\tilde{\Omega}_{c_0}^{n+p-j\mathfrak{q}_3}\setminus\Lambda_{\alpha}^{n+p-(j-1)\mathfrak{q}_3},\ \forall j\in\{1,2,\cdots,k\}
		\end{equation}
		and
		\begin{equation}
			\label{e20260421d2}\left\{P_{\alpha}^{\comp s}(w):\ k_m+1\leq s\leq \tilde{m}_{k}\right\}\subseteq\Lambda_{\alpha}^{n+1+2\chi}.
		\end{equation}
		Since $\{P_{\alpha}^{\comp j}(w): 0\leq j\leq k_m\}\subseteq\Lambda_{\alpha}^{n+1+\chi}$, we have
		\begin{equation}
			\label{e20260421d3}\left\{P_{\alpha}^{\comp s}(w):\ 0\leq s\leq \tilde{m}_{k}\right\}\subseteq\Lambda_{\alpha}^{n+1+\chi}.
		\end{equation}
		It follows from (\ref{e20260410b1}) in the proof of $(3)$ that for all $j\in\left\{2+\lfloor\frac{2\mathfrak{q}_5}{\mathfrak{q}_3}\rfloor,\ 3+\lfloor\frac{2\mathfrak{q}_5}{\mathfrak{q}_3}\rfloor,\ \cdots,\ \lfloor\frac{p-1-2\chi-2\mathfrak{q}_5}{\mathfrak{q}_3}\rfloor\right\}$, we have
		\begin{equation}
			\label{e202601051}\hat{U}_0^{n+p-j\mathfrak{q}_3+\mathfrak{l}}\cap\tilde{\Omega}_{c_0}^{n+p-j\mathfrak{q}_3}\setminus\Lambda_{\alpha}^{n+p-(j-1)\mathfrak{q}_3}\subseteq\hat{U}_{0,p}^{n}.
		\end{equation}
		As that in the proof of $(3)$, we denote by $\tilde{k}$ the number of elements in the set $$\left\{2+\lfloor\frac{2\mathfrak{q}_5}{\mathfrak{q}_3}\rfloor,\ 3+\lfloor\frac{2\mathfrak{q}_5}{\mathfrak{q}_3}\rfloor,\ \cdots,\ \lfloor\frac{p-1-2\chi-2\mathfrak{q}_5}{\mathfrak{q}_3}\rfloor\right\},$$
		we set
		$m_j:=\tilde{m}_{1+\lfloor\frac{2\mathfrak{q}_5}{\mathfrak{q}_3}\rfloor+j}$ for all $1\leq j\leq\tilde{k}$ and $m_0:=0$, and by Section \ref{s4.2}, (\ref{e20260421d1}) and (\ref{e20260421d3}), we can obtain that there exist a sequence
		$\left\{j_s:=j_0+s{\bf t}\right\}_{s=0}^{\lfloor\frac{\tilde{k}-j_0}{\bf t}\rfloor}$ ($j_0\asymp_{\mathfrak{q}_3}1$ and ${\bf t}\asymp_{\mathfrak{q}_3}1$)
		and Euclidean balls ${\bf B}_{j_s}$ ($0\leq s\leq\lfloor\frac{\tilde{k}-j_0}{\bf t}\rfloor$) such that
		\begin{itemize}
			\item[(a9)] ${\rm diam}({\bf B}_{j_0})\leq l_z\cdot{\bf\eta}$,
			\item[(b9)] ${\rm diam}({\bf B}_{j_{s+1}})\leq {\bf\eta}\cdot{\rm diam}({\bf B}_{j_s})$ for all $0\leq s\leq\lfloor\frac{\tilde{k}-j_0}{\bf t}\rfloor-1$,
			\item[(c9)] ${\rm dist}({\bf B}_{j_s},w)\leq\frac{c-1}{2}\cdot{\rm diam}({\bf B}_{j_s})$ for all $0\leq s\leq\lfloor\frac{\tilde{k}-j_0}{\bf t}\rfloor$,
			\item[(d9)] $P_{\alpha}^{\comp t}({\bf B}_{j_s})\subseteq\Lambda_{\alpha}^{n+1}$ for all $0\leq s\leq\lfloor\frac{\tilde{k}-j_0}{\bf t}\rfloor$ and $0\leq t\leq m_{j_s}$,
			\item[(e9)]
			$P_{\alpha}^{\comp m_{j_s}}({\bf B}_{j_s})\subseteq\hat{U}_0^{n+p-(1+\lfloor\frac{2\mathfrak{q}_5}{\mathfrak{q}_3}\rfloor+j_s)\mathfrak{q}_3+\mathfrak{l}}\cap\tilde{\Omega}_{c_0}^{n+p-(1+\lfloor\frac{2\mathfrak{q}_5}{\mathfrak{q}_3}\rfloor+j_s)\mathfrak{q}_3}\setminus\Lambda_{\alpha}^{n+p-(\lfloor\frac{2\mathfrak{q}_5}{\mathfrak{q}_3}\rfloor+j_s)\mathfrak{q}_3}$ for all $0\leq s\leq\lfloor\frac{\tilde{k}-j_0}{\bf t}\rfloor$,
			\item[(f9)]
			for all $0\leq s\leq\lfloor\frac{\tilde{k}-j_0}{\bf t}\rfloor$, $w'\in{\bf B}_{j_s}$, $1\leq t\leq j_s$ and $m_{t-1}\leq l\leq m_t$,
			there exists a curve $\tilde{\gamma}_{j_s}^{(l)}$ connecting $P_{\alpha}^{\comp l}(w')$ and $P_{\alpha}^{\comp l}(w)$ such that
			$l_{\mathbb{C}\setminus\overline{\Delta_{\alpha}}}(\tilde{\gamma}_{j_s}^{(l)})=O_{\mathfrak{q}_3}(\eta^{j_s-t})$
			and 
			$P_{\alpha}^{\comp (m_{j_s}-l)}\comp\tilde{\gamma}_{j_s}^{(l)}=\tilde{\gamma}_{j_s}^{(m_{j_s})}$.
		\end{itemize}
		By (f9), we have that for all $0\leq s\leq\lfloor\frac{\tilde{k}-j_0}{\bf t}\rfloor$, $0\leq l\leq m_{j_s}$ and $w'\in{\bf B}_{j_s}$,
		\begin{equation}
			\label{e20260515a}{\rm dist}_{\mathbb{C}\setminus\overline{\Delta_{\alpha}}}\left(P_{\alpha}^{\comp l}(w'),P_{\alpha}^{\comp l}(w)\right)=O_{\mathfrak{q}_3}(1).
		\end{equation}
		It follows from Claim $1$,
		(\ref{e20260421d2}) and (\ref{e20260515a}) that
		\begin{itemize}
			\item[(d'9)] $P_{\alpha}^{\comp t}({\bf B}_{j_s})\subseteq\Lambda_{\alpha}^{n+1+\chi}$ for all $0\leq s\leq\lfloor\frac{\tilde{k}-j_0}{\bf t}\rfloor$ and $k_m+1\leq t\leq m_{j_s}$.
		\end{itemize}
		
		Now we claim that
		for all $\lfloor\frac{\tilde{k}-j_0}{2{\bf t}}\rfloor\leq s\leq\lfloor\frac{\tilde{k}-j_0}{\bf t}\rfloor$ and $w'\in {\bf B}_{j_s}\cap S_z$, 
		we have that if $w'\in S_z\setminus E_p^{\mathfrak{l}}(\hat{\Delta}_{\alpha}^{n+p+1})$, then one of the following cases holds:
		\begin{itemize}
			\item the orbit of $w'$ visits $\mathcal{S}_t(\hat{U}_{0,p}^{n})$ before leaving $\Lambda_{\alpha}^{n+1+\chi}$;
			\item the orbit of $w'$ leaves $\Lambda_{\alpha}^{n+1+\chi}$ before visiting $\hat{U}_{0,p}^{n}\cup\bigcup_{1\leq j<t}\mathcal{S}_{j}(\hat{U}_{0,p}^{n})$ $m$ times;
			\item the orbit of $w'$ visits $\hat{U}_{0,p}^{n}\cup\bigcup_{1\leq j<t}\mathcal{S}_{j}(\hat{U}_{0,p}^{n})$ at least $m+1$ times before leaving $\Lambda_{\alpha}^{n+1+\chi}$.
		\end{itemize}
		Indeed, for all $\lfloor\frac{\tilde{k}-j_0}{2{\bf t}}\rfloor\leq s\leq\lfloor\frac{\tilde{k}-j_0}{\bf t}\rfloor$ and $w'\in{\bf B}_{j_s}\cap S_z$ with $w'\in S_z\setminus E_p^{\mathfrak{l}}(\hat{\Delta}_{\alpha}^{n+p+1})$, by (f9) we have that for all $1\leq t\leq j_s$ and $m_{t-1}\leq l\leq m_t$,
		$$l_{\mathbb{C}\setminus\overline{\Delta_{\alpha}}}(\tilde{\gamma}_{j_s}^{(l)})=O_{\mathfrak{q}_3}(\eta^{j_s-t})\ {\rm and}\ P_{\alpha}^{\comp (m_{j_s}-l)}\comp\tilde{\gamma}_{j_s}^{(l)}=\tilde{\gamma}_{j_s}^{(m_{j_s})}.$$
		Observe $0<\eta<1$ is determined by $\mathfrak{q}_3$. Then $\frac{1}{-\ln\eta}\asymp_{\mathfrak{q}_3}1$ and hence $\frac{1}{-\ln\eta}\asymp_{N}1$.
		Since $s\geq\lfloor\frac{\tilde{k}-j_0}{2{\bf t}}\rfloor\geq\frac{p}{4{\bf t}\mathfrak{q}_3}\gg_{N}1$, we have
		$-j_s\ln\eta\gg_N1$ and hence  $\frac{1}{\eta^{j_s}}\gg_N1$. By combining $\frac{1}{\kappa(\frac{\Delta}{N})}\preceq_N1$, it follows that
		\begin{equation}
			\label{e4.1}l_{\mathbb{C}\setminus\overline{\Delta_{\alpha}}}(\tilde{\gamma}_{j_s}^{(k_m+1)})=O_{\mathfrak{q}_3}(\eta^{j_s-1})=O_{N}(\eta^{j_s})=\eta^{j_s}O_{N}(1)<\kappa(\frac{\Delta}{N}).
		\end{equation}
		For all $1\leq j\leq m$, we let $P_{\alpha}^{\comp k_j}(w)$ correspond to the $j$-th visiting $\hat{U}_{0,p}^{n}\cup\bigcup_{1\leq j<t}\mathcal{S}_{j}(\hat{U}_{0,p}^{n})$ of the orbit of $w$.
		It follows from $w\in\tilde{E}_{(m,t)}$ that
		\begin{equation}
			\label{e20260414a1}P_{\alpha}^{\comp (k_j+1)}(w)\not\in\hat{\Delta}_{\alpha}^{n+p+\mathfrak{l}}.
		\end{equation}
		By the Schwarz lemma and (\ref{e4.1}) we have
		\begin{equation}
			\label{e20260417a1}
			l_{\mathbb{C}\setminus\overline{\Delta_{\alpha}}}(\tilde{\gamma}_{j_s}^{(l)})<\kappa(\frac{\Delta}{N}),\ 0\leq l\leq k_m,
		\end{equation}
		in particular, we have that for all $1\leq j\leq m$,
		\begin{equation}
			\label{e20260414a2}
			l_{\mathbb{C}\setminus\overline{\Delta_{\alpha}}}(\tilde{\gamma}_{j_s}^{(k_j+1)})<\kappa(\frac{\Delta}{N}).
		\end{equation}
		Together with (\ref{e20260414a1}), (\ref{e20260414a2}),
		$P_{\alpha}\comp\tilde{\gamma}_{j_s}^{(k_j)}=\tilde{\gamma}_{j_s}^{(k_j+1)}$, $P_{\alpha}^{\comp k_j}(w)\in\hat{U}_{0,p}^{n}\cup\bigcup_{1\leq j<t}\mathcal{S}_{j}(\hat{U}_{0,p}^{n})$ and the fact that $\tilde{\gamma}_{j_s}^{(k_j)}$ connects $P_{\alpha}^{\comp k_j}(w)$ and $P_{\alpha}^{\comp k_j}(w')$, Lemma \ref{l12615} gives that for all $1\leq j\leq m$,
		\begin{equation}
			\label{e20260420d}P_{\alpha}^{\comp k_j}(w')\in\hat{U}_{0,p}^{n}\cup\bigcup_{1\leq j< t}\mathcal{S}_{j}(\hat{U}_{0,p}^{n})\cup{\rm int}(\mathcal{S}_{t}(\hat{U}_{0,p}^{n})).
		\end{equation}
		By $w'\in S_z\setminus E_p^{\mathfrak{l}}(\hat{\Delta}_{\alpha}^{n+p+1})$, we have that the orbit of $w'$ doesn't stay in $\Lambda_{\alpha}^{n+1+\chi}$ all the time. Now we assume that the orbit of $w'$ doesn't visit ${\rm int}(\mathcal{S}_t(\hat{U}_{0,p}^{n}))$ before leaving $\Lambda_{\alpha}^{n+1+\chi}$ and doesn't leave $\Lambda_{\alpha}^{n+1+\chi}$ before visiting $\hat{U}_{0,p}^{n}\cup\bigcup_{1\leq j<t}\mathcal{S}_{j}(\hat{U}_{0,p}^{n})$ $m$ times. So to prove the claim, we need only to 
		prove that the orbit of $w'$ visits $\hat{U}_{0,p}^{n}\cup\bigcup_{1\leq j<t}\mathcal{S}_{j}(\hat{U}_{0,p}^{n})$ at least $m+1$ times before leaving $\Lambda_{\alpha}^{n+1+\chi}$.
		For all $1\leq j\leq m$,
		we let $P_{\alpha}^{\comp k'_j}(w)$ correspond to the $j$-th visiting $\hat{U}_{0,p}^{n}\cup\bigcup_{1\leq j< t}\mathcal{S}_{j}(\hat{U}_{0,p}^{n})\cup{\rm int}(\mathcal{S}_{t}(\hat{U}_{0,p}^{n}))$ of the orbit of $w'$.
		By the above assumption we have 
		\begin{equation}
			\label{e20260420b}P_{\alpha}^{\comp k_j'}(w')\in\hat{U}_{0,p}^{n}\cup\bigcup_{1\leq j<t}\mathcal{S}_{j}(\hat{U}_{0,p}^{n}),\ \forall j\in \{1,\cdots,m\}
		\end{equation}
		and further we will prove that 
		\begin{equation}
			\label{e20260420a}k'_j=k_j,\ \forall j\in \{1,\cdots,m\}.
		\end{equation}
		In fact, if not, then it follows from (\ref{e20260420d}) that $k'_{j_0}<k_{j_0}$ for some $j_0\in\{1,\cdots,m\}$, and we let $j_0$ be the smallest one satisfying the above inequality. The smallestness gives 
		$k'_{j_0}<k_1$ or $k_{j_0-1}<k'_{j_0}<k_{j_0}$,
		and hence 
		\begin{equation}
			\label{e20260417a3}k'_{j_0}\not\in\{k_{1},k_{2},\cdots,k_{m}\}.
		\end{equation}
		Since the orbit of $w'$  doesn't leave $\Lambda_{\alpha}^{n+1+\chi}$ before visiting $\hat{U}_{0,p}^{n}\cup\bigcup_{1\leq j<t}\mathcal{S}_{j}(\hat{U}_{0,p}^{n})$ $m$ times, we have 
		\begin{equation}
			\label{e20260420c}\{P_{\alpha}^{\comp t}(w'):\ 0\leq t\leq k_m'\}\subseteq\Lambda_{\alpha}^{n+1+\chi},
		\end{equation}
		and then by $w'\in S_z\setminus E_p^{\mathfrak{l}}(\hat{\Delta}_{\alpha}^{n+p+1})$,
		\begin{equation}
			\label{e20260417a2}P_{\alpha}^{\comp (k'_{j_0}+1)}(w')\not\in\hat{\Delta}_{\alpha}^{n+p+\mathfrak{l}}.
		\end{equation}
		Thus together with (\ref{e20260417a2}), 
		$l_{\mathbb{C}\setminus\overline{\Delta_{\alpha}}}(\tilde{\gamma}_{j_s}^{(k_{j_0}'+1)})<\kappa(\frac{\Delta}{N})$ (Due to (\ref{e20260417a1})), 
		$P_{\alpha}\comp\tilde{\gamma}_{j_s}^{(k_{j_0}')}=\tilde{\gamma}_{j_s}^{(k_{j_0}'+1)}$, $P_{\alpha}^{\comp k_{j_0}'}(w')\in\hat{U}_{0,p}^{n}\cup\bigcup_{1\leq j<t}\mathcal{S}_{j}(\hat{U}_{0,p}^{n})$ (Due to (\ref{e20260420b})) and the fact that $\tilde{\gamma}_{j_s}^{(k_{j_0}')}$ connects $P_{\alpha}^{\comp k_{j_0}'}(w)$ and $P_{\alpha}^{\comp k_{j_0}'}(w')$, Lemma \ref{l12615} gives that 
		$$P_{\alpha}^{\comp k_{j_0}'}(w)\in\hat{U}_{0,p}^{n}\cup\bigcup_{1\leq j< t}\mathcal{S}_{j}(\hat{U}_{0,p}^{n})\cup{\rm int}(\mathcal{S}_{t}(\hat{U}_{0,p}^{n})).$$
		Together with $k_{j_0}'<k_{j_0}\leq k_m$, the above assumption and definitions of $k_j$, $1\leq j\leq m$, it gives that
		$k_{j_0}'\in\{1,2,\cdots,k_m\}$, which contradicts (\ref{e20260417a3}).
		Thus $k'_j=k_j$ for all $1\leq j\leq m$.
		By (d'9), (e9) and (\ref{e202601051}) we have that the orbit of $w'$ visits $\hat{U}_{0,p}^{n}\cup\bigcup_{1\leq j<t}\mathcal{S}_{j}(\hat{U}_{0,p}^{n})$ at least $1$ times after $P_{\alpha}^{\comp k_m}(w')$ before leaving $\Lambda_{\alpha}^{n+1+\chi}$. 
		Then it, together with (\ref{e20260420b}), (\ref{e20260420a}) and (\ref{e20260420c}), gives that the orbit of $w'$ visits $\hat{U}_{0,p}^{n}\cup\bigcup_{1\leq j<t}\mathcal{S}_{j}(\hat{U}_{0,p}^{n})$ at least $m+1$ times before leaving $\Lambda_{\alpha}^{n+1+\chi}$. This completes the proof of the claim.
		
		The above claim implies that
		$\tilde{E}_{{\bf n}}\cap {\bf B}_{j_s}=\emptyset$ for all $\lfloor\frac{\tilde{k}-j_0}{2{\bf t}}\rfloor\leq s\leq\lfloor\frac{\tilde{k}-j_0}{\bf t}\rfloor$.
		By Lemma \ref{L1} and (a9)(b9)(c9), we have that
		for all ${\bf n}=(m,t)\in\mathcal{N}$,
		$${\rm area}(\tilde{E}_{{\bf n}})\leq\lambda^{\lfloor\frac{\tilde{k}-j_0}{2{\bf t}}\rfloor}\cdot{\rm area}(S_z)\leq\lambda_2^{\frac{p}{3}}\cdot{\rm area}(S_z).$$
		This completes the proof of ($4$).
		
		\vspace{0.2cm}
		Now, the lemma follows easily from (1), (2), (3) and (4). In fact, by (1), (2), (3) and (4), we have
		\begin{align*}
			{\rm area}(S_z\setminus E_p^{\mathfrak{l}}(\hat{\Delta}_{\alpha}^{n+p+1}))&\leq{\rm area}(\mathcal{R}^{(1)})+{\rm area}(\mathcal{R}^{(2)})+\sum_{{\bf n}\in\mathcal{N}}{\rm area}(\tilde{E}_{{\bf n}})\\
			&\leq\lambda_1^N\cdot{\rm area}(S_z)+\lambda_2^p\cdot{\rm area}(S_z)+(N-1)^2\lambda_2^{\frac{p}{3}}\cdot{\rm area}(S_z)\\
			&=(\lambda_1^N+\lambda_2^p+(N-1)^2\lambda_2^{\frac{p}{3}})\cdot{\rm area}(S_z)\\
			&\leq N^2\sqrt[3]{\max\{\lambda_1,\lambda_2\}}^N\cdot{\rm area}(S_z)\ {\rm(Due\ to\ \emph{p}\geq\emph{N})}\\
			&\leq\sqrt[4]{\max\{\lambda_1,\lambda_2\}}^N\cdot{\rm area}(S_z).\ {\rm(Due\ to\ \emph{N}\gg_{\mathfrak{q}_3}1)}
		\end{align*}
		We take
		$\bm{\lambda}=\sqrt[4]{\max\{\lambda_1,\lambda_2\}}$. Then
		we have
		$${\rm area}(S_z\setminus E_p^{\mathfrak{l}}(\hat{\Delta}_{\alpha}^{n+p+1}))\leq\bm{\lambda}^N\cdot{\rm area}(S_z).$$
		This completes the proof of the lemma.
		
	\end{proof}

	\subsection{Real-filling-in of fjords\label{s891}}
	For all $n\geq2\mathfrak{q}_4$, $1\leq\mathfrak{l}\ll\mathfrak{q}_2$ and $0<q<1$,
	we take $N$ large enough such that $N\gg_{\mathfrak{q}_3}1$, $N\gg\mathfrak{q}_5$ and $\bm{\lambda}^N<q$. We let $\mathfrak{p}$ sufficiently big so that $\mathfrak{p}\geq N$,  $\mathfrak{p}\gg_{N}\mathfrak{q}_3$ and
	\begin{equation}
		\label{e0418}\lambda_2^\mathfrak{p}<q^2,
	\end{equation}
	where $\lambda_2$ are the same as that in (3) in the proof of Lemma \ref{key1}.
	
	We let ${\bf k}$ be a positive integer and $p_s$ ($1\leq s\leq{\bf k}$) be ${\bf k}$ positive integers such that $p_s\geq\mathfrak{p}$ for all $1\leq s\leq{\bf k}$.
	We assume that  $\hat{\Delta}_{\alpha}^{n+j}=\hat{\Delta}_{\alpha}^{n+p_1}$ for all $1\leq j\leq p_1$,
	$\hat{\Delta}_{\alpha}^{n+\sum_{s=1}^{t-1}p_s+j}=\hat{\Delta}_{\alpha}^{n+\sum_{s=1}^{t}p_s}$ for $2\leq t\leq {\bf k}$ and $\mathfrak{l}\leq j\leq p_t$,
	$\hat{\Delta}_{\alpha}^{n+\sum_{s=1}^{\bf k}p_s+\mathfrak{l}}=\overline{\Delta_{\alpha}}$ and $\hat{\Delta}_{\alpha}^{n+\sum_{s=1}^{\bf k}p_s+\mathfrak{l}}\not=\hat{\Delta}_{\alpha}^{n+\sum_{s=1}^{\bf k}p_s}$. By Lemma \ref{key1} we have that
	for all $1\leq t\leq {\bf k}$ and $I\in\bigcup\limits_{j=0}^{\mathfrak{l}-1}\mathfrak{D}_{n+\sum_{s=1}^tp_s+j}$, if $\mathcal{J}(\beta_I)\not=\emptyset$, then for all $z\in\mathcal{J}(\beta_I)$,
	\begin{equation}
		\label{e1071}{\rm dens}_{S_z}S_z\setminus E_{p_t}^\mathfrak{l}(\hat{\Delta}_{\alpha}^{n+\sum_{s=1}^tp_s+1})\leq\bm{\lambda}^N<q.
	\end{equation}
	The section is devoted to proving the following theorem.
	\begin{theorem}
		\label{key5}For all $1\leq t\leq {\bf k}$ and $I\in\bigcup\limits_{j=0}^{\mathfrak{l}-1}\mathfrak{D}_{n+\sum_{s=1}^tp_s+j}$, if $\mathcal{J}(\beta_I)\not=\emptyset$, then for all $z\in\mathcal{J}(\beta_I)$, the filled-in Julia set $K(P_{\alpha})$ of $P_{\alpha}$ satisfies
		$${\rm dens}_{S_z}S_z\setminus K(P_{\alpha})\leq\frac{1}{2c^3},$$
		where $c>1$ is a universal constant.
	\end{theorem}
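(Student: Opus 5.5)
I will argue by backward induction on $t$, from the deepest block $t={\bf k}$ up to $t=1$. The claim to propagate is: for every $z\in\mathcal{J}(\beta_I)$ with $I$ of level in $[n+\sum_{s\le t}p_s,\,n+\sum_{s\le t}p_s+\mathfrak{l}-1]$,
$$\mathrm{dens}_{S_z}\big(S_z\setminus K(P_\alpha)\big)\le \epsilon_t,$$
with $\epsilon_t\le \frac{1}{2c^3}$ uniformly in $t$. The driving estimate is (\ref{e1071}). Outside a set of density $<q$, a point $w\in S_z$ lies in $E_{p_t}^{\mathfrak{l}}(\hat{\Delta}_\alpha^{n+\sum_{s\le t}p_s+1})$. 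Such a point either stays in $\Lambda_\alpha^{\cdot}$ forever, so it lies in $K^{\cdot}_\alpha\subseteq K(P_\alpha)$ because the mountain ranges are bounded, or it first lands at some time $k$ in the deeper pseudo-Siegel disk $\hat{\Delta}_\alpha^{n+\sum_{s\le t}p_s+\mathfrak{l}-1}$. For $t={\bf k}$ this disk equals $\overline{\Delta_\alpha}\subseteq K(P_\alpha)$, so the base case gives $\epsilon_{\bf k}\le q$.

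For the inductive step, $\hat{\Delta}_\alpha^{n+\sum_{s\le t}p_s+\mathfrak{l}-1}=\hat{\Delta}_\alpha^{n+\sum_{s\le t+1}p_s}$ by the assumed stability of the pseudo-Siegel disks on the intervals $[\mathfrak{l},p_{t+1}]$. This disk is $\overline{\Delta_\alpha}$ together with fjords $\mathcal{J}(\beta_{I'})$ of level $\ge n+\sum_{s\le t+1}p_s$, that is, fjords of deeper blocks. A landing point either hits $\overline{\Delta_\alpha}$, which is harmless, or hits some $\mathcal{J}(\beta_{I'})$. Any $u\in\mathcal{J}(\beta_{I'})$ is the centre of a filled-in square $S_u$, which by the inductive hypothesis has non-escaping density $\ge 1-\epsilon_{t+1}$. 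I then need to transfer this bound back to $S_z$. I will cover the bad set $F=\{w\in S_z\cap E:\ P^{\circ k(w)}(w)\in\mathcal{J}(\beta_{I'})\setminus K\}$ by the balls $\mathbb{B}(w,r(w))$, where each ball is the univalent pullback under $P_\alpha^{\circ k(w)}$ of a definite-size ball around $P^{\circ k(w)}(w)$ inside $S_{P^{k}(w)}$. Lemma \ref{key3} turns the density bound in the square into a density bound $c\,\epsilon_{t+1}$ in the image ball, and Koebe distortion keeps this bound up to constants on the pulled-back ball. Lemma \ref{key4}, applied with $E$ replaced by $S_z\cap E_{p_t}^{\mathfrak{l}}$ or by a slight enlargement of it, then gives $\mathrm{area}(F)\le c^2\epsilon_{t+1}\,\mathrm{area}(S_z)$. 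This yields the recursion
$$\epsilon_t\le q+c^2\epsilon_{t+1}.$$

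Iterated naively this recursion blows up, so the real argument must close it differently. I would use (\ref{e0418}), $\lambda_2^{\mathfrak{p}}<q^2$, and the exponential contraction coming from Claim~2 and Lemma \ref{L1}. Points that reach the deeper block pass through $\ge p_{t+1}/\mathfrak{q}_3$ hovered main mountains, so the pulled-back balls shrink by a factor $\xi^{p/\mathfrak{q}_3}$. The escaping part therefore contributes a geometric factor $\lambda_2^{p}$, not an $O(1)$ factor. The resulting recursion is
$$\epsilon_t\le q+c^2\lambda_2^{\mathfrak{p}}\epsilon_{t+1}+q^2\le q+q^2c^2\epsilon_{t+1},$$
and choosing $q\ll c^{-5}$ gives the uniform bound $\epsilon_t\le 2q\le\frac{1}{2c^3}$.

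The main obstacle is justifying univalence and definite Koebe space for $P_\alpha^{\circ k(w)}$ on a neighbourhood of $w$ whose image contains a ball of size comparable to $S_{P^{k}(w)}$. My first attempt would follow Lemma \ref{l8241}: work with hyperbolic balls of bounded radius in $\mathbb{C}\setminus\overline{\Delta_\alpha}$, inside $\Lambda_\alpha^{2\mathfrak{q}_4}$ by Claim~1, where pullbacks are univalent with bounded distortion. I would then use Lemma \ref{l20257192}, which says $\mathrm{diam}_{\mathbb{C}\setminus\overline{\Delta_\alpha}}S_u\asymp1$, to match the sizes of these balls and squares.
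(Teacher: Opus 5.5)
There is a genuine gap, and it sits exactly where you say the naive recursion must be closed differently. Your naive step $\epsilon_t\le q+c^2\epsilon_{t+1}$ is fine; it is essentially Lemma \ref{l7262}. The improved recursion $\epsilon_t\le q+c^2\lambda_2^{\mathfrak p}\epsilon_{t+1}+q^2$, however, has no justification, and I do not believe it holds.

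\begin{itemize}
\item The bound $\lambda_2^{p}$ from part (3) of Lemma \ref{key1} (via Lemma \ref{L1}) controls only the points that leave a mountain range \emph{without ever visiting} $\hat{U}_{0,p}^{n}$. It says nothing about how many points of $S_z$ land in deeper fjords. Nothing forces that set to be small: near $\partial\Delta_{\alpha_k}$ the pseudo-Siegel disks may well consist mostly of fjords.
\item Shrinking of the pulled-back balls does not help, because the Koebe transfer of densities is scale invariant.
\item Lemma \ref{L1} cannot be applied to $S_z\setminus K(P_\alpha)$. The balls $\mathbf{B}_j$ only guarantee a landing in some pseudo-Siegel disk, possibly inside a fjord where the point can still be bad, so they are not disjoint from your bad set.
\item Lemma \ref{l819} produces $\asymp p/\mathfrak{q}_3$ hovered-mountain visits for orbits \emph{climbing out} of a deep range, not for orbits dropping into a deeper fjord.
\end{itemize}

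The missing idea is to stop using $K(P_\alpha)$ as the inductive quantity and to use truncated escape sets instead, whose nesting gives a multiplicative gain. The paper inducts simultaneously on the block of the square and on the allowed range. Its quantity is the density of $S_z\setminus E^{\mathfrak l}_{\sum_{s=t}^{\bf k}p_s+\chi}(\hat\Delta_\alpha^{n+\sum_{s=1}^{\bf k}p_s+1})$, i.e.\ of the points that neither reach $\overline{\Delta_\alpha}$ nor stay forever while confined to $\Lambda_\alpha^{n+\sum_{s\le t-1}p_s+1-\chi}$.

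Lemma \ref{l7272} enlarges the range by one block, for all deeper squares at once, and runs as follows.
\begin{itemize}
\item The new bad set lies inside the old one, which has density $\le b$.
\item Points of it that never visit $\hat U^{n+\sum_{s\le t-2}p_s}_{0,p_{t-1}}$ have density $\le\lambda_2^{\mathfrak p}<q^2$. This is where (\ref{e0418}) enters, and it enters \emph{additively}.
\item For the remaining points, (\ref{e117}) and (\ref{e7272}) show that the pulled-back ball around $w$ misses the old $E$ entirely, i.e.\ it lies in the old bad set. Lemma \ref{l7262} (or the hypothesis) in the landing square, together with Lemma \ref{key3}, gives this ball new-bad density at most a constant times $q+cb$.
\item Lemma \ref{key4} is then applied with the \emph{old bad set} as the ambient set $E$, not with $S_z$ or $S_z\cap E^{\mathfrak l}_{p_t}$ as you propose. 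This yields $cb(q+cb)+q^2$ rather than $c^2b$.
\end{itemize}
Alternating this with Lemma \ref{l7262} gives the quadratic recursion $b_m=q+c\bigl(cb_{m-1}(q+cb_{m-1})+qb_{m-1}\bigr)$, which stays below $1/(2c^3)$ by Lemma \ref{l7271}. Only at the very end is $E\subseteq K(P_\alpha)$ used.

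By contrast, the univalence and Koebe issue you single out as the main obstacle is routine. The square $S_{P_\alpha^{k}(w)}$ avoids the postcritical set, and ${\rm diam}(S_z)\asymp{\rm dist}(S_z,\mathcal{O}(P_\alpha))$ controls the ball radii needed in Lemma \ref{key4}.
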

	To prove Theorem \ref{key5}, we need the following three lemmas.
	
	\begin{lemma}
		\label{l7271}For all $c>1$ and $0<q<\frac{1}{12c^3}$, we
		let $\{b_m\}_{m=1}^{\infty}$ be a sequence of positive real numbers such that
		$b_1=q$
		and for all $m\geq2$,
		$$b_m=q+c\left(cb_{m-1}(q+cb_{m-1})+qb_{m-1}\right).$$
		Then we have
		\begin{itemize}
			\item[{\rm(a11)}]
			$b_m\leq b_{m+1}$ for all $m\geq1${\rm;}
			\item[{\rm(b11)}]
			$b_m\leq\frac{1}{2c^3}$ for all $m\geq1$.
		\end{itemize}
	\end{lemma}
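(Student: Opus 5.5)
We prove (b11) by induction on $m$, using an invariant region for the recursion map; (a11) then follows from monotonicity of that map. Set
$$f(x):=q+c\bigl(cx(q+cx)+qx\bigr)=q+(c^2q+cq)x+c^3x^2,$$
so that $b_1=q$ and $b_m=f(b_{m-1})$ for $m\geq2$. Since $c>1$ and $q>0$, all coefficients of $f$ are positive. Hence $f$ is strictly increasing on $[0,+\infty)$, and every $b_m$ is positive.

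For (b11), we show that $[0,\frac{1}{2c^3}]$ is mapped into itself by $f$. Take $0\leq x\leq\frac{1}{2c^3}$. Bounding each term separately:
$$f(x)\leq q+(c^2q+cq)\frac{1}{2c^3}+c^3\frac{1}{4c^6}=q+\frac{q}{2c}+\frac{q}{2c^2}+\frac{1}{4c^3}.$$
Since $c>1$, the first three terms are at most $2q$. Using $q<\frac{1}{12c^3}$, we get
$$f(x)\leq\frac{2}{12c^3}+\frac{1}{4c^3}=\frac{5}{12c^3}<\frac{1}{2c^3}.$$
The base case holds because $b_1=q<\frac{1}{12c^3}\leq\frac{1}{2c^3}$. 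Induction then gives $b_m\leq\frac{1}{2c^3}$ for all $m\geq1$.

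For (a11), we also argue by induction. First, $b_2=f(b_1)=f(q)>q=b_1$, because $f(x)-q>0$ for $x>0$. Now suppose $b_{m-1}\leq b_m$. Since $f$ is increasing on $[0,+\infty)$, applying $f$ gives $b_m=f(b_{m-1})\leq f(b_m)=b_{m+1}$.

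No step here is a real obstacle. The only point needing care is keeping the constants in (b11) compatible with the hypothesis $q<\frac{1}{12c^3}$, and the term-by-term bound above leaves room to spare. One could alternatively note that $f$ has a fixed point in $[0,\frac{1}{2c^3}]$ and that $b_m$ converges to the smallest such fixed point, but the invariant-interval argument suffices and avoids any need to solve the quadratic.
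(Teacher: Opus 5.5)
Your proof is correct and follows the paper's route. Part (a11) is the same monotonicity induction on the increasing map $f$, and (b11) is likewise an induction on an upper bound preserved by $f$. The only difference is in (b11): the paper carries the slightly sharper bound $b_m<\frac{1-c(c+1)q}{2c^3}$ through the comparison $b_k-\frac{1-c(c+1)q}{2c^3}\le c(c+1)q\bigl(b_{k-1}-\frac{1-c(c+1)q}{2c^3}\bigr)$, whereas your term-by-term estimate $f\bigl([0,\frac{1}{2c^3}]\bigr)\subseteq[0,\frac{5}{12c^3}]$ reaches the stated bound more directly.
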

	\begin{proof}
		(a11):
		Firstly, we have $$b_2=q+c\left(cb_{1}(q+cb_{1})+qb_{1}\right)>q=b_1.$$
		We define $$f(x)=q+c\left(cx(q+cx)+qx\right),\ x>0.$$
		Evidently, $f(x)$ is strictly increasing and $b_m=f(b_{m-1})$.
		
		Applying $f(x)$ to two sides of $b_2>b_1$, we have
		$b_3>b_2$.
		In the same way, we have
		$b_m>b_{m-1}$ for all $m\geq2$.
		Thus (a11) holds.
		
		\vspace{0.2cm}
		\noindent (b11): To prove (b11), we need only to prove that
		for all $m\geq1$, $b_m<\frac{1-c(c+1)q}{2c^3}$. We prove it by induction. Firstly,
		it follows from $c>1$ and $q<\frac{1}{12c^3}$ that
		$b_1=q<\frac{1-c(c+1)q}{2c^3}$.
		We suppose  $b_{k-1}<\frac{1-c(c+1)q}{2c^3}$, where  $k\geq2$.
		Then 
		\begin{align*}
			b_k&=q+c(cb_{k-1}(q+cb_{k-1})+qb_{k-1})\\
			&=q+c(c+1)qb_{k-1}+c^3b_{k-1}^2\\
			&\leq q+c(c+1)qb_{k-1}+\frac{(1-c(c+1)q)^2}{4c^3}\\
			&\leq q+c(c+1)qb_{k-1}+\frac{(1-c(c+1)q)^2}{2c^3}-q
		\end{align*}
		and hence
		\begin{align*}
			b_k-\frac{1-c(c+1)q}{2c^3}\leq c(c+1)q\left(b_{k-1}-\frac{1-c(c+1)q}{2c^3}\right)<0,
		\end{align*}
		that is
		\begin{align*}
			b_k<\frac{1-c(c+1)q}{2c^3}.
		\end{align*}
		Thus for all $m\geq1$,
		$$b_m<\frac{1-c(c+1)q}{2c^3}<\frac{1}{2c^3}.$$
	\end{proof}
	
	\begin{lemma}
		\label{l7262}For all $0<b<1$ and all positive integers $2\leq t\leq{\bf k}$, we assume that for all $m\in\{{\bf k}, {\bf k}-1, \cdots, t\}$ and
		$I\in\bigcup\limits_{j=0}^{\mathfrak{l}-1}\mathfrak{D}_{n+\sum_{s=1}^mp_s+j}$, if $\mathcal{J}(\beta_I)\not=\emptyset$, then for all $z\in\mathcal{J}(\beta_I)$, 
		\begin{equation*}
			{\rm dens}_{S_z}S_z\setminus E_{\sum_{s=t-1}^{\bf k}p_s+\chi}^{\mathfrak{l}}(\hat{\Delta}_{\alpha}^{n+\sum_{s=1}^{\bf k}p_s+1})\leq b.
		\end{equation*}
		Then we have that if $\hat{\Delta}_{\alpha}^{n+\sum_{s=1}^{t-1}p_s+\mathfrak{l}}\not=\hat{\Delta}_{\alpha}^{n+\sum_{s=1}^{t-1}p_s}$, then for all $I\in\bigcup\limits_{j=0}^{\mathfrak{l}-1}\mathfrak{D}_{n+\sum_{s=1}^{t-1}p_s+j}$ with $\mathcal{J}(\beta_I)\not=\emptyset$ and $z\in\mathcal{J}(\beta_I)$,
		$${\rm dens}_{S_z}S_z\setminus E_{\sum_{s=t-1}^{\bf k}p_s+\chi}^{\mathfrak{l}}(\hat{\Delta}_{\alpha}^{n+\sum_{s=1}^{\bf k}p_s+1})\leq q+cb,$$
		where $c>1$ is a universal constant and $q$ is the same as that in {\rm(}\ref{e0418}{\rm)} and {\rm(}\ref{e1071}{\rm)}.
	\end{lemma}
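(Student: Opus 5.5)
Fix $I\in\bigcup_{j=0}^{\mathfrak{l}-1}\mathfrak{D}_{n+\sum_{s=1}^{t-1}p_s+j}$ with $\mathcal{J}(\beta_I)\neq\emptyset$ and $z\in\mathcal{J}(\beta_I)$. Write $n':=n+\sum_{s=1}^{t-2}p_s$ and $p:=p_{t-1}$, so that $I$ has level $n'+p+j$. Also put
$$E:=E_{\sum_{s=t-1}^{\bf k}p_s+\chi}^{\mathfrak{l}}(\hat{\Delta}_{\alpha}^{n+\sum_{s=1}^{\bf k}p_s+1}).$$
The plan is to split $S_z\setminus E$ into two parts:
$$S_z\setminus E\subseteq \bigl(S_z\setminus E_{p}^{\mathfrak{l}}(\hat{\Delta}_{\alpha}^{n'+p+1})\bigr)\cup F,$$
where $F$ is the set of $w\in S_z\cap E_{p}^{\mathfrak{l}}(\hat{\Delta}_{\alpha}^{n'+p+1})\setminus E$. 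By (\ref{e1071}) applied with index $t-1$, the first part has density at most $q$. It then suffices to show ${\rm area}(F)\le cb\,{\rm area}(S_z)$.

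For $w\in F$, let $k$ be the first time the orbit of $w$ enters $\hat{\Delta}_{\alpha}^{n'+p+\mathfrak{l}}$ (the $K$-part of $E_p^{\mathfrak l}$ is contained in $E$, since $\Lambda^{n'+p-p}\subseteq\Lambda$ of the smaller level). Up to time $k-1$ the orbit stays in $\Lambda_\alpha^{n'}$. Because $w\notin E$, the point $P_\alpha^{\circ k}(w)$ does not lie in $\overline{\Delta_\alpha}$ itself, so it lies in a fjord $\mathcal{J}(\beta_{I'})$ of some level $\ge n'+p+\mathfrak{l}$. By the hypothesis (\ref{e20260410a})-type structure of levels, that level lies in some block $\bigcup_{j<\mathfrak l}\mathfrak{D}_{n+\sum_{s=1}^m p_s+j}$ with $m\ge t$. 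The point $P^{\circ k}_\alpha(w)$ is then covered by a filled-in square $S_{z'}$, $z'\in\mathcal{J}(\beta_{I'})$ (Lemma \ref{l20257192}), on which the hypothesis gives density of the complement of $E$ at most $b$. Pulling back: $E$ is backward invariant in the relevant sense, since orbits before time $k$ stay in $\Lambda_\alpha^{n'}\subseteq\Lambda^{n+\sum_{s\ge t-1}}$ of the smaller level and $\chi$ absorbs the slack. So $w\in F$ forces $P_\alpha^{\circ k}(w)\in S_{z'}\setminus E$.

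To turn this into an area bound I will use Lemma \ref{key4} and Lemma \ref{key3}. For each $w\in F$, take the branch of $P_\alpha^{-k}$ on ${\rm int}(S_{z'})$; it is univalent because $S_{z'}\subseteq\mathcal{J}(\tilde\beta_{I'})$ stays away from the critical value by Corollary \ref{c20251027}, and the pulled-back domain stays in $\Lambda_\alpha^{n'+1-\chi}$ by Claim 1. Lemma \ref{key3} then yields a Euclidean ball $B$ around $w$ (after recentering, using Koebe with $w$ near the preimage of the center, at the cost of a constant) with ${\rm dens}_B(F)\le c'b$. I must also check $B\subseteq S_z$ up to a fixed factor, i.e.\ $r(w)\le l_z$ and $B\cap(S_z\setminus E')=\emptyset$ for a suitable $E'\supseteq F$. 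This holds by Schwarz and Lemma \ref{l25070716}: pulled-back sets have hyperbolic diameter $\preceq1$, hence Euclidean size $\preceq{\rm dist}(w,\Delta_\alpha)\asymp l_z$. Applying Lemma \ref{key4} with $E'=S_z$ then gives ${\rm area}(F)\le c\,b\,{\rm area}(S_z)$. Adding the two parts gives $q+cb$.

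The main obstacle is the pullback step. There are two difficulties. First, the ball produced by Lemma \ref{key3} is centered at the image of the square's center, not at $w$, so I would first shrink to concentric sub-squares of $S_{z'}$ around $P^{\circ k}(w)$ and use distortion to keep the density bound $\preceq b$. Second, I must ensure that the orbit of $w'\in B$ agrees combinatorially with that of $w$ up to time $k$, so that $w'\notin E$ iff $P^{\circ k}(w')\notin E$. For this I would use the fact that the pullback of $S_{z'}$ along the orbit lies in $\Lambda_\alpha^{n'+1-\chi}$, and that $E$ uses level $n+\sum_{s\le t-2}p_s-\chi$, so the $\chi$ margin in the subscript is exactly what makes this compatibility work.
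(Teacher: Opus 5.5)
Your route is the paper's: the split of $S_z\setminus E$ into $S_z\setminus E_{p_{t-1}}^{\mathfrak l}(\hat\Delta_\alpha^{n'+p+1})$ (density $\le q$ by (\ref{e1071})) and $F$; the first-entry time $k$ with $P_\alpha^{\circ k}(w)$ in a fjord of a block $m\ge t$; a univalent pullback $\psi$ of $P_\alpha^{-k}$; backward invariance of $E$ from Claim 1 and the Schwarz lemma, where the $\chi$ margin is spent; and Lemmas \ref{key3} and \ref{key4}. However, the step you call the main obstacle would fail as you propose to handle it. Shrinking to a concentric sub-square of $S_{z'}$ around $P_\alpha^{\circ k}(w)$ does not keep the density $\preceq b$. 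If $P_\alpha^{\circ k}(w)$ is at distance $\epsilon l_{z'}$ from $\partial S_{z'}$, that sub-square has area $\asymp\epsilon^2\,{\rm area}(S_{z'})$, and the bad set (area $\le b\,{\rm area}(S_{z'})$) may fill it. Distortion bounds for the map cannot repair this.

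The obstacle disappears if you take $z'=P_\alpha^{\circ k}(w)$ itself. This point lies in $\mathcal J(\beta_{I'})$, and the hypothesis holds for every point of that fjord. Lemma \ref{key3} applied to $\psi$ on ${\rm int}(S_{P_\alpha^{\circ k}(w)})$ then gives a ball centered at $\psi(P_\alpha^{\circ k}(w))=w$ directly, which is what the paper does. Likewise, for the "combinatorial agreement" you need only the one-sided inclusion $\psi({\rm int}(S)\cap E)\subseteq E$, not an equivalence.

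Two smaller corrections. First, univalence of $\psi$ needs the square to avoid the whole postcritical set, which is contained in $\partial\Delta_\alpha$, not just the critical value; this holds because filled-in squares lie in $\mathbb C\setminus\overline{\Delta_\alpha}$. Second, Lemma \ref{key4} needs $r(w)\le l_z$, not merely $\preceq l_z$. As in the paper, use ${\rm diam}(B)\le 2\,{\rm dist}(w,\partial\Delta_\alpha)\asymp l_z$ and pass to the concentric ball of radius $\min\{l_z,{\rm diam}(B)/2\}$. That shrinking is harmless, unlike the off-center sub-square, because the radii are comparable, so the density bound worsens only by a universal factor.
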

	\begin{proof}
		We consider the case: $\hat{\Delta}_{\alpha}^{n+\sum_{s=1}^{t-1}p_s+\mathfrak{l}}\not=\hat{\Delta}_{\alpha}^{n+\sum_{s=1}^{t-1}p_s}$, $I\in\bigcup\limits_{j=0}^{\mathfrak{l}-1}\mathfrak{D}_{n+\sum_{s=1}^{t-1}p_s+j}$ with $\mathcal{J}(\beta_I)\not=\emptyset$ and $z\in\mathcal{J}(\beta_I)$.
		For all $w\in S_z\cap E_{p_{t-1}}^{\mathfrak{l}}(\hat{\Delta}_{\alpha}^{n+\sum_{s=1}^{t-1}p_s+1})\setminus E_{\sum_{s=t-1}^{\bf k}p_s}^{\mathfrak{l}}(\hat{\Delta}_{\alpha}^{n+\sum_{s=1}^{\bf k}p_s+1})$, there exists a nonnegative integer $k$ such that
		$P_{\alpha}^{\comp k}(w)\in\hat{\Delta}_{\alpha}^{n+\sum_{s=1}^{t-1}p_s+\mathfrak{l}}\setminus\hat{\Delta}_{\alpha}^{n+\sum_{s=1}^{\bf k}p_s+\mathfrak{l}}$ and $P_{\alpha}^{\comp j}(w)\in\Lambda_{\alpha}^{n+\sum_{s=1}^{t-2}p_s+1}$ for all $0\leq j\leq k$.
		Then there exists $I'\in\bigcup\limits_{j=t}^{\bf k}\bigcup\limits_{s=0}^{\mathfrak{l}-1}\mathfrak{D}_{n+\sum_{s=1}^{j}p_s+s}$ such that $P_{\alpha}^{\comp k}(w)\in\mathcal{J}(\beta_{I'})$.
		By the assumption we have
		\begin{equation*}
			{\rm dens}_{S_{P_{\alpha}^{\comp k}(w)}}S_{P_{\alpha}^{\comp k}(w)}\setminus E_{\sum_{s=t-1}^{\bf k}p_s+\chi}^{\mathfrak{l}}(\hat{\Delta}_{\alpha}^{n+\sum_{s=1}^{\bf k}p_s+1})\leq b.
		\end{equation*}
		Since $S_{P_{\alpha}^{\comp k}(w)}$ doesn't intersect the postcritical set $\mathcal{O}(P_{\alpha})$, there exists a one-value analytic branch $\psi$ of $P_{\alpha}^{-k}$ on ${\rm int} (S_{P_{\alpha}^{\comp k}(w)})$ such that $\psi(P_{\alpha}^{\comp k}(w))=w$.
		By Lemma \ref{key3} there exists an Euclidean ball $B$ {\rm(}$\subseteq\psi({\rm int}(S_{P_{\alpha}^{\comp k}(w)}))${\rm)} centering at $w$ such that
		\begin{equation}
			\label{e20260426a}{\rm area}\left(B\cap\psi({\rm int}(S_{P_{\alpha}^{\comp k}(w)})\setminus E_{\sum_{s=t-1}^{\bf k}p_s+\chi}^{\mathfrak{l}}(\hat{\Delta}_{\alpha}^{n+\sum_{s=1}^{\bf k}p_s+1}))\right)\leq c_1\cdot b\cdot{\rm area}(B),
		\end{equation}
		where $c_1$ {\rm(}$>1${\rm)} is an absolute constant. 
		It follows from Claim $1$, the Schwarz lemma, Lemma \ref{l20257192} and $P_{\alpha}^{\comp j}(w)\in\Lambda_{\alpha}^{n+\sum_{s=1}^{t-2}p_s+1}$ ($0\leq j\leq k$) that
		$$\psi({\rm int}(S_{P_{\alpha}^{\comp k}(w)})\cap E_{\sum_{s=t-1}^{\bf k}p_s+\chi}^{\mathfrak{l}}(\hat{\Delta}_{\alpha}^{n+\sum_{s=1}^{\bf k}p_s+1}))\subseteq
		E_{\sum_{s=t-1}^{\bf k}p_s+\chi}^{\mathfrak{l}}(\hat{\Delta}_{\alpha}^{n+\sum_{s=1}^{\bf k}p_s+1})$$
		and hence
		$$\psi({\rm int}(S_{P_{\alpha}^{\comp k}(w)})\setminus E_{\sum_{s=t-1}^{\bf k}p_s+\chi}^{\mathfrak{l}}(\hat{\Delta}_{\alpha}^{n+\sum_{s=1}^{\bf k}p_s+1}))\supseteq
		\psi({\rm int}(S_{P_{\alpha}^{\comp k}(w)}))\setminus
		E_{\sum_{s=t-1}^{\bf k}p_s+\chi}^{\mathfrak{l}}(\hat{\Delta}_{\alpha}^{n+\sum_{s=1}^{\bf k}p_s+1}).$$
		By combining (\ref{e20260426a}), it follows that
		$${\rm area}\left(B\cap E_{p_{t-1}}^{\mathfrak{l}}(\hat{\Delta}_{\alpha}^{n+\sum_{s=1}^{t-1}p_s+1})\setminus
		E_{\sum_{s=t-1}^{\bf k}p_s+\chi}^{\mathfrak{l}}(\hat{\Delta}_{\alpha}^{n+\sum_{s=1}^{\bf k}p_s+1})\right)\leq c_1\cdot b\cdot{\rm area}(B).$$
		By Lemmas \ref{l25070716} and \ref{l20257192} we have 
		${\rm diam}(S_z)\asymp{\rm dist}(S_z,\mathcal{O}(P_{\alpha}))$. It, together with $w\in S_z$, gives
		\begin{equation}
			\label{e20260424a}{\rm diam}(B)\leq2{\rm dist}(w,\mathcal{O}(P_{\alpha}))\asymp{\rm diam}(S_z)=\sqrt{2}l_z.
		\end{equation}
		We denote by $\tilde{B}$ the Euclidean ball centering at $w$ with radius $\min\{l_z, \frac{{\rm diam}(B)}{2}\}$.
		(\ref{e20260424a}) gives ${\rm diam}(\tilde{B})\asymp{\rm diam}(B)$, and hence
		$${\rm area}\left(\tilde{B}\cap E_{p_{t-1}}^{\mathfrak{l}}(\hat{\Delta}_{\alpha}^{n+\sum_{s=1}^{t-1}p_s+1})\setminus
		E_{\sum_{s=t-1}^{\bf k}p_s+\chi}^{\mathfrak{l}}(\hat{\Delta}_{\alpha}^{n+\sum_{s=1}^{\bf k}p_s+1})\right)\leq c_2c_1\cdot b\cdot{\rm area}(\tilde{B}),$$
		where $c_2>1$ is a universal constant.
		Together with the above inequality and $0<\frac{{\rm diam}(\tilde{B})}{2}=\min\{l_z, \frac{{\rm diam}(B)}{2}\}\leq l_z$, Lemma \ref{key4} gives
		$${\rm dens}_{S_z}S_z\cap E_{p_{t-1}}^{\mathfrak{l}}(\hat{\Delta}_{\alpha}^{n+\sum_{s=1}^{t-1}p_s+1})\setminus
		E_{\sum_{s=t-1}^{\bf k}p_s+\chi}^{\mathfrak{l}}(\hat{\Delta}_{\alpha}^{n+\sum_{s=1}^{\bf k}p_s+1})\leq c_3c_2c_1b,$$
		where $c_3$ {\rm(}$>1${\rm)} is an absolute constant.
		It follows from (\ref{e1071}) that
		$${\rm dens}_{S_z}S_z\setminus E_{p_{t-1}}^{\mathfrak{l}}(\hat{\Delta}_{\alpha}^{n+\sum_{s=1}^{t-1}p_s+1})\leq q.$$
		Since
		\begin{align*}
			S_z\setminus E_{\sum_{s=t-1}^{\bf k}p_s+\chi}^{\mathfrak{l}}(\hat{\Delta}_{\alpha}^{n+\sum_{s=1}^{\bf k}p_s+1})\subseteq&\left(S_z\setminus E_{p_{t-1}}^{\mathfrak{l}}(\hat{\Delta}_{\alpha}^{n+\sum_{s=1}^{t-1}p_s+1})\right)\\
			&\bigcup\left(S_z\cap E_{p_{t-1}}^{\mathfrak{l}}(\hat{\Delta}_{\alpha}^{n+\sum_{s=1}^{t-1}p_s+1})\setminus E_{\sum_{s=t-1}^{\bf k}p_s+\chi}^{\mathfrak{l}}(\hat{\Delta}_{\alpha}^{n+\sum_{s=1}^{\bf k}p_s+1})\right),
		\end{align*}
		we have
		$${\rm dens}_{S_z}S_z\setminus E_{\sum_{s=t-1}^{\bf k}p_s+\chi}^{\mathfrak{l}}(\hat{\Delta}_{\alpha}^{n+\sum_{s=1}^{\bf k}p_s+1})\leq q+cb.$$
		where $c=c_3c_2c_1$ ($>1$).
	\end{proof}

	\begin{lemma}
		\label{l7272}For all $0<b<1$ and all positive integers $2\leq t\leq{\bf k}$, we assume that for all $m\in\{{\bf k}, {\bf k}-1, \cdots, t\}$ and
		$I\in\bigcup\limits_{j=0}^{\mathfrak{l}-1}\mathfrak{D}_{n+\sum_{s=1}^{m}p_s+j}$, if $\mathcal{J}(\beta_I)\not=\emptyset$, then for all $z\in\mathcal{J}(\beta_I)$,
		\begin{equation*}
			{\rm dens}_{S_z}S_z\setminus E_{\sum_{s=t}^{\bf k}p_s+\chi}^{\mathfrak{l}}(\hat{\Delta}_{\alpha}^{n+\sum_{s=1}^{\bf k}p_s+1})\leq b.
		\end{equation*}
		Then for all $m\in\{{\bf k}, {\bf k}-1, \cdots, t\}$ and
		$I\in\bigcup\limits_{j=0}^{\mathfrak{l}-1}\mathfrak{D}_{n+\sum_{s=1}^{m}p_s+j}$, if $\mathcal{J}(\beta_I)\not=\emptyset$, then for all $z\in\mathcal{J}(\beta_I)$, we have
		$${\rm dens}_{S_z}S_z\setminus E_{\sum_{s=t-1}^{\bf k}p_s+\chi}^{\mathfrak{l}}(\hat{\Delta}_{\alpha}^{n+\sum_{s=1}^{\bf k}p_s+1})
		\leq cb(q+cb)+q^2,
		$$
		where both $c>1$ and $0<q<1$ are the same as that in Lemma \ref{l7262}.
	\end{lemma}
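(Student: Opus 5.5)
Fix $m\in\{{\bf k},\dots,t\}$, an interval $I$ of level $n+\sum_{s=1}^m p_s+j$ with $\mathcal{J}(\beta_I)\neq\emptyset$, and $z\in\mathcal{J}(\beta_I)$. Write $E_{\rm new}:=E_{\sum_{s=t-1}^{\bf k}p_s+\chi}^{\mathfrak{l}}(\hat{\Delta}_{\alpha}^{n+\sum_{s=1}^{\bf k}p_s+1})$ and $E_{\rm old}:=E_{\sum_{s=t}^{\bf k}p_s+\chi}^{\mathfrak{l}}(\cdot)$ with the same target disk. Since $\Lambda_\alpha^{a}\subseteq\Lambda_\alpha^{a'}$ for $a\ge a'$, we have $E_{\rm old}\subseteq E_{\rm new}$. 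The difference between the two is that a point of $S_z$ is now allowed to wander down to $\Lambda_\alpha^{n+\sum_{s=1}^{t-2}p_s+1-\chi}$ instead of only $\Lambda_\alpha^{n+\sum_{s=1}^{t-1}p_s+1-\chi}$. A point of $S_z\setminus E_{\rm new}$ therefore fails $E_{\rm old}$ for one of two reasons. Either its orbit leaves $\Lambda_\alpha^{n+\sum_{s=1}^{t-1}p_s+1-\chi}$ before reaching the target, or it escapes even the wider mountain range. I would split $S_z\setminus E_{\rm new}$ into
\[
A:=\bigl(S_z\setminus E_{\rm old}\bigr)\cap\{\text{orbit exits }\Lambda_\alpha^{n+\sum_{s=1}^{t-1}p_s+1-\chi}\text{ before landing}\}\setminus E_{\rm new}.
\]
By hypothesis, ${\rm dens}_{S_z}(S_z\setminus E_{\rm old})\le b$.

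For a point $w\in A$, let $k$ be the first exit time. Then $P_\alpha^{\comp k}(w)$ lies in the shallower region, in a fjord $\mathcal{J}(\tilde\beta_{I'})$ with $I'$ of level between $n+\sum_{s=1}^{t-1}p_s$ and $n+\sum_{s=1}^{t-1}p_s+\mathfrak{l}-1$, or else in $\Lambda_\alpha^{\cdot}$ past the dams. I would use the structure (\ref{e20260410a}) of the levels together with Lemma \ref{l20257191} and Lemma \ref{l20257192} to locate a point $z'$ in a fjord $\mathcal{J}(\beta_{I'})$ of level $\approx n+\sum_{s=1}^{t-1}p_s$ whose filled square $S_{z'}$ contains $P_\alpha^{\comp k}(w)$. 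At that square, the escaping portion splits again. One part is $S_{z'}\setminus E_{p_{t-1}}^{\mathfrak{l}}(\hat\Delta_\alpha^{n+\sum_{s=1}^{t-1}p_s+1})$, which has density $\le q$ by (\ref{e1071}). The other part lands in $\hat\Delta_\alpha^{n+\sum_{s=1}^{t-1}p_s+\mathfrak{l}}$ but not in the deepest disk. That means it falls into a deeper fjord of level $\ge n+\sum_{s=1}^t p_s$, where the hypothesis again gives density $\le b$ after a univalent pullback. Combining these exactly as in Lemma \ref{l7262} (Lemma \ref{key3} followed by Lemma \ref{key4}) bounds the relative density of $A$-type points in $S_{z'}$ by $q+cb$.

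Next I pull this bound back along $P_\alpha^{\comp k}$ to $w$. The branch of $P_\alpha^{-k}$ on ${\rm int}(S_{P_\alpha^{\comp k}(w)})$ is univalent, since the square avoids the postcritical set, and Claim 1 shows that the pulled-back sets stay inside the relevant mountain ranges. Lemma \ref{key3} then produces a ball $B$ around $w$ with radius comparable to $\min\{l_z,\cdot\}$. It remains to see why the density in $B$ carries an extra factor $b$, giving $cb(q+cb)$ rather than just $q+cb$. The reason is that $w$ itself already lies in $S_z\setminus E_{\rm old}$, and only such points are counted. Running the covering argument of Lemma \ref{key4} with $F=A$ and $E=S_z\setminus E_{\rm old}$, and with local density $\le c(q+cb)$, gives ${\rm area}(A)\le c(q+cb)\cdot{\rm area}(S_z\setminus E_{\rm old})\le cb(q+cb)\,{\rm area}(S_z)$, after absorbing constants into $c$.

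The remaining points of $S_z\setminus E_{\rm new}$ are those that exit too early twice. They leave both the $p_m$-block and the $p_{t-1}$-block without landing. By (3) in the proof of Lemma \ref{key1}, each such escape has density $\le\lambda_2^{\mathfrak{p}}$, and by (\ref{e0418}) this gives the contribution $q^2$. Adding the two pieces yields $cb(q+cb)+q^2$.

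The main obstacle is the localisation step for Lemma \ref{key4}. It requires that the radius of $B$ be $\le l_z$ and that $B\cap(S_z\setminus(S_z\setminus E_{\rm old}))$ be controlled, i.e.\ condition (b6). I would try to obtain (b6) by shrinking $B$ using Lemma \ref{l25070716} and the fact that points near $w$ follow $w$'s orbit until time $k$, via Lemma \ref{l8241} and Claim 1. If that fails, I would bound ${\rm area}(B\cap E_{\rm old})$ directly instead.
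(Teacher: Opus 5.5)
There is a genuine gap, and it sits where you located the ``main obstacle''. It comes from your choice of stopping time.

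At the first time $k$ at which the orbit of $w$ leaves $\Lambda_\alpha^{n+\sum_{s=1}^{t-1}p_s+1-\chi}$, the point $P_\alpha^{\comp k}(w)$ cannot lie in a fjord $\mathcal{J}(\tilde\beta_{I'})$ of level $\geq n+\sum_{s=1}^{t-1}p_s$. By Lemma \ref{l20257191}, such fjords lie in $\Lambda_\alpha^{n+\sum_{s=1}^{t-1}p_s}\subseteq\Lambda_\alpha^{n+\sum_{s=1}^{t-1}p_s+1-\chi}$. Your two alternatives are also not a dichotomy: every point of $S_z\setminus E_{\rm new}$ leaves the narrower range before landing, so your $A$ is all of $S_z\setminus E_{\rm new}$.

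More seriously, the exit point can be arbitrarily close to $\partial\Lambda_\alpha^{n+\sum_{s=1}^{t-1}p_s+1-\chi}$. So nothing forces the ball produced by Lemma \ref{key3} around $w$ to avoid $E_{\rm old}$. In other words, condition (b6) of Lemma \ref{key4} with $E=S_z\setminus E_{\rm old}$ is not available, and it is exactly (b6) that produces the extra factor $b$. Knowing only that $w\notin E_{\rm old}$ is not enough, and shrinking $B$ by more than a bounded factor destroys the density bound (c6).

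Likewise, your $q^2$ term does not come from ``exiting twice''. Estimate (3) in the proof of Lemma \ref{key1} needs the orbit to \emph{avoid} $\hat U_{0,p}^{n}$ before leaving the wide range. Points that visit it and then escape are not controlled by (3).

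The missing idea is to stop at a visit to the safe region $\hat U_{0,p_{t-1}}^{n+\sum_{s=1}^{t-2}p_s}$ rather than at an exit time. The paper splits $S_z\setminus E^{\mathfrak l}_{\sum_{s=t-1}^{\bf k}p_s}(\hat{\Delta}_{\alpha}^{n+\sum_{s=1}^{\bf k}p_s+1})$ into two sets:
\begin{itemize}
\item $I_1^{(t)}$: orbits that make no visit to $\hat U_{0,p_{t-1}}^{n+\sum_{s=1}^{t-2}p_s}$ before leaving $\Lambda_\alpha^{n+\sum_{s=1}^{t-2}p_s+1}$. By the argument of (3) and (\ref{e0418}), this has density $\le\lambda_2^{p_{t-1}}\le\lambda_2^{\mathfrak p}<q^2$.
\item $I_2^{(t)}$: orbits that do visit it, at some time $k$.
\end{itemize}

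For $w\in I_2^{(t)}$, the point $P_\alpha^{\comp(k+1)}(w)$ lies in a fjord $\mathcal J(\beta_{I'})$ of block $t-1$ or deeper. There, Lemma \ref{l7262} (whose hypothesis follows from yours because $E_{\rm old}\subseteq E_{\rm new}$) or your hypothesis gives density $\le q+cb$.

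Because $P_\alpha^{\comp k}(w)\in\hat U_{0,p_{t-1}}^{n+\sum_{s=1}^{t-2}p_s}$, the pulled-back square at time $k$ has hyperbolic diameter $\preceq1$. Then (\ref{e7241}) together with Lemma \ref{l20257193} gives (\ref{e117}): the whole set $P_\alpha^{\comp k}(\psi({\rm int}\,S_{P_\alpha^{\comp(k+1)}(w)}))$ misses $\Lambda_\alpha^{n+\sum_{s=1}^{t-1}p_s+1-\chi}\cup\overline{\Delta_\alpha}$.

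Hence the ball $\tilde B$ around $w$ misses $E_{\rm old}$, which is exactly (b6). Lemma \ref{key4} with $E=S_z\setminus E_{\rm old}$ then gives density $\le c_3c_2c_1(q+cb)\,b=cb(q+cb)$.
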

	\begin{proof}
		For all $m\in\{{\bf k}, {\bf k}-1, \cdots, t\}$ and all
		$I\in\bigcup\limits_{j=0}^{\mathfrak{l}-1}\mathfrak{D}_{n+\sum_{s=1}^{m}p_s+j}$, if $\mathcal{J}(\beta_I)\not=\emptyset$, then for all $z\in\mathcal{J}(\beta_I)$,
		we denote by $I_1^{(t)}$ the set consisting of $w\in S_z\setminus E_{\sum_{s=t-1}^{\bf k}p_s}^{\mathfrak{l}}(\hat{\Delta}_{\alpha}^{n+\sum_{s=1}^{\bf k}p_s+1})$ such that the orbit of $w$ doesn't visit $\hat{U}_{0,p_{t-1}}^{n+\sum_{s=1}^{t-2}p_s}$ before leaving $\Lambda_{\alpha}^{n+\sum_{s=1}^{t-2}p_s+1}$; we denote by $I_2^{(t)}$ the set consisting of $w\in S_z\setminus E_{\sum_{s=t-1}^{\bf k}p_s}^{\mathfrak{l}}(\hat{\Delta}_{\alpha}^{n+\sum_{s=1}^{\bf k}p_s+1})$ such that the orbit of $w$ visits $\hat{U}_{0,p_{t-1}}^{n+\sum_{s=1}^{t-2}p_s}$ before leaving $\Lambda_{\alpha}^{n+\sum_{s=1}^{t-2}p_s+1}$.
		
		Next, we prove
		\begin{equation}
			\label{e117a}{\rm dens}_{S_z}I_1^{(t)}\leq q^2
		\end{equation}
		and
		\begin{equation}
			\label{e117b}{\rm dens}_{S_z}I_2^{(t)}\setminus E_{\sum_{s=t-1}^{\bf k}p_s+\chi}^{\mathfrak{l}}(\hat{\Delta}_{\alpha}^{n+\sum_{s=1}^{\bf k}p_s+1})\leq cb(q+cb),
		\end{equation}
		respectively.
		Firstly, we prove
		$${\rm dens}_{S_z}I_1^{(t)}\leq q^2.$$
		For all $w\in I_1^{(t)}$,
		the orbit of $w$ doesn't visit $\hat{U}_{0,p_{t-1}}^{n+\sum_{s=1}^{t-2}p_s}$ before leaving $\Lambda_{\alpha}^{n+\sum_{s=1}^{t-2}p_s+1+\chi}$.
		Since $w\in S_z\setminus E_{\sum_{s=t-1}^{\bf k}p_s}^{\mathfrak{l}}(\hat{\Delta}_{\alpha}^{n+\sum_{s=1}^{\bf k}p_s+1})$, by Lemma \ref{l20257192} we have $w\in S_z\subseteq\mathcal{J}(\tilde{\beta}_I)\subseteq\Lambda_{\alpha}^{n+\sum_{s=1}^{m}p_s}\subseteq\Lambda_{\alpha}^{n+\sum_{s=1}^{t-1}p_s}$ and 
		$P_{\alpha}^{\comp j}(w)\not\in\Lambda^{n+\sum_{s=1}^{t-2}p_s+1}\cup\overline{\Delta_{\alpha}}$ for some positive integer $j$. Then in the same way as the proof of (3) in the proof of Lemma \ref{key1}, we can obtain
		$${\rm dens}_{S_z}I_1^{(t)}\leq \lambda_2^{p_{t-1}}\leq\lambda_2^{\mathfrak{p}}.$$
		By combining (\ref{e0418}), it follows that
		$${\rm dens}_{S_z}I_1^{(t)}\leq q^2.$$
		
		Now we prove ${\rm dens}_{S_z}I_2^{(t)}\setminus E_{\sum_{s=t-1}^{\bf k}p_s+\chi}^{\mathfrak{l}}(\hat{\Delta}_{\alpha}^{n+\sum_{s=1}^{\bf k}p_s+1})\leq cb(q+cb).$
		For all $w\in I_2^{(t)}$, there exists a nonnegative integer $k$ such that $P_{\alpha}^{\comp k}(w)\in\hat{U}_{0,p_{t-1}}^{n+\sum_{s=1}^{t-2}p_s}$ and $P_{\alpha}^{\comp j}(w)\in\Lambda_{\alpha}^{n+\sum_{s=1}^{t-2}p_s+1}$ for all $0\leq j\leq k$.
		By combining $w\in S_z\setminus E_{\sum_{s=t-1}^{\bf k}p_s}^{\mathfrak{l}}(\hat{\Delta}_{\alpha}^{n+\sum_{s=1}^{\bf k}p_s+1})$, it follows that
		$$P_{\alpha}^{\comp (k+1)}(w)\in\bigcup\limits_{j=t-1}^{\bf k}\bigcup\limits_{s=0}^{\mathfrak{l}-1}\hat{\Delta}_{\alpha}^{n+\sum_{s=1}^{j}p_s+s}\setminus\hat{\Delta}_{\alpha}^{n+\sum_{s=1}^{j}p_s+s+1}.$$
		If $P_{\alpha}^{\comp (k+1)}(w)\in
		\bigcup\limits_{s=0}^{\mathfrak{l}-1}\hat{\Delta}_{\alpha}^{n+\sum_{s=1}^{t-1}p_s+s}\setminus\hat{\Delta}_{\alpha}^{n+\sum_{s=1}^{t-1}p_s+s+1}$,
		then there exists $I\in\bigcup\limits_{s=0}^{\mathfrak{l}-1}\mathfrak{D}_{n+\sum_{s=1}^{t-1}p_s+s}$ such that $P_{\alpha}^{\comp (k+1)}(w)\in\mathcal{J}(\beta_I)$.
		By Lemma \ref{l7262} and the assumption we have
		\begin{equation}
			\label{e7245a}{\rm dens}_{S_{P_{\alpha}^{\comp (k+1)}(w)}}S_{P_{\alpha}^{\comp (k+1)}(w)}\setminus E_{\sum_{s=t-1}^{\bf k}p_s+\chi}^{\mathfrak{l}}(\hat{\Delta}_{\alpha}^{n+\sum_{s=1}^{\bf k}p_s+1})\leq q+cb.
		\end{equation}
		If $P_{\alpha}^{\comp (k+1)}(w)\in\bigcup\limits_{j=t}^{\bf k}\bigcup\limits_{s=0}^{\mathfrak{l}-1}\hat{\Delta}_{\alpha}^{n+\sum_{s=1}^{j}p_s+s}\setminus\hat{\Delta}_{\alpha}^{n+\sum_{s=1}^{j}p_s+s+1}$,
		then there exists $I\in\bigcup\limits_{j=t}^{\bf k}\bigcup\limits_{s=0}^{\mathfrak{l}-1}\mathfrak{D}_{n+\sum_{s=1}^{j}p_s+s}$ such that $P_{\alpha}^{\comp (k+1)}(w)\in\mathcal{J}(\beta_I)$.
		By the assumption we have
		$${\rm dens}_{S_{P_{\alpha}^{\comp (k+1)}(w)}}S_{P_{\alpha}^{\comp (k+1)}(w)}\setminus E_{\sum_{s=t}^{\bf k}p_s+\chi}^{\mathfrak{l}}(\hat{\Delta}_{\alpha}^{n+\sum_{s=1}^{\bf k}p_s+1})\leq b$$
		and hence
		\begin{equation}
			\label{e7245}{\rm dens}_{S_{P_{\alpha}^{\comp (k+1)}(w)}}S_{P_{\alpha}^{\comp (k+1)}(w)}\setminus E_{\sum_{s=t-1}^{\bf k}p_s+\chi}^{\mathfrak{l}}(\hat{\Delta}_{\alpha}^{n+\sum_{s=1}^{\bf k}p_s+1})\leq q+cb.
		\end{equation}
		Since $S_{P_{\alpha}^{\comp(k+1)}(w)}$ doesn't intersect the postcritical set $\mathcal{O}(P_{\alpha})$, there exists a one-value analytic branch $\psi$ of $P_{\alpha}^{-k-1}$ on ${\rm int} (S_{P_{\alpha}^{\comp(k+1)}(w)})$ such that $\psi(P_{\alpha}^{\comp(k+1)}(w))=w$. By Lemma \ref{l20257192} and the Schwarz lemma we have that for all $0\leq j\leq k+1$,
		\begin{equation}
			\label{e724a}{\rm diam}_{\mathbb{C}\setminus\overline{\Delta_{\alpha}}}P_{\alpha}^{\comp j}(\psi({\rm int}(S_{P_{\alpha}^{\comp(k+1)}(w)})))\preceq1.
		\end{equation}
		Observe that $P_{\alpha}^{\comp k}(w)\in\hat{U}_{0,p_{t-1}}^{n+\sum_{s=1}^{t-2}p_s}\subseteq\overline{\Omega_{c_0}^{n+\sum_{s=1}^{t-2}p_s+2\mathfrak{q}_5}\setminus\Omega_{c_0}^{n+\sum_{s=1}^{t-1}p_s-2\mathfrak{q}_5}}$ and $P_{\alpha}^{\comp k}(w)\in P_{\alpha}^{\comp k}(\psi({\rm int}(S_{P_{\alpha}^{\comp(k+1)}(w)})))$. Thus (\ref{e7241})
		gives
		\begin{equation}
			\label{e7242}P_{\alpha}^{\comp k}(\psi({\rm int}(S_{P_{\alpha}^{\comp(k+1)}(w)})))\subseteq\Omega_{c_0}^{n+\sum_{s=1}^{t-2}p_s+\mathfrak{q}_5}\setminus\Omega_{c_0}^{n+\sum_{s=1}^{t-1}p_s-\mathfrak{q}_5}.
		\end{equation}
		Moreover, since $S_{P_{\alpha}^{\comp(k+1)}(w)}\subseteq\mathcal{J}(\tilde{\beta}_I)\subseteq\tilde{\Delta}_{\alpha}^{-1}\setminus\{P_{\alpha}(c_0)\}$ and $P_{\alpha}^{\comp k}(w)\in\hat{U}_{0,p_{t-1}}^{n+\sum_{s=1}^{t-2}p_s}\subseteq\tilde{U}_0^{-1}$, we have
		\begin{equation}
			\label{e7243}P_{\alpha}^{\comp k}(\psi({\rm int}(S_{P_{\alpha}^{\comp(k+1)}(w)})))\subseteq\tilde{U}_0^{-1}\setminus\{c_0\}.
		\end{equation}
		By Lemma \ref{l20257193},
		\begin{equation}
			\label{e7244}\tilde{U}_0^{-1}\cap\left(\overline{\Delta_{\alpha}}\cup\Lambda_{\alpha}^{n+\sum_{s=1}^{t-1}p_s+1-\chi}\setminus\Omega_{c_0}^{n+\sum_{s=1}^{t-1}p_s-\mathfrak{q}_5}\right)=\{c_0\}.\ {\rm(Note\ \mathfrak{q}_2-\mathfrak{q}_5<1-\chi)}
		\end{equation}
		Thus by (\ref{e7242}), (\ref{e7243}) and (\ref{e7244}), we have
		\begin{equation}
			\label{e117}P_{\alpha}^{\comp k}(\psi({\rm int}(S_{P_{\alpha}^{\comp(k+1)}(w)})))\cap(\Lambda_{\alpha}^{n+\sum_{s=1}^{t-1}p_s+1-\chi}\cup\overline{\Delta_{\alpha}})={\emptyset}.
		\end{equation}
		By (\ref{e7245a}), (\ref{e7245}) and  Lemma \ref{key3}, there exists an Euclidean ball $B$ {\rm(}$\subseteq\psi({\rm int}(S_{P_{\alpha}^{\comp(k+1)}(w)}))${\rm)} centering at $w$ such that
		\begin{equation}
			\label{e724b}{\rm area}\left(B\cap\psi(S_{P_{\alpha}^{\comp (k+1)}(w)}\setminus E_{\sum_{s=t-1}^{\bf k}p_s+\chi}^{\mathfrak{l}}(\hat{\Delta}_{\alpha}^{n+\sum_{s=1}^{\bf k}p_s+1}))\right)\leq c_1\cdot(q+cb)\cdot{\rm area}(B),
		\end{equation}
		where $c_1$ is the same as that in the proof of Lemma \ref{l7262}.
		It follows from (\ref{e724a}),
		$P_{\alpha}^{\comp j}(w)\in\Lambda_{\alpha}^{n+\sum_{s=1}^{t-2}p_s+1}$ ($0\leq j\leq k+1$) and Claim $1$ that
		$$\psi(S_{P_{\alpha}^{\comp(k+1)}(w)}\cap E_{\sum_{s=t-1}^{\bf k}p_s+\chi}^{\mathfrak{l}}(\hat{\Delta}_{\alpha}^{n+\sum_{s=1}^{\bf k}p_s+1}))\subseteq E_{\sum_{s=t-1}^{\bf k}p_s+\chi}^{\mathfrak{l}}(\hat{\Delta}_{\alpha}^{n+\sum_{s=1}^{\bf k}p_s+1}),$$
		and hence
		$$\psi(S_{P_{\alpha}^{\comp(k+1)}(w)}\setminus E_{\sum_{s=t-1}^{\bf k}p_s+\chi}^{\mathfrak{l}}(\hat{\Delta}_{\alpha}^{n+\sum_{s=1}^{\bf k}p_s+1}))\supseteq
		\psi(S_{P_{\alpha}^{\comp(k+1)}(w)})\setminus
		E_{\sum_{s=t-1}^{\bf k}p_s+\chi}^{\mathfrak{l}}(\hat{\Delta}_{\alpha}^{n+\sum_{s=1}^{\bf k}p_s+1}).$$
		By combining (\ref{e724b}), it follows that
		$${\rm area}\left(B\cap I_2^{(t)}\setminus
		E_{\sum_{s=t-1}^{\bf k}p_s+\chi}^{\mathfrak{l}}(\hat{\Delta}_{\alpha}^{n+\sum_{s=1}^{\bf k}p_s+1})\right)\leq c_1\cdot(q+cb)\cdot{\rm area}(B).$$
		By Lemmas \ref{l25070716} and \ref{l20257192} we have 
		${\rm diam}(S_z)\asymp{\rm dist}(S_z,\mathcal{O}(P_{\alpha}))$. By combining $w\in S_z$, it follows that
		\begin{equation}
			\label{e20260425a}{\rm diam}(B)\leq2{\rm dist}(w,\mathcal{O}(P_{\alpha}))\asymp{\rm diam}(S_z)=\sqrt{2}l_z.
		\end{equation}
		We denote by $\tilde{B}$ the Euclidean ball centering at $w$ with radius $\min\{l_z, \frac{{\rm diam}(B)}{2}\}$.
		(\ref{e20260425a}) gives ${\rm diam}(\tilde{B})\asymp{\rm diam}(B)$, and hence
		\begin{equation}
			\label{e7271}{\rm area}\left(\tilde{B}\cap I_2^{(t)}\setminus
			E_{\sum_{s=t-1}^{\bf k}p_s+\chi}^{\mathfrak{l}}(\hat{\Delta}_{\alpha}^{n+\sum_{s=1}^{\bf k}p_s+1})\right)\leq c_2c_1\cdot(q+cb)\cdot{\rm area}(\tilde{B}),
		\end{equation}
		where $c_2$ is the same as that in the proof of Lemma \ref{l7262}.
		Since $\tilde{B}\subseteq B\subseteq\psi({\rm int}(S_{P_{\alpha}^{\comp(k+1)}(w)}))$, (\ref{e117}) and $\hat{\Delta}_{\alpha}^{n+\sum_{s=1}^{\bf k}p_s+\mathfrak{l}}=\overline{\Delta_{\alpha}}$,
		we have
		\begin{equation}
			\label{e7272}\tilde{B}\cap E_{\sum_{s=t}^{\bf k}p_s+\chi}^{\mathfrak{l}}(\hat{\Delta}_{\alpha}^{n+\sum_{s=1}^{\bf k}p_s+1})=\emptyset.
		\end{equation}
		Together with (\ref{e7271}), (\ref{e7272}), $0<\frac{{\rm diam}(\tilde{B})}{2}=\min\{l_z, \frac{{\rm diam}(B)}{2}\}\leq l_z$ and the assumption, Lemma \ref{key4} gives
		$${\rm dens}_{S_z}I_2^{(t)}\setminus E_{\sum_{s=t-1}^{\bf k}p_s+\chi}^{\mathfrak{l}}(\hat{\Delta}_{\alpha}^{n+\sum_{s=1}^{\bf k}p_s+1})\leq c_3c_2c_1b(q+cb)=cb(q+cb),$$
		where both $c_3$ and $c$ are the same as that in the proof of Lemma \ref{l7262}.
		
		At last, by  (\ref{e117a}) and (\ref{e117b}) we have
		\begin{align*}
			{\rm dens}_{S_z}S_z\setminus E_{\sum_{s=t-1}^{\bf k}p_s+\chi}^{\mathfrak{l}}(\hat{\Delta}_{\alpha}^{n+\sum_{s=1}^{\bf k}p_s+1})&={\rm dens}_{S_z}(I_1^{(t)}\cup I_2^{(t)})\setminus E_{\sum_{s=t-1}^{\bf k}p_s+\chi}^{\mathfrak{l}}(\hat{\Delta}_{\alpha}^{n+\sum_{s=1}^{\bf k}p_s+1})\\
			&\leq cb(q+cb)+q^2.
		\end{align*}
		
	\end{proof}

	\begin{proof}[The proof of Theorem \ref{key5}:]
		Let $c$ be the same as that in the proof of Lemma \ref{l7262}, and let $q$, $\{b_j\}$ and the above $c$ satisfy the relations in the lemma \ref{l7271}.
		
		Firstly, we prove that for all $1\leq m\leq{\bf k}$ and $I\in\bigcup\limits_{j=0}^{\mathfrak{l}-1}\mathfrak{D}_{n+\sum_{s=1}^mp_s+j}$, if $\mathcal{J}(\beta_I)\not=\emptyset$, then for all $z\in\mathcal{J}(\beta_I)$, we have
		\begin{equation}
			\label{e1116a}{\rm dens}_{S_z}S_z\setminus E_{\sum_{s=m}^{\bf k}p_s+\chi}^{\mathfrak{l}}(\hat{\Delta}_{\alpha}^{n+\sum_{s=1}^{\bf k}p_s+1})\leq b_{{\bf k}-m+1}.
		\end{equation}
		In fact, since $\hat{\Delta}_{\alpha}^{n+\sum_{s=1}^{\bf k}p_s+\mathfrak{l}}\not=\hat{\Delta}_{\alpha}^{n+\sum_{s=1}^{\bf k}p_s}$, by (\ref{e1071}) we have that for $I\in\bigcup\limits_{j=0}^{\mathfrak{l}-1}\mathfrak{D}_{n+\sum_{s=1}^{\bf k}p_s+j}$ with $\mathcal{J}(\beta_I)\not=\emptyset$ and all $z\in\mathcal{J}(\beta_I)$, we have
		$${\rm dens}_{S_z}S_z\setminus E_{p_{\bf k}}^{\mathfrak{l}}(\hat{\Delta}_{\alpha}^{n+\sum_{s=1}^{\bf k}p_s+1})\leq q=b_1.$$
		We suppose that (\ref{e1116a}) holds for $m={\bf k}, {\bf k}-1, \cdots, t>1$.
		Next, we prove that (\ref{e1116a}) holds for $m=t-1$.
		By the above supposition and (a11) in Lemma \ref{l7271}, for all $m\in\{{\bf k}, {\bf k}-1, \cdots, t\}$, we have that for all
		$I\in\bigcup\limits_{j=0}^{\mathfrak{l}-1}\mathfrak{D}_{n+\sum_{s=1}^{m}p_s+j}$, if $\mathcal{J}(\beta_I)\not=\emptyset$, then for all $z\in\mathcal{J}(\beta_I)$, we have
		\begin{align*}
			{\rm dens}_{S_z}S_z\setminus E_{\sum_{s=t}^{\bf k}p_s+\chi}^{\mathfrak{l}}(\hat{\Delta}_{\alpha}^{n+\sum_{s=1}^{\bf k}p_s+1})&\leq
			{\rm dens}_{S_z}S_z\setminus E_{\sum_{s=m}^{\bf k}p_s+\chi}^{\mathfrak{l}}(\hat{\Delta}_{\alpha}^{n+\sum_{s=1}^{\bf k}p_s+1})\\
			&\leq b_{{\bf k}-m+1}\\
			&\leq b_{{\bf k}-t+1}.
		\end{align*}
		Then by Lemma \ref{l7272} and (a11) in Lemma \ref{l7271} we have
		\begin{align*}
			{\rm dens}_{S_z}S_z\setminus E_{\sum_{s=t-1}^{\bf k}p_s+\chi}^{\mathfrak{l}}(\hat{\Delta}_{\alpha}^{n+\sum_{s=1}^{\bf k}p_s+1})
			&\leq
			cb_{{\bf k}-t+1}(q+cb_{{\bf k}-t+1})+q^2\\
			&\leq cb_{{\bf k}-t+1}(q+cb_{{\bf k}-t+1})+qb_{{\bf k}-t+1}.
		\end{align*}
		Now for all
		$I\in\bigcup\limits_{j=0}^{\mathfrak{l}-1}\mathfrak{D}_{n+\sum_{s=1}^{t-1}p_s+j}$, if $\mathcal{J}(\beta_I)\not=\emptyset$, then for all $z\in\mathcal{J}(\beta_I)$,
		by Lemma \ref{l7262}, we have
		\begin{align*}
			{\rm dens}_{S_z}S_z\setminus E_{\sum_{s=t-1}^{\bf k}p_s+\chi}^{\mathfrak{l}}(\hat{\Delta}_{\alpha}^{n+\sum_{s=1}^{\bf k}p_s+1})
			&\leq q+c(cb_{{\bf k}-t+1}(q+cb_{{\bf k}-t+1})+qb_{{\bf k}-t+1})\\
			&= b_{{\bf k}-(t-1)+1}.
		\end{align*}
		Thus by induction (\ref{e1116a}) holds.
		
		At last, by (b11) in Lemma \ref{l7271} and (\ref{e1116a}), we have that for all $1\leq m\leq{\bf k}$ and $I\in\bigcup\limits_{j=0}^{\mathfrak{l}-1}\mathfrak{D}_{n+\sum_{s=1}^mp_s+j}$, if $\mathcal{J}(\beta_I)\not=\emptyset$, then for all $z\in\mathcal{J}(\beta_I)$, we have
		$${\rm dens}_{S_z}S_z\setminus K(P_{\alpha})\leq{\rm dens}_{S_z}S_z\setminus E_{\sum_{s=m}^{\bf k}p_s+\chi}^{\mathfrak{l}}(\hat{\Delta}_{\alpha}^{n+\sum_{s=1}^{\bf k}p_s+1})\leq b_{{\bf k}-m+1}\leq\frac{1}{2c^3}.$$
		Thus the proof of the lemma is completed.
		
	\end{proof}

	\section{The proof of two main theorems}
	
	Before proving two main theorems, to satisfy the assumption condition at the begining of Section \ref{s891}, we first adjust the original pseudo-Siegel disks.
	Let $\{\hat{\Delta}_{\alpha}^n\}_{n\geq-1}$ be a geodesic filling-in of $\Delta_{\alpha}$ satisfying (P1) and (P2) in Section \ref{s2.3}. Let $\{F_{n}\}$ be the corresponding geodesic pre-filling-ins of $\Delta_{\alpha}$.
	Recall $\alpha=[a_1,a_2,\cdots]$ is an eventually golden mean number. We let ${\bf h}=2$ if $a_1>1$ and let ${\bf h}=3$ if $a_1=1$. For every positive integer $\mathfrak{M}$, we define 
	a new geodesic filling-in of $\Delta_{\alpha}$: $\{\hat{\Delta}_{\alpha, \mathfrak{M}}^n\}_{n\geq-1}$ as follows: 
	\begin{itemize}
		\item $\hat{\Delta}_{\alpha, \mathfrak{M}}^n=\overline{\Delta_{\alpha}}$ for all sufficiently large $n$,
		\item for all $n\geq0$,
		$$\hat{\Delta}_{\alpha,\mathfrak{M}}^{n-1}=
		\left\{\begin{matrix}
			\hat{\Delta}_{\alpha,\mathfrak{M}}^n,&a_{n-1+{\bf h}}\leq\mathfrak{M} \ {\rm or}\ \hat{\Delta}_{\alpha}^n=\hat{\Delta}_{\alpha}^{n-1}\\
			\hat{\Delta}_{\alpha,\mathfrak{M}}^n\cup F_{n-1},&{\rm else}
		\end{matrix}\right..
		$$
	\end{itemize}
	It follows from Dudko and Lyubich's construction (refer [Section 11.4, \cite{DL}]) that the following fact holds:
	
	\vspace{0.2cm}
	\noindent\emph{{\rm\bf Fact \uppercase\expandafter{\romannumeral1}:} For every positive integer $\mathfrak{M}$, there exists a geodesic filling-in of $\Delta_{\alpha}$: $\{\hat{\Delta}_{\alpha}^n\}_{n\geq-1}$ satisfying (P1) and (P2) in Section \ref{s2.3} so that $\hat{\Delta}_{\alpha}^n=\hat{\Delta}_{\alpha, \mathfrak{M}}^n$ for all $n\geq-1$. Note that parameters $C,M$ in (P1) and the parameter $\Delta$ in (P2) in Section \ref{s2.3} may depend on $\mathfrak{M}$, but independent of $\alpha$.}
	
	\vspace{0.2cm}
	\noindent In fact, in Dudko and Lyubich's construction, 
	on the one hand, in [Theorem 10.3, \cite{DL}] as long as $\Delta_{\alpha}$ does not have an external level-($n-1$) parabolic rectangle of width $\sqrt{\bf K}$, $\hat{\Delta}_{\alpha}^{n-1}=\hat{\Delta}_{\alpha}^n$. On the other hand, by [Thereom 4.1, \cite{DL}] if there is an external level-($n-1$) parabolic rectangle of sufficient width, then $a_{n-1+{\bf h}}$ will be sufficiently big. Thus 
	for every fixed $\mathfrak{M}$, 
	to ensure $\{\hat{\Delta}_{\alpha}^n\}_{n\geq-1}=\{\hat{\Delta}_{\alpha, \mathfrak{M}}^n\}_{n\geq-1}$,
	we only need to increase ${\bf K}$ in [Theorems 10.3 and 10.4, \cite{DL}] appropriately so that for all $n\geq0$, if there is an external level-($n-1$) parabolic rectangle of width $\sqrt{\bf K}$, then $a_{n-1+{\bf h}}>\mathfrak{M}$ holds. In this case, as Dudko and Lyubich's construction, parameters $C,M,\Delta$ are independent of $\alpha$.

	\begin{proof}[The proof of Theorem \ref{T2}]
		For all $\mathfrak{M}\geq1$ and $\mathfrak{l}\geq1$, by the above fact \uppercase\expandafter{\romannumeral1}, we have pseudo-Siegel disks: $\{\hat{\Delta}_{\alpha}^m\}_{m\geq-1}$ so that $\hat{\Delta}_{\alpha}^m=\hat{\Delta}_{\alpha, \mathfrak{M}}^m$ for all eventually golden mean $\alpha$.
		We let $\mathfrak{p}$ sufficiently big so that the all conditions at the first paragraph of Section \ref{s891} hold. For all
		$\alpha=[a_1,a_2,\cdots]\in\mathcal{C}_{\mathfrak{l},\mathfrak{p}}^{(\mathfrak{M})}$,
		we let $\alpha_k:=[a_1,a_2,\cdots,a_k,1,1,\cdots]$ be a perturbation of $\alpha$. By the upper semi-continuity of areas of filled-in Julia sets (see [Proposition $2$, \cite{BC}]), we have $$\limsup\limits_{k\to+\infty}{\rm area}(K(P_{\alpha_k}))\leq{\rm area}(K(P_{\alpha}))={\rm area}(J(P_{\alpha})).$$
		To prove Theorem \ref{T2}, we only need to
		prove
		$$\limsup\limits_{k\to+\infty}{\rm area}(K(P_{\alpha_k}))>0.$$
		
		Since $\alpha\in\mathcal{C}_{\mathfrak{l},\mathfrak{p}}^{(\mathfrak{M})}$, we have that
		there exists a positive integer $j_0$ such that for all $j_1>j_2\geq j_0$ with $a_{j_1}>\mathfrak{M}$ and $a_{j_2}>\mathfrak{M}$, we have $j_1-j_2\geq\mathfrak{p}$ or $j_1-j_2\leq\mathfrak{l}-1$. We fix a positive integer $n$ such that $n\geq\max\{2\mathfrak{q}_4+{\bf h}, j_0\}$ and for all $s\in\{n+1,n+2,\cdots, n+\mathfrak{p}-1\}$, $a_s\leq\mathfrak{M}$.
		Then there exists a sequence $\{p_{k}\}_{k=1}^\infty$ of positive integers such that 
		\begin{itemize}
			\item for all $k\geq1$,
			$p_k\geq\mathfrak{p}$,
			\item for all $n+1\leq m\leq n+p_1-1$,
			$a_m\leq\mathfrak{M}$,
			\item for all $k\geq1$ and $n+\sum_{s=1}^kp_s+\mathfrak{l}\leq m\leq n+\sum_{s=1}^{k+1}p_s-1$, $a_m\leq\mathfrak{M}$.
		\end{itemize}
		Thus $a_m>\mathfrak{M}$ may occur only for $n+\sum_{s=1}^kp_s\leq m\leq n+\sum_{s=1}^kp_s+\mathfrak{l}-1$ with $k\in\mathbb{N}_+$.
		Since $\alpha$ is of non-Brjuno type, $a_m>\mathfrak{M}$ must occur infinite times.
		
		For every $k=n+\sum_{s=1}^{\tilde{\bf k}}p_s+\mathfrak{l}$ with a positive integer $\tilde{\bf k}$, we have that $\hat{\Delta}_{\alpha_k}^m=\hat{\Delta}_{\alpha_k, \mathfrak{M}}^m$ for all $m\geq-1$.
		Then we have
		\begin{itemize}
			\item for $1\leq j\leq p_1$, $\hat{\Delta}_{\alpha_k}^{n-{\bf h}+j}=\hat{\Delta}_{\alpha_k}^{n-{\bf h}+p_1}$,
			\item
			for $2\leq t\leq \tilde{\bf k}$ and $\mathfrak{l}\leq j\leq p_t$,  
			$\hat{\Delta}_{\alpha_k}^{n-{\bf h}+\sum_{s=1}^{t-1}p_s+j}=\hat{\Delta}_{\alpha_k}^{n-{\bf h}+\sum_{s=1}^{t}p_s}$,
			\item  $\hat{\Delta}_{\alpha_k}^{n-{\bf h}+\sum_{s=1}^{\tilde{\bf k}}p_{s}+\mathfrak{l}}=\overline{\Delta_{\alpha_k}}$.
		\end{itemize}
		If $\hat{\Delta}_{\alpha_k}^{n-{\bf h}+1}\setminus\overline{\Delta_{\alpha_k}}\not=\emptyset$, then for all $z\in\hat{\Delta}_{\alpha_k}^{n-{\bf h}+1}\setminus\overline{\Delta_{\alpha_k}}$, there exist ${\bf k}\in\{1,2,\cdots,\tilde{\bf k}\}$ and $j\in\{0,1,2,\cdots,\mathfrak{l}-1\}$ such that
		$\hat{\Delta}_{\alpha_k}^{n-{\bf h}+\sum_{s=1}^{\bf k}p_{s}+j}\not=\hat{\Delta}_{\alpha_k}^{n-{\bf h}+\sum_{s=1}^{\bf k}p_{s}+j+1}$ and $z\in\mathcal{J}(\beta_I)$ for some $I\in\mathfrak{D}_{n-{\bf h}+\sum_{s=1}^{\bf k}p_s+j}$. In this case, by Theorem \ref{key5} we have
		\begin{equation}
			\label{e8111}{\rm dens}_{S_z} K(P_{\alpha_k})>\frac{1}{2}.
		\end{equation}
		
		Now we consider $\hat{\Delta}_{\alpha_k}^{n-{\bf h}+1}$. It follows from Lemma \ref{l84a} that $\hat{\Delta}_{\alpha_k}^{n-{\bf h}+1}$ contains a $\mathfrak{K}$-quasidisk $B$ whose diameter has a positive low bound depending on $n$, but not on $k$. For all $z\in B\setminus\overline{\Delta_{\alpha_k}}$, we denote by $B_z$ the biggest closed Euclidean ball centering at $z$ contained in $S_z$. Then ${\rm diam}(B_z)\leq2{\rm diam}(\hat{\Delta}_{\alpha_k}^{n-{\bf h}+1})$ (Due to $z\in\hat{\Delta}_{\alpha_k}^{n-{\bf h}+1}$, $\overline{\Delta_{\alpha_k}}\subseteq\hat{\Delta}_{\alpha_k}^{n-{\bf h}+1}$ and $B_z\subseteq S_z\subseteq\mathbb{C}\setminus\overline{\Delta_{\alpha_k}}$) and ${\rm dens}_{B_z} K(P_{\alpha_k})\geq1-2/\pi$ (Due to (\ref{e8111})). If ${\rm area}(B\setminus\overline{\Delta_{\alpha_k}})>0$, then by applying the Besicovitch covering theorem to $\{B_z:z\in B\setminus\overline{\Delta_{\alpha_k}}\}$, we have that there exists a family of pairwise disjoint $B_z$ ($z\in B\setminus\overline{\Delta_{\alpha_k}}$) such that the area of these $B_z$ union is at least compatible with the area of $B\setminus\overline{\Delta_{\alpha_k}}$, and hence the area of $K(P_{\alpha_k})$ is at least compatible with the area of $B\setminus\overline{\Delta_{\alpha_k}}$. Observe that $B\cap\overline{\Delta_{\alpha_k}}\subseteq K(P_{\alpha_k})$.
		This implies that the area of $K(P_{\alpha_k})$ 
		is at least compatible with that of $B$ and hence
		\begin{equation*}
			\label{e20260507c}{\rm area}(K(P_{\alpha_k}))\succeq{\rm area}(B).
		\end{equation*}
		It follows that ${\rm area}(K(P_{\alpha_k}))$
		has a positive lower bound not depending on $k$. Thus
		$$\limsup\limits_{k\to+\infty}{\rm area}(K(P_{\alpha_k}))>0.$$
		This completes the proof.
	\end{proof}
	
	\begin{lemma}
		\label{l8141}For all eventually golden mean number $\alpha$ and $n\geq4$, we have
		\begin{itemize}
			\item[{\rm(a10)}] $\gamma_{c_0}^{n}\cap\hat{\Delta}_{\alpha}^{-1,-1}=\{x_{q_{n}},x_{q_{n+1}-q_n}\}$,
			\item[{\rm(b10)}] ${\rm diam}(\gamma_{c_0}^{n})\to0$ and ${\rm dist}(c_0,\gamma_{c_0}^{n})\to0$  as $n\to+\infty$, both are uniform on $\alpha$.
		\end{itemize}
	\end{lemma}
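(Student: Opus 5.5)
The plan treats (a10) as a combinatorial statement about where a geodesic can meet the boundary, and (b10) as a width-to-diameter estimate.

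For (a10), I first identify the endpoints of $\gamma_{c_0}^n$. Its two endpoints are the far endpoints of the two level-$n$ intervals adjacent at $c_0$, namely $I_{[c_0,x_{q_n}]}$ and $I_{[c_0,x_{q_{n+1}-q_n}]}$; these are the points $x_{q_n}$ and $x_{q_{n+1}-q_n}$, which are the preimages on $\partial\Delta_\alpha$ of the endpoints of the level-$n$ interval containing $P_\alpha(c_0)$. Both points lie on $\partial\Delta_\alpha$. They also lie on $\partial\hat{\Delta}_\alpha^{-1,-1}$, since by Lemma \ref{l9301} their images $x_{q_n-1},x_{q_{n+1}-q_n-1}$ are well-grounded points of $\partial\hat{\Delta}_\alpha^{-1}$. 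So it remains to show that ${\rm int}(\gamma_{c_0}^n)$ avoids $\hat{\Delta}_\alpha^{-1,-1}$. By Lemma \ref{l04231}, for $n\geq\mathfrak q_1$ the geodesic is non-winding, so $\Omega_{c_0}^n$ is the bounded region cut off over $I\cup J$. For smaller $n$ (down to $4$) I use the argument in the second case of that proof, via harmonic measure $<1/2$; otherwise I restrict to $n$ where non-winding holds. Next, Lemma \ref{l20257193} gives $\tilde U_0^{-1}\cap\tilde\Delta_\alpha^{-1}=\{c_0\}$, and (\ref{e251030b}) places $\tilde\Delta_\alpha^{-1}$ in $\overline{W_1}$. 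I then show that $\Omega_{c_0}^n$ minus its base lies in the component of $P_\alpha^{-1}(\ldots)$ separated from $\hat\Delta_\alpha^{-1,-1}\setminus\overline{\Delta_\alpha}$. The geometric point is that $\hat\Delta_\alpha^{-1,-1}=\overline{\Delta_\alpha}\cup$ (pullbacks of fjords), and fjords based on intervals away from $c_0$ are disjoint from $\Omega_{c_0}^n$ by Lemma \ref{l825a}, because their dams are protected by wide rectangles (P2). The critical value is not in any big fjord, so no fjord preimage sits at $c_0$. The interior of $\gamma_{c_0}^n$ lies in $\hat{\mathbb C}\setminus\overline{\Delta_\alpha}$, so the claimed equality follows.

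For (b10), I use the corollary to Lemma \ref{l7301}. The two intervals adjacent at $c_0$ of level $n$ are nested inside those of level $n-k$, so the outer extremal width between $I\cup J$ and $\overline{\partial\Delta_\alpha\setminus(I'\cup J')}$ is $\preceq 1/k$. Equivalently, the annulus separating $\Omega_{c_0}^n\cup I\cup J$ from far pieces of the boundary has modulus $\succeq k$ on the outside. To make this Euclidean, I weld inner and outer geometry as in Lemma \ref{l832} and Corollary \ref{c831}. The problem is that these were built around $P_\alpha(c_0)$, so I construct the analogous annulus $\mathcal G$ around $P_\alpha(c_0)$ with ${\rm mod}(\mathcal G)\succeq n$. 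I then pull it back by $P_\alpha$ as in (\ref{e85a}), which gives an annulus around $c_0$ of modulus $\succeq n/2$. This annulus surrounds $I_{[c_0,x_{q_n}]}\cup I_{[c_0,x_{q_{n+1}-q_n}]}$ together with the region over it, and so surrounds $\gamma_{c_0}^n$ (Lemma \ref{l825a} keeps the geodesic inside the filling). By Grötzsch, the diameter of the bounded complementary component is $\preceq e^{-2\pi\,{\rm mod}}\,{\rm diam}(\Delta_\alpha)$. The diameter of $\Delta_\alpha$ is uniformly bounded, since $K(P_\alpha)\subseteq\mathbb D(0,3)$. Hence ${\rm diam}(\gamma_{c_0}^n)\to0$ uniformly, and since $c_0$ lies in the same small component, ${\rm dist}(c_0,\gamma_{c_0}^n)\to0$ uniformly as well.

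The main obstacle is (a10): showing that no pulled-back fjord of $\hat\Delta_\alpha^{-1}$ enters $\Omega_{c_0}^n$ near $c_0$, especially for small $n$ where non-winding is not yet guaranteed. I would first try to get this directly from Corollary \ref{c20251027} and Lemma \ref{l20257193}, using the geodesic $\Gamma_\alpha$ to separate the two preimage branches.
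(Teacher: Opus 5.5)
Both parts of your proposal have gaps: the (a10) plan rests on a false intermediate claim, and the (b10) plan never rules out that $\gamma_{c_0}^n$ crosses the inner half of the annulus, which is exactly where (a10) is needed.

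\textbf{On (a10).} Your plan is to show that $\Omega_{c_0}^n$ is separated from the pulled-back fjords $\hat\Delta_\alpha^{-1,-1}\setminus\overline{\Delta_\alpha}$. You split these fjords into ``based away from $c_0$'' and ``at $c_0$''. That intermediate claim is false, and the split misses exactly the fjords that matter.

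Suppose there are dams at some level $m\geq n$. The $P_\alpha$-preimages of level-$m$ fjords whose base intervals pull back into $I_{[c_0,x_{q_n}]}\cup I_{[c_0,x_{q_{n+1}-q_n}]}$ are pieces of $\hat\Delta_\alpha^{-1,-1}$ based in the interior of the base of $\Omega_{c_0}^n$. They meet (indeed lie in) $\Omega_{c_0}^n$, underneath the geodesic, and they are not at $c_0$. For these you must show disjointness from the curve $\gamma_{c_0}^n$ itself, not from $\Omega_{c_0}^n$. Lemma \ref{l20257193} cannot help: it concerns $\Lambda_\alpha^{n+\mathfrak q_2}\setminus\Omega_{c_0}^n$, so it excludes the main mountain by design.

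The missing idea is a uniform unlinking argument.
\begin{itemize}
\item Pull the protecting rectangle $\mathcal R_I$ of (P2) back to a rectangle $\mathcal R_I^{-1}$ based on a single level-$m$ interval $I^{-1}$. This is possible because the ends of $\hat\beta_I$ are at combinatorial distance $\geq 11\iota_{m+1}$ from those of $I$.
\item By Lemma \ref{l825a}, pick a hyperbolic geodesic $\gamma\subseteq\mathcal R_I^{-1}$ joining its horizontal sides. The pulled-back fjord then lies in $O_{I^{-1}}(\gamma)$, whose trace on $\partial\Delta_\alpha$ stays at least $10\iota_{m+1}$ from ${\rm EP}(\mathfrak D_m)$.
\item The points $x_{q_n}$ and $x_{q_{n+1}-q_n}$ are within $2\iota_{m+1}$ of ${\rm EP}(\mathfrak D_m)$ for every $m$. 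So the endpoint pairs of $\gamma$ and $\gamma_{c_0}^n$ are unlinked, and $\gamma_{c_0}^n$ never enters $O_{I^{-1}}(\gamma)$.
\end{itemize}
This handles all levels $m$ and all $n\geq 4$ at once and needs no non-winding. Your route through Lemmas \ref{l04231} and \ref{l20257193} only reaches $n\geq\mathfrak q_1$ (resp. $n\geq\mathfrak q_2$), so even if it worked, the stated range would not be covered.

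\textbf{On (b10).} Your overall strategy is sound: take the annulus of Corollary \ref{c831} around $P_\alpha(c_0)$, pull it back using (\ref{e85a}), and let the modulus tend to $\infty$. The gap is the step ``the pulled-back annulus surrounds $\gamma_{c_0}^n$ by Lemma \ref{l825a}'', which only controls the outer half.

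The pulled-back annulus also contains the preimage of the inner rectangle $\mathcal G^-_{\hat\Delta_\alpha^m}(\hat I_{2,l},\hat I_{2,r})$. That preimage lies in $\hat\Delta_\alpha^{m,-1}$ and can pass through pulled-back fjords outside $\overline{\Delta_\alpha}$. Nothing in your plan stops $\gamma_{c_0}^n$ from crossing the annulus there. This is precisely where (a10) is needed: since $\hat\Delta_\alpha^{m,-1}\subseteq\hat\Delta_\alpha^{-1,-1}$, $\gamma_{c_0}^n$ misses ${\rm int}(\hat\Delta_\alpha^{m,-1})$. So (b10) depends on (a10) and is not an independent width estimate, as you framed it.

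There is a second, smaller issue. Lemma \ref{l825a} only prevents the geodesic from \emph{crossing} a wide rectangle, not from entering it. One way to handle this is with two nested annuli:
\begin{itemize}
\item use the deeper rectangle $\mathcal R^{(-1)}_{2,+}$ to keep $\gamma_{c_0}^{n+3k}$ off $\mathcal R^{(-1)}_{1,+}$ entirely;
\item then measure smallness with $\mathcal G_1^{-1}$, which separates $\{c_0,\gamma_{c_0}^{n+3k}\}$ from $\{P_\alpha(c_0),\infty\}$.
\end{itemize}
With a single annulus you would need the same splitting.
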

	\begin{proof}
		(a10) For all $I\in\bigcup\limits_{s=-1}^\infty\mathfrak{D}_s$ with  $\mathcal{J}(\beta_I)\not=\emptyset$, by (P2) in Section \ref{s2.3}
		there exists a non-winding rectangle $\mathcal{R}_{I}$ based on $I$ (with respect to $\overline{\Delta_{\alpha}}$) separating (non-winding) $\beta_{I}$ and (non-winding) $\hat{\beta}_{I}$ on $\mathbb{C}\setminus\Delta_{\alpha}$ such that the width $\mathcal{W}(\mathcal{R}_{I})$ is greater than a universal $\Delta+4\gg1$. Moreover, the combinatorial distance between two endpoints of $\hat{\beta}_{I}$ and two endpoints of $I$ is at least $11\iota_{m+1}$, where $m$ is the level of $I$. Let $\beta_{I}^{-1}$ (resp. $\mathcal{R}_{I}^{-1}$) be the component of $P_{\alpha}^{-1}(\beta_{I})$ (resp. $P_{\alpha}^{-1}(\mathcal{R}_{I})$) intersecting $\overline{\Delta_{\alpha}}$, respectively.
		Since the combinatorial distance between two endpoints of $\hat{\beta}_{I}$ and two endpoints of $I$ is at least $11\iota_{m+1}$,
		we have that there exists a unique interval $I^{-1}$ of
		$\mathfrak{D}_m$ such that both $\beta_{I}^{-1}$ and $\mathcal{R}_{I}^{-1}$ are based on $I^{-1}$.
		Then $O_{I^{-1}}(\beta_{I}^{-1})\subseteq O_{I^{-1}}(\mathcal{R}_{I}^{-1})\setminus\mathcal{R}_{I}^{-1}$ and the combinatorial distance between $\partial^{h,0}\mathcal{R}_{I}^{-1}\cup\partial^{h,1}\mathcal{R}_{I}^{-1}$ ($=I^{-1}\cap\mathcal{R}_{I}^{-1}$) and two endpoints of $I^{-1}$ is at least $10\iota_{m+1}$.
		Observe that $\mathcal{W}(\mathcal{R}_{I}^{-1})=\mathcal{W}(\mathcal{R}_{I})\gg1$. 
		By Lemma \ref{l825a} there exists a geodesic $\gamma\subseteq\mathcal{R}_{I}^{-1}$ connecting $\partial^{h,0}\mathcal{R}_{I}^{-1}$ and $\partial^{h,1}\mathcal{R}_{I}^{-1}$ with respect to the hyperbolic metric on $\hat{\mathbb{C}}\setminus\overline{\Delta_{\alpha}}$. Thus 
		\begin{equation}
			\label{e20260428a}O_{I^{-1}}(\beta_{I}^{-1})\subseteq O_{I^{-1}}(\gamma)
		\end{equation}
		and $\overline{O_{I^{-1}}(\gamma)}\cap I^{-1}$ has the combinatorial distance at least $10\iota_{m+1}$ with two endpoints of $I^{-1}$.
		Observe that both $x_{q_n}$ and $x_{q_{n+1}-q_n}$ have the combinatorial distance at most $2\iota_{m+1}$ with endpoints of all intervals of $\mathfrak{D}_m$.
		Thus $\{x_{q_n},x_{q_{n+1}-q_n}\}\cap\overline{O_{I^{-1}}(\gamma)}=\emptyset$. By combining (\ref{e20260428a}), it follows that $\gamma_{c_0}^n\cap O_{I^{-1}}(\beta_I^{-1})\subseteq\gamma_{c_0}^n\cap O_{I^{-1}}(\gamma)=\emptyset$, and hence
		$\gamma_{c_0}^{n}\cap\hat{\Delta}_{\alpha}^{-1,-1}=\gamma_{c_0}^{n}\cap\overline{\Delta_{\alpha}}=\{x_{q_{n}},x_{q_{n+1}-q_n}\}$.
		
		\vspace{0.2cm}
		\noindent(b10) Let $n$ ($\geq\mathfrak{q}_2$) and $k$ ($\gg1$) be two positive integers. For all $1\leq s\leq2$, we let $I_{s,l}$ and  $I_{s,r}$ be the two components of $$\overline{I_{[x_{q_{n+1+(s-1)k}-1},x_{q_{n+2+(s-1)k}-1}]}\setminus  I_{[x_{q_{n+1+sk}-1},x_{q_{n+2+sk}-1}]}}$$
		with
		$I_{s,l}<I_{[x_{q_{n+1+sk}-1},x_{q_{n+2+sk}-1}]}<I_{s,r}$.
		For all $1\leq s\leq2$,
		we denote by 
		$\hat{I}_{s,l}$ and  $\hat{I}_{s,r}$ projections of
		$I_{s,l}$ and  $I_{s,r}$ onto $\partial\hat{\Delta}_{\alpha}^{n+1}$, respectively.
		We define $\mathcal{G}_s$ as the interior of the union $\mathcal{G}^-_{\hat{\Delta}_{\alpha}^{n+1}}(\hat{I}_{s,l},\hat{I}_{s,r})\cup\hat{\mathcal{G}}^+_{\overline{\Delta_{\alpha}}}(\hat{I}_{s,l},\hat{I}_{s,r})$.
		By Corollary \ref{c831}, we have that for all $1\leq s\leq2$, ${\rm mod}(\mathcal{G}_s)\gg1$.
		By Lemma \ref{l841} and $n\geq\mathfrak{q}_2$, 
		for all $1\leq s\leq2$, $\mathcal{G}_s\subseteq\mathbb{C}$ and $P_{\alpha}(c_0)$ is contained in the bounded component of $\mathbb{C}\setminus\mathcal{G}_s$. 
		For all $1\leq s\leq2$, we define $\mathcal{G}_s^{-1}:=P_{\alpha}^{-1}(\mathcal{G}_s)$.
		Since $P_{\alpha}(c_0)$ is contained in the bounded component of $\mathbb{C}\setminus\mathcal{G}_s$, we have that $P_{\alpha}|_{\mathcal{G}_s^{-1}}$ is a covering of degree $2$ from $\mathcal{G}_s^{-1}$ to $\mathcal{G}_s$. Then ${\rm mod}(\mathcal{G}_s^{-1})=\frac{{\rm mod}(\mathcal{G}_s)}{2}\gg1$.
		
		For all $1\leq s\leq2$, we let $I_{s,l}^{-1}$ and $I_{s,r}^{-1}$ be pre-images of
		$I_{s,l}$ and $I_{s,r}$
		contained in $\partial\Delta_{\alpha}$, respectively.
		We denote by $\mathcal{R}_{s,+}^{(-1)}$ the rectangle bounded by $I_{s,l}^{-1}\cup I_{s,r}^{-1}\cup\partial^{v,0}\overline{\mathcal{G}_s^{-1}\setminus\hat{\Delta}_{\alpha}^{n+1,-1}}\cup\partial^{v,1}\overline{\mathcal{G}_s^{-1}\setminus\hat{\Delta}_{\alpha}^{n+1,-1}}$ with
		$\partial^{h,0}\mathcal{R}_{s,+}^{(-1)}=I_{s,l}^{-1}$ and $\partial^{h,1}\mathcal{R}_{s,+}^{(-1)}=I_{s,r}^{-1}$. 
		By (\ref{e85a})
		$$\mathcal{W}(\mathcal{R}_{s,+}^{(-1)})\succeq{\rm mod}(\mathcal{G}_s)\gg1.$$
		
		Observe that $\partial^{h,0}\mathcal{R}_{2,+}^{(-1)}<I_{[x_{q_{n+3k}},x_0]}\cup I_{[x_{q_{n+3k+1}-q_{n+3k}},x_0]}<\partial^{h,1}\mathcal{R}_{2,+}^{(-1)}$.
		Then it follows from Lemma \ref{l825a} and $\mathcal{W}(\mathcal{R}_{2,+}^{(-1)})\gg1$ that
		$$\gamma_{c_0}^{n+3k}\setminus\{x_{q_{n+3k}},x_{q_{n+3k+1}-q_{n+3k}}\}\subseteq{\rm int}( O_{I_{[x_{q_{n+1+k}},x_0]}\cup I_{[x_{q_{n+2+k}},x_0]}}(\mathcal{R}_{2,+}^{(-1)})).$$
		This implies $\gamma_{c_0}^{n+3k}\cap\mathcal{R}_{1,+}^{(-1)}=\emptyset$, and hence
		$\gamma_{c_0}^{n+3k}\cap\overline{\mathcal{G}_1^{-1}\setminus\hat{\Delta}_{\alpha}^{n+1,-1}}=\emptyset$. Together with $\gamma_{c_0}^{n+3k}\cap{\rm int}(\hat{\Delta}_{\alpha}^{n+1,-1})=\emptyset$ (Due to (a10)), it gives 
		$\gamma_{c_0}^{n+3k}\cap\mathcal{G}_1^{-1}=\emptyset$.
		Observe that $\mathcal{G}_1^{-1}$ separates $\{\infty, P_{\alpha}(c_0)\}$ and $\{x_{q_{n+3k}}, c_0\}$.
		Thus $\mathcal{G}_1^{-1}$ separates $\{\infty, P_{\alpha}(c_0)\}$ and $\{\gamma_{c_0}^{n+3k}, c_0\}$.
		By Corollary \ref{c831} ${\rm mod}(\mathcal{G}_1^{-1})=\frac{{\rm mod}(\mathcal{G}_1)}{2}$ converges to $+\infty$ as $k\to+\infty$, uniform on $\alpha$.
		Then the diameter of the bounded component of $\mathbb{C}\setminus\mathcal{G}_1^{-1}$ converges to $0$ as $k\to+\infty$, uniform on $\alpha$.
		Since the bounded component of $\mathbb{C}\setminus\mathcal{G}_1^{-1}$ contains $\gamma_{c_0}^{n+3k}$ and $c_0$, we have that
		${\rm diam}(\gamma_{c_0}^{n+3k})\to0$ and ${\rm dist}(c_0,\gamma_{c_0}^{n+3k})\to0$ as $k\to+\infty$, uniform on $\alpha$.
		
	\end{proof}
	
	\begin{proof}[The proof of Theorem \ref{T3}]
		For all $\mathfrak{M}\geq1$ and $\mathfrak{l}\geq1$, by the above fact \uppercase\expandafter{\romannumeral1}, we have pseudo-Siegel disks: $\{\hat{\Delta}_{\alpha}^m\}_{m\geq-1}$ so that $\hat{\Delta}_{\alpha}^m=\hat{\Delta}_{\alpha, \mathfrak{M}}^m$ for all eventually golden mean $\alpha$.
		We let $\mathfrak{p}$ sufficiently big so that the all conditions at the first paragraph of Section \ref{s891} hold. For all
		$\alpha=[a_1,a_2,\cdots]\in\mathcal{E}_{\mathfrak{l},\mathfrak{p}}^{(\mathfrak{M})}$,
		we let $\alpha_k:=[a_1,a_2,\cdots,a_k,1,1,\cdots]$ be a perturbation of $\alpha$. We denote by $\Delta_{\alpha}$ (resp. $\Delta_{\alpha_k}$) the Siegel disk of $P_{\alpha}$ (resp. $P_{\alpha_k}$) centering at $0$.
		Let $\phi_{\alpha}$ (resp. $\phi_{\alpha_k}$) be a conformal map from $\mathbb{D}$ to $\Delta_{\alpha}$ (resp. $\Delta_{\alpha_k}$) such that $P_{\alpha}(\phi_{\alpha}(z))=\phi_{\alpha}(e^{2\pi i\alpha}z)$ (resp. $P_{\alpha_k}(\phi_{\alpha_k}(z))=\phi_{\alpha_k}(e^{2\pi i\alpha_k}z)$) for all $z\in\mathbb{D}$.
		For all $0<r<1$, we define $$\Delta_{\alpha}(r):=\phi_{\alpha}(\mathbb{B}(0,r))$$
		and
		$$\Delta_{\alpha_k}(r):=\phi_{\alpha_k}(\mathbb{B}(0,r)).$$
		For all $0<r<1$ and any Brjuno number $\theta$, we 
		define $$K_r(P_{\theta}):=K(P_{\theta})\cap\left\{z\in\mathbb{C}:\{P_{\theta}^{\comp s}(z)\}_{s=0}^{+\infty}\subseteq\mathbb{C}\setminus\Delta_{\alpha}(r)\right\}.$$
		Similar to the upper semi-continuity of the area of $K(P_{\theta})$ on $\theta$, it is clear that ${\rm area}(K_r(P_{\theta}))$ also has the upper semi-continuity on $\theta$. Thus we have that for all $0<r<1$, 
		\begin{equation}
			\label{e8171}\limsup\limits_{k\to+\infty}{\rm area}(K_r(P_{\alpha_k}))\leq{\rm area}(K_r(P_{\alpha})).
		\end{equation}
		Observe that $K_r(P_{\alpha})$ is decreasing on $r$ and
		$$\bigcap\limits_{0<r<1}K_r(P_{\alpha})=J(P_{\alpha}).$$
		Then
		$$\lim\limits_{r\to1^-}{\rm area}(K_r(P_{\alpha}))={\rm area}(J(P_{\alpha})).$$
		Applying $r\to1^-$ to two sides of (\ref{e8171}), we have
		$$\limsup\limits_{r\to1^-}\left(\limsup\limits_{k\to+\infty}{\rm area}(K_r(P_{\alpha_k}))\right)\leq\lim\limits_{r\to1^-}{\rm area}(K_r(P_{\alpha}))={\rm area}(J(P_{\alpha})).$$
		To prove Theorem \ref{T3}, we only need to
		prove that there exists $\epsilon>0$ such that for all $0<r'<1$,
		$$\limsup\limits_{k\to+\infty}{\rm area}(K_{r'}(P_{\alpha_k}))>\epsilon.$$
		
		Since $\alpha\in\mathcal{E}_{\mathfrak{l},\mathfrak{p}}^{(\mathfrak{M})}$, we have that $c_{0,\alpha}\not\in\overline{\Delta_{\alpha}}$. By [Lemma 1.3, \cite{ST00}] $c_{0,\alpha}$ is a recurrent critical point. Thus $P_{\alpha}(c_{0,\alpha})\not\in\overline{\Delta_{\alpha}}$.
		We write
		$\delta:={\rm dist}(\Delta_{\alpha},P_{\alpha}(c_{0,\alpha}))>0$.
		By (b10) in Lemma \ref{l8141} for all sufficiently big $n$, $P_{\alpha_k}(\gamma_{c_{0,\alpha_k}}^n)\in\mathbb{B}(P_{\alpha_k}(c_{0,\alpha_k}),\delta)$ for all $k\geq1$.
		We fix $n$ sufficiently big so that $n\geq2\mathfrak{q}_4$ and 
		\begin{equation}
			\label{e20260127a}P_{\alpha_k}(\gamma_{c_{0,\alpha_k}}^n)\in\mathbb{B}(P_{\alpha_k}(c_{0,\alpha_k}),\delta)\ {\rm for\ all}\ k\geq1.
		\end{equation}
		It follows from [Proposition $10$, \cite{R99}] (also see [Corollary $4$, \cite{ABC04}]) that
		$\phi_{\alpha_k}'(0)\to\phi_{\alpha}'(0)$ as $k\to+\infty$. Then
		for all $0<r<1$, $\phi_{\alpha_k}$ converges uniformly to $\phi_{\alpha}$ as $k\to+\infty$ on $\mathbb{B}(0,r)$. Observe that $P_{\alpha_k}(c_{0,\alpha_k})$ converges to $P_{\alpha}(c_{0,\alpha})$ as $k\to+\infty$. Then it follows that for sufficiently big $k$, 
		\begin{equation}
			\label{e20260127b} 
			{\rm dist}(\Delta_{\alpha_k}(r),P_{\alpha_k}(c_{0,\alpha_k}))>\delta.
		\end{equation}
		Thus by (\ref{e20260127a}) and (\ref{e20260127b}) we have that for all $0<r<1$, there exists a positive integer $K$ such that for all $k\geq K$, we have 
		$$P_{\alpha_k}\left(\partial\Omega_{c_{0,\alpha_k}}^{n}\right)\cap\Delta_{\alpha_k}(r)=\emptyset$$
		and hence
		\begin{equation}
			\label{e20260429a}P_{\alpha_k}\left(\overline{\Omega_{c_{0,\alpha_k}}^{n}}\right)\cap\Delta_{\alpha_k}(r)=\emptyset.
		\end{equation}
		since for all $0<r<1$, $\phi_{\alpha_k}$ converges uniformly to $\phi_{\alpha}$ as $k\to+\infty$ on $\mathbb{B}(0,r)$, we have that for all $r'$ with $0<r'<r<1$, there exists a positive integer $K'$ such that for all $k\geq K'$, we have 
		\begin{equation}
			\label{e8182}\Delta_{\alpha}(r')\subseteq\Delta_{\alpha_k}(r).
		\end{equation}
		
		Since $\alpha\in\mathcal{E}_{\mathfrak{l},\mathfrak{p}}^{(\mathfrak{M})}$, we have that
		there exists a positive integer $j_0$ such that for all $j_1>j_2\geq j_0$ with $a_{j_1}>\mathfrak{M}$ and $a_{j_2}>\mathfrak{M}$, we have $j_1-j_2\geq\mathfrak{p}$ or $j_1-j_2\leq\mathfrak{l}-1$. We fix a positive integer $n_0$ such that $n_0\geq\max\{n+\mathfrak{q}_2+\chi+{\bf h}, j_0\}$ and for all $s\in\{n_0+1,n_0+2,\cdots, n_0+\mathfrak{p}-1\}$, $a_s\leq\mathfrak{M}$.
		Then there exists a sequence $\{p_{k}\}_{k=1}^\infty$ of positive integers such that 
		\begin{itemize}
			\item for all $k\geq1$,
			$p_k\geq\mathfrak{p}$,
			\item for all $n_0+1\leq m\leq n_0+p_1-1$,
			$a_m\leq\mathfrak{M}$,
			\item for all $k\geq1$ and $n_0+\sum_{s=1}^kp_s+\mathfrak{l}\leq m\leq n_0+\sum_{s=1}^{k+1}p_s-1$, $a_m\leq\mathfrak{M}$.
		\end{itemize}
		Thus $a_m>\mathfrak{M}$ may occur only for $n_0+\sum_{s=1}^kp_s\leq m\leq n_0+\sum_{s=1}^kp_s+\mathfrak{l}-1$ with $k\in\mathbb{N}_+$.
		Since $\alpha\in\mathcal{E}_{\mathfrak{l},\mathfrak{p}}^{(\mathfrak{M})}$, $a_m>\mathfrak{M}$ must occur infinite times.
		
		For every $k=n_0+\sum_{s=1}^{\tilde{\bf k}}p_s+\mathfrak{l}\geq\max\{K,K'\}$ with a positive integer $\tilde{\bf k}$, we have that
		pseudo-Siegel disks $\hat{\Delta}_{\alpha_k}^m=\hat{\Delta}_{\alpha_k, \mathfrak{M}}^m$ for all $m\geq-1$.
		Then we have
		\begin{itemize}
			\item for $1\leq j\leq p_1$, $\hat{\Delta}_{\alpha_k}^{n_0-{\bf h}+j}=\hat{\Delta}_{\alpha_k}^{n_0-{\bf h}+p_1}$,
			\item
			for $2\leq t\leq\tilde{\bf k}$ and $\mathfrak{l}\leq j\leq p_t$,  
			$\hat{\Delta}_{\alpha_k}^{n_0-{\bf h}+\sum_{s=1}^{t-1}p_s+j}=\hat{\Delta}_{\alpha_k}^{n_0-{\bf h}+\sum_{s=1}^{t}p_s}$,
			\item  $\hat{\Delta}_{\alpha_k}^{n_0-{\bf h}+\sum_{s=1}^{\tilde{\bf k}}p_{s}+\mathfrak{l}}=\overline{\Delta_{\alpha_k}}$.
		\end{itemize}
		If $\hat{\Delta}_{\alpha_k}^{n_0-{\bf h}+1}\setminus\overline{\Delta_{\alpha_k}}\not=\emptyset$, then for all $z\in\hat{\Delta}_{\alpha_k}^{n_0-{\bf h}+1}\setminus\overline{\Delta_{\alpha_k}}$, there exist ${\bf k}\in\{1,2,\cdots,\tilde{\bf k}\}$ and $j\in\{0,1,2,\cdots,\mathfrak{l}-1\}$ such that
		$\hat{\Delta}_{\alpha_k}^{n_0-{\bf h}+\sum_{s=1}^{\bf k}p_{s}+j}\not=\hat{\Delta}_{\alpha_k}^{n_0-{\bf h}+\sum_{s=1}^{\bf k}p_{s}+j+1}$ and $z\in\mathcal{J}(\beta_I)$ for some $I\in\mathfrak{D}_{n_0-{\bf h}+\sum_{s=1}^{\bf k}p_s+j}$. In this case, 
		by the proof of Theorem \ref{key5} we have
		\begin{equation*}
			{\rm dens}_{S_z}S_z\setminus E_{\sum_{s=t}^{\bf k}p_s+\chi}^{\mathfrak{l}}(\hat{\Delta}_{\alpha_k}^{n_0-{\bf h}+\sum_{s=1}^{\bf k}p_s+1})\leq\frac{1}{2c^3}<\frac{1}{2}.
		\end{equation*}
		For all $z'\in\mathbb{C}$, if there exists $m\geq1$ such that $P_{\alpha_k}^{\comp m}(z')\in\hat{\Delta}_{\alpha_k}^{n_0-{\bf h}+\sum_{s=1}^{\bf k}p_s+\mathfrak{l}}=\overline{\Delta_{\alpha_k}}$ and $\{P_{\alpha_k}^{\comp s}(z')\}_{s=0}^{m-1}\subseteq\Lambda_{\alpha_k}^{n+\mathfrak{q}_2}$, then by Lemma \ref{l20257193}  $P_{\alpha_k}^{\comp (m-1)}(z')\in\Omega_{c_{0,\alpha_k}}^{n}$ and hence by (\ref{e20260429a}) 
		$P_{\alpha_k}^{\comp m}(z')\in\overline{\Delta_{\alpha_k}}\setminus\Delta_{\alpha_k}(r)$.
		By combining (\ref{e8182}), it follows that for all $j\geq0$, $P_{\alpha_k}^{\comp j}(z')\not\in\Delta_{\alpha}(r')$.
		Together with $n_0\geq n+\mathfrak{q}_2+\chi+{\bf h}$, this implies
		$$E_{\sum_{s=t}^{\bf k}p_s+\chi}^{\mathfrak{l}}(\hat{\Delta}_{\alpha_k}^{n_0-{\bf h}+\sum_{s=1}^{\bf k}p_s+1})\setminus\overline{\Delta_{\alpha_k}}\subseteq K_{r'}(P_{\alpha_k}).$$
		Thus
		\begin{equation}
			\label{e8161}{\rm dens}_{S_z}K_{r'}(P_{\alpha_k})\geq{\rm dens}_{S_z} E_{\sum_{s=t}^{\bf k}p_s+\chi}^{\mathfrak{l}}(\hat{\Delta}_{\alpha_k}^{n_0-{\bf h}+\sum_{s=1}^{\bf k}p_s+1})>\frac{1}{2}.
		\end{equation}
		
		Now we consider $\hat{\Delta}_{\alpha_k}^{n_0-{\bf h}+1}$.
		For all positive integer $t\geq2$, we let $I_{l,t}$ and  $I_{r,t}$ be the two components of 
		$$\overline{I_{[x_{q_{n_0-{\bf h}+1}-1},x_{q_{n_0-{\bf h}+2}-1}]}\setminus I_{[x_{q_{n_0-{\bf h}+1+t}-1},x_{q_{n_0-{\bf h}+2+t}-1}]}}$$
		with
		$I_{l,t}<I_{[x_{q_{n_0-{\bf h}+1+t}-1},x_{q_{n_0-{\bf h}+2+t}-1}]}<I_{r,t}$.
		We denote by 
		$\hat{I}_{l,t}$,   $\hat{I}_{r,t}$ and $\hat{I}_{[x_{q_{n_0-{\bf h}+1+t}-1},x_{q_{n_0-{\bf h}+2+t}-1}]}$ projections of
		$I_{l,t}$, $I_{r,t}$ and $I_{[x_{q_{n_0-{\bf h}+1+t}-1},x_{q_{n_0-{\bf h}+2+t}-1}]}$ onto $\partial\hat{\Delta}_{\alpha_k}^{n_0-{\bf h}+1}$, respectively.
		We define $\mathcal{G}_t$ as the interior of the union $\mathcal{G}^-_{\hat{\Delta}_{\alpha_k}^{n_0-{\bf h}+1}}(\hat{I}_{l,t},\hat{I}_{r,t})\cup\hat{\mathcal{G}}^+_{\overline{\Delta_{\alpha_k}}}(\hat{I}_{l,t},\hat{I}_{r,t})$.
		Let $\gamma_t^-$ be the geodesic connecting $x_{q_{n_0-{\bf h}+1+t}-1}$ and $x_{q_{n_0-{\bf h}+2+t}-1}$ with respect to the hyperbolic metric on ${\rm int}(\hat{\Delta}_{\alpha_k}^{n_0-{\bf h}+1})$. Let $D_t^-$ ($\subseteq\mathbb{C}$) be the closed region bounded by $\gamma_t^-$ and $\hat{I}_{[x_{q_{n_0-{\bf h}+1+t}-1},x_{q_{n_0-{\bf h}+2+t}-1}]}$. By Lemma \ref{l84a} we have that $D_t^-$ is a closed $\mathfrak{K}'$-quasidisk, where $\mathfrak{K}'$ is a universal constant.
		Observe that $\mathcal{G}_t$ separates $\{\infty,c_{0,\alpha_k}\}$ and $\{P_{\alpha_k}(c_{0,\alpha_k}),D_t^-\}$.
		By Corollary \ref{c831}, we have ${\rm mod}(\mathcal{G}_t)\to+\infty$ as $t\to+\infty$, uniform on $\alpha_k$. Thus there exists a positive integer $t_0$ (independent of $\alpha_k$)
		such that for all $t\geq t_0$, we have
		$D_t^-\subseteq\mathbb{B}(P_{\alpha_k}(c_{0,\alpha_k}),\frac{\delta}{2})$ and hence by (\ref{e20260127b}) $D_t^-\cap\Delta_{\alpha_k}(r)=\emptyset$.
		Since $D_{t_0}^-$ is a closed $\mathfrak{K}'$-quasidisk containing $x_{q_{n_0-{\bf h}+1+t_0}-1}$ and $x_{q_{n_0-{\bf h}+2+t_0}-1}$,
		${\rm area}(D_{t_0}^-)$ has a positive lower bound, independent of $k$.
		Similar to the proof of Theorem \ref{T2}, applying (\ref{e8182}), (\ref{e8161}) and $D_{t_0}^-\cap\Delta_{\alpha_k}(r)=\emptyset$ to $D_{t_0}^-$, we can obtain
		$$\limsup\limits_{k\to+\infty}{\rm area}(K_{r'}(P_{\alpha_k}))\succeq{\rm area}(D_{t_0}^-).$$
		The conclusion follows from the arbitrariness of $r$ and $r'$.
		
	\end{proof}

\end{document}